\documentclass[12pt,a4paper]{amsart}
\usepackage{amsfonts,amsmath,amssymb}
\usepackage{hyperref}
\usepackage{latexsym,fullpage,amsfonts,amssymb,amsmath,amscd,graphics,epic}
\usepackage[all]{xy}
\usepackage{amssymb,amsbsy,amsthm,amsxtra}
\usepackage[usenames]{color}
\usepackage{amscd}
\usepackage{amsthm}
\usepackage{amsfonts}
\usepackage{amssymb}
\usepackage{mathrsfs}
\usepackage{url}
\usepackage{bbm}
\usepackage{wasysym}
\usepackage{enumitem}
\usepackage{framed}
\usepackage{nicematrix}
\usepackage{setspace}
\usepackage{comment}
\usepackage[normalem]{ulem}

\usepackage[capitalise]{cleveref}

\usepackage{ytableau}%
\ytableausetup{smalltableaux} 
\usepackage{dynkin-diagrams}

\usepackage{ctable}

\theoremstyle{plain}% default
\newtheorem*{theorem*}{Theorem}
\newtheorem{lemma}{Lemma}[subsection]
\newtheorem{proposition}[lemma]{Proposition}
\newtheorem{corollary}[lemma]{Corollary}
\newtheorem{theorem}[lemma]{Theorem}

\newtheorem*{conjecture*}{Conjecture}

\newtheorem{introthm}{Theorem}

\newtheorem{introcorollary}[introthm]{Corollary}
\newtheorem{introprop}[introthm]{Proposition}

\theoremstyle{definition}
\newtheorem{definition}[lemma]{Definition}

\newtheorem{example}[lemma]{Example}

\newtheorem*{example*}{Example}

\theoremstyle{remark}
\newtheorem*{remark*}{Remark}
\newtheorem{remark}[lemma]{Remark}

\newtheorem{notation}[lemma]{Notation}

\newcommand{\Hom}{\operatorname{Hom}}

\newcommand{\diag}{\operatorname{diag}}

\newcommand{\triv}{{\mathbbm 1}}

\newcommand{\id}{\operatorname{Id}}

\renewcommand{\Im}{\operatorname{Im}}
\newcommand{\Ker}{\operatorname{Ker}}

\newcommand{\Aut}{{\operatorname{Aut}}}

\newcommand{\End}{\operatorname{End}}

\newcommand{\sdim}{\operatorname{sdim}}

\newcommand{\C}{{\kk}}
\newcommand{\Z}{{\mathbb Z}}

\newcommand{\fm}{{\mathfrak m}}

\newcommand{\eps}{{\varepsilon}}

\newcommand{\lam}{{\lambda}}

\newcommand{\g}{{\mathfrak g}}
\newcommand{\gl}{{\mathfrak{gl}}}
\renewcommand{\sl}{{\mathfrak{sl}}}

\newcommand{\osp}{{\mathfrak{osp}}}

\newcommand{\p}{\mathfrak{p}}
\newcommand{\q}{\mathfrak{q}}
\newcommand{\s}{\mathfrak{s}}
\newcommand{\h}{\mathfrak{h}}
\newcommand{\fc}{\mathfrak{c}}
\newcommand{\rr}{\mathfrak{r}}
\newcommand{\fl}{\mathfrak{l}}
\newcommand{\ft}{\mathfrak{t}}

\newcommand{\Lie}{\operatorname{Lie}}
\newcommand{\ad}{\operatorname{ad}}
\newcommand{\Ad}{\operatorname{Ad}}

\newcommand{\abs}[1]{\left|{#1}\right|}

\newcommand{\MM}{\pmb{\rm{M}}}

\newcommand{\sVect}{\mathtt{sVect}}

\newcommand{\Der}{\mathrm{Der}}

\newcommand{\Rep}{\mathrm{Rep}}

\newcommand{\G}{\mathbb{G}_a}
\newcommand{\kk}{\mathbbm{k}}
\renewcommand{\ss}{\mathbf{ss}}
\newcommand{\Inna}[1]{\begin{framed} {\tt{\color{blue}{#1}}} \end{framed}} 
\newcommand{\Innas}[1]{\fbox{\tt{\color{blue}{#1}}}}

\newcommand{\InnaA}[1]{#1}

\def\quotient#1#2{%
    \raise1ex\hbox{$#1$}\Big/\lower1ex\hbox{$#2$}%
}

\begin{document}

\date{\today}
\title{Balanced and neat elements in quasi-reductive Lie superalgebras}
\author{Inna Entova-Aizenbud, Vera Serganova}

 \begin{abstract}
 
Let $G$ be a quasi-reductive supergroup (so its underlying algebraic group $G_{\bar 0}$ is reductive). 
To understand better odd elements in quasi-reductive Lie superalgebras, we consider two trivially intersecting classes of odd elements, which turn out to be particularly convenient to work with: {\it neat} odd elements and {\it balanced} odd elements. 

Neat elements are always $\ad$-nilpotent. Neat elements are the best analogues one can consider for nilpotent elements in a semisimple Lie algebra, since they may be embedded into subalgebras that are isomorphic to $\osp(1|2)$, a simple Lie superalgebra whose underlying Lie algebra is $\sl_2$. Balanced odd elements, on the other hand, are a natural generalization of the notion of a self-commuting element (an element $x\in \mathrm{Lie}(G)_{\bar 1}$ for which $[x,x]=0$). Balanced elements are used to define homology-type functors on the category of representations of $G$.

We study the properties of balanced and neat odd elements in quasi-reductive Lie superalgebras and show that any element $x\in  \mathrm{Lie}(G)_{\bar 1}$ may be written as a sum of a neat and a balanced odd element which commute with each other. 

This theorem has a very nice categorical application. 

Let $\g^{(1|1)}$ be the $(1|1)$-dimensional Lie superalgebra generated by an odd element $x$. The semisimplification of the category of finite-dimensional super-representations of $\g^{(1|1)}$ is a symmetric monoidal functor $S:  \mathrm{Rep}(\g^{(1|1)}) \to \mathrm{Rep}(SOSp(1|2))$.

Any $x\in 
\mathrm{Lie}(G)_{\bar 1}$ induces a homomorphism $ i_x:\g^{(1|1)}\to \mathrm{Lie}(G)$. Let $$\Phi_x=S\circ (-)\downarrow_{i_x}:\mathrm{Rep}(G)\to \mathrm{Rep}(SOSp(1|2))$$ be the composition of the restriction functor $(-)\downarrow_{i_x}$ and the semisimplification functor $S $. 

We show that the functor $\Phi_x$ may be described explicitly using the homology-type functor $\Phi_{x_{bal}}$ corresponding to the balanced part of $x$ in the above decomposition. These homology-type functors are known as Duflo-Serganova functors.

Finally, we provide a full classification of distinguished odd elements in simple quasi-reductive Lie superalgebras and show that in all cases except $\mathfrak{spe}(n)$, such elements are either balanced or neat.
 \end{abstract}

%\keywords{}

\maketitle
\setcounter{tocdepth}{1}
\tableofcontents

\section{Introduction}

\subsection{}
The study of nilpotent elements and their orbits is a central theme in the representation theory of semisimple Lie algebras. In this paper, we propose a new approach to the study of odd elements in quasi-reductive Lie superalgebras. 

An algebraic supergroup $G$ is called quasi-reductive if its underlying algebraic group $G_{\bar{0}}$ is reductive. 
A Lie superalgebra $\mathfrak{g} = \mathfrak{g}_{\bar{0}} \oplus \mathfrak{g}_{\bar 1}$ is quasi-reductive if its even part $\mathfrak{g}_{\bar{0}}$ is reductive and $\mathfrak{g}_{\bar{0}}$ acts semisimply on $\mathfrak{g}_{\bar 1}$. Examples of such structures include the classical families $\mathfrak{gl}(m|n)$, $\mathfrak{osp}(m|2n)$ and the strange Lie superalgebras $\mathfrak{pe}(n)$ and $\mathfrak{q}(n)$.

Let $G$ be a quasi-reductive supergroup and $\g:=\mathrm{Lie}~G$ its Lie superalgebra. We will consider orbits of elements in $\g_{\bar 1}$ under the adjoint action of the underlying reductive algebraic group $G_{\bar{0}}$.

The orbits of $\ad$-nilpotent odd elements in a quasi-reductive Lie superalgebra may exhibit a behaviour very different from the nilpotent orbits in a semisimple Lie algebra: for example, orbits in a Lie superalgebra do not have to be conical and there might be infinitely many such orbits.

\subsection{The additive and orthosymplectic supergroups}

%We will say that $x\in \g_{\bar 1}$ is {\it $\ad$-nilpotent} if $[x,x]$ is a nilpotent element in the reductive Lie algebra $\g_{\bar 0}$ (that is, $[x,x]\in [\g_{\bar 0}, \g_{\bar 0}]$ and $\ad_x\in \End(\g)$ is a nilpotent operator). 
Let $\kk$ be an algebraically closed field of characteristic zero.

Let $OSp(1|2)$ be the (connected) orthosymplectic supergroup with the Lie superalgebra
 $$\osp(1|2)=\sl_2\oplus \kk^2, \;\;\; \sl_2=\mathrm{span}\{e,h,f\},\;\;\; \kk^2=\mathrm{span}\{X, Y\}$$
 where $[X,X]=f, ~[Y, Y]=e,~ [Y, X]=2h $. Let $\G^{(1|1)}$ be the $(1|1)$-dimensional additive supergroup, whose Lie superalgebra is $ \g_a=\mathrm{span}\{f\}\oplus \mathrm{span}\{x\}$ with $[f,x]=0,~ [x,x]=f$. We have a natural embedding $\iota: \G^{(1|1)}\hookrightarrow OSp(1|2)$ corresponding to $x\mapsto X, ~ f\mapsto f$. 

 Given a quasi-reductive supergroup $G$, the $\ad$-nilpotent odd elements $x\in \g_{\bar 1}$ are in bijection with  homomorphisms $ \G^{(1|1)}\to G$, so the supergroup $\G^{(1|1)} $ is naturally one of the main objects of study in the theory of nilpotent elements in supergroups.

 \subsection{Two types of nilpotent operators}
 
Consider a vector superspace $V$ and an odd nilpotent endomorphism $x$ of $V$ (equivalently, an action of $\G^{(1|1)}$ on $V$). Then there exists an increasing filtration $V=\bigcup_i V^i$, the {\it Deligne filtration for an odd nilpotent endomorphism} such that 

\begin{itemize}
  \item $x( V^{k})\subset \Pi V^{k-2}$;
  \item $\forall~k\geq 0$, $x^{{k}}: Gr^{k}(V)\xrightarrow{\sim} \Pi^kGr^{-k}(V) $, where $Gr^k(V):=V^{k}/ V^{k-1}$.
  \end{itemize}

 One may then define an action of $OSp(1|2)$ on $Gr^{ev}(V):=\bigoplus_k Gr^{2k}(V)$ so that the restriction of this action via $\iota:\G^{(1|1)}\hookrightarrow OSp(1|2)$ gives the action of $x$ on $Gr^{ev}(V)$.
   
In down-to-earth terms, the vector superspace $Gr^{ev}(V)$ is isomorphic to the direct sum of the odd-sized Jordan blocks of $x$.

Let us consider two special types of odd nilpotent operators $x$.
    \begin{itemize}
        \item If all the non-trivial Jordan blocks of $x:V\to V$ have even sizes then $OSp(1|2) $ acts trivially on $Gr^{ev}(V) = \Ker(x)/(\Im(x)\cap \Ker(x))$.
        Such $x$ is called {\bf balanced}. A special subclass of balanced operators are {\bf self-commuting} operators: these are $x$ for which $x\circ x=0 $, so $Gr^{ev}(V)=Gr^0(V)=\Ker(x)/\Im(x)$.
        \item If the Jordan blocks of $x$ have {\it odd sizes} then $Gr^{ev}(V) \cong V$, so we obtain an action $OSp(1|2)\curvearrowright V$. Such $x$ is called {\bf neat}.
        \end{itemize}

The self-commuting operators are clearly balanced and the sets of neat and balanced operators intersect only at zero. Moreover, one can easily see that any odd operator $x$ can be presented as a sum of a neat operator and a balanced operator, by decomposing $x$ into a sum of Jordan blocks of odd and even sizes. 

\mbox{}

The first aim of this paper is to generalize this picture to arbitrary odd elements in quasi-reductive Lie superalgebras.

Let $G$ be a quasi-reductive group, $\g:=\Lie(G)$. Given an element $x\in \g_{\bar 1}$, we may consider the Jordan decomposition of $\frac{1}{2}[x,x]=s+f \in \g_{\bar 0}$ into semisimple and nilpotent parts. If $s=0$ we will say that $x$ is {\bf $\ad$-nilpotent}; in this case, $x\rvert_V$ is a nilpotent operator for any $V\in \Rep(G)$.

An $\ad$-nilpotent element $x\in \g_{\bar 1}$ is called {\bf balanced} (respectively, {\bf neat}) if $x\rvert_{V}$ is a balanced (respectively, neat) odd nilpotent operator for any $V\in \Rep(G)$. A {\bf self-commuting} element in $\g_{\bar 1}$ is an element $x$ such that $[x,x]=0$; it is automatically balanced.

\begin{example*}
An $\ad$-nilpotent element $x\in \gl(m|n)_{\bar 1}$ is given by an odd nilpotent operator $\phi_x\in \End^{\bullet}(\kk^{m|n})$. It turns out that such an element $x$ is balanced (respectively, neat) if and only if $\phi_x$ is a balanced (respectively, neat) operator. 
\end{example*}

In our previous work (\cite{entova2022jacobson}), we proved the following criterion, which shows that neat elements enjoy a Jacobson-Morozov property similar to nilpotent elements in reductive Lie algebras.
\begin{theorem*}[Jacobson-Morozov theorem for supergroups, see \cite{entova2022jacobson}]
    Let $G$ be a quasi-reductive supergroup, $\g:=\Lie(G)$ and let $x\in \g_{\bar 1}$ be $\ad$-nilpotent. 
    
    Let $i:\G^{(1|1)}\to G$ be a homomorphism corresponding to $x$. Then $x$ is neat if and only if there exists a homomorphism $\overline{i}:OSp(1|2)\to G$ such that $i=\overline{i}\circ \iota$, where $$\iota:\G^{(1|1)}\hookrightarrow OSp(1|2)$$ is the embedding we fixed before.
\end{theorem*}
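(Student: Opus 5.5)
The easy direction is the "if" part. Suppose $i=\overline{i}\circ\iota$ for some $\overline{i}:OSp(1|2)\to G$. Take any $V\in\Rep(G)$ and restrict it along $\overline{i}$ to get a representation of $OSp(1|2)$. Finite-dimensional representations of $OSp(1|2)$ are semisimple, and each simple one is a copy of the irreducible $\osp(1|2)$-module of dimension $2k+1$. On that module $X$ acts as a single odd Jordan block of size $2k+1$. Indeed, the module is one $\sl_2$-string for $f=[X,X]$, and $X$ moves down through all weights. So $x\rvert_V=X\rvert_V$ has only odd-size Jordan blocks, which means $x$ is neat.

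For the converse I would build an $\osp(1|2)$-triple inside $\g$ and then integrate it.
\begin{enumerate}
\item Set $f:=[x,x]$. This is a nilpotent element of the reductive Lie algebra $\g_{\bar 0}$.
\item Fix a faithful representation $V$ of $G$. By neatness, the Deligne filtration of $x\rvert_V$ gives an $OSp(1|2)$-action on $Gr(V)\cong V$. The difficulty is that this action lives on the associated graded, not on $\g$ itself.
\item To work inside $\g$, use the grading element instead. Look for $h\in\g_{\bar 0}$ with $[h,x]=-x$, $[h,f]=-2f$, and $\ad h$ semisimple. Then seek $Y\in\g_{\bar 1}$ with $[Y,Y]=e$, $[Y,X]=2h$ (up to the normalisations fixed above), where $(e,h,f)$ is an $\sl_2$-triple.
\item Apply the even Jacobson--Morozov theorem to $f$ inside $\g_{\bar 0}$. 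This gives a triple $(e,h,f)$, which can be chosen in the centraliser of any reductive subgroup fixing $x$, via Kostant-type conjugacy arguments.
\item Decompose $\g_{\bar 1}$ as an $\sl_2$-module. I must also arrange $[h,x]=-x$. The plan is to use that the neatness condition on the adjoint representation (and on $V$) forces $x$ to lie in the $-1$ eigenspace of some Jacobson--Morozov $h$ for $f$. I would prove this by comparing the Deligne filtration of $x$ on $\g$ with the $\sl_2$-weight filtration of $f=x^2$. Since all Jordan blocks of $x$ are odd, the Deligne filtration of $x$ is half the weight filtration of $x^2$, so the gradings match.
\item Define $Y$ via $e$ and $x$, namely $Y:=[e,x]$ up to a scalar, and check the relations $[Y,Y]=e$ and $[Y,x]\propto h$. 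These hold because they hold after applying the faithful representation $V$, where they reduce to the explicit computation on odd Jordan blocks.
\end{enumerate}

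Finally, integrate the resulting homomorphism $\osp(1|2)\to\g$ to $OSp(1|2)\to G$. The even part integrates because $SL_2$ is simply connected. The supergroup statement then follows from the Harish-Chandra pair description of $G$, and it is compatible with $\iota$ by construction.

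The main obstacle is step 5: producing $h$ with $[h,x]=-x$, as opposed to merely $[h,f]=-2f$. I would handle it by transporting the canonical grading that the Deligne filtration puts on $Gr(V)\cong V$. Neatness makes that grading split canonically, so it should give $h\in\mathfrak{gl}(V)$. Then I would show $h\in\g$ by the uniqueness/conjugacy argument of Kostant, applied to the algebraic hull of $x$ and $f$.
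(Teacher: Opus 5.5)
Your ``if'' direction is correct, and so is the final integration step. The gap is in step 5, which is exactly the step you flagged as the main obstacle.

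\textbf{What the filtration comparison actually gives.} Comparing the Deligne filtration of $x$ with the weight filtration of $f$ on a faithful $V$ is valid, but it only shows that $x$ lowers the $f$-weight filtration by one. Concretely, take a Jacobson--Morozov triple $(e,h,f)$ in $\g_{\bar 0}$ and the $\ad_h$-decomposition $x=x_0+x_{-1}+x_{-2}+\cdots$. The comparison then gives $x_0=0$, i.e.\ the attractor of $x$ vanishes. It does not give $x=x_{-1}$.

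\textbf{Why ``the gradings match'' fails.} Equal filtrations do not force equal gradings, and the splitting is not canonical. Take a single odd Jordan block of size $5$ in $\gl(3|2)$, namely $x:v_0\mapsto v_1\mapsto\cdots\mapsto v_4\mapsto 0$, with $f=x^2$. Let $h_0$ act on $v_j$ by $2-j$. Let $u\in\gl(3|2)_{\bar 0}$ act as $f$ on $V_{\bar 1}$ and by $0$ on $V_{\bar 0}$.
\begin{itemize}
\item Then $u\in\fc(f)$, so $h'=\exp(\ad_u)h_0=h_0+2u$ is again a Jacobson--Morozov element for $f$, and it induces the same filtration.
\item Yet $x$ has a nonzero component of $\ad_{h'}$-weight $-3$: it sends $v_0\mapsto -v_3$, since $v_1=(v_1+v_3)-v_3$ splits into $h'$-eigenvectors of weights $1$ and $-1$.
\end{itemize}
Your fallback has the same problem. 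Transporting a ``canonical'' grading from $Gr(V)\cong V$ rests on a false premise: splittings compatible with $x$ form a torsor under a unipotent group. Invoking Kostant on ``the algebraic hull'' also does not explain how the resulting $h\in\gl(V)$ would land in $\g$.

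\textbf{The missing idea: a conjugation step.} Keep your filtration argument, but use it only to get $x_0=0$. Then proceed as follows.
\begin{enumerate}
\item The weight $-2$ part of $\frac12[x,x]=f$ gives $\frac12[x_{-1},x_{-1}]=f$, and $[f,x_{-1}]=0$.
\item Hence $e,h,f,x_{-1},[e,x_{-1}]$ span a copy of $\osp(1|2)$, purely by the Jacobi identity. This is where your step 6 belongs, applied to $x_{-1}$ rather than $x$; no faithful-representation computation is needed.
\item Since $\g$ is a semisimple $\osp(1|2)$-module, $\g^{<0}\subset\Im\ad_{x_{-1}}$. So for the lowest term $x_{-n}$ you can pick $w\in\g^{1-n}_{\bar 0}$ with $[x_{-1},w]=x_{-n}$.
\item Conjugating by $\exp(\ad_w)$ removes $x_{-n}$ and leaves the components in degrees $-1,\dots,1-n$ unchanged, introducing only terms of lower degree. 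Descending induction over the finite grading gives $g\in G_{\bar 0}$ with $\Ad_g x=x_{-1}$.
\item Then $\Ad_g^{-1}$ of that $\osp(1|2)$ contains $x$.
\end{enumerate}
This is precisely the argument of \cref{lem:criterion_neatness_h_decomp}. With it inserted in place of ``the gradings match'', your outline, including the integration via $SL_2$ and Harish-Chandra pairs, becomes a complete proof.
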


We may further generalize the notions of a balanced or a self-commuting element to any odd element $x\in \g_{\bar 1}$: denoting by $s$ the semisimple part of $\frac{1}{2}[x,x]\in \g_{\bar 0}$, we say that $x$ is {\bf balanced} if and only if $x\rvert_{V^s}$ is a balanced odd nilpotent operator for any $V\in \Rep(G)$. We say that $x$ is {\bf homological} if and only if $ \frac{1}{2}[x,x]=s$. The properties of being self-commuting, homological, balanced and neat are clearly invariant under $G_{\bar 0}$-conjugation.
\begin{remark}
    Any self-commuting element is homological and any homological element is balanced.
\end{remark}

The following theorem, proved in a slightly stronger form (see \cref{thm:2-step_exist}), is one of the main results in this paper:
\begin{introthm}\label{introthrm:2-step}
    Let $G$ be a quasi-reductive supergroup, $\g:=\Lie(G)$ and let $x\in\g_{\bar 1}$. 
There exists a decomposition 
$x=x_{bal}+x_{neat}$, such that 
$x_{bal}, x_{neat} \in \g_{\bar 1}$, $x_{bal} $ is balanced, $x_{neat} $ is neat and $[x_{bal},x_{neat}]=0$. 
   
\end{introthm}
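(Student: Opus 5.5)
The plan is to reduce first to the $\ad$-nilpotent case and then argue by induction on $\dim\g$, peeling off neat pieces with the Jacobson--Morozov theorem for supergroups.

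\emph{Reduction to the $\ad$-nilpotent case.} Write $\frac12[x,x]=s+n$ for the Jordan decomposition. Since $[x,[x,x]]=0$ and $s$ is a polynomial in $\ad_{[x,x]}$, we have $[x,s]=0$, so $x\in\g^s:=\Ker\ad_s$. The centralizer $\g^s$ is again the Lie superalgebra of a quasi-reductive supergroup $G^s$ (the centralizer of a torus), and $s$ is central in $\g^s$. By definition, $x$ is balanced precisely when $x|_{V^s}$ is balanced for every $V$. Neatness of an element of $\g^s$ can be tested on $G^s$-modules, and every $G^s$-module is a subquotient of a restriction of a $G$-module. So it suffices to produce the decomposition inside $\g^s$. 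There I pass to $\g^s/\kk s$, or equivalently twist by the central element, to replace $x$ by an $\ad$-nilpotent element with $\frac12[x,x]=n$. I also need to check that a decomposition $x=x_{bal}+x_{neat}$ in $\g^s$ satisfying $[x_{bal},x_{bal}]\in 2s+\text{nilp}$ and $[x_{neat},x_{neat}]$ nilpotent lifts correctly. I expect this to be routine bookkeeping with $V^s$.

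\emph{The $\ad$-nilpotent case.} I argue by induction on $\dim\g$. If $x$ is balanced, take $x_{neat}=0$. Otherwise, there is a $V\in\Rep(G)$ on which $x$ has a Jordan block of odd size at least $3$. I then use the Deligne filtration of the adjoint representation: the semisimplification of $\g$ restricted to $\g^{(1|1)}$ carries an $OSp(1|2)$-action, and its grading element lifts (by a Jacobson--Morozov argument applied to $n$ inside $\g_{\bar 0}$) to a semisimple $h\in\g_{\bar 0}$ with $[h,n]=-2n$. Decompose $x=\sum_k x_k$ into $\ad_h$-eigencomponents. The candidate is $x_{neat}:=x_{-1}$, a component of the correct weight for the image of $X$ in an $\osp(1|2)$. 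I would check that this component is neat by completing it to an $\osp(1|2)$-triple: find $y\in\g_{\bar1}$ of $h$-weight $+1$ with $[y,x_{-1}]=2h$ and $[y,y]=e$. The Jacobson--Morozov theorem for supergroups then gives neatness. The remainder $x-x_{-1}$ should commute with $x_{-1}$ after possibly re-choosing $h$ inside the centralizer of $x$, in the spirit of Kostant's conjugacy of $\sl_2$-triples. Next, I apply induction to $x':=x-x_{-1}$ inside the centralizer $\g^{\osp}$ of this $\osp(1|2)$. This centralizer is quasi-reductive, since $OSp(1|2)$ is reductive in the super sense and its representations are semisimple, and it is strictly smaller than $\g$ unless the $\osp(1|2)$ is trivial. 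Induction gives $x'=x'_{bal}+x'_{neat}$, and I set $x_{neat}=x_{-1}+x'_{neat}$ and $x_{bal}=x'_{bal}$. The sum of commuting neat elements lying in commuting $\osp$'s should again be neat (via the diagonal $\osp(1|2)$), and $x_{bal}$ commutes with all of it.

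\emph{Main obstacle.} The hardest step is producing the splitting so that the remainder $x-x_{neat}$ actually lies in the centralizer of a full $\osp(1|2)$ containing $x_{neat}$, and then verifying that it is balanced there, i.e.\ that no odd Jordan block of size at least $3$ survives in any representation. My first attempt would be the $\gl(m|n)$ model, where the statement is the splitting of $\phi_x$ into odd-size and even-size Jordan blocks. To transport it to general $\g$, I would use a faithful representation $V$ and define $x_{neat}$ as the projection of $x$ to the odd-block part, taken $G_{\bar0}$-equivariantly. I would then show this projection lies in $\g$ by expressing it through $x$ and the canonical grading from the Deligne filtration, which is functorial and hence compatible with $\g\subset\End(V)$.
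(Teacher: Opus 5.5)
There is a genuine gap. The decisive step is choosing the splitting (an $\sl_2$-triple making $x_{-1}$ neat, with $x-x_{-1}$ centralizing an $\osp(1|2)$ through it), and each mechanism you propose for it fails.

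\textbf{An arbitrary triple does not work.} For an arbitrary Jacobson--Morozov triple $(e,h,f)$ of $f=\frac12[x,x]$, the component $x_{-1}$ need not be neat. Work in $\gl(V)$ with three summands:
$V_A$ with basis $v_0,\dots,v_3$ ($v_0,v_2$ even), $xv_i=v_{i+1}$, $xv_3=0$;
an even $w$ with $xw=0$;
and $V_C$ carrying one Jordan block of size $3$.
Take $h$ with weights $1,1,-1,-1$ on $v_0,\dots,v_3$, weight $0$ on $w$, and weights $1,0,-1$ on $V_C$. Then $x_0,x_{-2}$ live on $V_A$ and $x_{-1}=x|_{V_C}$.

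Now let $u$ be the even map with $u(v_0)=w$, $u(w)=v_2$, and zero elsewhere. It satisfies $[u,f]=0$ and $[h,u]=-u$. For the triple $\exp(\ad_u)(e,h,f)$, the weight $-1$ component of $x$ is conjugate to $x|_{V_C}+E$, where $E(w)=v_3$. This operator has a Jordan block of size $2$, so it is not neat.

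\textbf{Re-choosing $h$ via $C_{G_{\bar 0}}(x)$ cannot help.} For $g\in C_{G_{\bar 0}}(x)$, the $\Ad_g(h)$-components of $x$ are just $\Ad_g(x_i)$. So neither the neatness of $x_{-1}$ nor $[x_{-1},x-x_{-1}]$ changes; note its part $[x_{-1},x_{-2}]=-[x_0,x_{-3}]$ need not vanish. The real freedom is $C_{G_{\bar 0}}(f)$, which shifts $x_{-1}$ by $[x_0,u_{-1}]$ (\cref{lem:selfcompart}). Proving that some choice in $C_{G_{\bar 0}}(f)$ works \emph{is} the theorem.

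\textbf{The completion to $\osp(1|2)$ with the same Cartan element fails.} Take even the good triple above, where $x_{-1}=x|_{V_C}$. You cannot find $y$ completing $x_{-1}$ to an $\osp(1|2)$ with Cartan element $h$: for any odd $y$, the $V_A\to V_A$ block of $[y,x_{-1}]$ is zero, while that of $h$ is not. In general $f=[x_0,x_{-2}]+\frac12[x_{-1},x_{-1}]$, so the $\osp(1|2)$ containing $x_{neat}$ must have a different Cartan element. In the paper, $h$ is a diagonal sum of the Cartan element of $\osp_{x_{neat}}$ and those of the $\sl_2$'s of the balanced pieces.

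\textbf{The faithful-module fallback fails too.} The Deligne filtration is canonical, but a splitting of $V$ into odd-block and even-block parts is not. Worse, a faithful module need not see the neat part at all. In $\g=\osp(1|2)\times\gl(1|1)$, take $x=X+x_0$ with $0\neq X\in\osp(1|2)_{\bar 1}$ and $0\neq x_0\in\gl(1|1)_{\bar 1}$, $[x_0,x_0]=0$.
On the faithful module $\MM(1)\boxtimes\kk^{1|1}$, $x$ acts as $\mathtt{M}_2\otimes\mathtt{M}_1$. Its semisimplification is $S(\mathtt{M}_2)\otimes S(\mathtt{M}_1)=0$, so all its blocks are even and your projection gives $x_{neat}=0$.
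Yet $x$ is not balanced: it acts by a block of size $3$ on $\MM(1)\boxtimes\triv$. So balancedness, unlike neatness (\cref{thrm:JM}), cannot be read off a faithful module.

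\textbf{How the paper does it.} The argument is not uniform. The paper passes to a minimal Levi subalgebra where $x$ is distinguished. It uses the structure theorem of \cref{ssec:quasired} to write $x=x'+\sum_\alpha y_\alpha$, with $x'$ acting by odd derivations and $y_\alpha$ distinguished in the minimal ideals. Ideals moved by $x'$ are handled via Takiff superalgebras and $\mathfrak{pq}(n)$. The remaining ideals are handled by the case-by-case classification \cref{thrm:dist_is_neat_or_balanced}, with \cref{cor:dist_orbits_spe} for $\mathfrak{spe}(n)$. The $\sl_2$-triple is then the diagonal one.

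Your induction inside the centralizer of the $\osp(1|2)$ would be fine once such a base splitting exists, but producing it is exactly what is missing.
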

This decomposition is called a {\bf $2$-step decomposition of $x$}. Such a decomposition is not necessarily unique.

To better understand neat and balanced elements, we provide intrinsic criteria for an element $x\in \g_{\bar 1}$ to be neat or balanced. 

Let $x\in\g_{\bar 1}$ and let $\frac{1}{2}[x,x]=s+f$ be the Jordan decomposition of $\frac{1}{2}[x,x]$, where $s$ is the semisimple part and $f$ is the nilpotent part. 

Assume that $f\neq 0$. Fix an $\sl_2$-triple $(e,h,f)$ such that $[s, h]=0$ and let us write 
\begin{equation}\label{introeq:h_eigenvectors}
x=x_0+x_{-1}+\dots+x_{-k}
\end{equation} where $[h,x_i]=ix_i$, $[s,x_i]=0$ and $[f, x_i]=0$ for all $i$. The element $x_0$ is a homological element: $\frac{1}{2}[x_0,x_0]$ is the semisimple part in the Jordan decomposition of $\frac{1}{2}[x,x]\in \g_{\bar 0}$. We call $x_0$ an {\it attractor} of $x$. 

For a different choice of the Cartan element $h$, one obtains a conjugate attractor. We denote the set of attractors of $x$ by $att(x)$.
If $\frac{1}{2}[x,x]$ is semisimple (so $f=0$), we call $x_0:=x$ its own attractor.

We may now formulate the intrinsic criteria for an odd element to be neat or balanced (see \cref{lem:criterion_neatness_h_decomp}, \cref{ssec:strong_bal_implies_bal} and \cref{cor:decomp_for_neat_or_bal_elems}):
\begin{introprop}\label{introprop:neat_bal_criteria}
Let $x\in \g_{\bar 1}$ and let $\frac{1}{2}[x,x]=s+f$ be the Jordan decomposition as before. 
\begin{itemize}
     \item The element $x$ is neat iff $att(x)=\{0\}$. In that case, $x_{-1}$ is conjugate to $x$.
    \item The element $x$ is balanced iff it is {\it strongly balanced}: that is, either $x$ is homological, or there exists a non-trivial $\sl_2$-triple $(e,h,f)$ as above so that $x_{-1}=0$.
\end{itemize}
\end{introprop}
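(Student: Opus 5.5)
We prove the two bullets separately. Throughout, fix the setup of \eqref{introeq:h_eigenvectors}: $\frac12[x,x]=s+f$ with $f\neq 0$, an $\sl_2$-triple $(e,h,f)$ commuting with $s$, and the decomposition $x=\sum_{i\le 0}x_i$ into pieces with $[h,x_i]=ix_i$ and $[s,x_i]=[f,x_i]=0$. The whole argument is carried out inside the quasi-reductive subalgebra $\fl:=\g^s$, the centralizer of $s$. On $V^s$ the operator $x$ is nilpotent, and both neatness and balancedness are conditions on the Jordan type of $x|_{V^s}$. The basic tool is to compare $x$ with $x_0$ and with $x_{-1}$ via the one-parameter torus $t\mapsto t^{h}\in G_{\bar 0}$. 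We have
\[
t^{h}\cdot x=x_0+t^{-1}x_{-1}+\dots+t^{-k}x_{-k},
\]
so after rescaling by $t$ the elements $tx_{-1}+\dots$ degenerate to $x_{-1}$ as $t\to 0$. Meanwhile $x_0=\lim_{t\to\infty}t^h\cdot x$ lies in the closure of the orbit. The Jordan type of an odd nilpotent operator is semicontinuous in the dominance order, so passing to limits can only make the type smaller, and this lets us compare $x$ with $x_0$ and $x_{-1}$.

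\textbf{Neat criterion.} Suppose $x$ is neat. Then $s=0$, since a neat operator on $V^s$ together with the Jacobson--Morozov theorem gives an $\osp(1|2)$ containing $x$. Moreover $[x_0,x_0]$ is the $h$-weight-zero part of $[x,x]=2f$, which has weight $-2$, so $[x_0,x_0]=0$. To get $x_0=0$, embed $x=X$ in an $\osp(1|2)$ with $h'=[Y,X]/2$, and choose the $\sl_2$-triple to be the even part of this $\osp(1|2)$. In the resulting decomposition $x$ has pure $h$-weight $-1$, so $x_0=0$. Since attractors are all conjugate, $att(x)=\{0\}$.

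For the converse, assume $x_0=0$, so that $s=0$. Let $\q\subset\g$ be the sum of the $h$-eigenspaces of nonpositive weight. Then $x_{-1}$ satisfies $[x_{-1},x_{-1}]=2f$, because the weight $-2$ part of $[x,x]$ is exactly $[x_{-1},x_{-1}]$. One checks that $(e,h,f,x_{-1})$ generates an $\osp(1|2)$, using the $\sl_2$-theory of $\g_{\bar 1}$ to produce $y$ with $[h,y]=y$ and $[x_{-1},y]=h$. Hence $x_{-1}$ is neat. Next, $x=x_{-1}+(\text{terms of weight}\le -2)$, and the unipotent group $\exp(\q_{<0})$ can be used to conjugate away the lower terms. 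This is a standard graded ``normal form'' argument: at each step we solve $[z,x_{-1}]=x_{-j}-(\dots)$ with $z$ in $\g_{\bar 0}$ of weight $-(j-1)$. Surjectivity of $\ad x_{-1}$ onto the relevant weight spaces follows from the $\osp(1|2)$-module structure of $\g$ together with the fact that $x$ commutes with $f$. The conclusion is that $x$ is conjugate to $x_{-1}$, hence neat.

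\textbf{Balanced criterion.} We first show that strongly balanced implies balanced. The homological case is immediate, since then $x^2=s$ acts by zero on $V^s$. If $x_{-1}=0$, restrict to $V^s$, where $x_0$ acts with $x_0^2=0$. Decompose $V^s$ under the $\sl_2$ to see that $x=x_0+x_{-2}+\dots$ has only even-size Jordan blocks beyond those killed. Concretely, the degeneration $t^h$ shows that $\ker x/(\ker x\cap \Im x)$ equals its $\ker x_0/\Im x_0$-analogue with trivial $\osp$ action. For the converse, take $x$ balanced with $f\ne0$; we need a triple with $x_{-1}=0$. Apply the neat criterion to the component $x_{-1}$: if $x_{-1}\ne0$, then the $\osp(1|2)$ built from $x_{-1}$ inside $\fl^{x_0}$ would yield, in some $V$, an odd Jordan block of size $\geq 3$, contradicting balancedness. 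To realize this, one chooses the triple within the centralizer of $x_0$ so as to maximize the neat part.

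\textbf{Main obstacle.} The hardest step is this last one: producing the $\sl_2$-triple with $x_{-1}=0$ from balancedness. I would reduce it to \cref{introthrm:2-step} restricted to $\fl$, writing $x=x_{bal}+x_{neat}$. Balancedness of $x$ should force $x_{neat}=0$ by a Jordan-block count on the adjoint representation, and the decomposition is then read off from the construction of $x_{bal}$.
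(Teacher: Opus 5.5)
\textbf{The neat bullet is fine.} It is essentially the paper's argument: \cref{lem:criterion_neatness_h_decomp} shows that $(e,h,f,x_{-1})$ span an $\osp(1|2)$ and that $\g^{<0}\subset\Im\ad_{x_{-1}}$ lets you conjugate away the lower terms, and the converse follows from the super Jacobson--Morozov theorem. The balanced bullet, however, has genuine gaps in both directions.

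\textbf{Strongly balanced $\Rightarrow$ balanced.} The degeneration $t^h\cdot x\to x_0$ exists for \emph{every} $x$; a neat $x$, for instance, degenerates to $x_0=0$. Semicontinuity only gives $\operatorname{rk}x^j\ge\operatorname{rk}x_0^j$ on $M^s$. That inequality never uses $x_{-1}=0$ and cannot exclude odd Jordan blocks of size $\ge 3$. Likewise, ``decompose $V^s$ under $\sl_2$'' is not an argument by itself.

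The missing input is the identity $f=[x_0,x_{-2}]$, which is exactly what $x_{-1}=0$ gives you. With $\mathfrak k=\sl_x\times\kk[x_0]$, this identity rules out $\mathfrak k$-summands $V_k$ with $k>0$ on which $x_0$ acts trivially. Indeed, for a highest weight vector $v$ of such a summand, $f^kv=x_0f^{k-1}x_{-2}v$, and applying $e^k$ puts $v$ in $x_0M$.

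Hence every nontrivial summand is $V_k\otimes P$ with $P$ free over $\kk[x_0]$. A highest weight vector $v$ with $x_0v\neq0$ satisfies $x^{2k+1}v\neq0=x^{2k+2}v$, so it generates a block of size $2k+2$. The $\mathfrak k$-trivial part must then be corrected ($m\mapsto m-n$ with $xm=xn$) to produce blocks of size $1$, with directness checked on the associated graded. This is \cref{lem:strb}, and it is the real content of this direction.

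\textbf{Balanced $\Rightarrow$ strongly balanced.} Both mechanisms you propose are false: a nonzero $x_{-1}$, even a nonzero neat part of a $2$-step decomposition, does not contradict balancedness.

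Here is a counterexample. Let $\MM=\Pi\MM(1)$ and $P=\kk^{1|1}$, and take odd $\xi,\eta\in\gl(P)$ with $\xi^2=0$ and $[\xi,\eta]=1$. Set $x=X\otimes 1+1\otimes\xi\in\gl(\MM\otimes P)$.
\begin{itemize}
\item The triple coming from $\osp(1|2)\otimes 1$ gives $x_0=1\otimes\xi$ and $x_{-1}=X\otimes 1\neq 0$.
\item $x_{-1}$ is neat, and $\osp(1|2)\otimes 1$ commutes with $x_0$. So this decomposition has all the properties of \cref{thm:2-step_exist}, with $x_{neat}\neq 0$.
\item Yet $x$ is balanced. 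On $\MM\otimes P$ its Jordan blocks have sizes $4$ and $2$, and in general balancedness follows from \cref{prop:tensor_functors_2_step} together with $DS_\xi(P)=0$.
\end{itemize}
So no Jordan-block count can force $x_{neat}=0$.

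What balancedness actually gives is weaker. Apply \cref{prop:tensor_functors_2_step} to the adjoint module: the indecomposable $\osp_{x_{neat}}\times\kk[x_0]$-summand of $\g^s$ containing $\osp_{x_{neat}}$ has the form $\osp_{x_{neat}}\boxtimes M$. Triviality of the $OSp(1|2)$-action on $\Phi_x(\g)$ then forces $DS_{x_0}(M)=0$, i.e.\ $\osp_{x_{neat}}\subset\Im\ad_{x_0}$.

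You then need \cref{prop:x_1_can_be_eliminated}, which your proposal does not contain. By $\sl_x$-equivariance of $\ad_{x_0}$, choose $u\in\g^s_{\bar 0}$ with $[h,u]=-u$, $[f,u]=0$ and $[x_0,u]=x_{-1}$. Replace $h$ by $\exp(\ad_u)h$; in the new grading $x$ has no weight $-1$ component. In the example above, $u=\pm X\otimes\eta$ does this.
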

 The two-step decomposition shown in \cref{introthrm:2-step} interacts nicely with attractors: one can choose it so that $x, x_{bal}$ have a common attractor $x_0$ and so that $x_0$ commutes with the $\osp(1|2)$-type subalgebra of $\g$ corresponding to $x_{neat}$.
\subsection{Attractors of odd elements}

The attractors of an odd element $x$ play an important role: they lie in the closure of the orbit $G_{\bar 0}.x$ and convey important information about the element $x$ and its action on $G$-modules, as we show in \cref{introprop:neat_bal_criteria} and \cref{introthrm:functors}.

Furthermore, we prove the following statement (see \cref{thm:finattractor}): 
\begin{introthm}\label{introthm:finite_fibers}
Let $\g^{hom}$ denote the set of homological elements.
    The correspondence $$att:\g_{\bar 1}/G_{\bar 0}\; \longrightarrow \;\g^{hom}/G_{\bar 0}, \;\;\;\;\;\;G_{\bar 0}.x \; \longmapsto \InnaA{G_{\bar 0}.}att(x)$$ between $G_{\bar 0}$-orbits in $\g_{\bar 1}$ and their attractors is a surjective map with finite fibers.
\end{introthm}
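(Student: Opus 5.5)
Three things have to be shown: the map $att$ is well defined on orbits, it is surjective, and its fibers are finite. Well-definedness is already noted in the text, since a different choice of $\sl_2$-triple gives a conjugate attractor, and $G_{\bar 0}$-conjugating $x$ conjugates everything. Surjectivity is immediate: a homological element $y$ has $\frac12[y,y]$ semisimple, so $y$ is its own attractor.

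The work is in the finiteness of fibers. Fix a homological $x_0$ and put $s=\frac12[x_0,x_0]$. Let $x$ have $x_0$ as an attractor, with Jordan decomposition $\frac12[x,x]=s+f$. Then $f$ lies in the centralizer $\g_{\bar 0}^s$, which is reductive, and the triple $(e,h,f)$ can be taken inside $\g_{\bar 0}^s$. Moreover $x-x_0=x_{-1}+\dots+x_{-k}$ lies in $\g^s_{\bar 1}$ and is annihilated by $f$.

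The plan is to reduce to the centralizer $\mathfrak{c}=\g^s\cap\g^{x_0}$, or rather to the quasi-reductive subalgebra $\g^s$. There $x_0$ is central-like: $[x_0,x_0]=2s$ is central in $\g^s$. I then want to pass to $\mathfrak{k}=\g^{x_0}\cap\g^s$, or better to the quotient $\g^{x_0}/[x_0,\g]$ in the spirit of Duflo--Serganova. We have $[x_0,x_{-i}]=0$ because $[x,x]$ has no $h$-weight $-1$ component beyond the relevant terms. Indeed, expanding $\frac12[x,x]=s+f$ by $h$-weights gives $[x_0,x_{-1}]=0$, $[x_0,x_{-2}]+\frac12[x_{-1},x_{-1}]=f$, and so on. So $x':=x_{-1}+x_{-2}+\dots$ is, modulo $\Im\ad_{x_0}$, an element of the Duflo--Serganova algebra $\g_{x_0}$, which is again quasi-reductive. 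Its image is an $\ad$-nilpotent element there, and the neat criterion of \cref{introprop:neat_bal_criteria} says $x'$ has attractor $0$, i.e.\ it is neat in a suitable sense.

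Once that reduction is in place, finiteness would follow from two inputs. The first is that in a quasi-reductive Lie superalgebra there are only finitely many $G_{\bar 0}$-orbits of neat elements. These correspond via the Jacobson--Morozov theorem to conjugacy classes of homomorphisms $OSp(1|2)\to G$, and such classes are finite by the rigidity of reductive-type subgroups: $OSp(1|2)$ has semisimple representation theory, so its homomorphisms are rigid and there are finitely many up to conjugacy, as with $SL_2$. The second is a lifting argument. Given the orbit of the neat part, the pair $(x_0,x')$ is determined up to the $G^{x_0}_{\bar 0}$-action, and $\g^s$-reductivity lets the ambiguity coming from $\Im\ad_{x_0}$ be absorbed by conjugation by $\exp$ of even elements $[x_0,\g_{\bar 1}]$, via a graded Hensel/Newton-type induction on $h$-weight.

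I expect the main obstacle to be this last step: controlling the difference between $x'$ and its class modulo $[x_0,\g]$, showing that the fibre over a single neat orbit in the reduced algebra is a single orbit or finitely many. I would try an inductive correction weight by weight, using that $\ad_{x_0}$ squares to $\ad_s$, which acts invertibly off the centralizer.
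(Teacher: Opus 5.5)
Your well-definedness and surjectivity arguments are fine, but the finiteness argument has a genuine gap, and it starts from a false step. The $h$-weight expansion of $\frac12[x,x]=s+f$ gives only $[x_0,x_{-1}]=0$. The weight $-2$ part reads $[x_0,x_{-2}]+\frac12[x_{-1},x_{-1}]=f$. In the strongly balanced case ($x_{-1}=0$) this says $[x_0,x_{-2}]=f\neq 0$. So $x'=x-x_0$ does not lie in $\g^{x_0}$ and has no image in $DS_{x_0}(\g)$. Only $x_{-1}$ does, and its image $\overline{x}_{-1}$ is neat or zero (\cref{lem:x_minus_1_is_neat_in_DS}, \cref{rem:x_minus_1_is_zero_in_DS}).

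More seriously, even after this correction the invariant ``orbit of $\overline{x}_{-1}$'' loses too much. By \cref{prop:x_1_can_be_eliminated}, the elements with $\overline{x}_{-1}=0$ are exactly the strongly balanced ones with attractor $x_0$, and there are typically several such orbits. For example, in $\gl(2|2)$ let $x_0$ map $\kk^{2|0}$ isomorphically onto $\kk^{0|2}$. Then $DS_{x_0}(\g)=0$. Both $x_0$ and the element acting by a single Jordan block of size $4$ with even cyclic vector have attractor $x_0$, and they are not conjugate. So a single fibre over a neat orbit is not one orbit, and you give no reason why it should be finite.

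Your weight-by-weight correction by $\exp$ of even elements can at best show that $x$ is determined up to conjugacy once $x_0$ and $f$ are fixed; that is \cref{lem:balancedfin}. Nothing in your proposal bounds the possible nilpotent parts $f$. The paper gets this bound from three facts:
\begin{enumerate}
\item $f\in[x_0,\g]$, so the $\sl_2$-triple lies in the non-reductive Lie algebra $[x_0,\g]_{\bar 0}$;
\item $\sl_2$-embeddings into any Lie algebra are conjugate iff they are conjugate modulo the radical (\cref{lem:sl2});
\item together with \cref{lem:balancedfin}, these give \cref{cor:balanced}.
\end{enumerate}

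For general $x$ you also lack a way to reassemble $x$ from $x_0$ and the class of $\overline{x}_{-1}$. The components $x_{-1},x_{-2},\dots$ and $f$ are coupled through the weight $-2$ equation, so the ambiguity of $x_{-1}$ modulo $\Im\ad_{x_0}$ propagates. The paper avoids this lifting problem altogether with the $2$-step decomposition of \cref{thm:2-step_exist}, which rests on the reduction to distinguished elements and the case-by-case classification. In that decomposition $\osp_{x_{neat}}$ commutes with $x_{bal}$ and $x_0\in att(x)\cap att(x_{bal})$. Then $x$ is determined by two pieces of data:
\begin{enumerate}
\item one of finitely many neat orbits (\cref{cor:fin_many_neat_orb});
\item a strongly balanced element with attractor $x_0$ in the quasi-reductive centralizer of $\osp_{x_{neat}}$ (\cref{prop:centralizer_reductive}), to which \cref{cor:balanced} applies.
\end{enumerate}
The missing ideas are therefore such a splitting of the neat part from the balanced part, and the finiteness of $\sl_2$-classes inside $[x_0,\g]$.
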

\InnaA{
\begin{introcorollary}
Let $\g^{\mathbf{sc}}:=\{x\in \g_{\bar 1}: [x,x]=0\} $ and $ \mathcal{N}^{\ad}(\g_{\bar 1})$ denote the sets of self-commuting (respectively, $\ad$-nilpotent) odd elements in a quasi-reductive Lie superalgebra $\g$. Then $\mathcal{N}^{\ad}(\g_{\bar 1})$ has finitely many $G_{\bar 0}$-orbits if and only if $\g^{\mathbf{sc}}$ has finitely many $G_{\bar 0}$-orbits.
\end{introcorollary}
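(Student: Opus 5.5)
The corollary should follow by restricting \cref{introthm:finite_fibers} to $\ad$-nilpotent elements. The first step is to identify the attractors of an $\ad$-nilpotent element. If $x\in\mathcal{N}^{\ad}(\g_{\bar 1})$, then $s=0$ in the Jordan decomposition $\frac12[x,x]=s+f$. Every attractor $x_0\in att(x)$ is homological with $\frac12[x_0,x_0]=s=0$, so $att(x)\subset\g^{\mathbf{sc}}$. Conversely, a self-commuting element is homological and $\ad$-nilpotent. Hence $\g^{\mathbf{sc}}=\g^{hom}\cap\mathcal{N}^{\ad}(\g_{\bar 1})$, and the set $\g^{\mathbf{sc}}$ is $G_{\bar 0}$-stable.

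The second step is to check that the map $att$ sends orbits in $\mathcal{N}^{\ad}(\g_{\bar 1})$ onto orbits in $\g^{\mathbf{sc}}$, with fiber over $G_{\bar 0}.y$ equal to all of $att^{-1}(G_{\bar 0}.y)$. Surjectivity is immediate, since a self-commuting $y$ is its own attractor: $\frac12[y,y]=0$ is semisimple, so $att(y)=\{y\}$, and $y$ is $\ad$-nilpotent. For the fibers, suppose $x\in\g_{\bar 1}$ has an attractor conjugate to $y\in\g^{\mathbf{sc}}$. The semisimple part of $\frac12[x,x]$ equals $\frac12[x_0,x_0]$, which is $0$ because $G_{\bar 0}$-conjugation preserves self-commutation. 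So $x$ is $\ad$-nilpotent. Therefore the restriction $att:\mathcal{N}^{\ad}(\g_{\bar 1})/G_{\bar 0}\to\g^{\mathbf{sc}}/G_{\bar 0}$ is surjective, and its fibers are exactly the fibers of the map in \cref{introthm:finite_fibers}, so they are finite.

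The last step is counting. A surjective map with finite fibers between two sets has infinite domain exactly when it has infinite target. Hence $\mathcal{N}^{\ad}(\g_{\bar 1})$ has finitely many $G_{\bar 0}$-orbits if and only if $\g^{\mathbf{sc}}$ does. The only point needing care is the identity $\frac12[x_0,x_0]=s$ for every choice of attractor, including the degenerate case $f=0$, where $x_0=x$. The paper states this identity with the definition of attractors, so I do not expect a real obstacle.
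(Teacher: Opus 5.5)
Your proposal is correct and follows the paper's intended route: the paper states this corollary directly after \cref{thm:finattractor} and gives no separate proof, and your argument supplies the details it leaves implicit. Attractors of $\ad$-nilpotent elements are self-commuting, a self-commuting element is its own attractor, and any element with a self-commuting attractor is $\ad$-nilpotent, so restricting the attractor map to $\mathcal{N}^{\ad}(\g_{\bar 1})/G_{\bar 0}$ gives a surjection onto $\g^{\mathbf{sc}}/G_{\bar 0}$ whose fibers are finite by that theorem (your normalization $\frac12[x_0,x_0]=s$ is the correct one; \cref{lem:square_attractor} drops the factor $\frac12$, which is harmless here since $s=0$).
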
}

As one can see from \cref{introprop:neat_bal_criteria}, the fiber $att^{-1}(\{0\})$ consists of neat orbits in $\g_{\bar 1}$. It was proved in \cref{cor:fin_many_neat_orb} that every quasi-reductive Lie superalgebra has finitely many neat orbits and \cref{introthm:finite_fibers} is a direct generalization of that statement.

Unlike nilpotent orbits in finite-dimensional Lie algebras, there exist Lie superalgebras such as $\sl(n|n)$ which have infinitely many odd $\ad$-nilpotent orbits. \cref{introthm:finite_fibers} allows one to reduce problems concerning the number and the structure of $G_{\bar 0}$-orbits in $\g_{\bar 1}$ to the study of the set $\g^{hom}$, which has already been studied extensively (see \cite{duflo2005associated, gorelik2022duflo}).

\subsection{Distinguished odd elements in quasi-reductive Lie superalgebras}

Let $G$ be a quasi-reductive supergroup and $\g=\Lie(G)$ be its Lie superalgebra. We may define Levi subsuperalgebras in $\g$ in the same way one defines Levi subalgebras in reductive Lie algebras: as centralizers of (purely even) toral subalgebras of $\g_{\bar 0}$.

As in the classical theory of nilpotent orbits, we say that 
$x$ is {\bf distinguished} iff $x $ is not contained in any proper Levi sub-superalgebra of $\g$.

We classify all the distinguished orbits in simple quasi-reductive Lie superalgebras and in Takiff Lie superalgebras (this is done in \cref{sec:orbits_in_classical}). Using this classification, we prove the following dichotomy (see \cref{thrm:dist_is_neat_or_balanced}; also used to prove \cref{introthrm:2-step}):
\begin{introthm}
    Let $\g$ be a simple quasi-reductive Lie superalgebra or a Takiff Lie superalgebra, such that $\g\not \cong \mathfrak{spe}(n)$ for any $n$. Let $x\in \g_{\bar 1}$ be a distinguished odd element. Then $x$ is either strongly balanced or neat.
\end{introthm}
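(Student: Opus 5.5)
The proof will be a case-by-case check that rests on the classification of distinguished odd orbits in \cref{sec:orbits_in_classical}. The first step is to reduce to elements whose square is nilpotent. Let $x\in\g_{\bar 1}$ be distinguished and write $\frac12[x,x]=s+f$. The element $s$ is semisimple and commutes with $x$. Hence $x$ lies in the centralizer of the torus generated by $s$, which is a Levi subsuperalgebra. Distinguishedness forces this Levi to be all of $\g$, so $s$ is central. For the simple algebras in question $s=0$ up to the centre. For Takiff superalgebras I will track the central part separately. In either case $x$ is $\ad$-nilpotent modulo central elements, and the criteria of \cref{introprop:neat_bal_criteria} apply to the nilpotent part.

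Next I fix an $\sl_2$-triple $(e,h,f)$ and use the grading $x=x_0+x_{-1}+\dots+x_{-k}$. By \cref{introprop:neat_bal_criteria} it suffices to show one of two things. Either the attractor is zero ($x_0=0$ for every admissible choice), which gives neatness. Or there is a choice of triple with $x_{-1}=0$, which gives strong balancedness. If $f=0$ then $x$ is homological and hence balanced, so I assume $f\neq 0$. A general structural observation helps here. The attractor $x_0$ commutes with $f$ and $h$ and satisfies $[x_0,x_0]\in\kk s$, so $[x_0,x_0]$ is central and $x_0$ is homological. If $x_0\neq0$ and $x_{-1}\neq 0$, I would try to exhibit a nonzero torus centralizing $x$. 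This contradicts distinguishedness, but I do not expect such a torus to exist in general. That is why the classification is needed.

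The main work is therefore family by family: $\gl(m|n)$ and $\sl(m|n)$, $\osp(m|2n)$, the exceptional algebras $D(2,1;\alpha)$, $G(3)$, $F(4)$, then $\mathfrak{pe}(n)$ and $\mathfrak{q}(n)$, and finally the Takiff algebras. Here I use the explicit normal forms of distinguished orbits from \cref{sec:orbits_in_classical}.

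For $\gl(m|n)$ the element $x$ is an odd nilpotent operator $\phi_x$ on $\kk^{m|n}$. Being distinguished means $\phi_x$ cannot be split by a nontrivial even torus, so $\kk^{m|n}$ is indecomposable as a module over $\phi_x$ together with its centralizer torus. This leaves either a single Jordan block, or several blocks all of odd size, or several blocks all of even size. The example in the introduction then gives that $\phi_x$ is neat or balanced, and hence so is $x$. The key point is that mixing odd and even blocks always admits a torus scaling the two groups independently, which commutes with $x$.

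For $\osp$ the same block argument works with the orthosymplectic form. Here paired blocks are required, and I check that no distinguished configuration mixes parities. For the exceptional algebras and for $\mathfrak{q}(n)$ and $\mathfrak{pe}(n)$, I would compute the grading explicitly on the finitely many normal forms and read off either $x_{-1}=0$ or $att(x)=0$.

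I expect the main obstacle to be the Takiff case. There the torus argument is weaker, because the Takiff extension adds nilpotent directions that a torus cannot separate. I would handle it by reducing to the base algebra: decompose $x=x'+\epsilon x''$, show that distinguishedness of $x$ implies distinguishedness of $x'$, and then lift the neat or balanced structure. Neatness lifts through Jacobson--Morozov applied to $x'$, using that $x''$ has nonpositive $h$-weights. Balancedness lifts by checking that the $\sl_2$-triple for $x'$ still kills the weight $-1$ component.
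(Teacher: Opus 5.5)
Your skeleton (show $s$ is central, hence $s=0$ because the centres are trivial, then go through the classification family by family) is the paper's. However, several of the cases you sketch would not go through as written, and the Takiff case, which you call the main obstacle, rests on a misreading. In this paper the Takiff superalgebra is $\s\otimes\kk[\xi]\oplus\kk\partial$ with $\s$ a \emph{purely even} simple Lie algebra. So there is no odd ``base'' element $x'$ whose distinguishedness or neatness could be lifted. Every odd element has the form $x=\lambda\partial+\xi u$ with $u\in\s$. Since $[\partial,\s]=0$ and $[v,\xi u]=\xi[v,u]$, we get $\fc_{\g_{\bar 0}}(x)=\s^u$, so $x$ is distinguished iff $u$ is a distinguished nilpotent element of $\s$ (\cref{lem:Takiff}). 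Then there are two cases. If $\lambda=0$, then $[x,x]=0$. Otherwise $f=\lambda u$, and any $\sl_2$-triple $(e,h,f)$ gives $x=x_0+x_{-2}$ with $x_0=\lambda\partial$ and $x_{-2}=\xi u$. Hence every distinguished element is strongly balanced and none is neat, so the Jacobson--Morozov lifting you propose has nothing to act on.

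Two simple cases are not covered by what you describe.

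\emph{The case $\p\sl(n|n)$, which your list omits.} Odd elements lift uniquely to $\sl(n|n)$. By \cref{lem:pg_distinguished_orbits}, a distinguished $\bar x$ may lift to $x$ with $\frac12[x,x]=\lambda\id+f$ and $\lambda\neq0$. Such an $x$ is invertible on $\kk^{n|n}$, so your Jordan-block analysis of a nilpotent operator $\phi_x$ says nothing about it. The missing step is the normal form $B=\mu 1_n$, $C=\nu(1_n+J)$ of \cref{lem:sl-dist}. This makes $f$ principal and gives $x=x_0+x_{-2}$ for the principal $h$.

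\emph{The exceptional algebras $D(2,1;\alpha)$, $G(3)$, $F(4)$.} Your plan presupposes a list of normal forms of distinguished orbits, which is not available. The missing idea is \cref{lem-defect1}:
\begin{enumerate}
\item A nonzero attractor $x_0$ is self-commuting, hence conjugate to an isotropic root vector $x_\alpha$.
\item Since $DS_{x_\alpha}(\g)$ is purely even, $\bar x_{-1}=0$. This forces $\bar f=0$ and puts the whole triple $(e,h,f)$ into $[x_\alpha,\g]$.
\item But the even part of $[x_\alpha,\g]$ is solvable (\cref{lem:auxdef1}), a contradiction.
\end{enumerate}
So $x_0=0$, and $x$ is neat, or $f=0$, and $x$ is self-commuting.

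\emph{Smaller corrections.}
\begin{itemize}
\item In $\gl(m|n)$ and in $\sl(m|n)$ with $m\neq n$, distinguished forces a single Jordan block, since any two blocks can be scaled by a non-central torus. Your ``several blocks of equal size-parity'' never occurs.
\item In $\osp(m|2n)$, distinguished \emph{forbids} paired blocks $W'\oplus W''$, because of the torus $\lambda\id_{W'}\oplus-\lambda\id_{W''}$. Self-dual blocks have odd length by \cref{lem:form_B_indec_U}, so all distinguished elements there are neat.
\item $\gl$, $\mathfrak{pe}$ and $\q$ are not simple. You need the quotient cases $\p\sl(n|n)$ and $\mathfrak{psq}(n)$, obtained using the fact that neat and strongly balanced elements stay so under surjections.
\end{itemize}
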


\subsection{The categorical picture}
Finally, let us describe the categorical side of the above definitions.

Let us consider the functor $Gr^{ev}: \Rep(\G^{(1|1)}) \to \Rep(OSp(1|2))$. This functor is a symmetric monoidal functor, though not coming from a homomorphism of supergroups. In fact, it is a semisimplification functor (see \cite{entova2022jacobson, etingof2021semisimplification}), annihilating the indecomposable $\G^{(1|1)}$-modules of zero superdimension.

Let $G$ be a quasi-reductive supergroup, $\g:=\Lie(G)$ and $x\in \g_{\bar 1}$, with $s$ denoting the semisimple part in the Jordan decomposition of $\frac{1}{2}[x,x]$.

We denote by $\Rep(G)$ the category of finite-dimensional super-representations of $G$ and similarly for $\Rep(\G^{(1|1)}), \Rep(OSp(1|2)).$ \InnaA{Given any $M\in \Rep(G)$, we may consider the subspace of $s$-invariants $M^s$, on which $x$ acts nilpotently. This endows $M^s$ with action of $\G^{(1|1)}$ corresponding to $x$, and we obtain a functor $\Rep(G)\to\Rep(\G^{(1|1)})$.}

Consider the categorical picture corresponding to an odd element $x\in \g_{\bar 1}$. We denote by $(-)\downarrow$ the corresponding restriction functors, \InnaA{and by $(-)^s$ the functor of $s$-invariants:}
 $$\xymatrix{&\Rep(G) \ar[rr]^{(-)^s\downarrow_{\G^{(1|1)}}} \ar[rrd]_{\;\;\;\;  Gr^{ev}\circ (-)^s\downarrow_{\G^{(1|1)}} \;\;\;\; } &{} &\Rep(\G^{(1|1)})\ar@/_0.5pc/[d]_{Gr^{ev}} \\ &{} &{} &\Rep(OSp(1|2))  \ar@/_0.5pc/[u]_{(-)\downarrow^{OSp(1|2)}_{\G^{(1|1)}}}  }$$

%Note that $ Gr^{ev}\circ (-)\downarrow^{OSp(1|2)}_{\G^{(1|1)}}\cong \id$. 

Consider the symmetric monoidal functor $\Phi_x:=Gr^{ev}\circ (-)^s\downarrow_{\G^{(1|1)}}$ from $\Rep(G)$ to $\Rep(OSp(1|2))$. 

When $x$ is neat, the Jacobson-Morozov theorem for supergroups implies that this functor is isomorphic to a restriction functor with respect to a homomorphism $OSp(1|2)\to G$ (see \cite{entova2022jacobson}); in particular, it is a faithful and exact symmetric monoidal functor. On the other hand, if $x$ is balanced then the action of $OSp(1|2)$ on $\Phi_x(M)$ is trivial for any $M\in \Rep(G)$ and the functor $\Phi_X$ is not faithful if $x\neq 0$.

An important special case is when $x$ is a homological element, i.e. $s:=\frac{1}{2}[x,x] \in \g_{\bar 0}$ is semisimple. In that case, the functor $$\Phi_x(-)=\frac{\Ker(x)\cap \Ker(s)}{\Im(x)\cap \Ker(s)}$$ is called the {\it Duflo-Serganova functor} $DS_x$. Such functors have been defined in \cite{duflo2005associated} for self-commuting elements $x$ (see \cite{gorelik2022duflo} for a detailed survey and more recent results). The Duflo-Serganova functors have been extensively studied and used to construct support theory for $G$-modules. 

Our main categorical result states that the functor $\Phi_x$ for any odd element $x\in \g_{\bar 1}$ can be described via the Duflo-Serganova functor $DS_{x_0}$ associated with an attractor $x_0$ of $x$.

The stronger version of \cref{introthrm:2-step}, proved in \cref{thm:2-step_exist}, allows us to choose an attractor $x_0$ and a $2$-step decomposition $x=x_{bal}+x_{neat}$ so that $x_{neat}$ lies in an $\osp(1|2)$-type Lie sub-superalgebra which commutes with $x_0$ (such a Lie sub-superalgebra exists by the super Jacobson-Morozov theorem). Let us denote the group homomorphism corresponding to this sub-superalgebra by 
$\varphi_{x_{neat}}:OSp(1|2)\to G$. 

Denote by $(-)\downarrow_{\varphi_{x_{neat}}}: \Rep(G)\to \Rep(OSp(1|2)\times G')$ the restriction functor to the sub-supergroup $OSp(1|2)\times G'$, where $G'$ is the quasi-reductive subgroup of $G$ centralizing the subgroup $\Im(\varphi_{x_{neat}})$.

The following result is a corollary of \cref{prop:tensor_functors_2_step}, \cref{cor:criterion_balanced}.

\begin{introthm}\label{introthrm:functors}
Let $x\in \g_{\bar 1}$. Choose $x_0\in att(x)$ and $\varphi_{x_{neat}} $ as above.
Then we have an isomorphism of functors
making the following diagram commutative:
$$ \xymatrix{ &\Rep(G) \ar[rrrd]_{\Phi_x} \ar[rrr]^-{(-)\downarrow_{\varphi_{x_{neat}}}} &&&{\Rep(OSp(1|2)\times G')} \ar[d]^{\id\boxtimes DS_{x_{bal}}} \\
& & &&\Rep(OSp(1|2))}.$$
In particular, if $x$ is balanced then there exists a natural isomorphism $\Phi_x \xrightarrow{\sim} DS_{x_0}.$
\end{introthm}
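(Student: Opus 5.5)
We will deduce the isomorphism from the 2-step decomposition of \cref{thm:2-step_exist}, in its strong form. There $x=x_{bal}+x_{neat}$ with $[x_{bal},x_{neat}]=0$, and $x_{neat}$ lies in an $\osp(1|2)$-subalgebra $\mathfrak{a}=\Lie(\Im\varphi_{x_{neat}})$ that commutes with $x_{bal}$ and with the common attractor $x_0$. Hence $x_{bal},x_0\in\Lie(G')$.

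\textbf{Step 1: the semisimple part.} Writing $\frac12[x,x]=\frac12[x_{bal},x_{bal}]+\frac12[x_{neat},x_{neat}]$, the second summand is a nilpotent element $f'\in\mathfrak a_{\bar 0}$. The first summand has Jordan decomposition $s+f''$ in $\Lie(G')_{\bar 0}$, and it commutes with $\mathfrak a$. So $s+(f''+f')$ is the Jordan decomposition of $\frac12[x,x]$, and $s$ is also the semisimple part for $x_{bal}$.

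\textbf{Step 2: restriction.} Restrict $M\in\Rep(G)$ to $OSp(1|2)\times G'$. Since $s\in\Lie(G')$ commutes with $OSp(1|2)$, the space $M^s$ is an $OSp(1|2)\times G'$-submodule. As an $OSp(1|2)\times\G^{(1|1)}$-module (the second factor generated by $x_{bal}$) it decomposes as $M^s=\bigoplus_j V_j\boxtimes W_j$ with $V_j\in\Rep(OSp(1|2))$ simple. The odd operator $x$ acts on $M^s$ as $X\otimes 1+1\otimes x_{bal}$, with the usual sign rule. Each $V_j$ restricted to $X$ consists of odd Jordan blocks only, since $x_{neat}$ is neat. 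Each $W_j$, restricted to $x_{bal}$, has only even blocks plus trivial ones, since $x_{bal}$ is balanced on $s$-invariants.

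\textbf{Step 3: monoidality of $Gr^{ev}$.} $Gr^{ev}$ is the semisimplification functor, so it is symmetric monoidal on $\Rep(\G^{(1|1)})$. The action of $x$ on $V_j\boxtimes W_j$ is the tensor product of the $\G^{(1|1)}$-modules $V_j\downarrow$ and $W_j\downarrow$. Therefore
\[
Gr^{ev}(V_j\otimes W_j)\cong Gr^{ev}(V_j)\otimes Gr^{ev}(W_j)\cong V_j\otimes DS_{x_{bal}}\text{-part of }W_j .
\]
Here the balanced summand $W_j$ has trivial $OSp(1|2)$ action on its $Gr^{ev}$, and $Gr^{ev}(W_j)=\Phi_{x_{bal}}(W_j)$. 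To identify $\Phi_{x_{bal}}$ with $DS$, replace $x_{bal}$ by its attractor. By \cref{cor:criterion_balanced} (the strongly balanced criterion), $\Phi_{x_{bal}}\cong DS_{x_0}$ naturally. This is also the ``in particular'' claim, obtained by taking $x_{neat}=0$.

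\textbf{Naturality and the main obstacle.} Naturality in $M$ must be checked. The decomposition in Step 2 is functorial via isotypic components, and the isomorphism $Gr^{ev}(A\otimes B)\cong Gr^{ev}A\otimes Gr^{ev}B$ is natural. The $OSp(1|2)$-structure on the left is the one defined via the Deligne filtration, which must be matched with the tensor product structure. This matching is the main obstacle: we must show that the monoidal structure of semisimplification respects the $OSp(1|2)$-actions. We would do this via \cref{prop:tensor_functors_2_step}, or by directly verifying that the Deligne filtration of $X\otimes1+1\otimes x_{bal}$ is the convolution of the two filtrations.
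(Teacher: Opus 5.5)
Your route is the paper's. The theorem is deduced there from \cref{prop:tensor_functors_2_step} and \cref{cor:criterion_balanced}, and \cref{prop:tensor_functors_2_step} is proved by restricting to $OSp(1|2)\times G'$, writing $x$ as $X\otimes 1+1\otimes x_{bal}$, and commuting semisimplification past the functor $T$. So falling back on that proposition, as you suggest at the end, just reproduces the paper.

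The ``main obstacle'' you flag is not one, and your own Step 3 already removes it. The monoidal constraint $S(A)\otimes S(B)\cong S(A\otimes B)$ is an isomorphism \emph{in} $\Rep(OSp(1|2))$. Together with $S\circ(-)\downarrow_X\cong\id$, it gives $S(V\downarrow_X\otimes W)\cong V\otimes S(W)$ as $OSp(1|2)$-modules, naturally in $V$ and $W$. The canonical isotypic decomposition $\bigoplus_k \MM(2k)\otimes\Hom^{\bullet}_{OSp(1|2)}(\MM(2k),-)$ of $OSp(1|2)\times\G^{(1|1)}$-modules then upgrades this to a natural isomorphism $S\circ T\cong\overline{T}\circ(\id\boxtimes S)$. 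That is precisely \cref{lem:aux_ss_functors}. The paper proves it differently: it shows $T$ preserves negligible morphisms (via \cref{lem:negl_Deligne_filt}), so $S\circ T$ factors through $\id\boxtimes S$, and then identifies the induced functor with $\overline{T}$. Your version is more direct and needs no comparison of Deligne filtrations. Your Step 1 (the semisimple part of $\frac12[x_{bal},x_{bal}]$ is $s$) is left implicit in the paper, and it is what makes the $s$-invariants on the two sides match.

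The genuine gap is in the ``in particular''.
\begin{itemize}
\item For the decomposition of \cref{thm:2-step_exist}, $x$ balanced does not give $x_{neat}=0$. \Cref{cor:decomp_for_neat_or_bal_elems} only characterizes balancedness by $x_{neat}\in\Im\ad_{x_0}$.
\item If you instead use the trivial decomposition $x=0+x$, the first part is vacuous. You must then apply \cref{cor:criterion_balanced} to $x$ itself, which requires $x$ to be \emph{strongly} balanced; you only know it is balanced.
\end{itemize}
That implication is \cref{cor:decomp_for_neat_or_bal_elems}, a nontrivial result whose proof itself uses \cref{prop:tensor_functors_2_step}. Either cite it, or avoid it as follows. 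Keep the decomposition of \cref{thm:2-step_exist}, in which $x_{bal}$ is strongly balanced by construction. You need this anyway to apply \cref{cor:criterion_balanced} to $x_{bal}$, since that theorem's statement only gives $x_{bal}\in\g_{bal}$. The first part then yields $\mathrm{Forg}\circ\Phi_x\cong\Phi_{x_{bal}}\circ\mathrm{Res}^G_{G'}\cong DS_{x_0}$, as in \cref{cor:Phi_x_is_DS_att}. When $x$ is balanced, $OSp(1|2)$ acts trivially on every $\Phi_x(M)$, so nothing is lost by forgetting the action.

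A minor slip: $M^s$ is in general not a $G'$-submodule, since $G'$ need not centralize $s$. It is stable under $OSp(1|2)$ and under the $\G^{(1|1)}$ generated by $x_{bal}$, which is all you actually use.
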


Intuitively, this theorem means that the functor $\Phi_x$ behaves essentially like the homological functor $DS_{x_0}=\Phi_{x_0}$.

\subsection{Structure of the paper}
We give the general preliminaries in \cref{sec:prelim} and give the required background on neat elements and the super Jacobson-Morozov theorem in \cref{sec:neat_elems}. We then define attractors of an odd element in \cref{sec:attractor} and study their properties, proving the first part of \cref{introprop:neat_bal_criteria}. In \cref{sec:balanced}, we define balanced and strongly balanced elements, explain why the latter implies the former and explain why the correspondence between balanced orbits and their attractors has finite fibers (as a preparation for \cref{introthm:finite_fibers}). In \cref{sec:orbits_in_classical}, we classify balanced orbits in simple quasi-reductive Lie superalgebras, stating explicitly which are neat and which are balanced. Finally, we use this classification to prove Theorems \ref{introthrm:2-step}, \ref{introthm:finite_fibers}, \ref{introthrm:functors} in \cref{sec:2step}. Additionally, in \cref{app:detecting_neat_and_bal} we discuss how the properties of being neat or balanced are inherited in Levi sub-superalgebras. 
\subsection{Acknowledgments}
The authors thank A. Elashvili and P. Etingof for stimulating discussions. I.E.-A. was supported by the Israel Science Foundation (grant no. 1362/23). V.S. was supported by Simons Foundation grant 346300.

\section{Preliminaries}\label{sec:prelim}

Our base field will be an algebraically closed field $\kk$ with $char(\kk) = 0$.

All our categories will be $\kk$-linear rigid symmetric monoidal categories, with the bifunctor $-\otimes-$ being bilinear. All the functors will be symmetric monoidal (we will write SM for short) and $\kk$-linear.

\subsection{Semisimplification}\label{ssec:symmetric monoidal_cat}

\begin{definition}
    The {\it semisimplification} of a rigid symmetric monoidal $\kk$-linear category $\mathcal{U}$ is the pair $(S, \overline{\mathcal{U}})$, where $\overline{\mathcal{U}} = \mathcal{U} /\mathcal{N} $ and $S: \mathcal{U} \to \overline{\mathcal{U}}$ is the quotient functor. 
\end{definition} 
The category
$\overline{\mathcal{U}}$ is a semisimple rigid symmetric $\kk$-linear category and $S: \mathcal{U} \to \overline{\mathcal{U}}$ is a full symmetric monoidal $\kk$-linear functor. More details can be found in \cite{etingof2021semisimplification}.

\subsection{Vector superspaces and supergroups}\label{ssec:supervec}

A {\it vector superspace} is a $\mathbb{Z}/2\mathbb{Z}$-graded $\kk$-vector space 
$V=V_{\bar 0}\oplus V_{\bar 1}$; for $v\in V_{\eps}$, $\eps\in  
\mathbb{Z}/2\mathbb{Z}$, the {\it parity} of $v$ is $\bar{v} := \eps$. 

The objects in the category of vector superspaces $\sVect$ are 
finite-dimensional vector superspaces and the morphisms are linear 
morphisms preserving the grading. When mentioning maps $V\to W$ between two vector superspaces, we will denote by $\Hom(V, W)$ the space of grading-preserving maps and by $\Hom^{\bullet}(V, W)$ the space of all linear maps. The latter is naturally a vector superspace in its own right and $\Hom^{\bullet}(V, W)_{\bar 0}=\Hom(V, W)$. The elements of $\Hom^{\bullet}(V, W)_{\bar 1}$ will be referred to as {\it odd maps}.

When mentioning maps $V\to W$ we will by default assume that these maps are grading-preserving, unless explicitly stated otherwise.

Let $V\in \sVect$. The (categorical) trace of an endomorphism $\phi\in \End^{\bullet}(V)$, called the {\it supertrace} of $\phi$, is given by $str(\phi):=tr(\phi^{(0)})-tr(\phi^{(1)})$ where $\phi^{(p)}:V_{\bar p}\to V_{\bar p}$ is the corresponding component of the map $\phi$, for $p\in \{\bar 0, \bar 1\}$. The categorical dimension of $V \in \sVect$, also called the
{\it superdimension}, is defined as
$$\sdim V := str(\id_V)=\dim V_{\bar{0}} - 
\dim V_{\bar 1}.$$ We also denote the usual dimension by $\dim V:=\dim V_{\bar{0}} + 
\dim V_{\bar 1}$.

We will denote by $\Pi$ the change of parity endofunctor on $\sVect$: namely, $\Pi \kk^{m|n} \cong \kk^{n|m}$.

A supergroup $G$ may be defined  via the supercommutative Hopf superalgebra of functions on $G$. The Lie superalgebra $\mathrm{Lie}(G)$ corresponding to an algebraic supergroup $G$ is defined as a subquotient of this Hopf superalgebra in the usual way.

An alternative approach to algebraic supergroups is via Harish-Chandra pairs (see \cite{masuoka2012harish}). A Harish-Chandra pair is a pair $(G_{\bar 0}, ~\g)$, where $\g$ is a Lie superalgebra, $G_{\bar 0}$ is an affine algebraic group with $\mathrm{Lie} ~G_{\bar 0}=\g_{\bar 0}$ and the adjoint action of $\g_{\bar 0}$ on $\g_{\bar 1}$ integrates to an action of $G_{\bar 0}$.
The category of affine algebraic supergroups is equivalent to the category of Harish-Chandra pairs. 
Given an algebraic supergroup $G$, its corresponding Harish-Chandra pair is $(G_{\bar 0}, ~\g)$, where $\g:=\mathrm{Lie}(G)$ and $G_{\bar 0}$ is defined as the group of the $\kk$-points of $G$. The latter is called the {\it underlying algebraic group} of $G$.

\begin{definition}
 Given a Lie superalgebra $\g$, the category of finite-dimensional representations of $\g$ is denoted by $\Rep(\g)$. Its objects are $(V, \rho)$ where $V \in \sVect$ and $\rho: \g \to \gl(V)$ is a homomorphism of Lie algebra objects in $\sVect$. The maps in $\Rep(\g)$ are $\g$-equivariant, parity-preserving maps.
\end{definition}

Let $G$ be an algebraic supergroup corresponding to a Harish-Chandra pair $(G_{\bar 0}, ~\g)$.

\begin{definition}
We define $\Rep(G)$ as the category of representations of $G$ in $\sVect$, with parity-preserving maps. 
\end{definition}

In the Harish-Chandra pairs approach, the category $\Rep(G)$ can be viewed as the category of finite-dimensional Harish-Chandra $(G_{\bar 0}, ~\g)$-modules: that is, representations of $\g$ where the action of the Lie algebra $\g_{\bar 0}$ integrates to an action of $G_{\bar 0}$.

\begin{definition}
 A supergroup $G$ is called {\it quasi-reductive} if $G_{\bar{0}}$ is reductive\footnote{By a reductive algebraic group we mean an algebraic group whose finite-dimensional (rational) representations form a semisimple category.}. 
\end{definition}

\subsection{Centralizers, orbits, semisimple and homological elements}\label{ssec:centralizers}
Let $G$ be an algebraic supergroup and $\g:=\mathrm{Lie}(G)$.
\begin{comment}
\begin{notation}

We denote by $\Aut^{\circ}(\g)$ the connected component of the identity in the algebraic group of automorphisms of the Lie superalgebra $\g$ and by $$\Der(\g):=Lie(\Aut^{\circ}(\g))=\{\partial:\g\to\g ~\rvert~ \forall ~ a,b\in \g, \; \partial([a,b])=[\partial(a),b]+[a,\partial(b)]~\}$$
the Lie algebra of (even) derivations of $\g$.
The inner derivations $$\mathrm{Inn}(\g):=\{\ad_a|a\in \g_{\bar 0}\}\cong \g_{\bar 0}/Z(\g)_{\bar 0}$$ form an ideal in $\Der(\g)$.

\end{notation}
Sometimes we will be interested in the  Lie superalgebra $\Der^{\bullet}(\g)$ of all derivations of $\g$, both even and odd:
\begin{notation}
We denote:
  $$\Der^{\bullet}(\g):=\{\phi\in \End^{\bullet}(\g)~\rvert ~ \forall ~a,b\in \g, ~ \phi([a,b])=[\phi(a), b]+(-1)^{p(a)p(\phi)}[a, \phi(b)]\}. $$
  
This is a vector superspace and elements of $\Der^{\bullet}(\g)_{\bar 1}$ are called {\it odd derivations } of $\g$.
\end{notation}
Clearly, $\Der^{\bullet}(\g)_{\bar 0} = \Der(\g)$.
\begin{notation}
For any $a\in \g$, let $C(a)$ be the subgroup of $\Aut^{\circ}(\g)$ preserving $a$. Similarly, for any subset $S\subset \g$ let $C(S):=\cap_{a\in S} C(a)$ be the subgroup of $\Aut^{\circ}(\g)$ preserving each element of $S$.
\end{notation}
\end{comment}
\begin{notation}
    For any $a\in \g$, we will denote by $\mathfrak{c}_{\g_{\bar 0}}(a):=\Ker \ad_a\rvert_{\g_{\bar 0}}$ the centralizer of $a$ in $\g_{\bar 0}$ and by $G_{\bar 0}.a$ its orbit under the adjoint action of $G_{\bar 0}$. Similarly, for any subset $S\subset \g$, let $\mathfrak{c}_{\g_{\bar 0}}(S) := \cap_{a\in S} \mathfrak{c}_{\g_{\bar 0}}(a)$.

    When considering the centralizer of the $a$ (respectively, $S$) in $G_{\bar 0}$, we will denote by $C_{G_{\bar 0}}(a)$ (respectively, $C_{G_{\bar 0}}(S)$) the connected component of the unit; this is a connected algebraic subgroup of $G_{\bar 0}$ and $\mathrm{Lie} ~C_{G_{\bar 0}}(a)=\mathfrak{c}_{\g_{\bar 0}}(a)$.
\end{notation}

\begin{notation}
    We denote by $Z(\g):=\{a\in \g~\rvert~ \forall ~b\in \g, [a,b]=0\}$ the center of $\g$.
\end{notation}

\begin{proposition}\label{prop:centralizer_reductive} Let $\g$ be a quasi-reductive Lie superalgebra and $\h$ be a subalgebra which acts semisimply on $\g$ via the adjoint action. Then $\fl=\fc_{\g}(\h)$ is quasi-reductive.    
\end{proposition}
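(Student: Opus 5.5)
We need to show that $\fl_{\bar 0}=\fc_{\g_{\bar 0}}(\h)$ is reductive and that it acts semisimply on $\fl_{\bar 1}=\fc_{\g_{\bar 1}}(\h)$. Both statements will be reduced to classical facts about reductive Lie algebras acting semisimply.

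\textbf{Step 1: $\h$ is even, and $\fl$ is a subalgebra.} First I will check that we may take $\h\subset\g_{\bar 0}$. Suppose $y\in\h$ acts semisimply on $\g$. If $y$ is odd, then $\ad_y^2=\tfrac12\ad_{[y,y]}$. The odd part of a semisimple element should be handled by decomposing $y=y_{\bar 0}+y_{\bar 1}$; if $\h$ is not homogeneous, I will replace it by the span of the even parts. The intended reading, consistent with how Levi subalgebras are used later, is that $\h$ is a (toral, or more generally reductive) subalgebra of $\g_{\bar 0}$ acting semisimply on $\g$. I will state this reduction and work with $\h\subset\g_{\bar 0}$.

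With $\h$ even, $\fl=\fl_{\bar 0}\oplus\fl_{\bar 1}$ is a graded subalgebra by the Jacobi identity.

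\textbf{Step 2: $\fl_{\bar 0}$ is reductive.} Since $\g_{\bar 0}$ is reductive, it has a faithful finite-dimensional semisimple representation $V$, for instance $\g_{\bar 0}/Z\oplus$ a faithful representation of the center. We may arrange that $\h$ acts semisimply on $V$: the semisimple action on $\g$ shows that the elements of $\h$ act semisimply in the adjoint representation, and every reductive subalgebra acting semisimply in the adjoint representation consists of elements that are semisimple plus central. Then I apply the classical theorem that the centralizer in a reductive Lie algebra of a subalgebra acting semisimply (an ``$\mathfrak{a}$-reductive'' subalgebra, as in Bourbaki, Ch.~I, \S6 and Ch.~VII) is reductive in $\g_{\bar 0}$.

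The cleanest route is via the trace form $B_V(a,b)=\operatorname{tr}_V(ab)$ on $\g_{\bar 0}$. This form is nondegenerate on $[\g_{\bar 0},\g_{\bar 0}]$, and on $\g_{\bar 0}$ for a suitable choice of $V$. It is $\h$-invariant, and $\h$ acts semisimply, so $\g_{\bar 0}=\fl_{\bar 0}\oplus[\h,\g_{\bar 0}]$ is a $B_V$-orthogonal decomposition. Hence $B_V$ restricted to $\fl_{\bar 0}$ is nondegenerate and is a trace form of the faithful representation $V|_{\fl_{\bar 0}}$. A Lie algebra with a faithful representation whose trace form is nondegenerate is reductive (Bourbaki I.6.4).

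\textbf{Step 3: $\fl_{\bar 0}$ acts semisimply on $\fl_{\bar 1}$.} Here I use the criterion that a representation $W$ of a reductive Lie algebra is semisimple if and only if the center acts by semisimple endomorphisms. The center of $\fl_{\bar 0}$ consists of elements that are semisimple in $\g_{\bar 0}$: by Step 2, $\fl_{\bar 0}$ is reductive in $\g_{\bar 0}$, so its center acts semisimply on $V$, and hence the center consists of semisimple elements of $\g_{\bar 0}$. Semisimple elements of $\g_{\bar 0}$ act semisimply on any semisimple $\g_{\bar 0}$-module, in particular on $\g_{\bar 1}$, and therefore on the invariant subspace $\fl_{\bar 1}$.

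\textbf{Main obstacle.} The real content is Step 2, specifically showing that $\fl_{\bar 0}$ is reductive \emph{in} $\g_{\bar 0}$, i.e.\ that its center consists of semisimple elements and not merely that $\fl_{\bar 0}$ is abstractly reductive. The trace-form argument delivers this, because $V|_{\fl_{\bar 0}}$ turns out to be semisimple. I would verify it carefully using the orthogonal decomposition with respect to the nondegenerate invariant form.
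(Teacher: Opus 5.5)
There is a genuine gap, in Step 1. In this paper, ``$\h$ acts semisimply'' means that $\g$ is a semisimple $\h$-module, and $\h$ may have a nonzero odd part. The proposition is in fact used in the proof of \cref{thm:finattractor}, and for the group $G'$ in \cref{prop:tensor_functors_2_step}, with $\h=\osp_{x_{neat}}$. This algebra acts semisimply on $\g$ even though its odd element $x_{neat}$ acts nilpotently.

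Your reduction ``replace $\h$ by the span of its even parts'' is not legitimate. It replaces $\fl=\fc_\g(\h)$ by $\fc_\g(\h_{\bar 0})$, which is in general strictly larger, and quasi-reductivity of the larger algebra says nothing about the subalgebra $\fc_\g(\h)$.

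Your main argument does not extend to odd $\h$ either. When $\h_{\bar 1}\neq 0$, the complement of $\fl_{\bar 0}$ in $\g_{\bar 0}$ is $\fm_{\bar 0}=[\h_{\bar 0},\g_{\bar 0}]+[\h_{\bar 1},\g_{\bar 1}]$. The trace form $B_V$ of a $\g_{\bar 0}$-module gives no orthogonality between $\fl_{\bar 0}$ and $[\h_{\bar 1},\g_{\bar 1}]$. Switching to the supertrace form of a $\g$-module restores invariance but loses nondegeneracy on $\g_{\bar 0}$: for $\mathfrak{pe}(n)$ the supertrace form of the defining module vanishes identically on $\mathfrak{pe}(n)_{\bar 0}$.

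Relatedly, you conflate module-semisimplicity with elementwise semisimplicity. Your claim that a subalgebra acting semisimply consists of semisimple-plus-central elements already fails for $\h=\sl_2\subset\g_{\bar 0}$. The conclusion you wanted, that $V$ is a semisimple $\h$-module for even $\h$, is still true, by looking at the center of $\h$.

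The paper's argument avoids invariant forms and works for any $\h$. Let $\fm$ be the sum of the nontrivial isotypic components. Then $\g=\fl\oplus\fm$ and $[\fl,\fm]\subset\fm$, since each $\ad_a$ with $a\in\fl$ is, up to sign, an $\h$-module map. Let $\mathfrak n=[\fl_{\bar 0},\operatorname{rad}\fl_{\bar 0}]$, a nilpotent ideal of $\fl_{\bar 0}$. A nonzero $x\in\mathfrak n$ is a nilpotent element of $\g_{\bar 0}$, so take an $\sl_2$-triple $(x,h,y)$ in $\g_{\bar 0}$. Projecting $h,y$ along $\fm$ to $\bar h,\bar y\in\fl_{\bar 0}$ gives $[\bar h,x]=2x$ and $[\bar y,x]=-\bar h$. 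So $\bar h\in\mathfrak n$ but $\bar h$ is not $\ad$-nilpotent, a contradiction.

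The same trick handles the nilpotent part $z_n$ of a central element $z$ of $\fl_{\bar 0}$. By \cref{lem:centralizer_Jordan_closed}, $z_n$ is again central in $\fl_{\bar 0}$, so $[\bar y,z_n]=0$ forces $\bar h=0$, contradicting $[\bar h,z_n]=2z_n$. Hence $Z(\fl_{\bar 0})$ consists of semisimple elements, and your Step 3 then finishes the proof.

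This Jordan-closure point is also missing from your Step 2 in the even case. Nondegeneracy of $B_V|_{\fl_{\bar 0}}$ alone does not make $V|_{\fl_{\bar 0}}$ semisimple. It does once you know that $z_n\in Z(\fl_{\bar 0})$, because then $z_n$ lies in the radical of $B_V|_{\fl_{\bar 0}}$.
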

\begin{proof}
Let $\mathfrak{m}$ be the direct sum of all simple non-trivial $\h$-submodules in $\g$.
Then $\g=\fl\oplus\mathfrak{m}$ and $[\fl,\mathfrak m]\subset\mathfrak m$. Let $\mathfrak n$ denote the nil radical of $\fl_{\bar 0}$. Assume $\mathfrak n\neq 0$ and choose $x\in \mathfrak n$. Since $\g_{\bar 0}$ is reductive there exists an $\mathfrak{sl}_2$ triple $(x,h,y)$ in $\g_{\bar 0}$. Let $\bar h$ and $\bar y$ denote the images of $h, y$ under the projection $\g\twoheadrightarrow\fl$
with kernel $\mathfrak m$. Then $[\bar h,x]=2x$ and $[\bar y,x]=-\bar h$ (here we use:  $[\fl,\mathfrak m]\subset\mathfrak m$). So $\bar h$ lies in $\mathfrak n$ but is not nilpotent. We obtain a contradiction.
\end{proof}
\begin{remark}
An analogous statement holds for supergroups as well.
\end{remark}

The following fact will be used frequently throughout the paper.
\begin{lemma}\label{lem:centralizer_Jordan_closed}
Let $G$ be quasi-reductive and $\g:=\Lie(G)$.
    Let $\mathfrak{c}_{\g_{\bar 0}}(a)\subset  \g_{\bar 0}$ be the (even) centralizer of any element $x\in \g$. Then $\mathfrak{c}_{\g_{\bar 0}}(a)$ is closed under Jordan decomposition: that is, it contains the semisimple and nilpotent part of any element of $\mathfrak{c}_{\g_{\bar 0}}(a)$.
    
\end{lemma}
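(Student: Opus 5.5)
The plan is to realise the centralizer as the kernel of a linear map built from the adjoint representation, and then use the fact that Jordan decomposition in a reductive Lie algebra is compatible with finite-dimensional representations. Write $a$ for the element being centralized; the statement says $x$, but it means $a$.

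First I will fix the relevant representation. Since $G$ is quasi-reductive, $\g_{\bar 0}$ is reductive and acts semisimply on $\g$ via the adjoint action, and this action integrates to $G_{\bar 0}$. For $y\in\g_{\bar 0}$ write $y=y_s+y_n$ for its Jordan decomposition in $\g_{\bar 0}$. This is the abstract decomposition in $[\g_{\bar 0},\g_{\bar 0}]$ together with the convention that central elements are semisimple, which is the convention the paper uses.

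The key input is the standard fact that in any finite-dimensional representation $\rho$ of a reductive Lie algebra in which the center acts semisimply, $\rho(y_s)$ and $\rho(y_n)$ are the semisimple and nilpotent parts of $\rho(y)$. The adjoint action of $\g_{\bar 0}$ on $\g$ is such a representation, since the center of $\g_{\bar 0}$ acts semisimply because $G_{\bar 0}$ is reductive. Hence $\ad_{y_s}$ and $\ad_{y_n}$ are the semisimple and nilpotent parts of $\ad_y\in\End(\g)$. In particular, each of them is a polynomial in $\ad_y$ with zero constant term.

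Now let $y\in\fc_{\g_{\bar 0}}(a)$, so that $\ad_y(a)=[y,a]=0$. Since $\ad_{y_s}=p(\ad_y)$ for a polynomial $p$ without constant term, we get $[y_s,a]=p(\ad_y)(a)=0$. Then $[y_n,a]=[y,a]-[y_s,a]=0$ as well. So both $y_s$ and $y_n$ lie in $\fc_{\g_{\bar 0}}(a)$, which proves the lemma.

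The main point needing care is the compatibility of the abstract Jordan decomposition with the representation $\ad$ on the whole of $\g$, including the odd part, when $\g_{\bar 0}$ has a center. I would handle this by splitting $\g_{\bar 0}=\mathfrak z\oplus[\g_{\bar 0},\g_{\bar 0}]$. On the semisimple part, the compatibility is the classical theorem on preservation of Jordan decomposition under representations. On the center $\mathfrak z$, every element acts semisimply on $\g$, and $\ad_{\mathfrak z}$ commutes with $\ad_{[\g_{\bar 0},\g_{\bar 0}]}$. Therefore the semisimple part of $\ad_{z+y'}$ is $\ad_z+\ad_{y'_s}$, because a sum of commuting semisimple operators is semisimple and commutes with the nilpotent operator $\ad_{y'_n}$.
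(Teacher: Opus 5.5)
Your proof is correct and follows essentially the same route as the paper: $\ad_{y_s}$ is the semisimple part of $\ad_y$ on $\g$. It is therefore a polynomial in $\ad_y$ without constant term, so it annihilates $a$, and then $y_n=y-y_s$ does as well. Your extra paragraph shows that the abstract Jordan decomposition in the reductive algebra $\g_{\bar 0}$ is compatible with the adjoint action on all of $\g$, including the center and the odd part; this fills in a step the paper's terse argument leaves implicit.
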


\begin{proof}
    Given an even element $t\in \mathfrak{c}_{\g_{\bar 0}}(a)$, consider its Jordan decomposition $t=s+n$, where $s$ is semisimple, $n$ is nilpotent. Then $\ad_s$ is a polynomial in $\ad_t$, so $x \in \Ker(\ad_t) \subset \Ker(\ad_s)$ and thus $s\in \mathfrak{c}_{\g_{\bar 0}}(a)$. Hence $\mathfrak{c}_{\g_{\bar 0}}(a)$ is closed under Jordan decomposition.
\end{proof}

\begin{notation}\label{notn:ss}
    Let $\mathfrak{r}$ be a reductive Lie algebra (purely even). We denote by $\mathfrak{r}^{\mathbf{ss}}$ the set of its semisimple elements.
\end{notation}
Note that in this setting, $Z(\mathfrak{r})\subset \mathfrak{r}^{\mathbf{ss}}$.

\begin{notation}
Assume that $G$ is quasi-reductive.
  \begin{itemize}
      \item We will denote by $\g^{\mathbf{ss}}$ the set of semisimple elements in $\g_{\bar 0}$.
      \item We will denote $\g^{hom}:=\{x\in \g_{\bar 1}~|~ [x,x]\in \g^{\mathbf{ss}}\}.$
      Such elements $x$ will be called {\it homological elements of $\g$}.
  \end{itemize} 
\end{notation}

\subsection{Simple Lie superalgebras}\label{ssec:classical_list}
The superalgebras below are examples of quasi-reductive Lie superalgebras corresponding to quasi-reductive Lie supergroups\footnote{We will write $OSp(m|2n)$ for the supergroup whose underlying algebraic group is connected; this supergroup is often denoted $SOSp(m|2n)$.}: 
%\begin{align*}  & \gl(m|n), \;\sl(m|n), \p\gl(m|n), \p\sl(n|n), \;\osp(m|2n),\; D(2|1; a),\; F(1,3), \;G(1,2), \\&\mathfrak{pe}(n), \;\mathfrak{spe}(n), \;\mathfrak{q}(n),\;\mathfrak{pq}(n),\;\mathfrak{sq}(n),\;\mathfrak{psq}(n)
%\end{align*}

%We remind the reader briefly the definitions of some of these Lie superalgebras:
\begin{itemize}
\item $\gl(m|n)=\End^{\bullet}(\kk^{m|n})$ with $[A', A'']=AA'-(-1)^{\overline{A'}~\overline{A''}}A''A'$ for homogeneous $A', A''\in \End^{\bullet}(\kk^{m|n})$. We define $\sl(m|n)=\{A\in \gl(m|n)~|~str(A)=0\}$.
\item Let $B\in \Hom^{\bullet}(S^2\C^{m|n'}, \kk)$ be a non-degenerate (homogeneous) symmetric form on $\C^{m|n}$. Let $\g$ be the Lie sub-superalgebra of $\gl(m|n)$ preserving this form. If $\overline{B}=\bar{0}$ then $n'= 2n\in 2\Z$ and $\g$ is called the orthosymplectic Lie superalgebra $\osp(m|2n)$. If $\overline{B}=\bar{1}$ then $n'= m$ and $\g$ is called the periplectic Lie superalgebra $\mathfrak{pe}(m|m)$. 
\item  $\q(n)\subset \gl(n|n)$ is the Lie sub-superalgebra centralizing the parity-switching isomorphism $ \kk^{n|n}\to \kk^{n|n}$. The Lie sub-superalgebra $\mathfrak{sq}(n)\subset \q(n)$ has $\mathfrak{sq}(n)_{\bar 0} =\gl_n$,  $\mathfrak{sq}(n)_{\bar 1} =\sl_n$.
    \item $\p\gl(m|n)$, $\p\sl(n|n)$, $\mathfrak{pq}(n)$ and $\mathfrak{psq}(n)$ are the respective quotients of $\gl(m|n)$, $\sl(n|n)$, $\mathfrak{q}(n)$ and $\mathfrak{sq}(n)$ by their $1$-dimensional centers and $\mathfrak{spe}(n) :=\mathfrak{pe}(n)\cap \sl(n|n)$. 
\end{itemize}

The classification of simple quasi-reductive Lie superalgebras can be derived from \cite{kac1977lie} and \cite{serganova2011quasireductive}. These are:
\begin{itemize}
\item Simple finite-dimensional Lie algebras,
    \item $\sl(m|n)$ (for $m\neq n$, $m, n\in\Z_{\geq 1}$), $\p\sl(n|n)$ ($n\in\Z_{\geq 1}$), $\osp(m|2n)$ ($m, n\in\Z_{\geq 1}$),
    \item Exceptional Lie superalgebras $D(2|1; a),\; F(1,3), \;G(1,2)$ with $a\neq 0$ (see  \cite{kac1977lie} for explicit constructions),
    \item Strange Lie superalgebras $\mathfrak{spe}(n)$, $\mathfrak{psq}(n)$ ($n\in\Z_{\geq 1}$).
\end{itemize}

We will also consider Takiff superalgebras, which we describe briefly below.  

Recall that for a Lie superalgebra $\g$, an odd derivation of $\g$ is a map 
  $$\phi:\g\to \Pi \g~ ~\text{ so that }  ~~ \forall ~a,b\in \g, ~ \phi([a,b])=[\phi(a), b]-(-1)^{\bar{a}}[a, \phi(b)]. $$

 Let $\fl$ be a simple Lie algebra and consider the Lie superalgebra $\g:=\fl\oplus\Pi\fl$, where the odd part is an abelian ideal given by the adjoint $\fl$-module. We may identify $\g\cong \fl\otimes \kk \xi$ where $\xi$ is an odd element and $[\xi, \xi]=0$. Let $\partial:=\frac{\partial}{\partial \xi}$; this is an odd derivation of $\g$. Consider the Lie superalgebra $\g\rtimes \kk \partial$, with $[\partial, \partial]=0$ and $[\partial, u]=\partial(u)$ for any $u\in \g$. Then $\g\rtimes \kk \partial$ is called the Takiff Lie superalgebra.  

\subsection{Quasi-reductive Lie superalgebras}\label{ssec:quasired}
Below we give a short summary of the results by Serganova on the structure of quasi-reductive Lie superalgebras (see \cite[Theorem 6.9]{serganova2011quasireductive}). 
Let $G$ be a quasi-reductive algebraic supergroup such that $G_{\bar 0}$ is connected and let $\g:=\mathrm{Lie} (G)$. 

\begin{lemma}\label{lem:aux_odd_central_commutator}
We have: $Z(\g)_{\bar 1} \cap [\g, \g] =\{0\} $. In particular, there exists a splitting of Lie superalgebras $\g=\widetilde{\g}\times Z(\g)_{\bar 1}$.
\end{lemma}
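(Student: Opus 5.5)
The proof has two steps: first show that $Z(\g)_{\bar 1}\cap[\g,\g]=\{0\}$, then produce a $\g$-stable complement of $Z(\g)_{\bar 1}$ that contains $[\g,\g]$.

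\textbf{The intersection.} Let $z\in Z(\g)_{\bar 1}$. Since $z$ is central, $\kk z$ is a one-dimensional ideal. Quasi-reductivity means $\g_{\bar 0}$ is reductive and acts semisimply on $\g$. Hence we can write $\g=\g_{\bar 0}\oplus\g_{\bar 1}$ with $\g_{\bar 1}=\g_{\bar 1}^{\g_{\bar 0}}\oplus[\g_{\bar 0},\g_{\bar 1}]$, where the first summand is the $\g_{\bar 0}$-invariants and the second is the sum of the nontrivial isotypic components. Then
$$[\g,\g]_{\bar 1}=[\g_{\bar 0},\g_{\bar 1}]=[\g_{\bar 0},\g_{\bar 1}]\text{-part},$$
so $[\g,\g]_{\bar 1}$ contains no nonzero $\g_{\bar 0}$-invariant vectors. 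A central odd element is in particular $\g_{\bar 0}$-invariant. Therefore $Z(\g)_{\bar 1}\cap[\g,\g]\subset\g_{\bar 1}^{\g_{\bar 0}}\cap[\g_{\bar 0},\g_{\bar 1}]=0$. This step is immediate once we know that $[\g_{\bar 0},-]$ kills the invariants and preserves the isotypic decomposition, which holds because $\g_{\bar 0}$ acts semisimply on $\g_{\bar 1}$.

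\textbf{The splitting.} Choose a vector subspace complement $W$ of $Z(\g)_{\bar 1}$ inside $\g_{\bar 1}^{\g_{\bar 0}}$, and set
$$\widetilde\g:=\g_{\bar 0}\oplus[\g_{\bar 0},\g_{\bar 1}]\oplus W.$$
Then $\g=\widetilde\g\oplus Z(\g)_{\bar 1}$ as vector superspaces, and $\widetilde\g\supset[\g,\g]$ because, by the first step, $[\g,\g]\subset\g_{\bar 0}\oplus[\g_{\bar 0},\g_{\bar 1}]$. Any subspace containing $[\g,\g]$ is an ideal. So $\widetilde\g$ is an ideal, and $Z(\g)_{\bar 1}$ is a central ideal. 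These two ideals intersect trivially and together span $\g$, so $\g=\widetilde\g\times Z(\g)_{\bar 1}$ as Lie superalgebras. The bracket between the factors vanishes since $Z(\g)_{\bar 1}$ is central.

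The only point needing care is the claim $[\g,\g]_{\bar 1}=[\g_{\bar 0},\g_{\bar 1}]$, which is true by definition of the bracket grading. Beyond that nothing is hard: this is where the semisimplicity of the $\g_{\bar 0}$-action (quasi-reductivity) is used. Connectedness of $G_{\bar 0}$ is not really needed beyond identifying invariants of $G_{\bar 0}$ with invariants of $\g_{\bar 0}$. The same splitting integrates to supergroups via Harish-Chandra pairs, with $G_{\bar 0}$ acting trivially on the odd central part.
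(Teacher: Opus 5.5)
Your proof is correct and takes essentially the paper's route. Semisimplicity of the $\g_{\bar 0}$-action gives a $\g_{\bar 0}$-stable complement to $Z(\g)_{\bar 1}$ in $\g_{\bar 1}$: the paper takes an arbitrary one, while you take the canonical $[\g_{\bar 0},\g_{\bar 1}]\oplus W$. In both arguments $[\g,\g]_{\bar 1}=[\g_{\bar 0},\g_{\bar 1}]$ lands in that complement, and your explicit ideal $\widetilde\g$ spells out the splitting that the paper leaves implicit.
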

\begin{proof}
The supergroup $G$ is quasi-reductive, so the reductive Lie algebra $\g_{\bar 0}$ acts semisimply on $\g_{\bar 1}$. This implies that we have a $\g_{\bar 0}$-decomposition 
$$\g_{\bar 1}= Z(\g)_{\bar 1}\oplus \mathfrak{s}$$
for some subspace $\mathfrak{s} \subseteq \g_{\bar 1}$.
Thus $[\g_{\bar 0}, \g_{\bar 1}]\subseteq \mathfrak{s}$ and so $[\g, \g]_{\bar 1} \subseteq \mathfrak{s}$. We conclude that $ Z(\g)_{\bar 1} \cap [\g, \g]_{\bar 1}  =\{0\}$.
\end{proof} 

Let $\g$ be a quasireductive Lie superalgebra. We will denote $\g':=\g/Z(\g)$ and let
$\mathfrak{i}(\g)$ denote the sum of all the minimal ideals of $\g'$. Let $\mathfrak{r} := \g'/\mathfrak{i}(\g)$.
Then \cite[Theorem 6.9]{serganova2011quasireductive} states that $\g'$ is a semidirect product of $\mathfrak{i}(\g)$ and $\mathfrak{r}$ and that 
\begin{itemize}
    \item The ideal $\mathfrak{i}(\g)$ is a direct sum of ideals and each of these summands $\ft$ satisfies one of the following conditions:
    \begin{itemize}[label=$\star$]
        \item $\ft$ is a simple quasi-reductive Lie superalgebra.
        \item $\ft_{\bar 1}$ is an abelian ideal in $\ft$ and $\ft_{\bar 0}$ is either a simple Lie algebra (in which case $\ft_{\bar 1}$ is its adjoint module\footnote{In that case, \InnaA{there exists a Takiff sub-superalgebra of $\g'$ of the form $\ft\rtimes \kk \partial$}.}) or $\ft_{\bar 0}=0$.
    \end{itemize}
    \item The Lie superalgebra $\mathfrak{r} = \mathfrak{r}_{\bar 0}\oplus \mathfrak{r}_{\bar 1}$ has a reductive even part $\mathfrak{r}_{\bar 0}$ and $\mathfrak{r}_{\bar 1}$ is an abelian ideal with a trivial action of $[\mathfrak{r}_{\bar 0}, \mathfrak{r}_{\bar 0}]$.
    \item We have a decomposition of $\g'_{\bar 0}$-modules $\g'= \mathfrak{i}(\g) \oplus \mathfrak{r} $. In particular, $\mathfrak{r}_{\bar 0}$ is an ideal in $\g'_{\bar 0}$ and we have a decomposition $\g'_{\bar 0}= \mathfrak{i}(\g)_{\bar 0} \times \mathfrak{r}_{\bar 0} $ as a product of Lie algebras.
    \item The elements of $\mathfrak{r}_{\bar 1}$ in the decomposition $\g'= \mathfrak{i}(\g) \oplus \mathfrak{r} $ act by odd derivations on $\mathfrak{i}(\g) $; the restriction of these derivations to $\mathfrak{i}(\g)_{\bar 0}$ is zero. In particular, $[\mathfrak{r}_{\bar 1}, \mathfrak{r}_{\bar 1}]=0$.
\end{itemize}

We will denote by $\overline{\mathfrak{i}}(\g)$ the ideal (direct summand) of the Lie superalgebra $\mathfrak{i}(\g)$ which is the direct sum of those ideals $\fl\subseteq \mathfrak{i}(\g)$ which are simple Lie superalgebras.  Let $\overline{I}(G) \trianglelefteq G'$ be the connected sub-supergroup corresponding to the ideal $\overline{\mathfrak{i}}(\g)$. 

\subsection{Levi and parabolic sub-superalgebras}\label{ssec:levi}

\begin{definition}\label{def:levi_subalgebra}
Let $\g$ be a Lie superalgebra. A sub-superalgebra $\fl\subseteq \g$ is called a {\it Levi} subalgebra of $\g$ if there exists a commutative Lie subalgebra $\s\subset\g^{\mathbf{ss}}$ such that $\fl = \g^\s$. It is clear that $\fl$  is a quasi-reductive subalgebra in $\g$.

\end{definition}

\begin{remark}
As in \cite{collingwood1993nilpotent}, the subalgebra $\s$ in the definition above may be replaced by a single element $s\in \g^{\mathbf{ss}}$. 
\end{remark}
\begin{comment}
\begin{remark}
If $\g=\g_{\bar 0}$ is a semisimple Lie algebra, this definition of a Levi subalgebra coincides with the one in \cite{collingwood1993nilpotent}: it is the centralizer of some semisimple element of $\g$. However, if $\g=\g_{\bar 0}$ is a reductive Lie algebra with a non-trivial center, then its semisimple ideal $[\g, \g]$ will, in this definition, be considered a proper Levi subalgebra.
\end{remark}
\end{comment}

   Levi subalgebras of quasi-reductive Lie superalgebras are particularly nice, being also quasi-reductive. 
   \begin{lemma}
       Let $G$ be a quasi-reductive supergroup and $\g:=\mathrm{Lie} (G)$. Any Levi subalgebra of $\g$ is itself the Lie superalgebra of a quasi-reductive sub-supergroup of $G$.
   \end{lemma}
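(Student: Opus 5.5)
Let $\fl=\g^s$ be a Levi subalgebra, where $s\in\g^{\mathbf{ss}}$ is a single semisimple element (by the Remark above, a commutative $\s\subset\g^{\mathbf{ss}}$ can be replaced by one element). The idea is to use the Harish-Chandra pair description of supergroups: build a pair $(L_{\bar 0},\fl)$ with $L_{\bar 0}$ reductive and $\Lie L_{\bar 0}=\fl_{\bar 0}$, check that $L_{\bar 0}$ acts on $\fl_{\bar 1}$ compatibly, and let $L\subseteq G$ be the corresponding sub-supergroup.

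First, $\fl$ is a quasi-reductive Lie superalgebra. Take $\h=\kk s$. Since $s$ is semisimple in $\g_{\bar 0}$ and $G$ is quasi-reductive, $\ad_s$ acts semisimply on $\g$: on $\g_{\bar 0}$ because $\g_{\bar 0}$ is reductive, and on $\g_{\bar 1}$ because $\g_{\bar 1}$ is a $G_{\bar 0}$-module and semisimple elements act semisimply in rational representations of a reductive group. So \cref{prop:centralizer_reductive} gives that $\fl_{\bar 0}=\fc_{\g_{\bar 0}}(s)$ is reductive. Moreover, $\fl_{\bar 0}$ acts semisimply on $\fl_{\bar 1}$, since $\fl_{\bar 1}$ is the $0$-eigenspace of $\ad_s$ in $\g_{\bar 1}$ and hence a direct summand of $\g_{\bar 1}$ as an $\fl_{\bar 0}$-module.

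Next, for the even group, set $L_{\bar 0}:=C_{G_{\bar 0}}(s)$, the connected component of the centralizer of $s$ in $G_{\bar 0}$. By the Notation above, $\Lie L_{\bar 0}=\fc_{\g_{\bar 0}}(s)=\fl_{\bar 0}$. This group is reductive. The most direct argument is that the centralizer of a semisimple element in a connected reductive group is reductive (a standard fact; e.g.\ take the Zariski closure $T_s$ of the torus generated by $\exp(ts)$, so $C_{G_{\bar 0}}(s)=C_{G_{\bar 0}}(T_s)^\circ$, and centralizers of tori in reductive groups are reductive). Alternatively, it is connected with reductive Lie algebra $\fl_{\bar 0}$, and in characteristic zero a connected group whose Lie algebra is reductive with semisimple action is reductive. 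If $G_{\bar 0}$ is not connected, one argues first with its identity component.

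The adjoint action of $L_{\bar 0}$ on $\g_{\bar 1}$ preserves $\fl_{\bar 1}=\Ker(\ad_s|_{\g_{\bar 1}})$, because $L_{\bar 0}$ fixes $s$. So $(L_{\bar 0},\fl)$ is a Harish-Chandra pair, and the restriction of the $G_{\bar 0}$-action integrates the $\fl_{\bar 0}$-action on $\fl_{\bar 1}$. The inclusions $L_{\bar 0}\subseteq G_{\bar 0}$ and $\fl\subseteq\g$ are compatible, so they give a morphism of Harish-Chandra pairs. By the equivalence with affine algebraic supergroups, this yields a closed sub-supergroup $L\subseteq G$ with $\Lie L=\fl$ and $L_{\bar 0}$ reductive, i.e.\ $L$ is quasi-reductive.

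The main point needing care is the reductivity of $L_{\bar 0}$ and that this sub-pair really defines a closed sub-supergroup. The latter follows from the Harish-Chandra pair formalism of \cite{masuoka2012harish}, since closed sub-pairs correspond to closed sub-supergroups.
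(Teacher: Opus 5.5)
Your proof is correct and follows essentially the paper's argument: take $L_{\bar 0}=C_{G_{\bar 0}}(s)$, identify it with the identity component of the centralizer of the minimal torus whose Lie algebra contains $s$ (hence it is reductive), and form the Harish-Chandra pair $(L_{\bar 0},\fl)$. Your extra check that $\fl$ is a quasi-reductive Lie superalgebra is not needed, because quasi-reductivity of $L$ only requires $L_{\bar 0}$ to be reductive.
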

   \begin{proof}
Let $s\in \g^{\mathbf{ss}}$ and let $\fl:=\g^s$. Let $T$ be a minimal toral subgroup in $G_{\bar 0}$ such that $\operatorname{Lie}(T)$ contains $s$. 

The centralizer $C_{G_{\bar 0}}(T)$ of $T$ in $G_{\bar 0}$ is reductive and so is its connected component of the unit, which we denote by $L_0$. Clearly, $L_0=C_{G_{\bar 0}}(s)$ and $Lie(L_0)= \mathfrak{c}_{\g_{\bar 0}}(s)=\fl_{\bar 0}$.
 The pair $(\fl,L_0)$ is a Harish-Chandra pair, defining a quasi-reductive sub-supergroup $L\subset G$.

   \end{proof}

\begin{definition}\label{def:parabolic_subalgebra}
Let $\g$ be a Lie superalgebra. A sub-superalgebra $\p\subseteq \g$ is called a {\it parabolic} subalgebra of $\g$ if there exists $s\in \g^{\mathbf{ss}}$ defining a grading $\g=\bigoplus_{i\in \Z} \g^{i}$ such that $\p=\bigoplus_{i\geq 0} \g^{i}$. 

The sub-superalgebra $\g^{0}$ is then called the {\it Levi subalgebra} of $\p$. Any Levi subalgebra in $\g$ is the Levi subalgebra of some parabolic subalgebra $\p\subset \g$.
\end{definition}

\subsection{The \texorpdfstring{supergroups $OSp(1|2)$}{group OSp(1|2)} and \texorpdfstring{$\G^{(1|1)}$}{the odd additive supergroup}}\label{ssec:prelim_osp}
\subsubsection{The supergroup \texorpdfstring{$OSp(1|2)$}{OSp(1|2)}}

Let $V_k$ be the $(k+1)$-dimensional irreducible representation of $SL_2$ (we set $V_{-1}:={0}$).

The supergroup $OSp(1|2)$ is the group superscheme in $\sVect$ of 
automorphisms of the space $\kk^{1|2}$ respecting a fixed symmetric 
non-degenerate form $\kk^{1|2} \otimes \kk^{1|2} \to \kk$ and having Berezinian 
$1$ (so $OSp(1|2)$ is a connected group in our convention).

We have: $OSp(1|2)_{\bar 0} = SL_2$ and the Lie superalgebra $\osp(1|2) = \mathrm{Lie}(OSp(1|2))$ is $(3|2)$-dimensional. The even part $\osp(1|2)_{\bar 0}$ is isomorphic to $\mathfrak{sl}_2$ and the odd part $\osp(1|2)_{\bar 1}$ (as a module over $\osp(1|2)_{\bar 0}$) is isomorphic to the standard $2$-dimensional representation of $\mathfrak{sl}_2$. 

We denote by $h$ the generator of the Cartan subalgebra in $\osp(1|2)_{\bar 0}$ and by $X, Y$ the standard basis of $\osp(1|2)_{\bar 1}$. The elements $h, X, Y$ generate the superalgebra $\osp(1|2)$, with relations
$$[h, X]=-X,\, [h, Y]=Y, \, [Y, X]=2h.$$

Taking $f:=[X, X]$ and $e:=[Y, Y]$, we obtain a basis $(X, Y, e,f,h)$ of $\osp(1|2)$.

We denote by $\Rep(OSp(1|2))$ the category of finite-dimensional $OSp(1|2)$-modules. This category is semisimple, with isomorphism classes of simple objects (up to parity switch) numbered by even integers. We denote by 
$\MM(k)$ ($k\geq 0$) the $(k+1|k)$-dimensional irreducible representation such that $$ \MM(k) \downarrow^{OSp(1|2)}_{SL_2} \;\cong \;V_{k} \oplus \Pi V_{k-1}.$$
%and $\osp(1|2)_{\bar{1}}$ acts by odd morphisms accordingly.

\begin{example}
 We have: $\MM(0) = \triv$, $\Pi \MM(1)$ is the natural $(1|2)$-dimensional matrix representation of $\osp(1|2)$ and $\MM(2)$ is the adjoint representation of $\osp(1|2)$.
\end{example}
The Clebsh-Gordan coefficients for $\osp(1|2)$ were computed in \cite[Lemma 3.3.4]{entova2022jacobson}.
\begin{comment}
It is easy to compute the Clebsh-Gordan coefficients for $\osp(1|2)$ using the restriction to the even part $\sl_2 = \osp(1|2)_{\bar 0}$ (see \cite[Lemma 3.3.4]{entova2022jacobson}):

\begin{lemma}\label{lem:Clebsh_Gordan}
 In the category of $\osp(1|2)$-modules, the decomposition of tensor products of irreducibles into irreducible summands is given by

$$ \MM(k)\otimes \MM(m) \cong \bigoplus_{s=\abs{k-m}}^{k+m} \Pi^{k+m-s} \MM(s)
$$
\end{lemma}
\end{comment}
\subsubsection{The \texorpdfstring{supergroup $\G^{(1|1)}$}{odd additive supergroup}}\label{ssec:prelim_G_a_odd}

Let $\G^{(1|1)}$ be the $(1|1)$-dimensional additive algebraic supergroup with 
$$\left(\G^{(1|1)}\right)_{\bar{0}} = \G.$$ The corresponding 
$(1|1)$-dimensional Lie superalgebra $\g_a^{(1|1)}:=\Lie(\G^{(1|1)})$ has a basis $([x,x], x)$, where $x \neq 0$ is 
an odd element, with relation (the Jacobi identity) $[x,[x,x]]=0$. This defines 
a Harish-Chandra pair as in \cite{masuoka2012harish} and hence an algebraic supergroup. 
For any algebraic supergroup $G$, an odd $\ad$-nilpotent element $x\in \mathrm{Lie} (G)_{\bar 1}$ defines a homomorphism $\G^{(1|1)} \to G$. We will denote the image of $\g_a^{(1|1)}$ in $ \g $ by $\kk[x]$.

Let $\mathtt{M}_k$ be the indecomposable representation of $\G^{(1|1)}$ with a basis $a_0, a_1, \ldots, a_{k}$, such that $\overline{a_{j}} \equiv j \mod 2$ for any $j\geq 0$ and
$$xa_j = 
\begin{cases}
	a_{j+1} &\text{ if } j<k\\
	0 &\text{ if } j=k                                                                                                                                                                                   
\end{cases}.
$$ 
It is easy to see that these are all the indecomposable representations of $\G^{(1|1)}$ up to change of parity and isomorphisms. The tensor multiplication rules for the modules $\mathtt{M}_k$ as well as the following easy (yet important) result were proved in \cite{entova2022jacobson}: there exists a symmetric monoidal (non-exact) functor $S:\Rep(\G^{(1|1)})\to \Rep(OSp(1|2))$ making $\Rep(OSp(1|2))$ the semisimplification of the category $\Rep(\G^{(1|1)})$. This functor annihilates indecomposable $\G^{(1|1)}$-representations of even dimension; more explicitly,  
$S(\mathtt{M}_{2k})\cong\mathbf{M}(2k)$, $S(\mathtt{M}_{2k+1})=0$.

Let $X \in \osp(1|2)_{\bar 1}$ be such that the element $X^2=\frac{1}{2}[X,X] \in \osp(1|2)_{\bar 0}$ corresponds to 
$f:=\begin{pmatrix}
0 &0\\
1 &0                                                                                                       \end{pmatrix}
$ under the isomorphism $\osp(1|2)_{\bar 0} \cong \mathfrak{sl}_2$.
The element $X$ defines an embedding $\iota_X:\G^{(1|1)} \hookrightarrow OSp(1|2)$, which in turn induces a restriction functor $$(-)\downarrow_X~: \Rep(OSp(1|2)) \to \Rep(\G^{(1|1)}).$$ 
It is easy to see that $S\circ (-)\downarrow_X~\cong ~\id$, so $\MM(2k)\downarrow_X ~\cong~ \mathtt{M}_{2k}$ for any $k\geq 0$.

\subsubsection{Representations of the \texorpdfstring{Lie superalgebra $\g_a^{(1|1)}$}{(1|1)-dimensional superalgebra}}
We may also consider the category of all finite-dimensional representations of the Lie superalgebra $\g_a^{(1|1)}$. 

Such representations are given by a pair $(x\rvert_V, V)$ where $V$ is a finite-dimensional vector superspace and $x\rvert_V\in \End^{\bullet}(V)_{\bar 1}$ is an odd endomorphism (not necessarily nilpotent). Let us explain what the semisimplification of this category looks like.

The category $\Rep(\g_a^{(1|1)})$ contains $ \Rep(\G^{(1|1)}) $ as a full subcategory, whose objects are pairs $(x\rvert_V,V)$ where $x\rvert_V$ is a nilpotent operator.

An indecomposable object in $\Rep(\g_a^{(1|1)})$ is a pair $(x\rvert_V, V)$ where $[x\rvert_V,x\rvert_V]$ acts on each subspace $V_{\bar 0}, V_{\bar 1}$ by a single Jordan block, with the same eigenvalues. The operator $[x\rvert_V,x\rvert_V]$ has supertrace zero, so either $x\rvert_V$ is a nilpotent operator, or $\dim V_{\bar 0}=\dim V_{\bar 1}$ (so $\sdim V=0$.) 

We now consider the semisimplification of the category $ \Rep(\g_a^{(1|1)})$. The above considerations imply that we have a semisimplification functor $S:\Rep(\g_a^{(1|1)})\to \Rep(OSp(1|2))$ which fits into the commutative diagram  
$$\xymatrix{&\Rep(\G^{(1|1)}) \ar[rd]^S \ar@{^{(}->}[r] &\Rep(\g_a^{(1|1)}) \ar^S[d]\\ &{} &\Rep(OSp(1|2))}$$

\subsection{The Deligne filtration}\label{ssec:filtration}

Let $x$ be an odd nilpotent operator on a finite-dimensional superspace $M$. Then $x$ defines a canonical finite increasing filtration\footnote{In the case of an even operator $x$, this is the filtration which appears in the Hodge theory.}
$$\ldots\subset\mathcal F^{i}(M)\subset \mathcal F^{i+1}(M)\subset\ldots $$ satisfying the conditions
\begin{itemize}
  \item $x(\mathcal F^{i}(M))\subset\mathcal F^{i-2}(M)$;
  \item If $Gr^i(M):=\mathcal F^{i}(M)/\mathcal F^{i-1}(M)$ then $x^{{i}}: Gr^{i}(M)\to \Pi^iGr^{-i}(M) $ is an isomorphism for all $i\geq 0$.
  \end{itemize}

This filtration is explicitly given by $$\mathcal{F}_k(M)=\bigoplus_{j-i=k+1} \Ker x^j\rvert_M \cap \Im x^i\rvert_M.$$

Choose the standard set of generators $h, X, Y$ in  $\mathfrak{osp}(1|2)$ as in \cref{ssec:prelim_osp}.

\begin{lemma}\label{lem:ospstr}[See \cite{entova2022jacobson}]
For any vector superspace $M$ with an odd nilpotent endomorphism $x$ on $M$, the superspace $Gr^{ev}(M):=\bigoplus_{i\in\mathbb Z}Gr^{2i}(M)$ has a unique structure of $\mathfrak{osp}(1|2)$-module such that $h$ acts by grading and $X$ acts as $Gr(x)$.
\end{lemma}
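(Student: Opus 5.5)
The plan is to reduce to indecomposable summands, build the module structure by hand on each one, and then check uniqueness directly.

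\emph{Reduction.} Decompose $M$ into indecomposable $\G^{(1|1)}$-summands. Up to parity shift these are the modules $\mathtt{M}_k$ of \cref{ssec:prelim_G_a_odd}. The Deligne filtration is canonical and given by the explicit formula $\mathcal{F}_k(M)=\bigoplus_{j-i=k+1}\Ker x^j\cap\Im x^i$, so it is compatible with direct sums. Hence $Gr^{ev}(M)$ is the direct sum of the $Gr^{ev}$ of the summands, and existence reduces to a single block $\mathtt{M}_k$ with basis $a_0,\dots,a_k$.

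\emph{Existence on one block.} The filtration on $\mathtt{M}_k$ puts $a_j$ in degree $k-2j$.
\begin{itemize}
\item If $k$ is odd, all degrees are odd, so $Gr^{ev}=0$ and there is nothing to construct.
\item If $k=2m$ is even, then $Gr^{ev}=\mathtt{M}_{2m}$, with $h$ acting on $a_j$ by a scalar proportional to $m-j$ (normalized so that $[h,X]=-X$) and $X$ acting as $a_j\mapsto a_{j+1}$.
\end{itemize}
Since $\MM(2m)\downarrow_X\cong\mathtt{M}_{2m}$ and $h$ acts on $\MM(2m)$ by its weight grading, transporting the structure of $\MM(2m)$ gives an $\osp(1|2)$-module structure of the required shape. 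Alternatively, one can define $Y$ directly by $Ya_j=c_ja_{j-1}$ and solve for the scalars $c_j$ from the relation $[Y,X]=2h$; this recursion is solvable with all $c_j\neq 0$ for $1\le j\le 2m$.

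\emph{Uniqueness.} This is the main obstacle, since it must hold on $Gr^{ev}(M)$ globally, not only block by block. Suppose $h$ and $X$ are fixed. Then $Y$ is an odd operator of $h$-weight $+1$ satisfying $[Y,X]=2h$. If $Y'$ is another such operator, then $D=Y-Y'$ has weight $+1$ and supercommutes with $X$.

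To show $D=0$, use that $Gr^{ev}(M)$ with the existing structure is a direct sum of modules $\MM(2m)$. Because $D$ supercommutes with $X$, it commutes with $X^2=\frac12 f$. Combined with the weight condition, the kernel of $X$ is spanned by lowest-weight vectors $v$, and $Dv$ lies in $\Ker X$ (up to sign) while having weight one higher than $v$. Each vector $Dv$ would therefore have to be a lowest-weight vector of a summand of weight $\mathrm{wt}(v)+1$. The key point is the Deligne isomorphisms $x^i:Gr^i\to\Pi^iGr^{-i}$. Their consequence is that lowest-weight vectors in $Gr^{ev}$ occur only in non-positive even degrees, whereas $Dv$ has odd degree; so on its own this might look like it leaves $Dv$ nonzero.

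One checks instead that $\Ker X\cap Gr^{2i}=0$ unless $2i$ is the minimal degree of its block. Applying $D$ raises the degree and still lands in $\Ker X$, which forces $Dv=0$ by comparing the degree bound from the Deligne isomorphisms. Finally, $D$ supercommutes with $X$ and $Gr^{ev}(M)$ is generated by lowest-weight vectors under repeated application of $Y$ and $X$ (equivalently, by highest-weight vectors under $X$). From this, $D=0$ follows by induction on weight.
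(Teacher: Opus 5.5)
Your existence argument is fine. The reduction to single blocks is legitimate, and with $ha_j=(m-j)a_j$ the overdetermined recursion is solved by $c_{2l}=-2l$, $c_{2l+1}=2(m-l)$. If you use the transport route instead, the irreducible you need is the $(m+1|m)$-dimensional one, with $\sl_2$-restriction $V_m\oplus\Pi V_{m-1}$. Given $\dim\MM(k)=(k+1|k)$, the label $\MM(2m)$ does not match $\dim\mathtt{M}_{2m}=2m+1$.

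\textbf{The gap is in uniqueness.} Your setup ($D=Y-Y'$ odd, of $h$-weight $+1$, with $DX=-XD$) is correct, but the argument that follows does not force $D=0$.
\begin{itemize}
\item $D$ raises the $h$-weight by $1$, i.e.\ the Deligne degree by $2$, so $Dv$ has even degree, not odd.
\item More seriously, ``$Dv\in\Ker X$ and has higher degree, hence $Dv=0$'' is not a valid inference. Suppose $v$ is the bottom vector of a block $\mathtt{M}_{2n}$ (degree $-2n$) and $M$ also contains a block $\mathtt{M}_{2n-2}$. Then $\Ker X\cap Gr^{-2n+2}\neq 0$, and nothing you use excludes $Dv$ being a nonzero vector there. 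The assertion that $\Ker X\cap Gr^{2i}=0$ unless $2i$ is the minimal degree of its block is not meaningful, since $Gr^{2i}$ mixes blocks.
\item Even granting $D|_{\Ker X}=0$, you cannot propagate it. $D$ satisfies no known relation with $Y$. Under $X$ alone, $Gr^{ev}(M)$ is generated by the \emph{highest}-weight vectors, about which you have shown nothing. Knowing $D|_{\Ker X}=0$ and $DX=-XD$ gives no information about $D$ at the top of a block.
\end{itemize}

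\textbf{The fix is to argue at the top of the blocks rather than the bottom.} Let $v\in Gr^{2k}$, $k\ge 0$, with $X^{2k+1}v=0$ (for instance the top vector of a block $\mathtt{M}_{2k}$). Then $Dv\in Gr^{2k+2}$ and $X^{2k+1}Dv=-DX^{2k+1}v=0$. Since $X^{2k+2}\colon Gr^{2k+2}\to Gr^{-2k-2}$ is an isomorphism (Deligne), $X^{2k+1}$ is injective on $Gr^{2k+2}$, so $Dv=0$. Such vectors generate $Gr^{ev}(M)$ under $X$, and $D$ anticommutes with $X$, so $D=0$.

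Alternatively, $D$ commutes with $f\in\kk X^2$ and has $\ad_h$-weight $+1$ in the finite-dimensional $\sl_2$-module $\End(Gr^{ev}(M))$, which exists by your existence part. But $\Ker\ad_f$ consists of lowest-weight vectors, all of weight $\le 0$, hence $D=0$. Either way, uniqueness is the easy half of the lemma, not the main obstacle.
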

\InnaA{Thus we have a functor $Gr^{ev}:\Rep(\G^{(1|1)}) \to \Rep(OSp(1|2))$. This functor is isomorphic to the semisimplification functor $S$.}

The following straightforward lemma connects the negligible morphisms in $\Rep(\G^{(1|1)})$ with the Deligne filtration.
\begin{lemma}\label{lem:negl_Deligne_filt}
    Let $f: M\to M'$ be a morphism of $\G^{(1|1)}$-modules and let $\mathcal{F}^{\bullet}(M), \mathcal{F}^{\bullet}(M')$ be the corresponding Deligne filtrations.
    \begin{enumerate}
        \item If for all $i\in \Z$ we have $f(\mathcal{F}^i(M))\subset \mathcal{F}^{i-1}(M')$, then $f$ is negligible.
        \item If $M, M'$ are indecomposable $\G^{(1|1)}$-modules of odd dimension and $f$ is negligible, then for all $i\in \Z$ we have $f(\mathcal{F}^i(M))\subset \mathcal{F}^{i-1}(M')$.
    \end{enumerate}
\end{lemma}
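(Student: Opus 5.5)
We reduce both parts to the case where $M$ and $M'$ are indecomposable. Write $M=\bigoplus_a M_a$ and $M'=\bigoplus_b M'_b$ as sums of indecomposable $\G^{(1|1)}$-modules, each of the form $\mathtt{M}_k$ up to parity. The Deligne filtration is canonical and compatible with direct sums: from the explicit formula, $\mathcal F^i(M)=\bigoplus_a \mathcal F^i(M_a)$. Hence the hypothesis of (1) holds for $f$ if and only if it holds for every component $f_{ba}:M_a\to M'_b$. Negligibility is also checked componentwise, since the negligible morphisms form a tensor ideal, closed under sums and under composition with inclusions and projections. So it suffices to treat $f:\mathtt{M}_k\to\mathtt{M}_l$, possibly with parity shifts.

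Next we describe the Deligne filtration on $\mathtt{M}_k$ with basis $a_0,\dots,a_k$. Place $a_j$ in filtration degree $k-2j$. Then $\mathcal F^i$ is spanned by those $a_j$ with $k-2j\le i$, the map $x$ lowers the degree by $2$, and $x^i:Gr^i\to Gr^{-i}$ is an isomorphism. This agrees with the explicit formula. Any $\G^{(1|1)}$-morphism $f:\mathtt{M}_k\to\mathtt{M}_l$ commutes with $x$, so it is determined by $f(a_0)=\sum_j c_j a_j$, subject to the constraint $x^{k+1}f(a_0)=0$.

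We now use the standard characterization of negligibility: $f:M\to M'$ is negligible iff $\mathrm{str}(g\circ f)=0$ for all $g:M'\to M$. Since $S(\mathtt{M}_{2k+1})=0$, any morphism between indecomposables one of which has even dimension is negligible. This settles (1) whenever $k$ or $l$ is odd, so the essential case of (1) is $k=l=2m$. It is also the whole content of (2), since odd-dimensional $\mathtt{M}_{2m}$ corresponds to even $k$.

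For $\mathtt{M}_{2m}\to\mathtt{M}_{2n}$, the filtration degrees of $a_j$ are $2m-2j$ and $2n-2j$ respectively. Suppose first $m\neq n$. Then $S(\mathtt{M}_{2m})=\MM(2m)$ and $S(\mathtt{M}_{2n})=\MM(2n)$ are non-isomorphic simples, so every morphism is negligible. In that case (2) must be checked by the degree computation. The constraint $x^{2m+1}f(a_0)=0$ forces $f(a_0)\in \Ker x^{2m+1}$, which is spanned by $a_j$ with $j\ge 2n-2m$ when $n>m$. Each such $a_j$ has degree at most $2m-2$, below the degree $2m$ of $a_0$. When $n<m$, the image $f(a_j)$ lands in degree at most $2n-2j<2m-2j$. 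Parity and filtration degrees being even, the shift is by at least $1$ in either case.

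Suppose now $m=n$. Then $f=c_0\,\id+(\text{terms with } j\ge1)$, and the remaining terms strictly lower the degree. We get $\mathrm{str}(f\circ g)$ equal to $c_0d_0\,\sdim\mathtt{M}_{2m}=\pm c_0d_0$, which is nonzero for $g=\id$ whenever $c_0\neq0$. Therefore $f$ is negligible iff $c_0=0$, iff $f$ strictly lowers the filtration. This proves both (1) and (2). The main care needed is the bookkeeping of parity shifts $\Pi$, which do not affect the argument, and the verification of the supertrace formula on the nilpotent part: a strictly filtration-lowering endomorphism is nilpotent and so has zero supertrace.
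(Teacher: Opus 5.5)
Your argument is correct. The paper gives no proof of this lemma and simply calls it straightforward; your reduction to indecomposable summands, followed by the explicit computation on $\mathtt{M}_k$ (every morphism is negligible when one dimension is even or when $m\neq n$, and for $m=n$ the morphism $f=c_0\,\mathrm{id}+\sum_{j\ge 1}c_jx^j$ is negligible iff $c_0=0$ iff it strictly lowers the filtration), is exactly the routine verification intended, and your remark that the essential case of (1) is ``$k=l$'' is only a cosmetic slip, since you then handle $k=2m$, $l=2n$ with $m\neq n$ correctly.
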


\subsection{Symmetric monoidal functors for arbitrary odd elements and the Duflo-Serganova functors}\label{ssec:DS_functors_prelim}

Let $G$ be a quasi-reductive supergroup, $\g:=\mathrm{Lie}~(G)$.

Any $x\in 
\g_{\bar 1}$ induces a homomorphism $ i_x:\g_a^{(1|1)}\to \g$. Define $$\Phi_x=S\circ (-)\downarrow_{i_x}:\mathrm{Rep}(\g)\to \mathrm{Rep}(OSp(1|2))$$ to be the composition of the restriction functor $(-)\downarrow_{i_x}:\Rep(\g)\to \Rep(\g_a^{(1|1)})$ and the semisimplification functor $S:\Rep(\g_a^{(1|1)})\to \Rep(OSp(1|2))$. We will usually consider the functor $\Phi_x$ as a functor from $\Rep(G) \to \mathrm{Rep}(OSp(1|2))$.

We will now give another description of $\Phi_x:\Rep(G) \to \mathrm{Rep}(OSp(1|2))$. Let $s\in \g_{\bar 0}$ be the semisimple part in the Jordan decomposition of $\frac{1}{2}[x,x]$ and let the supergroup $G^s$ be the centralizer of $s$ in $G$.

Let $M\in \Rep(G)$. Clearly, $x$ acts as a nilpotent operator on the space of invariants $M^s:=\Ker(s\rvert_{M})$. We may consider the associated Deligne filtration on $M^s$, which defines a symmetric monoidal functor
$$Gr^{ev}((\cdot)^s):\Rep(G)\to \Rep(OSp(1|2)),\;\;\; M\mapsto Gr^{ev}(M^s).$$ This functor is the composition of the following functors:
\begin{itemize}
    \item The functor $\Rep(G)\to \Rep(\G^{(1|1)})$ sending $M\in \Rep(G)$ to the $\G^{(1|1)}$-module given by action of $x$ on the $s$-invariants $M^s$, 
    \item the functor $Gr^{ev}:\Rep(\G^{(1|1)})\to \Rep(OSp(1|2))$, which is isomorphic to the semisimplification functor.
\end{itemize}
The above composition is isomorphic to the functor $\Phi_x$.

One may also consider it as a functor $\Phi_x: \Rep(G)~\longrightarrow~ \Rep(\Phi_x(\g))$ where $\Phi_x(\g)$ is a new Lie superalgebra, a semidirect product of $\osp(1|2)$ and a subquotient of $\g$.

We now describe an important special case of this construction. Assume that $x\in \g^{hom}$ (that is, $s:=\frac{1}{2}[x,x]$ is semisimple).
In this case, the functor $\Phi_x$ is called the Duflo-Serganova functor and denoted by $DS_x $ (see \cite{duflo2005associated, gorelik2022duflo}). It is given explicitly by $$DS_x: \Rep(G)~\longrightarrow~ \Rep(DS_x(\g)),\;\;\; M ~\longmapsto~ \frac{\Ker x\rvert_{M^s}}{\Im x\rvert_{M^s}} = \Phi_x(M^s).$$ 
(note that $Gr^{ev}(M^s)=Gr^0(M^s)$ in this case and the action of $OSp(1|2)$ on $\Phi_x(M)$ is trivial).
\begin{remark}
    The functor $\Phi_x$ is symmetric monoidal, but it is often not exact on either side (the elements $x$ for which it is exact are discussed in \cref{ssec:neat_def}).
\end{remark}

\subsection{Restriction functors for odd additive supergroups}

Let $\iota_X:\G^{(1|1)} \to  OSp(1|2)$ be the embedding we chose before and let $$(-)\downarrow_X~: \Rep(OSp(1|2)) \to \Rep(\G^{(1|1)})$$ the corresponding restriction functor. Consider the following commutative diagram of supergroups, where $\Delta$ stands for the diagonal embeddings:
$$\xymatrix{
 &\G^{(1|1)} \ar_{(i\times \id)\circ \Delta}[d] \ar^{i}[rr] &{} & OSp(1|2) \ar_{\Delta}[d]\\
    &OSp(1|2)\times \G^{(1|1)} \ar^{\id \times i}[rr] &{} &OSp(1|2) \times OSp(1|2)
}.$$

Consider the SM functors 
\begin{align*}
    T: \Rep(OSp(1|2))\boxtimes \Rep(\G^{(1|1)}) ~\longrightarrow~\Rep(\G^{(1|1)}), \;\;\; M\boxtimes M'\longmapsto M\downarrow_X~\otimes~ M',\\
    \overline{T}: \Rep(OSp(1|2))\boxtimes \Rep(OSp(1|2)) ~\longrightarrow~\Rep(OSp(1|2)), \;\;\; M\boxtimes M'\longmapsto M\otimes M',
\end{align*} The functors $T, \overline{T}$ correspond to the supergroup homomorphisms $$\G^{(1|1)} \xrightarrow{(i\times \id)\circ \Delta} OSp(1|2)\times \G^{(1|1)}, \;\;\; \Delta: OSp(1|2)\to OSp(1|2) \times OSp(1|2)$$ respectively. The following lemma gives a "semisimplification-categorical" analogue of the above commutative diagram.
\begin{lemma}\label{lem:aux_ss_functors}
    There exists a natural isomorphism making the following diagram of functors commutative: 
    $$\xymatrix{
        &\Rep(OSp(1|2))\boxtimes \Rep(\G^{(1|1)}) \ar_{T}[d] \ar^-{\id \boxtimes S}[rr] &{} &\Rep(OSp(1|2)) \boxtimes \Rep(OSp(1|2))\ar_{\overline{T}}[d]\\
 &\Rep(\G^{(1|1)})  \ar^-{S}[rr] &{} & \Rep(OSp(1|2)). }$$
\end{lemma}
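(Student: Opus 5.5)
We need a natural isomorphism $S(M\downarrow_X\otimes M')\cong M\otimes S(M')$ for $M\in\Rep(OSp(1|2))$ and $M'\in\Rep(\G^{(1|1)})$, compatible with the monoidal structures. The plan is to realise $S$ as $Gr^{ev}$ (\cref{lem:ospstr}) and use the universal property of semisimplification, since both composites are symmetric monoidal functors out of $\Rep(OSp(1|2))\boxtimes\Rep(\G^{(1|1)})$.

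First step: factor both composites. Since $T$ is a symmetric monoidal functor, $S\circ T$ sends $M\boxtimes M'$ to $S(M\downarrow_X)\otimes S(M')$ via the monoidal structure of $S$. The key input is $S\circ(-)\downarrow_X\cong\id$, which is stated in \cref{ssec:prelim_G_a_odd}. This gives a natural isomorphism
$$S(M\downarrow_X\otimes M')\;\cong\; S(M\downarrow_X)\otimes S(M')\;\cong\; M\otimes S(M')=\overline{T}\circ(\id\boxtimes S)(M\boxtimes M').$$

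Second step: check that the isomorphism $S\circ(-)\downarrow_X\cong\id$ is an isomorphism of \emph{symmetric monoidal} functors, and that it is natural in $M$. I would verify this concretely. On $\MM(2k)\downarrow_X\cong\mathtt M_{2k}$, the Deligne filtration for $x=X$ is exactly the filtration by $h$-eigenvalues, with $\mathcal F^i=\bigoplus_{j\le i}M_j$ up to normalisation of the grading. So $Gr^{ev}(M\downarrow_X)$ is canonically identified with $M$ itself as a graded space with the action of $X$, and by the uniqueness in \cref{lem:ospstr} it is identified with $M$ as an $\osp(1|2)$-module.

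For a general (semisimple) $M$ the same holds: the $h$-grading gives a splitting of the Deligne filtration, and this splitting is functorial, because $OSp(1|2)$-maps preserve $h$-weights. The splitting is compatible with tensor products, since $h$ acts on $M\otimes N$ as $h\otimes1+1\otimes h$ and the Deligne filtration of a tensor product of $h$-split modules is the convolution filtration. This yields monoidality.

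The main obstacle is the monoidality and canonicity of the identification, rather than its existence on objects. The fact that $Gr^{ev}$ itself is monoidal is assumed from \cite{entova2022jacobson} (it is the semisimplification functor). One must make sure the composite isomorphism agrees with the tensor structure maps of $S$. I would handle this by noting that every map involved is induced by the identity on underlying graded superspaces once the $h$-splitting is chosen. The coherence diagrams then commute on the nose, and naturality in $M'$ follows from naturality of the monoidal structure of $S$.
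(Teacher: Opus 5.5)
Your proof is correct, but it takes a different route from the paper. Your first step already proves the lemma. By definition $T(M\boxtimes M')=M\downarrow_X\otimes M'$. The symmetric monoidal structure of $S$ and the isomorphism $S\circ(-)\downarrow_X\cong\id$ are both recorded in \cref{ssec:prelim_G_a_odd}. Together they give $S(T(M\boxtimes M'))\cong S(M\downarrow_X)\otimes S(M')\cong M\otimes S(M')$, naturally in $M$ and $M'$.

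You should add one sentence on why this suffices on the whole Deligne product. Since $\Rep(OSp(1|2))$ is semisimple, every object of $\Rep(OSp(1|2))\boxtimes\Rep(\G^{(1|1)})$ is a direct sum of objects $L\boxtimes N$, with $L$ running over representatives of simple $OSp(1|2)$-modules up to parity. Morphisms between such sums are direct sums of morphisms $\id_L\boxtimes g$, so your isomorphism extends additively.

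The paper instead checks on indecomposables, using \cref{lem:negl_Deligne_filt}, that $T$ sends negligible morphisms to negligible morphisms. Hence $S\circ T$ factors through the semisimplification functor $\id\boxtimes S$ via some symmetric monoidal functor $\overline{T'}$. The paper then identifies $\overline{T'}$ with $\overline{T}$ by invoking the correspondence between such functors and supergroup homomorphisms $OSp(1|2)\to OSp(1|2)\times OSp(1|2)$.

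Your route is shorter and avoids both the negligibility computation and that Tannakian identification. The paper's route obtains the square from the universal property of semisimplification, and shows along the way that $T$ preserves the negligible ideal.

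Your second step is not needed for the lemma as stated, which asserts only a natural isomorphism; it matters only if you want the isomorphism to be monoidal. If you keep it, note that the convolution property of Deligne filtrations really does require both tensor factors to be restricted from $OSp(1|2)$. For general $\G^{(1|1)}$-modules it fails: $\mathtt{M}_1\otimes\mathtt{M}_1\cong\mathtt{M}_1\oplus\Pi\mathtt{M}_1$ has Deligne degrees $\pm1$ rather than $2,0,0,-2$. So the $h$-splitting argument cannot be applied to $M\downarrow_X\otimes M'$ itself, and your first step is right to use the abstract monoidal structure of $S$ there.
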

\begin{proof}
    Denote: $\mathcal{U}:=\Rep(OSp(1|2))\boxtimes \Rep(\G^{(1|1)})$. Recall that the category $\Rep(OSp(1|2))$ is semisimple, so the functor $\mathcal{U} \xrightarrow{~\id \boxtimes S~} \Rep(OSp(1|2)) \boxtimes \Rep(OSp(1|2))$ is the semisimplification functor. This functor is given by taking the quotient of the category $\mathcal{U}$ by the ideal of negligible morphisms (see \cite{etingof2021semisimplification}). We will now show that any negligible morphism in $\mathcal{U}$ is sent to zero under the functor $S\circ T$; in other words, that $T$ sends negligible morphisms to negligible morphisms. This would imply the existence of a SM functor $\overline{T'}$ and a natural isomorphism making the diagram below commutative: 
    $$\xymatrix{
        &\mathcal{U} \ar_{T}[d] \ar^-{\id \boxtimes S}[rr] &{} &\Rep(OSp(1|2)) \boxtimes \Rep(OSp(1|2))\ar_{\overline{T'}}[d]\\
 &\Rep(\G^{(1|1)})  \ar^-{S}[rr] &{} & \Rep(OSp(1|2)). }$$ 

To show that $T$ sends negligible morphisms to negligible morphisms, it is enough to check this for negligible morphisms in $\mathcal{U}$ whose domain and target are indecomposable (see \cite[Exercise 3(ii), Subsection 2.18]{benson1984modular}, or \cite[Lemma 2.2]{etingof2021semisimplification}). Let  $f:M\to M'$ in $\mathcal{U}$ be a negligible morphism between two indecomposable $OSp(1|2)\times \Rep(\G^{(1|1)})$-modules. Since $f$ is negligible, at least one of the following two conditions holds (see {\it loc. cit.}): 
\begin{itemize}
    \item $f$ is not an isomorphism, 
    \item $\sdim M=0$.
\end{itemize}

Since $OSp(1|2)$ is semisimple, any indecomposable $OSp(1|2)\times \Rep(\G^{(1|1)})$-module is an external product of a simple $OSp(1|2)$-module and an indecomposable $\Rep(\G^{(1|1)})$-module. 

Let us first assume that $\sdim M=0$. Then $M\cong \MM(2k)\boxtimes\mathtt{M}_{2s+1}$ for some $k, s\in \Z_{\geq 0}$. By the definition of the functor $T$, we have an isomorphism of $\G^{(1|1)}$-modules $$T(M)\cong \mathtt{M}_{2k}\otimes \mathtt{M}_{2s+1}.$$ Applying the functor $S$, we get:
$$ S(T(M))\cong S\left(\mathtt{M}_{2k}\otimes \mathtt{M}_{2s+1}\right)\cong S(\mathtt{M}_{2k})\otimes S(\mathtt{M}_{2s+1})=S(\mathtt{M}_{2k})\otimes0=0.$$
This shows that $S(T(f))=0$ and $T(f)$ is negligible, as required.

Now let us assume that $f$ is not an isomorphism and that $\sdim M, \sdim M'\neq 0$. Then $M\cong \MM(2k)\boxtimes\mathtt{M}_{2s}$, $M'\cong \MM(2k')\boxtimes\mathtt{M}_{2s'}$, for some $k, k', s, s'\in \Z$. 

Under these isomorphisms $f$ corresponds to an element of $\Hom_{OSp(1|2)}(\MM(2k), \MM(2k'))\otimes \Hom_{\G^{(1|1)}}(\mathtt{M}_{2s}, \mathtt{M}_{2s'})$. Since $\dim \Hom_{OSp(1|2)}(\MM(2k), \MM(2k'))\leq 1$, we may assume that $f$ corresponds to some 
$f_{1}\otimes f_{2}$, where $f_1\in \Hom_{OSp(1|2)}(\MM(2k), \MM(2k'))$, $f_2\in \Hom_{\G^{(1|1)}}(\mathtt{M}_{2s}, \mathtt{M}_{2s'})$. Since $f$ is not an isomorphism, either $f_1=0$ (in which case $f=0$ and we are done), or $f_1$ is an isomorphism and $f_2$ is a negligible morphism.

By \cref{lem:negl_Deligne_filt}, we conclude that for any $i$, we have: $f_2(\mathcal{F}^i(\mathtt{M}_{2s}))\subset \mathcal{F}^{i-1}(\mathtt{M}_{2s})$. We conclude that
$T(f)$ sends the component $\mathcal{F}^i(M)\cong \sum_j\mathcal{F}^j(\MM(2k))\otimes  \mathcal{F}^{i-j}(\mathtt{M}_{2s})$ to the component $\mathcal{F}^{i-1}(M')\cong \sum_j\mathcal{F}^j(\MM(2k))\otimes  \mathcal{F}^{i-j-1}(\mathtt{M}_{2s})$ and \cref{lem:negl_Deligne_filt} implies that $T(f)$ is negligible.

This proves the existence of a SM functor $\overline{T'}$ and an isomorphism as above.

Any SM functor $\Rep(OSp(1|2)) \boxtimes \Rep(OSp(1|2)) \to \Rep(OSp(1|2))$ corresponds to some supergroup homomorphism $OSp(1|2)\to OSp(1|2) \times OSp(1|2) $. The explicit action of $\overline{T'}$ on the objects implies that $\overline{T'}$ is isomorphic to $\overline{T}$. This completes the proof of the lemma.
\end{proof}

\newpage

\section{Neat and ad-nilpotent elements in Lie superalgebras}\label{sec:neat_elems}

\subsection{Definitions}\label{ssec:neat_def}

Let $G$ be a quasi-reductive algebraic supergroup corresponding to a Harish-Chandra pair $(G_{\bar 0}, \g:=\mathrm{Lie} (G))$ (see \cref{ssec:supervec}).

The definition below was given in \cite{entova2022jacobson}.
\begin{definition}\label{def:neat_operator}
 Let $V$ be a vector superspace and $x \in \End(V)$ an odd nilpotent operator. The element $x$ defines an action of the supergroup $\mathbb{G}_a^{(1|1)}$ on $V$. The element $x$ acts {\it neatly} on $V$ if all indecomposable $\mathbb{G}_a^{(1|1)}$-summands of $V$ have non-zero superdimension.
\end{definition}

In other words, $x$ acts neatly on $V$ if and only if $V$ and $Gr^{ev}(V)$ (see \cref{ssec:filtration}) are isomorphic as vector superspaces.

\begin{definition}\label{def:nilpotent_elements}
   Let $x\in \g_{\bar 1}$. 
   \begin{itemize}
   \item We denote by $G_{\bar 0}.x$ the $G_{\bar 0}$-orbit of $x$.
       \item We will say that $x$ is {\it $\ad$-nilpotent} if $[x,x]$ is a nilpotent element in the reductive Lie algebra $\g_{\bar 0}$ (that is, $[x,x]\in [\g_{\bar 0}, \g_{\bar 0}]$ and $\ad_x\in \End(\g)$ is a nilpotent operator).
        \item We will say that an $\ad$-nilpotent element $x\in\g_{\bar 1}$ is {\it neat} (respectively, that $G_{\bar 0}.x$ is a {\it neat orbit}) if $x$ acts 
neatly in every finite-dimensional representation of $\g$ (see \cref{def:neat_operator}).
   \end{itemize}

   We denote by $\mathcal{N}^{\ad}(\g_{\bar 1})$ the set of all $\ad$-nilpotent elements on $\g_{\bar 1}$ and by $\g_{neat}$ the set of neat elements in $\g_{\bar 1}$. 
\end{definition}

\begin{remark}
 The element $0 \in \g_{\bar 1}$ is always neat.
\end{remark}
\begin{remark}

One may also consider the set of ``geometrically nilpotent'' elements $x$ for which $0\in \overline{G_{\bar 0}.x}$.
This is the set of elements $v\in \g_{\bar 1}$ annihilated by all the $G_{\bar 0}$-invariant homogeneous polynomials of positive degree on $\g_{\bar 1}$ (see for example \cite{gruson2010cones, jenkins2021nilpotent, motorin2023resolution}). This set is clearly contained in $ \mathcal{N}^{\ad}(\g_{\bar 1})$ but does not always coincide with $ \mathcal{N}^{\ad}(\g_{\bar 1})$; for example, when $\g=\sl(n|n)$, there are $\ad$-nilpotent elements which are not ``geometrically nilpotent''.
\end{remark}

The following theorem, proved in \cite[Theorem 4.2.1]{entova2022jacobson}, justifies our interest in neat elements:

\begin{theorem}[Super Jacobson-Morozov Lemma, see \cite{entova2022jacobson}]\label{thrm:JM}
Let $G$ be a quasi-reductive algebraic supergroup with Lie algebra $\g$. 

\begin{itemize}
\item Let $x\in\mathcal{N}^{\ad}(\g_{\bar 1})$ and let $V$ be a faithful representation of $G$, inducing a representation $\rho: \g \to \gl(V)$. Then $x\in \g_{neat}$ iff $\rho(x)\in \gl(V)_{neat}$.
    \item Let $x\in\g_{neat}$, $x\neq 0$ and let $i_x:\mathbb{G}_a^{(1|1)} \hookrightarrow G$ be the corresponding injective homomorphism. The inclusion $i_x$ can be extended to an injective homomorphism $\bar{i}_x: OSp(1|2) \hookrightarrow G$. This extension is unique up to conjugation by an element of $G_{\bar 0}$.
    \item Let $x\in\mathcal{N}^{\ad}(\g_{\bar 1})$, $x\neq 0$. The functor $\Phi_x$ is faithful (equivalently, exact) if an only if $x$ is neat; in that case, $\Phi_x$ is given by the restriction with respect to $\bar{i}$.
\end{itemize}
\end{theorem}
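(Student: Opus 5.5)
Since this is the super Jacobson--Morozov theorem of \cite[Theorem 4.2.1]{entova2022jacobson}, the plan is to reconstruct its proof in three steps. First reduce to a faithful representation, then build the $\osp(1|2)$-triple from a classical $\sl_2$-triple, and finally identify $\Phi_x$ with restriction.

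For the first item, the easy direction is the definition of neatness. If $x\in\g_{neat}$, then $x$ acts neatly on $V$, so $\rho(x)$ acts neatly on the tautological module $V$ of $\gl(V)$. Every $\gl(V)$-module is a subquotient of sums of $V^{\otimes a}\otimes (V^*)^{\otimes b}$, so $\rho(x)\in\gl(V)_{neat}$ follows once neatness is stable under duals, tensor products and direct summands. Duals and summands are clear. Tensor products follow from the fact that $S$ is monoidal: $\sdim$ is multiplicative, and $\mathtt M_{2k}\otimes\mathtt M_{2l}$ has no odd-length summands by the $OSp(1|2)$ Clebsch--Gordan rules. For the converse we use the same closure together with the fact that every $G$-module is a summand of a subquotient of tensor powers of $V\oplus V^*$. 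The catch is that neatness passes to summands but not obviously to subquotients. So I would first prove the second item for $\gl(V)$ (explicitly, via Jordan blocks of odd size) and then deduce the general case from the group-level statement.

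For the second item, set $f=\frac12[x,x]$, which is nilpotent in $\g_{\bar 0}$. Choose an $\sl_2$-triple $(e,h,f)$ in $\g_{\bar 0}$ by classical Jacobson--Morozov, and decompose $x=\sum_i x_i$ under $\ad h$. Neatness in $\gl(V)$ means $V$ is a sum of odd Jordan blocks. Each such block carries an $\osp(1|2)$-structure compatible with the grading, so one can choose $h$ with $[h,x]=-x$. Concretely, I would take the Deligne grading on $V$ given by \cref{lem:ospstr} and choose $h$ in $\g_{\bar 0}$ with the same $\ad$-action on $x$ and $f$. This uses that $\g_{\bar 0}$ is reductive and that the centralizer is closed under Jordan decomposition, by \cref{lem:centralizer_Jordan_closed}.

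Given $[h,x]=-x$, let $y$ be the odd element with $[y,x]=2h$ in weight $1$. We need $[x,\cdot]\colon \g_{\bar1}^{1}\to\g_{\bar0}^0$ to hit $h$. I expect this to be the main obstacle, and I would handle it via $\ad$-surjectivity on the $\sl_2$-isotypic pieces, using that $\ad x$ intertwines $\ad f$ and that $\g$ is a neat $x$-module. The surjectivity onto $h$ then follows from the $\osp(1|2)$-structure on $Gr^{ev}(\g)=\g$. Next set $e:=[y,y]$ and check the relations, correcting $y$ by elements of the centralizer. Uniqueness up to $G_{\bar 0}$-conjugacy follows classical Kostant: two choices of $(h,y)$ differ by the unipotent radical of $C_{G_{\bar0}}(x,f)$, which acts transitively as in the Kostant argument.

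For the third item, if $x$ is neat then $M\downarrow_{i_x}=(M\downarrow_{\bar i_x})\downarrow_X$. Hence $\Phi_x=S\circ(-)\downarrow_X\circ(-)\downarrow_{\bar i_x}\cong(-)\downarrow_{\bar i_x}$, using $S\circ(-)\downarrow_X\cong\id$, and this functor is exact and faithful. Conversely, if $x$ is not neat, some $M$ contains an even-length block $\mathtt M_{2s+1}$, which $S$ kills. Splitting that block off, $S$ kills a nonzero endomorphism (the projection onto it), so $\Phi_x$ is not faithful. Exactness fails as well, by applying $\Phi_x$ to the exact sequence $0\to\mathtt M_0\to\mathtt M_1\to\Pi\mathtt M_0\to0$ realized inside a tensor power of $M$.
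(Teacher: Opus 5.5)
Since the paper only imports this theorem from \cite{entova2022jacobson}, the question is whether your sketch stands on its own. Your forward directions are fine: odd Jordan blocks of $\rho(x)$ on $V$ give an explicit $\osp(1|2)\subset\gl(V)$, and for neat $x$ one has $\Phi_x\cong(-)\downarrow_{\bar i_x}$ because $S\circ(-)\downarrow_X\cong\id$. There are, however, two genuine gaps.

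The first gap is the heart of item 2. You must produce an $\sl_2$-triple $(e,h,f)$ in $\g_{\bar 0}$, with $f=\frac12[x,x]$, such that $[h,x]=-x$. Your sentence about choosing $h\in\g_{\bar 0}$ ``with the same $\ad$-action'' as the Deligne grading does not do this. That grading operator lives in $\gl(V)_{\bar 0}$. Projecting it to $\rho(\g_{\bar 0})$ along a $\g_{\bar 0}$-stable complement preserves $[\,\cdot\,,f]=-2f$ but not $[\,\cdot\,,x]=-x$, because such a complement is not $\ad_x$-stable. Neither reductivity nor \cref{lem:centralizer_Jordan_closed} helps. Conversely, once such an $h$ exists, the step you call the main obstacle is automatic: $x$ is a lowest weight vector of weight $-1$, so $y:=[e,x]$ gives $[y,x]=h$ and $[y,y]=-2e$. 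The $\osp(1|2)$-structure on $Gr^{ev}(\g)$ would not help there anyway, since nothing says $\ad_h$ induces the Deligne grading.

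The missing idea is to use the decomposition $x=\sum_{i\le 0}x_i$ that you write down and then drop. Both $\rho(h)$ and the Cartan element of the explicit $\osp(1|2)\subset\gl(V)$ lie in $\sl_2$-triples through $\rho(f)$. By Kostant's theorem (\cref{lem:selfcompart} applied in $\gl(V)$), $\rho(x_0)$ is therefore conjugate to $0$, and faithfulness gives $x_0=0$. Hence $f=\frac12[x_{-1},x_{-1}]$, and $(e,h,f,x_{-1})$ span an $\osp(1|2)\subset\g$. Since $\g^{<0}\subset\Im\ad_{x_{-1}}$, you can conjugate $x$ to $x_{-1}$ by successive $\exp(\ad_w)$ with $w\in\g^{<0}_{\bar 0}$, as in \cref{lem:criterion_neatness_h_decomp}. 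The resulting subalgebra integrates to $OSp(1|2)\to G$ because $SL_2$ is simply connected.

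This argument uses only that $\rho(x)$ acts neatly on $V$, which is exactly what the converse of item 1 requires. As you set it up, that converse is circular. It invokes item 2, whose hypothesis is $x\in\g_{neat}$, and your item 2 argument uses neatness of $x$ on the adjoint module. That does not follow from neatness in $\gl(V)$: $\g$ is a $\kk[x]$-submodule of $\gl(V)$ that need not be a direct summand, and submodules of neat modules need not be neat.

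The second gap is the converse in item 3, which argues in the wrong category. The projection onto a $\kk[x]$-summand $\mathtt M_{2s+1}$ is not $G$-equivariant. Likewise, your short exact sequence built from $\mathtt M_1$ is a sequence of $\kk[x]$-modules, not of $G$-modules. So neither says anything about faithfulness or exactness of $\Phi_x$ on $\Rep(G)$.

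You need a $G$-equivariant witness, for example via Schur functors. Suppose $M\in\Rep(G)$ has dimension $(m|n)$ and $M\downarrow_x$ has an even block. Then $\Phi_x(M)$ has dimension $(m'|n')$ with $m-m'=n-n'>0$. As a $\kk$-linear symmetric monoidal functor, $\Phi_x$ commutes with Schur functors. Take the rectangle $\lambda=((n'+1)^{m'+1})$: by the hook condition $S_\lambda M\neq 0$, while $\Phi_x(S_\lambda M)\cong S_\lambda\Phi_x(M)=0$. So $\Phi_x(\id_{S_\lambda M})=0$ and $\Phi_x$ is not faithful. Non-exactness then follows from Deligne's theorem that an exact tensor functor between rigid abelian tensor categories with $\End(\triv)=\kk$ is faithful.

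Your uniqueness sketch can be made precise the same way. For two such subalgebras, $h'-h\in\g^x_{\bar 0}\cap[x,\g_{\bar 1}]$, and this space lies in negative $\ad_h$-degrees. Kostant's argument then conjugates $h$ to $h'$ by an element of $C_{G_{\bar 0}}(x)$. Finally, $h$ determines $e$, and hence $y=[e,x]$, so it determines the whole subalgebra.
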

\subsection{The \texorpdfstring{$\osp(1|2)$}{osp} subalgebra associated to a neat element}\label{ssec:neat_notn}

Let $G$ be a quasi-reductive algebraic supergroup and $\g:=\mathrm{Lie} (G)$.

\begin{definition}\label{def:neat_element_aux_notions}
Let $x\in \g_{neat}$.
\begin{itemize}
\item We will denote by $\osp_x$ the image of a Lie superalgebra homomorphism $\bar{i}_x:\osp(1|2) \to \g$ associated to $x$ (see \cref{thrm:JM}). 

This image is isomorphic to $\osp(1|2)$ when $x\neq 0$ and to $0$ when $x=0$. By \cref{thrm:JM}, all such choices of $\osp_x$ are conjugate under the action of $G_{\bar 0}$, so we will often omit the explicit choice of $i_x$. We will call these subalgebras {\it $\osp(1|2)$-type subalgebras}.
\item We will denote by $\sl_x$ the even part $(\osp_x)_{\bar 0}$. When $x\neq 0$, $\sl_x\cong \sl_2$.

    \item Let $h\in (\osp_x)_{\bar 0}$ be such that $[h, x]=x$. We will call it {\it a Cartan element corresponding to $x$}.
   
\item Given a Cartan element $h$ corresponding to $x$, we will denote by $\g=\bigoplus_{i\in \Z} \g^i$ the eigenspace decomposition of $\ad_h$, with $\g^0 = \Ker \ad_h$ and $\bigoplus_{i\geq 0} \g^i$ being the parabolic subalgebra corresponding to $h$. 

We will also write $ \g^+:= \bigoplus_{i> 0} \g^i$.
\end{itemize}    
\end{definition}

\begin{lemma}\label{lem:conjugation_of_squares}

Let $x, x'\in \g_{neat}$. We have:
$x,x'$ are $G_{\bar{0}}$-conjugate if and only if $[x,x]$ and $ [x', x']$ are $G_{\bar{0}}$-conjugate.
\end{lemma}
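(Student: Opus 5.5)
The forward implication is immediate: if $x'=g.x$ with $g\in G_{\bar 0}$, then $[x',x']=[g.x,g.x]=g.[x,x]$, because the adjoint action of $G_{\bar 0}$ respects the bracket. All the work is in the converse. There I will reduce to the uniqueness statements of \cref{thrm:JM} and to the classical Kostant--Jacobson--Morozov uniqueness theorem for the reductive group $G_{\bar 0}$.

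Assume $[x',x']=g.[x,x]$. Replacing $x'$ by $g^{-1}.x'$, we may assume $[x,x]=[x',x']=:2f$. If $f=0$, I claim $x=0$. Indeed, $x$ is neat and $\frac12[x,x]=0$, so $x$ squares to zero in every representation. Its Jordan blocks therefore have size at most $2$, and neatness excludes blocks of size $2$ because they have superdimension zero. Hence $x$ acts by zero in every representation, in particular in a faithful one, so $x=0$. By symmetry $x'=0$ as well.

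So assume $f\neq 0$. By \cref{thrm:JM}, choose embeddings $\bar i_x,\bar i_{x'}:\osp(1|2)\to\g$ with $X\mapsto x$ and $X\mapsto x'$. Both send $f=[X,X]$ to the same nilpotent element $2f$ (after rescaling conventions). Their even parts $\sl_x$ and $\sl_{x'}$ are then two $\sl_2$-triples in the reductive Lie algebra $\g_{\bar 0}$ with the same nilpositive-type element. By the classical Kostant theorem, these triples are conjugate by an element of the unipotent group $C_{G_{\bar 0}}(f)$, which is connected and fixes $f$. Conjugating, we may assume $\bar i_x$ and $\bar i_{x'}$ agree on $\osp(1|2)_{\bar 0}$, so they share the Cartan element $h$ and the elements $e$ and $f$.

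It remains to identify the odd parts. Here $x$ and $x'$ both lie in $\g^{-1}_{\bar 1}$, the $(-1)$-eigenspace of $\ad_h$, and both satisfy $[x,x]=[x',x']=2f$. I would consider the reductive group $K:=C_{G_{\bar 0}}(\sl_x)$, the centralizer of the $\sl_2$-triple, and decompose $\g_{\bar 1}$ as an $\sl_2\times K$-module, $\g_{\bar 1}=\bigoplus_k V_k\otimes U_k$. An odd element $y$ of $h$-weight $-1$ that is killed by $\ad_f$ is a lowest weight vector of weight $-1$, so it lies in $V_1\otimes U_1\cong U_1$. The requirement that $y$ extend to an $\osp(1|2)$ with the given even part means $y$ is a lowest weight vector in $V_1$, and $[f,x]=0$ holds automatically from the Jacobi identity $[x,[x,x]]=0$. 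Under this identification, the bracket $S^2(V_1\otimes U_1)\to\g_{\bar 0}$, followed by projection to the $f$-component, is given by a $K$-invariant symmetric bilinear form $\beta$ on $U_1$. The conditions on $x$ and $x'$ then say $\beta(u,u)=\beta(u',u')=1$, together with the requirement that the remaining $\sl_x$-type components of the bracket vanish appropriately. The natural tool is a Witt-type argument: $\beta$ restricted to the $K$-isotypic structure, combined with the reductive orthogonal group acting transitively on vectors of a fixed nonzero norm, should give $k\in K$ with $k.u=u'$. I expect this step to be the main obstacle.

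If the Witt-type argument becomes unwieldy, I would use the uniqueness clause of \cref{thrm:JM} in a different way. The idea is to show that the conjugacy class of the extension $\bar i_x$ is determined by its even part; a cleaner formulation is to prove that the $K$-orbit of admissible $u$ is open and unique in its fiber of $\beta$. Since $K$ fixes the whole $\sl_2$-triple, combining this $k$ with the earlier conjugations gives an element of $G_{\bar 0}$ carrying $x$ to $x'$.
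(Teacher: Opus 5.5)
Your reduction is sound and matches the paper's first step. After conjugating by $C_{G_{\bar 0}}(f)$ via Kostant's theorem, $x$ and $x'$ become neat elements with a common Cartan element $h$, both lying in $\g^{-1}_{\bar 1}$. The $f=0$ case is also handled correctly. A small inaccuracy: $C_{G_{\bar 0}}(f)$ is not unipotent; Kostant gives conjugacy by its unipotent radical, which is harmless here.

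The gap is the step that carries the content of the lemma: that two neat elements sharing the Cartan element $h$ are $G_{\bar 0}$-conjugate. You leave this unproved, and the route you sketch does not work as stated.
\begin{itemize}
\item The bracket on $V_1\otimes U_1$ gives a $K$-equivariant map $S^2U_1\to W_2$ into the multiplicity space of $V_2$ in $\g_{\bar 0}$, not a scalar form. Your ``projection to the $f$-component'' is not canonical.
\item $K$ acts on $U_1$ through some reductive subgroup of $GL(U_1)$ that need not be the orthogonal group of any form, so no Witt-type transitivity is available.
\item Your fallback, openness of a $K$-orbit in a fibre of $\beta$, would not give transitivity without irreducibility of that fibre.
\item The uniqueness clause of \cref{thrm:JM} concerns $\osp(1|2)$-subalgebras containing a fixed $x$, not those with a fixed even part. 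So it does not close the gap either.
\end{itemize}

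The paper closes exactly this step by citing \cite[Lemma 4.2.3]{entova2022jacobson}: neat elements with the same Deligne filtration, i.e.\ the same $\ad_h$-grading, are conjugate. A self-contained replacement is an open-orbit argument, which needs only the common $h$, not the whole triple.
\begin{enumerate}
\item Let $L$ be the connected centralizer of $h$ in $G_{\bar 0}$, so $\Lie(L)=\g^0_{\bar 0}$ and $L$ preserves $\g^{-1}_{\bar 1}$.
\item Decompose $\g$ into simple $\osp_x$-modules. On each, $x$ acts by a single Jordan chain through consecutive $h$-weights $n,n-1,\dots,-n$. Hence $\ad_x\colon\g^0\to\g^{-1}$ is surjective.
\item Since $\ad_x$ is odd, this gives $[\g^0_{\bar 0},x]=\g^{-1}_{\bar 1}$. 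So the tangent space to $L.x$ is all of $\g^{-1}_{\bar 1}$, and $L.x$ is open in that affine space.
\item The same holds for $x'$. Two nonempty open subsets of an irreducible variety meet, so $L.x=L.x'$.
\end{enumerate}
This gives the missing element of $G_{\bar 0}$ carrying $x$ to $x'$.
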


\begin{proof}
One direction is clear: if $x,x'$ are $G_{\bar{0}}$-conjugate then so are $[x,x]$ and $ [x', x']$.

For the other direction, assume that $[x,x]$ and $ [x', x']$ are $G_{\bar{0}}$-conjugate.

We may reduce the problem immediately to the case when $[x,x]=[x', x']$. Denote this element by $e$. In this case, there exists an element $h\in \g_{\bar{0}}$, unique up to $G_{\bar{0}}$-conjugation, such that $[h, e]=2e$. 

Due to this uniqueness, $h$ must be a Cartan element corresponding to both $x$ and $x'$. So the eigenvalues of $\ad_h$ define a grading on $\g$ corresponding to the Deligne filtrations for both $x$ and $x'$, implying that $x, x'$ have the same Deligne filtration. By \cite[Lemma 4.2.3]{entova2022jacobson}, this implies that $x,x'$ are $G_{\bar{0}}$-conjugate.
\end{proof}
Not every nilpotent even element is the commutator square of some neat element, as the next example shows.
\begin{example}
Let $\g=\gl(2|2)$ and let $e=\begin{bNiceArray}{c|c}
    A &0\\
    \hline
    0 &A
\end{bNiceArray} \in \gl(2|2)_{\bar 0}$ where $A=\begin{bNiceArray}{c|c}
    0 &1\\
    \hline
    0 &0
\end{bNiceArray}$. Assume $e= [x,x]$ for some $x\in \gl(2|2)_{\bar 1}$. Then the action of $x$ on $\kk^{2|2}$ would make $\kk^{2|2}$ into an indecomposable $\G^{(1|1)}$-module, implying that $x$ is not neat. Thus $e\neq [x,x]$ for all $x\in \gl(2|2)_{neat}$.
\end{example}

\begin{corollary}\label{cor:fin_many_neat_orb}
There is an injective map $x\mapsto [x,x]$ from the set of neat nilpotent orbits in $\g$ to the set of nilpotent orbits in $\g_{\bar 0}$. This map preserves the closure order.

\InnaA{In particular, there are finitely many neat orbits in a quasi-reductive Lie superalgebra.}
\end{corollary}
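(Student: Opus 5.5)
The map is well defined on orbits, since $G_{\bar 0}$-conjugation commutes with taking the commutator square. Its image lies in the nilpotent orbits of $\g_{\bar 0}$: for neat $x$ we have $[x,x]=2X^2$ in the corresponding $\osp_x$, and this is a nilpotent element of $\sl_x\subset\g_{\bar 0}$. Alternatively, $[x,x]$ is nilpotent by the definition of $\ad$-nilpotent.

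Injectivity is exactly \cref{lem:conjugation_of_squares}. If $[x,x]$ and $[x',x']$ are $G_{\bar 0}$-conjugate for neat $x,x'$, then $x,x'$ are $G_{\bar 0}$-conjugate. So the map $G_{\bar 0}.x\mapsto G_{\bar 0}.[x,x]$ from neat orbits to nilpotent orbits is injective.

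For the closure order we must show that $G_{\bar 0}.x'\subset\overline{G_{\bar 0}.x}$ implies $G_{\bar 0}.[x',x']\subset\overline{G_{\bar 0}.[x,x]}$. The squaring map $q:\g_{\bar 1}\to\g_{\bar 0}$, $y\mapsto[y,y]$, is a $G_{\bar 0}$-equivariant polynomial map, hence continuous in the Zariski topology. It therefore satisfies
\[
q\bigl(\overline{G_{\bar 0}.x}\bigr)\subset\overline{q(G_{\bar 0}.x)}=\overline{G_{\bar 0}.[x,x]},
\]
which gives the claim.

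The finiteness statement follows from the injection together with the classical fact that a reductive Lie algebra $\g_{\bar 0}$ has finitely many nilpotent $G_{\bar 0}$-orbits. Strictly, nilpotent elements lie in $[\g_{\bar 0},\g_{\bar 0}]$, whose nilpotent orbits are finite in number (Kostant, Dynkin), and $G_{\bar 0}$ acts through its derived group up to a finite component group, which does not affect finiteness.

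There is no serious obstacle, since \cref{lem:conjugation_of_squares} does the real work. The only points needing care are using "preserves closure order" in the direction that continuity of $q$ gives, and checking that the finiteness of nilpotent orbits applies to a possibly disconnected reductive $G_{\bar 0}$ acting on $\g_{\bar 0}$.
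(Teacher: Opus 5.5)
Your proposal is correct and follows essentially the argument the paper has in mind: the paper gives no separate proof, placing the corollary directly after \cref{lem:conjugation_of_squares}, which gives injectivity, with finiteness then coming from the finiteness of nilpotent orbits in the reductive Lie algebra $\g_{\bar 0}$. Your argument via Zariski continuity of the $G_{\bar 0}$-equivariant squaring map $y\mapsto [y,y]$ correctly supplies the forward closure-order statement, which the paper leaves unproved.
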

The last result in this corollary was also proved in \cite{entova2022jacobson}.

\section{Attractors of an odd element}\label{sec:attractor}
\subsection{Definition}
Let $G$ be a quasi-reductive supergroup and $\g:=\mathrm{Lie} (G)$.

Let $x\in\g_{\bar 1}$ and let $\frac{1}{2}[x,x]=s+f$ be the Jordan decomposition of $\frac{1}{2}[x,x]$, where $s$ is the semisimple part and $f$ is the nilpotent part. Assume that $f\neq 0$. \InnaA{Since $f\in \g^s$ and the latter is quasi-reductive, we may} fix an $\sl_2$-triple $(e,h,f)$ such that $[s, h]=0$. Let us write 
\begin{equation}\label{eq:h_eigenvectors}
x=x_0+x_{-1}+\dots+x_{-k}
\end{equation} where $[h,x_i]=ix_i$, $[s,x_i]=0$. Note that $[f,x]=0$ so $[f, x_i]=0$ for all $i$.

 Given $M\in \Rep(G)$, the $s$-invariants $M^s$ are naturally a $\g^s$-module and $x$ acts as an odd nilpotent operator on $M^s$. In particular, $\ad_x$ is an odd nilpotent operator on $\g^s$.

We begin with the following easy but important observation:
 \begin{lemma}\label{lem:square_attractor}
We have:
$s=[x_0,x_0]$.
 \end{lemma}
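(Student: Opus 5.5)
The plan is to compare $h$-graded components on both sides of $\frac{1}{2}[x,x]=s+f$. I read the statement with the normalization of the introduction, i.e.\ as $\frac{1}{2}[x_0,x_0]=s$; this is the same as $[x_0,x_0]=s$ up to the harmless factor coming from the convention $\frac12[x,x]=s+f$.

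First I justify that the decomposition \eqref{eq:h_eigenvectors} exists and involves only nonpositive $h$-degrees.
\begin{itemize}
\item By the Jacobi identity $[[x,x],x]=0$, so $x$ lies in the kernel of $\ad_{\frac12[x,x]}$.
\item The operator $\ad_s$ is a polynomial in $\ad_{\frac12[x,x]}$, as in the proof of \cref{lem:centralizer_Jordan_closed}. Hence $[s,x]=0$, and similarly $[f,x]=0$.
\item By \cref{prop:centralizer_reductive}, $\g^s$ is quasi-reductive. Since $f\in\g^s$, we may take the $\sl_2$-triple $(e,h,f)$ inside $\g^s_{\bar 0}$; in particular $[s,h]=[s,e]=0$.
\item So $\g^s$ is a finite-dimensional module over this $\sl_2$, and $\ad_h$ acts on it semisimply with integer eigenvalues.
\item The element $x\in\g^s_{\bar 1}$ is annihilated by $\ad_f$. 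In a finite-dimensional $\sl_2$-module, $\Ker\ad_f$ is spanned by lowest weight vectors, whose weights are $\le 0$. This gives $x=x_0+x_{-1}+\dots+x_{-k}$ with $[h,x_i]=ix_i$.
\end{itemize}

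Next I expand the bracket by degree:
$$\tfrac12[x,x]=\sum_{i,j}\tfrac12[x_i,x_j],$$
where $[x_i,x_j]$ has $h$-degree $i+j\le 0$. The only contribution in degree $0$ is $\frac12[x_0,x_0]$. On the right-hand side, $s$ has degree $0$ because $[h,s]=0$, and $f$ has degree $-2$.

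Comparing the degree-$0$ components gives $\frac12[x_0,x_0]=s$. As a by-product, the degree $-1$ component gives $[x_0,x_{-1}]=0$, and the degree $-2$ component gives $[x_0,x_{-2}]+\frac12[x_{-1},x_{-1}]=f$. I expect these relations to be useful later in the paper.

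The only step needing care is the nonpositivity of the degrees. That rests on two facts: the triple is chosen inside the centralizer $\g^s$, so $h$ preserves the relevant module and $x\in\g^s$; and $x$ is annihilated by $\ad_f$. Both are supplied by the arguments above, so I do not expect a genuine obstacle.
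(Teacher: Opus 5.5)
Your proof is correct and follows the paper's argument: compare the $\ad_h$-degree-$0$ components of $\frac12[x,x]=s+f$. You also justify, via $[f,x]=0$ and lowest weights of $\sl_2$-modules inside $\g^s$, the nonpositivity of degrees that the paper builds into \eqref{eq:h_eigenvectors}. Your normalization $\frac12[x_0,x_0]=s$ is the accurate one, since the degree-$0$ part of $\frac12[x,x]$ is $\frac12[x_0,x_0]$. This matches the introduction and the paper's own formula $f=[x_0,x_{-2}]+\frac12[x_{-1},x_{-1}]$, so the paper's literal $s=[x_0,x_0]$ is off by that factor; note, though, that the two versions are not ``the same'' unless $s=0$.
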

\begin{proof}
The summand of $\frac{1}{2}[x,x]=s+f$ corresponding to the $0$-eigenvalue of $\ad_h$ is $s$. On the other hand, by \eqref{eq:h_eigenvectors} it is $[x_0,x_0]$. So $ s=[x_0,x_0]$.
\end{proof}

\begin{definition}
 Let $x\in \g_{\bar 1}$. If $[x,x]$ is not semisimple, consider the decomposition as in \eqref{eq:h_eigenvectors}. The element $x_0\in \g_{\bar 1}$ is called an {\it attractor} of $x$. If $[x,x]$ is semisimple, we say that $x$ is its own attractor.
\end{definition}
An straightforward property of the attractor is that its orbit lies in the closure of the orbit $G_{\bar 0}.x$:
\begin{lemma}
We have $x_0\in \overline{G_{\bar 0}.x}$.
\end{lemma}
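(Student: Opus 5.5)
The plan is to realize $x_0$ as a limit of elements of the orbit $G_{\bar 0}.x$, using the one-parameter subgroup generated by the Cartan element $h$ of the chosen $\sl_2$-triple $(e,h,f)$.

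First I need that $h$ integrates to a cocharacter of $G_{\bar 0}$. Since $h$ lies in an $\sl_2$-triple inside the reductive Lie algebra $\g_{\bar 0}$, $\ad_h$ acts semisimply on $\g$ with integer eigenvalues. Moreover, the homomorphism $\sl_2\to\g_{\bar 0}$ integrates to $SL_2\to G_{\bar 0}$, or at least to $SL_2$ modulo a finite group. Hence there is a cocharacter $\lambda:\mathbb{G}_m\to G_{\bar 0}$ with $d\lambda(1)=h$; if only $PGL_2$ lifts, I would use $2h$ instead, which changes nothing below. This cocharacter acts on the $\ad_h$-eigenspace $\g^i$ by the character $t\mapsto t^i$. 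If $[x,x]$ is semisimple, then $x_0=x$ and there is nothing to prove, so assume $f\neq 0$.

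Next I apply $\lambda(t)$ to the decomposition \eqref{eq:h_eigenvectors}:
$$\Ad_{\lambda(t)}(x)=x_0+t^{-1}x_{-1}+t^{-2}x_{-2}+\dots+t^{-k}x_{-k}.$$
Every exponent on the right is nonpositive. Letting $t\to\infty$ (equivalently, using $\lambda(t^{-1})$ and letting $t\to 0$) gives a polynomial map $\mathbb{A}^1\to\g_{\bar 1}$ whose restriction to $\mathbb{G}_m$ lands in $G_{\bar 0}.x$ and whose value at $0$ is $x_0$. Therefore $x_0$ lies in the Zariski closure $\overline{G_{\bar 0}.x}$, since a morphism from $\mathbb{A}^1$ sends $0$ into the closure of the image of the dense open set $\mathbb{G}_m$.

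The only point needing care is the integrality and integration of $h$. This is standard: $h$ is in the image of $\sl_2$, the weights of $\ad_h$ on the finite-dimensional $\sl_2$-module $\g$ are integers, and the homomorphism integrates to $SL_2$ over the connected reductive group. I do not expect any real obstacle.
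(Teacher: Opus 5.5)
Your proposal is correct and follows the paper's argument: the paper also flows $x$ along the one-parameter group generated by the Cartan element $h$, taking $\lim_{t\to\infty}\exp(\ad_{th})(x)$ so that the components of negative $h$-weight die and only $x_0$ survives. Your version, with an algebraic cocharacter $\lambda$ satisfying $d\lambda(1)=h$ and a morphism $\mathbb{A}^1\to\g_{\bar 1}$, is the Zariski-closure form of the same limit, and it has the small advantage of making sense over any algebraically closed $\kk$ of characteristic zero rather than only over $\mathbb{C}$.
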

\begin{proof}
    We have $\lim_{\substack{t\in \mathbb{R}, \\t\to \infty}} \operatorname{exp}(\ad_{th})(x)=\lim_{\substack{t\in \mathbb{R}, \\t\to \infty}} (\sum_i e^{-it} x_i)=x_0.$
\end{proof}

%The element $x$ is $\ad$-nilpotent in $\g^s/Z(\g^s)$, making all the constructions and proofs easier in this setting. 

Considering further the decomposition $x=\sum_i x_i$, we see that $[x_0,x_0]=s$, $[x_0, x_{-1}]=0$ and $f=[x_0, x_{-2}]+\frac{1}{2}[x_{-1}, x_{-1}]$. In particular, $x_0\in \g^{hom}$, so we may consider the Duflo-Serganova functor $DS_{x_0}$ (see \cref{ssec:DS_functors_prelim}). Since $x_{-1}\in \g^{x_0}$, we may consider the image $\overline{x}_{-1}$ of $x_{-1}$ in the Lie superalgebra $DS_{x_0}(\g)$.

By the definition of the Duflo-Serganova functor, we have: \InnaA{$$DS_{x_0}(\g):=\frac{\Ker \ad_{x_0}\cap \g^s}{\Im \ad_{x_0}\cap \g^s}=DS_{x_0}(\g^s)$$} so $  DS_{x_0}(\g)_{neat}=DS_{x_0}(\g^s)_{neat}$.

\begin{lemma}\label{lem:x_minus_1_is_neat_in_DS}
 Assume that $f\notin\Im \ad_{x_0}\rvert_{\g^s}$. Then $\overline{x}_{-1} \in DS_{x_0}(\g)_{neat}$.
\end{lemma}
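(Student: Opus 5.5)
We would show that the $h$-eigenvalue grading on $DS_{x_0}(M)$ behaves like a Lefschetz grading for $\overline{f}$. With $\overline{x}_{-1}$ of degree $-1$ and $\overline{x}_{-1}^2=\overline{f}$, the doubled grading is then the Deligne filtration of $\overline{x}_{-1}$, and it has only even pieces.

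\emph{Step 1: the operators descend.} By construction $[h,s]=0$ and $[h,x_0]=0$, so $h\in \Ker\ad_{x_0}\cap\g^s$ and gives an even element $\overline{h}\in DS_{x_0}(\g)$. Similarly $f\in\g^s$ and $[f,x_0]=0$ give $\overline{f}$. Also $x_{-1}$ commutes with $s$ and (as noted before the statement) with $x_0$, so $\overline{x}_{-1}$ is defined.

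From $f=[x_0,x_{-2}]+\frac12[x_{-1},x_{-1}]$ with $x_{-2}\in\g^s$, we get $[\overline{x}_{-1},\overline{x}_{-1}]=2\overline{f}$ in $DS_{x_0}(\g)$. The hypothesis $f\notin\Im\ad_{x_0}\rvert_{\g^s}$ says exactly that $\overline{f}\neq 0$, so $\overline{x}_{-1}\neq0$.

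Now let $M\in\Rep(G)$. On $DS_{x_0}(M)=\Ker x_0\rvert_{M^s}/\Im x_0\rvert_{M^s}$ the operator $[x_0,x_{-2}]=x_0x_{-2}+x_{-2}x_0$ is null-homotopic, hence acts by zero. Therefore $\overline{x}_{-1}^2=\overline{f}$ as operators on $DS_{x_0}(M)$.

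\emph{Step 2: hard Lefschetz on cohomology.} The triple $(e,h,f)$ lies in $\g^s$, so $M^s$ is an $\sl_2$-module. Hence $h$ acts semisimply with integer eigenvalues, $M^s=\bigoplus_k (M^s)_k$, and $f^k:(M^s)_k\to(M^s)_{-k}$ is an isomorphism for $k\ge0$. Since $x_0$ has $h$-degree $0$ and commutes with $f$, each $f^k$ is an isomorphism of complexes $((M^s)_k,x_0)\to((M^s)_{-k},x_0)$. So it induces an isomorphism $\overline{f}^k:DS_{x_0}(M)_k\to DS_{x_0}(M)_{-k}$, where the grading is by the integer eigenvalues of $\overline{h}$.

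\emph{Step 3: identify the Deligne filtration.} Put the index $2k$ on $DS_{x_0}(M)_k$. Then $\overline{x}_{-1}$ lowers the index by $2$, and $\overline{x}_{-1}^{2k}=\overline{f}^k$ maps index $2k$ isomorphically to index $-2k$. By the uniqueness of the Deligne filtration (\cref{ssec:filtration}), this grading computes it. All indices are even, so $Gr^{ev}(DS_{x_0}(M))\cong DS_{x_0}(M)$, i.e.\ $\overline{x}_{-1}$ acts neatly on $DS_{x_0}(M)$. It is also $\ad$-nilpotent, since $[\overline{x}_{-1},\overline{x}_{-1}]=2\overline{f}$ is nilpotent; one can see this by running the same argument on $M=\g$.

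\emph{Main obstacle.} Neatness in $DS_{x_0}(\g)$ has to be tested on all of its representations, or, via the first item of \cref{thrm:JM}, on one faithful representation of the corresponding supergroup. Steps 1–3 only cover modules of the form $DS_{x_0}(M)$. We would first try to take $M$ faithful for $G$ and show that $DS_{x_0}(M)$, or a finite direct sum of $DS_{x_0}(M^{\otimes n})$, is faithful on $DS_{x_0}(\g)$ modulo its centre. This should work because $DS_{x_0}$ is symmetric monoidal and $DS_{x_0}(\g)$ acts on $DS_{x_0}(M)$ compatibly.

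If that fails, we would avoid representations and build the $\osp(1|2)$ directly, aiming to apply the super Jacobson–Morozov criterion intrinsically. We would apply the $\sl_2$-theory of Step 2 to $DS_{x_0}(\g)_{\bar 0}$, which is reductive. This should show that $\overline{h}$ is a Cartan element for $\overline{f}$, and Kostant's theorem would then supply $\overline{e}$ completing an $\sl_2$-triple with $[\overline{h},\overline{x}_{-1}]=-\overline{x}_{-1}$.
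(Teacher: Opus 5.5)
Your Steps 1--3 are correct. The Lefschetz argument shows that $\overline{x}_{-1}$ acts neatly on every module of the form $DS_{x_0}(M)$ with $M\in\Rep(G)$. That is not what the lemma asserts: $\overline{x}_{-1}\in DS_{x_0}(\g)_{neat}$ requires neat action in \emph{every} finite-dimensional representation of $DS_{x_0}(\g)$, and you leave exactly this passage open.

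The bridge you propose through faithfulness is unjustified. $DS_{x_0}$ is not exact, so knowing that every $G$-module is a subquotient of sums of tensor products of $M$ and $M^*$ tells you nothing after applying $DS_{x_0}$. Likewise, $\Rep(G)$ is not semisimple, so $\g$ need not be a direct summand of $M\otimes M^*$. Hence the induced map $DS_{x_0}(\g)\to DS_{x_0}(M)\otimes DS_{x_0}(M)^*$ need not be injective. Finally, to use the first item of \cref{thrm:JM} you would also need a quasi-reductive supergroup integrating $DS_{x_0}(\g)$, together with a faithful representation of it, and you have not set this up.

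The missing idea is that $e$ also commutes with $x_0$, which makes all of this unnecessary. Since $[h,x_0]=0$ and $[f,x_0]=0$, the vector $x_0$ is a lowest weight vector of weight $0$ in the finite-dimensional $\sl_2$-module $\g^s$. It therefore spans a trivial submodule, so $[e,x_0]=0$ and $e,h,f\in\Ker\ad_{x_0}\cap\g^s$. The projection of this subalgebra to $DS_{x_0}(\g)$ is a homomorphism of Lie superalgebras. So $\overline{e},\overline{h},\overline{f}$ span a copy of $\sl_2$, nonzero because $\overline{f}\neq 0$. Together with $\overline{x}_{-1}$, which satisfies $[\overline{h},\overline{x}_{-1}]=-\overline{x}_{-1}$ and $\frac12[\overline{x}_{-1},\overline{x}_{-1}]=\overline{f}$, they generate a copy of $\osp(1|2)$ in $DS_{x_0}(\g)$. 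This is the paper's proof. Neatness in every representation follows at once: finite-dimensional $\osp(1|2)$-modules are completely reducible, and on each simple one the odd generator acts by a single Jordan block of odd size.

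Your fallback reaches for this same $\overline{e}$, but its key claim is left unproved. That claim is that $\overline{h}$ is a Cartan element for $\overline{f}$, i.e.\ $\overline{h}\in\Im\ad_{\overline{f}}$, which Kostant's or Morozov's theorem would need. The hard Lefschetz isomorphisms of Step 2 do not give it by themselves. It is immediate once you observe that $\overline{h}=[\overline{e},\overline{f}]$.
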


\begin{proof}
By $h$-weight considerations, we have: $[f,x_0]=0$. Together with $[h,x_0]=0$, this implies that $[e,x_0]=0$. So $e,h,f \in \g^{x_0}\rvert_{\g^s}$ and their images $\overline{e}, \overline{h}, \overline{f}$ in $DS_{x_0}(\g^s) $ are either zero, or span a subalgebra of $DS_{x_0}(\g^s)$ isomorphic to $\sl_2$. Furthermore, since $f=\frac{1}{2}[x_{-1}, x_{-1}]+[x_0, x_{-2}]$, we have: $\overline{f}=\frac{1}{2}[\overline{x}_{-1}, \overline{x}_{-1}]$. By our assumption, $\overline{f}\neq 0$. Thus $(\overline{e},  \overline{h},\overline{f}, \overline{x}_{-1})$ generate a copy of $\osp(1|2)$ in $DS_{x_0}(\g^s)$, making $\overline{x}_{-1}$ a neat element in $DS_{x_0}(\g^s)=DS_{x_0}(\g)$.
\end{proof}
\begin{remark}\label{rem:x_minus_1_is_zero_in_DS}
The same argument shows that if $\overline{x}_{-1}=0$ then $\overline{f}=0$ and $f\in \Im \ad_{x_0}\rvert_{\g^s}$.
\end{remark}
\subsection{Uniqueness}
As one can see from the definition, there is usually more than one attractor for each $x\in \g_{\bar 1}$, since we may vary our choice of the $\sl_2$-triple $(e,h,f)$.
Yet it turns out that they are all conjugate.

\begin{lemma}\label{lem:selfcompart} Given $x\in \g_{\bar 1}$, let $\frac{1}{2}[x,x]=s+f$ be the Jordan decomposition of $\frac{1}{2}[x,x]$ as before. Let $(e,h, f)$, $(e',h', f)$ be two $\sl_2$-triples in $\g^s_{\bar 0}$ containing $f$. Suppose that we have decompositions of $x$ with respect to $\ad_h, \ad_{h'}$:
$$x=x_0+x_{-1}+\ldots=x_0'+x'_{-1}+\ldots$$
so that $[h, x_i]=ix_i$, $[h', x'_i]=ix'_i$ for each $i$ (and $[s,x_i]=[s, x'_i]=0$).
Then $x_0$ and $x_0'$ belong to the same $G_{\bar 0}$-orbit. 

Moreover, if $x_0=x_0'$ then $x_{-1}$ and $x'_{-1}$ belong to the same $G_{\bar 0}$-orbit. 
\end{lemma}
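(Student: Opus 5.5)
We will reduce the statement to the classical uniqueness of $\sl_2$-triples. The key input is the Kostant–Kostant–Mal'cev theorem, applied inside the reductive Lie algebra $\g^s_{\bar 0}$ (reductive by \cref{prop:centralizer_reductive}). It says that two $\sl_2$-triples $(e,h,f)$ and $(e',h',f)$ with the same nilpositive element $f$ are conjugate under the unipotent group $U:=\exp(\ad \mathfrak{c}_{\g^s_{\bar 0}}(f)\cap [\g^s_{\bar 0},\g^s_{\bar 0}])$. More precisely, there is $g=\exp(\ad_z)$ with $z\in \mathfrak{c}_{\g^s_{\bar 0}}(f)$ and $g.h=h'$, $g.e=e'$, $g.f=f$. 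Moreover $z$ can be taken in the sum of the positive $\ad_h$-eigenspaces of $\mathfrak{c}_{\g^s_{\bar 0}}(f)$. Here one should double-check the sign convention, since $f$ has $h$-weight $-2$: the centralizer of $f$ lives in non-positive $h$-degrees. So $z\in\bigoplus_{j<0}(\g^s_{\bar 0})^{j}$, where the grading is by $\ad_h$. Since $g$ lies in $C_{G_{\bar 0}}(s)\cap C_{G_{\bar 0}}(f)$, it fixes $s$ and $f$.

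The first step applies $g$ to $x$. The element $g.x$ has square $\frac12[g.x,g.x]=g.(s+f)=s+f$. Its decomposition with respect to $h'=g.h$ is $g.x=\sum_i g.x_i$ with $[h',g.x_i]=i\,g.x_i$. Hence $g.x_0$ is the $h'$-degree-zero component of $g.x$. Now compare with $x$ itself. Since $z$ lowers the $h$-degree and $g=\exp(\ad_z)$, we get $g.x = x + (\text{terms of } h\text{-degree}<\text{original})$. This is not directly a statement about $h'$-degrees, so instead we express $x$ through $h'$. We have $x = g^{-1}.(g.x)$ and $g^{-1}=\exp(-\ad_z)$. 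Also $z\in\mathfrak{c}(f)$ has negative $h'$-degrees as well, because $z$ commutes with $f$ and the $\sl_2$-theory for the triple $(e',h',f)$ puts $\mathfrak{c}(f)$ in non-positive $h'$-degrees. The zero-degree part of $z$ with respect to $h'$ lies in $\mathfrak{c}(e',h',f)$, and the construction of $z$ can be arranged to avoid it. The upshot is that $\exp(\ad_z)$ acts unipotently and strictly lowers the $h'$-filtration degree. Since $x$ has only non-positive $h'$-degrees, the degree-$0$ component is preserved: $(g.x)'_0=x'_0$. Comparing with $(g.x)'_0=g.x_0$ gives $x'_0=g.x_0$, so the attractors are conjugate.

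For the second assertion, assume $x_0=x_0'$. The same computation gives $x'_{-1}=g.x_{-1}+[\text{component of } z \text{ in } h'\text{-degree } -1 \text{ applied to } x_0]$. We therefore need to absorb the correction term $[z_{-1},x_0]$. We would use that $x_0=x'_0=g.x_0$, which forces the components of $z$ in the relevant degrees to lie in $\mathfrak{c}(x_0)$ modulo higher corrections. Alternatively, we may replace $g$ by an element of $C_{G_{\bar 0}}(s,f,x_0)$ conjugating $h$ to $h'$. This is possible by applying Kostant's theorem inside the reductive algebra $\mathfrak{c}_{\g^s_{\bar 0}}(x_0)$, which is closed under Jordan decomposition by \cref{lem:centralizer_Jordan_closed}. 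Note that $e,h,f$ and $e',h',f$ all commute with $x_0=x'_0$, by the weight argument in \cref{lem:x_minus_1_is_neat_in_DS}. Kostant's theorem in the centralizer $\mathfrak{c}_{\g_{\bar 0}}(x_0)\cap\g^s$, which is reductive in $\g$ by \cref{prop:centralizer_reductive} applied to the semisimple part, yields $g$ fixing $x_0$ as well. The main obstacle is exactly justifying this reductivity: $x_0$ is odd, so \cref{prop:centralizer_reductive} does not directly apply. We would handle it by a Levi-type argument, or fall back on the graded computation above and kill $[z_{-1},x_0]$ by degree considerations. Once $g$ fixes $x_0$ and maps $h\mapsto h'$, the unipotent-filtration argument gives $x'_{-1}=g.x_{-1}$.
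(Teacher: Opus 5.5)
Your main line is correct and is essentially the paper's proof: you conjugate $(e,h,f)$ to $(e',h',f)$ by a unipotent $g=\exp(\ad_z)$ with $z\in\mathfrak{c}_{\g^s_{\bar 0}}(f)$ of negative degree, read off degree-$0$ components to get $x_0'=g.x_0$, and, when $x_0=x_0'$, use $g.x_0=x_0$ to see that the degree-$(-1)$ correction $[z_{-1},x_0]$ vanishes exactly, not ``modulo higher corrections'' (the paper runs the same expansion in the $h$-grading rather than the $h'$-grading). The alternative via Kostant's theorem in $\mathfrak{c}(x_0)$, and the reductivity of that centralizer, is an unnecessary detour; likewise, the fact that $z$ has only negative $h'$-degrees needs no arranging, since $g$ fixes $z$ and carries each $h$-eigenspace onto the corresponding $h'$-eigenspace.
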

\begin{proof} 
Recall the notation $C_{G_{\bar 0}}(f)\subset G_{\bar 0}$ and $\mathfrak{c}_{\g_{\bar 0}}(f)\subset \g_{\bar 0}$ for the centralizers of $f$, as in \cref{ssec:centralizers}. By Kostant's Theorem, the $\mathfrak{sl}_2$-triple $(e',h', f)$ is obtained from $(e,h,f)$ via conjugation by an element of the connected centralizer subgroup $C_{G_{\bar 0}}(f)$.

Now, $\mathfrak{c}_{\g_{\bar 0}}(f)$ inherits the grading $\g=\bigoplus_{i\in \Z} \g^i$ defined by $\ad_h$. We denote $$\mathfrak{c}_{\g_{\bar 0}}(f)^i:=\mathfrak{c}_{\g_{\bar 0}}(f)\cap \g^i.$$ Clearly, $\mathfrak{c}_{\g_{\bar 0}}(f)^i=0$ for $i>0$.

Each element of $C_{G_{\bar 0}}(f)$ is a product of exponents $\exp(u)$ where $u\in \mathfrak{c}_{\g_{\bar 0}}(f)$.

For $u\in \mathfrak{c}_{\g_{\bar 0}}(f)^{<0}, u_0\in \mathfrak{c}_{\g_{\bar 0}}(f)^{0}$ we have:
$$\exp(u_0+u)=g'\exp(u)\exp(u_0)$$ where $g'$ is a product of exponents of elements in $\mathfrak{c}_{\g_{\bar 0}}(f)^{<0}$. \InnaA{Furthermore, $\mathfrak{c}_{\g_{\bar 0}}(f)^0$ centralizes $h, e$; thus} the adjoint action of $C_{G_{\bar 0}}(f)$ on the $\mathfrak{sl}_2$-triples containing $f$ is generated by $\Ad_g$ where $g=\exp(u)$, $u\in \mathfrak{c}_{\g_{\bar 0}}(f)^{<0}$.

Let $g=\exp(u_{-1}+\dots+u_{-n})\in C_{G_{\bar 0}}(f)$ where $u_i\in \mathfrak{c}_{\g_{\bar 0}}(f)^i$, and let $h'':=\operatorname{Ad}_{g}^{-1}(h), e'':=\operatorname{Ad}_g^{-1}(e)$. 

We will later prove that the decomposition of $x$  into $\ad_{h''}$-eigenvectors satisfies the following condition:

{\bf Condition ($\star$):}
$x=\sum_{i\leq 0} x''_i$ where $[h'', x''_i]=ix''_i$, $x''_0\in G_{\bar 0}.x_0$, and if $x''_0=x_0$ then $x''_{-1}\in G_{\bar 0}.x_{-1}$. 

Applying this argument several times, we will conclude that for $g\in C_{G_{\bar 0}}(f)$ we have: $Ad_g^{-1}(h)$ induces a decomposition of $x$ satisfying Condition ($\star$). This implies the statement of the Lemma.

We now prove that for $g=\exp(u_{-1}+\dots+u_{-n})$ as above and $h'':=\operatorname{Ad}_g^{-1}(h)$, the decomposition of $x$ into $\ad_{h''}$-eigenvectors satisfies Condition ($\star$).
Indeed, in the grading induced by $\ad_{h''}$ we have \InnaA{$x=\sum_{i\leq 0} x''_i$, where
$$x''_0=\Ad^{-1}_g(x_0), \;\;x''_{-1}=\Ad^{-1}_g(x_{-1}+[x_0,u_{-1}]), \;\;x''_{-2}=\Ad^{-1}_g(x_{-2}+[u_{-1},x_{-1}]+\frac{1}{2}[u_{-1},[u_{-1},x_0]])$$
%with $z_g$ being the sum of terms in degrees lower than $ -2$ with respect to $\ad_{h''}$. 
}
In particular, $x_0''=\Ad_g^{-1}(x_0)$. If $x_0=x_0''$ then $x_0=\Ad_g(x_0)$; that is, 
$$ x_0=\exp(\ad_{\sum_{i<0} u_i})(x_0)= x_0 + [u_{-1}, x_0]+z$$
where $x_0\in \g^0, ~[u_{-1}, x_0]\in \g^{-1}$ and $ z\in \g^{\leq -2}$. This implies that $[u_{-1}, x_0]=0=z$
and so $x''_{-1}=\Ad^{-1}_g(x_{-1}+[x_0,u_{-1}])=\Ad_g^{-1}(x_{-1})$.  
\end{proof}
\begin{remark}
    The same argument shows that whenever $x_0=x_0'$ and $x_{-1}=0$, we have: $x_{-2}$ and $x'_{-2}$ belong to the same $G_{\bar 0}$-orbit. 
\end{remark}

As a consequence, we have the following well-defined notation:

\begin{notation}\label{notn:attractor}
  \InnaA{Let $x\in \g_{\bar 1}$, and let $\frac{1}{2}[x,x]=s+f$ be the Jordan decomposition of $\frac{1}{2}[x,x]$. 
  
  If $[x,x]$ is semisimple (i.e. $f=0$), denote $att(x):=\{x\}$. If $[x,x]\in \g_{\bar 0}$ is not semisimple, denote by $att(x)$ the set of all the attractors of $x$. }
\end{notation}

\subsection{Criterion for neatness}\label{ssec:neat_criterion}

\begin{lemma}\label{lem:criterion_neatness_h_decomp}
Let $x\in \g_{\bar 1}$. 
If $att(x)=\{0\}$ then $x\in \g_{neat}$ and in any decomposition $x=\sum_i x_i$ as above, we have $x_{-1}\in G_{\bar 0}.x$.
\end{lemma}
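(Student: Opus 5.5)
Since $att(x)=\{0\}$, we have $x_0=0$ for every decomposition \eqref{eq:h_eigenvectors}. By \cref{lem:square_attractor} this gives $s=[x_0,x_0]=0$, so $x$ is $\ad$-nilpotent and $\frac12[x,x]=f$. The identities after \cref{lem:square_attractor} then reduce to $\frac12[x_{-1},x_{-1}]=f-[x_0,x_{-2}]=f$. So $x$ and $x_{-1}$ have the same commutator square $2f$. The plan is to prove that both $x_{-1}$ and $x$ are neat, and then apply \cref{lem:conjugation_of_squares}, which yields $x_{-1}\in G_{\bar 0}.x$ directly.

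\textbf{Neatness of $x_{-1}$.} Fix a finite-dimensional $G$-module $V$ (a faithful one suffices, by \cref{thrm:JM}). Decompose $V=\bigoplus_j V_j$ into $h$-eigenspaces, where $V_j$ has eigenvalue $j\in\Z$; this is integral because $h$ is part of an $\sl_2$-triple. The operator $x_{-1}$ has $h$-degree $-1$ and satisfies $x_{-1}^2=f$ on $V$. By $\sl_2$-theory, $f^j:V_j\to V_{-j}$ is an isomorphism for $j\ge 0$, so $x_{-1}^{2j}:V_j\to V_{-j}$ is an isomorphism for every $j\geq0$. Now double the grading, putting $V_j$ in degree $2j$. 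Then $x_{-1}$ has degree $-2$ and satisfies the hard-Lefschetz condition that defines the Deligne filtration. Hence this grading splits the Deligne filtration of $x_{-1}$, and every Jordan string of $x_{-1}$ is symmetric about $0$, running from weight $j$ down to weight $-j$. Such a string has length $2j+1$, which is odd. Therefore $x_{-1}$ acts neatly on $V$, and $x_{-1}$ is neat by \cref{thrm:JM}.

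\textbf{Neatness of $x$.} This is the step I expect to be the main obstacle. We have $x=x_{-1}+x_{-2}+\dots$, and on $V$ we have $x^2=f=x_{-1}^2$. Hence $x^{2j}=f^j$, so the even powers of $x$ and of $x_{-1}$ have the same ranks, on $V_{\bar0}$ and on $V_{\bar1}$ separately. First I would try a filtration argument. Let $F^{\le j}=\bigoplus_{i\le j}V_i$. Then $x$ lowers this filtration by $1$, and its associated graded operator is $x_{-1}$. Since $x^{2j}$ equals $f^j$ exactly, the isomorphisms $x^{2j}:\mathrm{gr}_jV\to \mathrm{gr}_{-j}V$ are unchanged. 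This should show that the filtration by $h$-weights is the Deligne filtration of $x$ as well, so that $Gr^{ev}$ of $x$ and of $x_{-1}$ agree and have the same dimension. Since $x_{-1}$ is neat, $\dim Gr^{ev}_{x}(V)=\dim Gr^{ev}_{x_{-1}}(V)=\dim V$, and therefore $x$ is neat.

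If that argument does not close, the fallback is a direct Jordan-type count. The Jordan type of $x$, restricted separately to $V_{\bar0}$ and $V_{\bar1}$, determines the Jordan type of $f=x^2$ on each parity: an $x$-block of size $2j+1$ contributes $f$-blocks of sizes $j+1$ and $j$ in opposite parities, while an $x$-block of size $2j$ contributes blocks of size $j$ to both parities. The $f$-types are fixed by $f$ alone. I would then show that a superdimension/multiplicity count forces the same odd-block decomposition as for the neat $x_{-1}$, which excludes even blocks.

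Once $x$ and $x_{-1}$ are both known to be neat with $[x,x]=[x_{-1},x_{-1}]$, \cref{lem:conjugation_of_squares} gives $x_{-1}\in G_{\bar 0}.x$, which completes the proof.
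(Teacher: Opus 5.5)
Your proof is correct, but it takes a different route from the paper. The paper also gets neatness of $x_{-1}$ from the copy of $\osp(1|2)$ generated by $e,h,f,x_{-1}$. It then proves $x_{-1}\in G_{\bar 0}.x$ directly and constructively. Since $\osp(1|2)$-theory gives $\g^{<0}\subset \Im\ad_{x_{-1}}$, one picks $w\in\g^{1-n}_{\bar 0}$ with $[x_{-1},w]=x_{-n}$ and applies $\exp(\ad_w)$ to remove the lowest term. Iterating this, using finiteness of the grading, conjugates $x$ to $x_{-1}$ inside the unipotent group $\exp(\g^{<0}_{\bar 0})$, and neatness of $x$ is then inherited from $x_{-1}$. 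You go the other way: you prove neatness of $x$ first, module by module, and obtain conjugacy from \cref{lem:conjugation_of_squares}.

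Your filtration step does close, and you should state it firmly rather than as ``should show''. In any finite-dimensional module, set $\mathcal F^{2j}=\mathcal F^{2j+1}=\bigoplus_{i\le j}V_i$. This filtration satisfies $x\mathcal F^{k}\subset\mathcal F^{k-2}$. Because $x^{2j}=f^j$ exactly, the induced maps $Gr^{2j}\to Gr^{-2j}$ are $f^j\colon V_j\to V_{-j}$, which are isomorphisms, while $Gr^{\mathrm{odd}}=0$. By uniqueness of the Deligne filtration, cite this explicitly, it is the Deligne filtration of $x$. The vanishing of the odd graded pieces then says that every Jordan block of $x$ has odd size.

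Three smaller points:
\begin{itemize}
\item This argument contains the case of $x_{-1}$, so your separate paragraph for $x_{-1}$ is redundant.
\item Your fallback Jordan count would not work on its own. The Jordan types of $f=x^2$ on $V_{\bar 0}$ and $V_{\bar 1}$ do not determine that of $x$: one block of size $2$, and two blocks of size $1$ of opposite parity, both give $f=0$. The $h$-grading is what rules this out.
\item You should dispose of the case $f=0$, where $att(x)=\{x\}$ forces $x=0$.
\end{itemize}

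As for trade-offs, the paper's argument is self-contained and explicit about the conjugating element. Yours leans on the heavier \cref{lem:conjugation_of_squares}, and hence on the super Jacobson--Morozov theorem and the conjugacy lemma of \cite{entova2022jacobson}. In return, it shows that the $h$-weight filtration is the Deligne filtration of $x$ on every module. So $\Phi_x$ and $\Phi_{x_{-1}}$ are computed from the same grading.
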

The second part of the statement sits well with \cref{lem:selfcompart}: indeed, by \cref{lem:selfcompart} if $att(x)=\{0\}$ then any two decompositions of $x$ yield conjugate $(-1)$-components. 
\begin{proof}
Given $x\in \g_{\bar 1}$, let $\frac{1}{2}[x,x]=s+f$ be the Jordan decomposition of $\frac{1}{2}[x,x]$. 

Assume $att(x)=0$. By the definition of attractor, we either have $x=0\in \g_{neat}$ or $f\neq 0$. We will from now on assume that $f\neq 0$ and let $(e,h, f)$ be an $\sl_2$-triple in $\g^s_{\bar 0}$. Consider the $\ad_h$-grading $\g=\oplus_{i\in \Z}\g^i$ and let
$x=\sum_{i=1}^{n} x_{-i}$ be the decomposition of $x$ with $x_i \in \g^i$ (by assumption, $x_0=0$). 

Recall that $f=\frac{1}{2}[x_{-1}, x_{-1}]$ (since $x_0=0$), so $e,h,f,x_{-1}$ generate a copy of 
$\osp(1|2)$, which implies that $x_{-1}\in \g_{neat}$.

We will now show that there exists $g\in G_{\bar 0}$ such that
$\operatorname{Ad}_g(x)=x_{-1}$. This will complete the proof. Indeed, assume that
$n\geq 2$ (otherwise $x=x_{-1}$ and we are done). Recall that $\g^{<0}\subset \Im \ad_{x_{-1}}$ so there exists $w\in \g^{1-n}_{\bar 0}$ such that $\ad_{x_{-1}}(w)=x_{-n}$. Then
 $$\operatorname{exp}(\ad_w)(x)=x-x_{-n}+y=x_{-1}+\dots+x_{-n+1} +y,\quad y\in \oplus_{i<-n}\g^{i}.$$

Repeating this argument several times and using finiteness of the grading $\g=\oplus_{i\in \Z}\g^i$, we can find an element $g\in G_{\bar 0}$ such that $\operatorname{Ad}_g(x)=x_{-1}+x_{-2}+\dots+x_{-m}$ with $m<n$. We can now finish the proof by induction on $n$.
\end{proof}

\section{Balanced nilpotent elements}\label{sec:balanced}
Let $G$ be a quasi-reductive group and $\g:=\Lie(G)$.
\subsection{Definition of balanced elements}
\begin{definition}
    Let $V$ be a finite-dimensional vector superspace over $\kk$. An odd nilpotent operator $\varphi \in \InnaA{\End^{\bullet}(V)_{\bar 1}}$ is called {\it balanced} if the size of each of its Jordan blocks is either $1$ or even.
\end{definition}

\begin{definition}
    Let $x\in \g_{\bar 1}$ and let $s$ be the semisimple part in the Jordan decomposition of $\frac{1}{2}[x,x]$. The element $x$ is called {\it balanced} if for any $M\in \Rep(G)$, $x\rvert_{M^s}$ is a balanced operator.
\end{definition}
\begin{example}
    Let $x\in \g^{hom}$, that is, $s:=\frac{1}{2}[x,x]$ is semisimple. We claim that $x$ is balanced. Indeed, for any $M\in \Rep(G)$, $x\rvert_{M^s}$ is an odd nilpotent operator which squares to zero, so all its Jordan blocks have sizes $1$ or $2$. In particular, any self-commuting odd element is balanced.
\end{example}

The above example provides the motivation for the definition of a balanced element. Just like in the special case of a homological element, we may describe the property of being balanced nicely through the associated symmetric monoidal functors (see \cref{ssec:DS_functors_prelim}).

Let $x$ be an balanced element in $\g$ and let $s$ be the semisimple part of the Jordan decomposition of $\frac{1}{2}[x,x]$. For any $M\in \Rep(G)$, consider the associated action of $\G^{(1|1)}$ on $M^s$. Since $x$ is balanced, any indecomposable $\G^{(1|1)}$-summand in $M^s$ is a single Jordan block of $x$ which is either of even size or of size $1$. Thus $\Phi_x(M)$ is isomorphic, as a vector space, to the sum of the indecomposable $\G^{(1|1)}$-summands of dimension $1$. The corresponding $OSp(1|2)$-action on this space is trivial and we may consider the functor
   $\Phi_x$ as a functor $\Rep(G)\to \sVect$.

\begin{notation}
    Let $\g_{bal}$ denote the set of balanced elements in $\g_{\bar 1}$. 
\end{notation}

\begin{remark}\label{rmk:balanced_func_faithful}
    From the description above, the functor $\Phi_x$ is faithful if and only if $x=0$. In particular, we obtain: $\g_{neat}\cap \g_{bal}=\{0\}$.

    We also saw that $\g^{hom}\subset \g_{bal}$.
   \end{remark}

\begin{lemma}\label{lem:Phi_for_x_bal}
    Let $x\in \g_{bal}$. There exists a natural isomorphism $$\eta: \Phi_x \to \widetilde{DS}_x$$ where $\widetilde{DS}_x: \Rep(G)\to \sVect$ is the functor defined by $$\widetilde{DS}_x(M):=\frac{\Ker ~ x\rvert_{M^s}}{\Im ~ x\rvert_{M^s}\cap \Ker ~ x\rvert_{M^s}}$$ for every $M\in \Rep(G)$.
\end{lemma}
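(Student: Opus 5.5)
The plan is to work one $M\in\Rep(G)$ at a time and then check naturality. Recall that $\Phi_x(M)\cong Gr^{ev}(M^s)$, where $M^s$ is a $\G^{(1|1)}$-module via $x$ and $Gr^{ev}$ is taken with respect to the Deligne filtration $\mathcal{F}^\bullet$ of \cref{ssec:filtration}. Since $x$ is balanced, we may decompose $M^s=A\oplus B$ as $\G^{(1|1)}$-modules. Here $A$ is the sum of the Jordan blocks of size $1$, so $x\rvert_A=0$, and $B$ is the sum of the blocks of even size $2k$, i.e. the modules $\mathtt{M}_{2k-1}$ up to parity. Both functors are additive on such decompositions, so the first step is to compute each of them on a single block.

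\textbf{Computing both sides on blocks.} For $\mathtt{M}_0$ both sides equal the module itself in degree $0$. For $\mathtt{M}_{2k-1}$ with basis $a_0,\dots,a_{2k-1}$, the Deligne filtration puts $a_j$ in degree $2k-1-2j$, which is always odd, so $Gr^{ev}(\mathtt{M}_{2k-1})=0$. On the other side, $\Ker x=\kk a_{2k-1}\subset \Im x$, so $\widetilde{DS}$ also vanishes. Hence both $\Phi_x(M)$ and $\widetilde{DS}_x(M)$ are identified with $A$, and they have the same dimension.

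\textbf{Canonical map $\eta_M$.} Next I define $\eta_M$ without choosing a splitting. Using the explicit formula for the filtration, I would check that $\mathcal{F}^0(M^s)=\Ker x+\mathcal{F}^{-1}(M^s)$, where $\mathcal{F}^{-1}(M^s)$ is spanned by the block pieces in negative degree. I would also check that $\mathcal{F}^{-1}\cap\Ker x=\Im x\cap \Ker x$, which holds blockwise because in $B$ the kernel is the bottom vector of each block and lies in $\Im x$, while in $A$ we have $\mathcal{F}^{-1}=0$. Together these give a canonical isomorphism
$$\frac{\Ker x\rvert_{M^s}}{\Im x\rvert_{M^s}\cap\Ker x\rvert_{M^s}}\;\xrightarrow{\sim}\;\frac{\mathcal{F}^0(M^s)}{\mathcal{F}^{-1}(M^s)}=Gr^0(M^s).$$
It remains to see that $Gr^{ev}(M^s)=Gr^0(M^s)$: from the block computation there are no nonzero even graded pieces in nonzero degrees. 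I take $\eta_M$ to be the inverse of this isomorphism, composed with the identification $\Phi_x(M)\cong Gr^{ev}(M^s)$.

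\textbf{Naturality and parity.} For a morphism $\phi:M\to N$ in $\Rep(G)$, $\phi$ commutes with $s$ and with $x$. So it maps $M^s\to N^s$, preserves $\Ker x$ and $\Im x$, and preserves the Deligne filtration, which is functorial in $\G^{(1|1)}$-modules by its explicit formula. Since $\eta$ is induced by an inclusion of subspaces, the naturality square commutes. I would also note that the $OSp(1|2)$-action on $Gr^0$ is trivial and that parity is preserved, so $\eta$ is an isomorphism of $\sVect$-valued functors. If one wants it, monoidality follows because both sides are subquotients compatible with tensor products, but the statement only asks for a natural isomorphism.

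\textbf{Main obstacle.} The step needing care is the splitting-free identification $\mathcal{F}^0=\Ker x+\mathcal{F}^{-1}$ together with $\mathcal{F}^{-1}\cap\Ker x=\Im x\cap\Ker x$. I would verify both by choosing a Jordan decomposition, which is legitimate because both sides are intrinsically defined subspaces and the decomposition is only used to check equalities between them.
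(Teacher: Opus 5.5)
Your proposal is correct and follows essentially the paper's route: pass to the nilpotent action of $x$ on $M^s$, compare $Gr^{ev}(M^s)$ with $\Ker x/(\Im x\cap\Ker x)$ via the Deligne filtration, and check bijectivity on a Jordan basis, where blocks of size $1$ contribute to both sides and blocks of even size contribute to neither. Your formulation through the inclusion $\Ker x\hookrightarrow\mathcal{F}^0$, using $\mathcal{F}^0=\Ker x+\mathcal{F}^{-1}$ and $\mathcal{F}^{-1}\cap\Ker x=\Im x\cap\Ker x$, is in fact the rigorous version of the paper's map: the paper writes it as a composite $\mathcal{F}_0(M)\hookrightarrow\Ker x\rvert_M\to\cdots$, but $\mathcal{F}^0\not\subset\Ker x$ in general, since already a Jordan block of size $4$ has a vector in degree $-1$ that is not killed by $x$.
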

\begin{proof}
       Since we are only interested in the action of $x$ on $s$-invariant subspaces, we may assume for simplicity that $s=0$, $x$ is $\ad$-nilpotent and $M=M^s$.
       Consider the Deligne filtration of $M$ with respect to the nilpotent operator $x:M\to M$. This filtration is given by 
    $$\mathcal{F}_k(M)=\sum_{j-i=k+1} \Ker x^j\rvert_{M} \cap \Im x^i\rvert_{M}.$$
\InnaA{Here we use the convention $x^0\rvert_{M} := \id_M$, $x^i \rvert_{M} := 0$ for $i<0$.}

Consider the composition $$\eta_M:\mathcal{F}_k(M) ~\hookrightarrow ~\Ker ~x\rvert_M~\twoheadrightarrow~ \frac{M^x}{xM\cap M^x}.$$
This composition defines a natural transformation of functors \InnaA{$ \mathcal{F}_k\to \widetilde{DS}_x$.
By definition, $\eta_M$ is zero for $i<0$ since $\mathcal{F}_i(M)\subset \Im x\rvert_{M}$ for $i<0$.}

    Recall that $\Phi_x(M)=Gr^{ev}M = \bigoplus_k  \quotient{\mathcal{F}_{2k}(M)}{\mathcal{F}_{2k-1}(M)}$.
    Now the natural maps $\eta_M$ define a natural transformation $\eta: \Phi_x \to \widetilde{DS}_x$. To check that this is an isomorphism, it is enough to choose a Jordan basis for $x\rvert_M$ and compute $\eta_M$ explicitly: the trivial Jordan blocks yield a basis for $\Phi_x(M)$ and are sent bijectively to the corresponding basis in $M^x/(xM\cap M^x)$.
\end{proof}

Below, we will give a sufficient criterion for $x\in \g_{\bar 1}$ to be balanced, similar to the criterion in \cref{ssec:neat_criterion}. 
\subsection{Strongly balanced elements}\label{ssec:strongly_balanced}

Let $x\in \g_{\bar 1}$ and let $\frac{1}{2}[x,x]=:s+f$ be the Jordan decomposition of $\frac{1}{2}[x,x]$. 

\begin{comment}
Assume that $f\neq 0$ and let $(e,h,f)$ be an $\sl_2$-triple. Consider the $\ad_h$-eigenvector decomposition $x=\sum_{i\leq 0}x_i$ where $[h,x_i]=ix_i$, $[s,x_i]=0$ for any $i$ (see \cref{eq:h_eigenvectors}).
\end{comment}

\begin{definition}\label{def:strongly_bal}
    We say that $x\in\g_{\bar 1}$ is {\it strongly balanced} if one of the two conditions below hold:
    \begin{itemize}
        \item $x\in \g^{hom}$ (that is, $f=0$ and $\frac{1}{2}[x,x]=s$ is semisimple),
        \item $f\neq 0$ and there exists an $\sl_2$-triple $(e,h,f)$ so that in the $\ad_h$-eigenvector decomposition $x=\sum_{i\leq 0}x_i$ (see \cref{eq:h_eigenvectors}) we have: $x_{-1}=0$.
    \end{itemize}
  
    For a strongly balanced $x$, we denote by $\sl_x\subset \g_{\bar 0}$ an $\sl_2$-subalgebra spanned by appropriate $e,h,f$ when $f\neq 0$, or the zero subalgebra when $f=0$. 
\end{definition}
There are usually several choices of $\sl_x$ for a given $x$, but they are all $C_{G_{\bar 0}}(f)$-conjugate. We remind the reader that $[h,x_0]=[f, x_0]=0$ which implies that $[\sl_x, x_0]=0$.

\begin{remark}
By \cref{lem:balancedLevi}, we may choose $\sl_x\subset \g^s$.
\end{remark}

  \begin{remark}
    In the definition above, the condition $x_{-1}=0$ implies that $f=[x_0, x_{-2}]$.
    \end{remark}
    
\begin{proposition}\label{prop:x_1_can_be_eliminated}
    Let $x\in \g_{\bar 1}$. Assume that $f\neq 0$ (that is, $x\notin \g^{hom}$). Let $(e,h,f)$ be an $\sl_2$-triple in $\g^s_{\bar 0}$ and let $x=\sum_{i\leq 0}x_i$ be the $\ad_h$-decomposition as above. If $x_{-1}\in \Im \ad_{x_0}\rvert_{\g^s}$ then $x$ is strongly balanced in $\g$.
\end{proposition}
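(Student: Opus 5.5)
The plan is to use the formula established in the proof of \cref{lem:selfcompart}. If $g=\exp(u_{-1}+\dots+u_{-n})\in C_{G_{\bar 0}}(f)$ with $u_i\in\mathfrak{c}_{\g_{\bar 0}}(f)^i$, set $h'':=\Ad_g^{-1}(h)$. Then $h''$ lies in the $\sl_2$-triple $(\Ad_g^{-1}e, h'', f)$, which still contains $f$ and still lies in $\g^s_{\bar 0}$ provided the $u_i$ commute with $s$. The $(-1)$-component of $x$ with respect to $\ad_{h''}$ is
$$x''_{-1}=\Ad_g^{-1}\bigl(x_{-1}+[x_0,u_{-1}]\bigr).$$
So it suffices to find an even element $u\in \mathfrak{c}_{\g_{\bar 0}}(f)^{-1}\cap \g^s$ with $[x_0,u]=x_{-1}$. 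We then take $g=\exp(-u)$, which gives $x''_{-1}=0$. That is precisely the second condition in \cref{def:strongly_bal}.

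First, produce a preimage with the right degree and parity. By hypothesis $x_{-1}=[x_0,w]$ for some $w\in\g^s$. The operator $\ad_{x_0}$ is odd and commutes with $\ad_h$ and $\ad_s$, because $[h,x_0]=[s,x_0]=0$. Hence we may replace $w$ by its even component of $\ad_h$-weight $-1$, and it still lies in $\g^s$ and still satisfies $[x_0,w]=x_{-1}$. So we may assume $w\in\g^{-1}_{\bar 0}\cap\g^s$.

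Second, correct $w$ so that it centralizes $f$; this is the main point. Since $[e,x_0]=[h,x_0]=[f,x_0]=0$ (see the proof of \cref{lem:x_minus_1_is_neat_in_DS}), $\ad_{x_0}$ is an $\sl_2$-module map on $\g^s$, where $\sl_2=\langle e,h,f\rangle$ acts semisimply because $\g$ is quasi-reductive. By $\sl_2$-theory, in each negative $h$-weight the space $\g^{-1}\cap\g^s$ splits canonically as
$$\g^{-1}\cap\g^s=\bigl(\Ker\ad_f\cap\g^{-1}\cap\g^s\bigr)\oplus \ad_f\bigl(\g^{1}\cap\g^s\bigr),$$
the lowest-weight vectors plus the rest. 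The projection $\pi$ onto the first summand is built from the $\sl_2$-isotypic structure, so it commutes with $\ad_{x_0}$ and preserves parity. Since $[f,x]=0$, we have $[f,x_{-1}]=0$, so $\pi(x_{-1})=x_{-1}$. Setting $u:=\pi(w)$ gives $u\in\mathfrak{c}_{\g_{\bar 0}}(f)^{-1}\cap\g^s$ and $[x_0,u]=\pi([x_0,w])=x_{-1}$.

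Third, apply the formula from \cref{lem:selfcompart} with $u_{-1}=-u$ and $g=\exp(-u)\in C_{G_{\bar 0}}(f)\cap C_{G_{\bar 0}}(s)$. The new Cartan element $h''$ commutes with $s$, and the $\ad_{h''}$-decomposition of $x$ has vanishing $(-1)$-component. Hence $x$ is strongly balanced.

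I expect the only real obstacle to be the second step: the naive approach of conjugating $x$ by $\exp(w)$ changes $f$, so one has to arrange $w\in\mathfrak{c}(f)$. The $\sl_2$-equivariance of $\ad_{x_0}$ is what makes that projection argument work.
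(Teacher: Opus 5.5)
Your strategy is the paper's. You use that $\ad_{x_0}$ commutes with $\ad_e,\ad_h,\ad_f$ to obtain an even $u$ of $h$-weight $-1$ with $[f,u]=0$ and $[x_0,u]=x_{-1}$, and then you conjugate by $\exp(\pm u)$ to kill the $(-1)$-component while fixing $f$ and $s$. Steps one and three are fine.

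The displayed splitting in step two, however, is false. In a finite-dimensional $\sl_2$-module, $f^k$ maps the weight-$k$ space isomorphically onto the weight-$(-k)$ space. So $\ad_f\colon \g^{1}\cap\g^s\to\g^{-1}\cap\g^s$ is surjective, and your second summand is the whole weight-$(-1)$ space. In particular it contains $\Ker\ad_f\cap\g^{-1}\cap\g^s$: in a copy of $V_1$, the lowest weight vector is $f$ applied to the highest one. As written, the projection $\pi$ is not defined. The correct complement of the lowest-weight vectors in weight $-1$ is the weight-$(-1)$ part of the isotypic components of highest weight $\geq 3$:
\[
\g^{-1}\cap\g^s=\bigl(\Ker\ad_f\cap\g^{-1}\cap\g^s\bigr)\oplus\ad_e\bigl(\g^{-3}\cap\g^s\bigr),
\]
which holds in each parity separately. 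Since $[x_0,e]=[x_0,f]=[x_0,h]=0$, the map $\ad_{x_0}$ sends each summand in one parity into the corresponding summand in the other parity. Hence the projection onto the first summand commutes with $\ad_{x_0}$ and fixes $x_{-1}$, and the rest of your argument goes through unchanged.

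The paper handles this step a little differently. It takes the two-dimensional simple module $M$ generated by $x_{-1}$, uses semisimplicity to find a simple submodule $M'\subset\g^s$ with $\ad_{x_0}(M')=M$, and lets $u$ be the even lowest weight vector of $M'$. It then computes the conjugate directly instead of quoting \cref{lem:selfcompart}.

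A minor sign point: with $\Ad_{\exp(u_{-1})}=\exp(\ad_{u_{-1}})$ and $[x_0,u]=-[u,x_0]$ for even $u$, the weight-$(-1)$ part of $\Ad_{\exp(u_{-1})}(x)$ is $x_{-1}+[u_{-1},x_0]$. You therefore want $g=\exp(u)$ rather than $\exp(-u)$. The formula you quoted carries the opposite sign; this does not affect the argument.
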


\begin{proof}
    Let $\g^s=\bigoplus_{i\in \Z} \g^i$ be the $\ad_h$-grading on the Levi sub-superalgebra $\g^s\subset \g$.  

    Let $\sl_x:=\mathrm{span}\{e,h,f\}$ and let $M$ be the simple $\sl_x$-module generated by $x_{-1}$. The $h$-weight of $x_{-1}$ is $-1$ and $[f,x_{-1}]=0$, so $\dim M=2$. Recall that $[x_0, \sl_x]=0$, so $\ad_{x_0}$ is an $\sl_x$-equivariant odd endomorphism of $\g^s$. Since $x_{-1}\in \Im \ad_{x_0}\rvert_{\g^s}$, this implies that there exists a simple $2$-dimensional $\sl_x$-submodule $M'\subset\g^s$ such that $\ad_{x_0}(M')=M$. In particular, there exists $u\in M'_{\bar 0}\subset \g^s_{\bar 0}$ such that $[x_0, u]=x_{-1}$ and $[f,u]=0$, $[h,u]=-u$.
    
    Let us consider the automorphism $\varphi:=\exp(\ad_u)$ of $\g^s$. This automorphism takes the $\sl_2$-triple $(e,h,f)$ to some $\sl_2$-triple $(\varphi(e),\varphi(h),f)$. 
    We claim that the $\ad_{\varphi(h)}$-decomposition of $x$ has no summand of $\varphi(h)$-weight $-1$, which would imply that $x$ is strongly balanced.

    Consider the decomposition of $\varphi^{-1}(x)$ with respect to the $\Z$-grading $\bigoplus_i \g^i$. We obtain:
    \begin{align*}
    \varphi^{-1}(x)&=x-x_{-1}-[u, x_{-1}]-[u, x_{-2}]-\ldots +\frac{1}{2}\ad_u^2(x_0)+\frac{1}{2}\ad_u^2(x_{-1})+\ldots  \\
    &=x_0+ ~(\text{ elements of degree (-2) or lower }).
    \end{align*}
    
    So in the decomposition of $x$ into $\ad_{\varphi(h)}$-eigenvectors, we obtain the summand $\varphi(x_0)$ (of $\ad_{\varphi(h)}$-weight zero) and summands of $\ad_{\varphi(h)}$-weights $(-2)$ and lower. This implies that $x$ is strongly balanced.
\end{proof}

\subsection{Strongly balanced implies balanced}\label{ssec:strong_bal_implies_bal}

From now and until the end of this subsection, assume that $x$ is strongly balanced. Let $\frac{1}{2}[x,x]=s+f$ be the Jordan decomposition of $\frac{1}{2}[x,x]$ and fix a subalgebra $\sl_x$ containing $f$.

We will prove that in this case, $x$ is balanced and the symmetric monoidal functors corresponding to $x$ and to its attractor $x_0$ are isomorphic; that is, $$\Phi_x\cong \Phi_{x_0}=DS_{x_0}.$$

We will start with the case when $x\in \g_{\bar 1}$ is an $\ad$-nilpotent strongly balanced element with $f:=[x,x]\neq 0$. Let $(e,h,f)$ be a suitable $\sl_2$-triple, so that the $\ad_h$-decomposition of $x$ is given by $x=x_0+x_{-2}+\dots$. For any $\g$-module $M$, we may consider the Deligne filtration $\mathcal{F}_f^\bullet( M)$ on $M$ associated with the nilpotent operator $f\rvert_M$. Its associated graded is given by the $h$-eigenspace grading $M=\bigoplus_k M_k$ on $M$. 

The action of $x$ preserves the filtration $\mathcal{F}_f^\bullet M$; taking the corresponding action on the associated graded, we obtain the action of $x_0$ on $\bigoplus_k M_k$ (recall that $x_0$ commutes with $\sl_x$).

\begin{lemma}\label{lem:strb} Let $x\in \mathcal{N}^{\ad}(\g_{\bar 1})$ be strongly balanced. Consider the Lie sub-superalgebra $\mathfrak k=\sl_x\times \kk[x_0]$ of $\g$. For any $\g$-module $M$ we have:
\begin{enumerate}
\item Every non-trivial indecomposable $\mathfrak k$-summand of $M$ is isomorphic to $V_k\otimes P$ where $V_k$ is an irreducible $\sl_x$-module with highest weight $k$ and $P$ is an indecomposable projective $\kk[x_0]$-module. In particular, $DS_{x_0}(M_i)=0$ for $i\neq 0$.
\item Every non-trivial indecomposable $\kk[x]$-summand of $M$ has superdimension $0$, hence $ x\in \g_{bal}$.
\end{enumerate}
\end{lemma}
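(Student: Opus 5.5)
The hypotheses give $s=0$, $\tfrac12[x,x]=f$, and $x=x_0+x_{-2}+x_{-3}+\dots$ with respect to $\ad_h$. Comparing $\ad_h$-weights in $\tfrac12[x,x]=f$ gives $[x_0,x_0]=0$ and $f=[x_0,x_{-2}]$, using $x_{-1}=0$. So $x_0$ is self-commuting and commutes with $\sl_x$, and $\mathfrak k=\sl_x\times\kk[x_0]$, where $\kk[x_0]$ is the $(0|1)$-dimensional abelian superalgebra. I will prove (1) first by a direct argument and then deduce (2) by comparing $x$ with its degeneration $x_0$ along the $h$-grading.

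\emph{Step 1 (structure of $\mathfrak k$-modules).} Since $\sl_x$ acts semisimply and commutes with $x_0$, I will write $M\cong\bigoplus_k V_k\otimes N_k$ with $N_k:=\Hom_{\sl_x}(V_k,M)$. Each $N_k$ is a $\kk[x_0]$-module, i.e. a superspace with an odd square-zero operator. Every such module is a sum of trivial modules and copies of the free (projective) module $P$ of dimension $(1|1)$. So the indecomposable $\mathfrak k$-summands are $V_k\otimes\kk$ and $V_k\otimes P$, and (1) amounts to showing that $DS_{x_0}(N_k)=0$ for $k>0$.

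\emph{Step 2 (the key identity).} Because $\sl_x$ commutes with $x_0$, the space $DS_{x_0}(M)=\bigoplus_k V_k\otimes DS_{x_0}(N_k)$ is an $\sl_x$-module. On it $f$ acts by zero: for $v\in\Ker x_0$,
$$fv=[x_0,x_{-2}]v=x_0x_{-2}v\pm x_{-2}x_0v=x_0(x_{-2}v)\in\Im x_0 .$$
Since $f$ acts nontrivially on $V_k$ for every $k>0$, this forces $DS_{x_0}(N_k)=0$ for $k>0$, so each such $N_k$ is free. Since $M_i=\bigoplus_k V_k[i]\otimes N_k$ and $V_0$ has weight $0$ only, it follows that $DS_{x_0}(M_i)=0$ for $i\neq0$, which proves (1).

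\emph{Step 3 (part (2)).} I will use the one-parameter family $x_t:=\exp(t\,\ad_h)(x)=x_0+e^{-2t}x_{-2}+\dots$. For $t\neq0$ this element is $G_{\bar0}$-conjugate to $x$, and it tends to $x_0$. The Jordan type of the odd nilpotent operator $x|_M$ is determined by the ranks $r_j:=\operatorname{rank}(x^j|_M)$. The even ranks are known, because $x^{2j}=f^j$, so $r_{2j}=\operatorname{rank}(f^j|_M)$ is read off from the $\sl_x$-decomposition. The number of blocks of size $2j+1$ equals $r_{2j}-2r_{2j+1}+r_{2j+2}$. Hence for $j\geq1$ it suffices to prove
$$\dim\bigl(\Ker x\cap\Im f^j\bigr)=\tfrac12\bigl(r_{2j}-r_{2j+2}\bigr),$$
that is, that on $\Im f^j$ the kernel of $x$ has exactly ``half'' the dimension.

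To obtain this, I will pass to the associated graded of the $f$-Deligne filtration, which is $\bigoplus_i M_i$. The operator $x$ preserves this filtration and its associated graded operator is $x_0$. By Step 1, $\Im f^j$ for $j\geq1$ only meets the summands $V_k\otimes P$ with $k>0$, on which $x_0$ acts freely. For such summands $\Ker x_0\cap\Im f^j$ has exactly half the dimension of $\Im f^j$ in each filtration piece. Upper semicontinuity, applied to $\dim\Ker$ for $x$ as the graded degeneration of $x_0$ (equivalently, of $x_t$ as $t\to\infty$), gives one inequality. The other inequality comes from $x^2=f$: on $\Im f^j$ we have $\Ker x\supseteq x(\Im f^j)\supseteq\Im f^{j+1}$-type inclusions, so $\dim\Ker x\geq\dim\Im x$ on $\Im f^j$. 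Together these yield equality.

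This shows that $x$ has no Jordan blocks of odd size at least $3$, so every non-trivial indecomposable $\kk[x]$-summand has even size and superdimension $0$. By definition this means $x\in\g_{bal}$.

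\emph{Main obstacle.} I expect the hardest part to be Step 3: making the comparison between $x$ and $x_0$ rigorous. The issue is controlling $\Ker x\cap\Im f^j$ through the associated graded, since the $f$-Deligne filtration and the $x$-Deligne filtration do not coincide. If the semicontinuity bookkeeping becomes unwieldy, I would fall back on a direct lifting argument. In that approach I would take a $\mathfrak k$-decomposition of $\operatorname{gr}M$, lift a basis of each $V_k\otimes P$ to $M$, and check by induction on filtration degree that these lifts assemble into even-length $x$-strings.
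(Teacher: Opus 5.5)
Your proof of (1) is correct and is essentially the paper's argument. The identity $fv=x_0x_{-2}v$ for $v\in\Ker x_0$ says that $f$ acts by zero on the $\sl_x$-module $DS_{x_0}(M)$; the paper uses the same fact via $e^kf^kv\in\kk^\times v$. Your reduction of (2) to the identity $\dim(\Ker x\cap\Im f^j)=\frac12(r_{2j}-r_{2j+2})$ for $j\ge1$ is also correct.

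The gap is in how you prove that identity. Upper semicontinuity along $x_t\to x_0$ on $\Im f^j$ only gives $\dim(\Ker x\cap\Im f^j)\le\dim(\Ker x_0\cap\Im f^j)=\frac12 r_{2j}$. This is strictly weaker than the bound $\frac12(r_{2j}-r_{2j+2})$ you need whenever $f^{j+1}\neq 0$ on $M$. Your ``other inequality'' is false. We have $x(\Im f^j)\not\subseteq\Ker x$, because $x^2=f$ does not vanish on $\Im f^j$, and $\dim\Ker x\ge\dim\Im x$ fails there. Concretely, let $x\in\gl(3|3)_{\bar 1}$ act on $\kk^{3|3}$ by a single Jordan block of length $6$; it is strongly balanced by \cref{prop:balanced_in_gl}. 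On $\Im f$ the kernel of $x$ is $1$-dimensional and the image is $3$-dimensional. Your two inequalities together would force $\dim(\Ker x\cap\Im f)=\frac12 r_2=2$, but it is $1$. So Step 3 as written does not rule out odd blocks of length at least $3$.

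The missing idea is to compare $x$ and $x_0$ on the quotient $Q_j:=\Im f^j/\Im f^{j+1}$ rather than on $\Im f^j$.
\begin{itemize}
\item $Q_j$ is stable under $x$, $x_0$ and $h$, and $x^2=f$ acts on it by zero.
\item By your Step 2, $x_0$ acts freely on $Q_j$ for $j\ge1$, since only the pieces $V_k\otimes N_k$ with $k\ge j\ge1$ contribute. Hence $\operatorname{rank}(x_0|_{Q_j})=\frac12\dim Q_j$.
\item Lower semicontinuity of rank along $x_t\to x_0$ gives $\operatorname{rank}(x|_{Q_j})\ge\frac12\dim Q_j$. Equivalently, a filtered map has rank at least that of its associated graded map.
\item The square-zero property gives the reverse inequality.
\item The number $\dim Q_j-2\operatorname{rank}(x|_{Q_j})$ is exactly the number of Jordan blocks of $x$ of length $2j+1$, so this number is zero.
\end{itemize}

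With this repair, your rank count is a valid alternative to the paper's proof. The paper instead builds the even-length $x$-strings explicitly. Take a highest weight vector $v$ of a summand $V_k\otimes P$ with $x_0v\neq0$. Then $x^{2k+2}v=f^{k+1}v=0$, while $x^{2k+1}v=xf^kv$ has nonzero weight $-k$ component $x_0f^kv$. These strings are independent, as one sees by passing to the associated graded of the $f$-filtration. Finally, each vector $m$ of the trivial $\mathfrak k$-part is replaced by $m-n$, where $n$ lies in the span of the strings and $xn=xm$. Your fallback sketch (``lift a basis and check by induction'') omits this last correction. It is needed because a lift of a trivial $\mathfrak k$-summand need not lie in $\Ker x$.
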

\begin{proof} 
\mbox{}

\begin{enumerate}
\item Recall that $[\sl_x, x_0]=0$. By the complete reducibility of $\sl_2$-modules we can see that every indecomposable $\mathfrak k$-module is isomorphic either to $V_k$ or to $V_k\otimes P$, where $P$ is an indecomposable projective $\kk[x_0]$-module of dimension $2$ (such $P$ is unique up to isomorphism). 

Let us assume that there exists a indecomposable $\mathfrak k$-summand of $M$ which is isomorphic to $V_k$ for some $k>0$.
Let $v$ be a highest weight vector in this summand. 

By our assumption,
$x_0 v=0$ but $v\notin x_0 M$. Recall that $f=[x_0, x_{-2}]$, so $fv=x_0x_{-2}v$.
Note that $f^{k}v$ is a lowest weight vector and we have
$$f^k v=f^{k-1}x_0x_{-2} v=x_0 f^{k-1} x_{-2} v.$$
 
The
application of $e^k$ to both sides of the latter equality gives 
$$ x_0 (e^k f^{k-1} x_{-2}) v=e^k x_0 f^{k-1} x_{-2} v = e^kf^k v\in \kk^{\times} v$$ which implies $v\in x_0 M$. This is a contradiction.

\item Consider a $\mathfrak k$-decomposition of $M$:
$$M= \bigoplus_\alpha N_{\alpha} \oplus \widetilde{M}$$ where $\widetilde{M}$ is a trivial $\mathfrak{k}$-module, while each $N_{\alpha}$ is an indecomposable $\mathfrak k$-module which is isomorphic to
$V_k\otimes P$ for some $k$. Fix $\alpha$ and let $N_{\alpha}
\cong V_k\otimes P$. We may choose $v\in N_{\alpha}$ corresponding to $v'\otimes u\in V_k\otimes P$, where $v'\in V_k$ is a highest weight vector and $u\in P$ satisfies: $x_0u\neq 0$. Therefore $v$ is a highest weight vector in $N_{\alpha}$ with respect to the action of $\sl_x$ and $x_0 v\neq 0$.

Then $x_0f^k v\neq 0$. \InnaA{This is the $(-2k)$-eigenvector in the $h$-decomposition of $xf^kv$} so $ xf^kv=x^{2k+1}v\neq 0$. On the other hand, $f^{k+1}v=x^{2k+2}v=0$. Thus,
$v$ generates an indecomposable $\kk[x]$-submodule $N'_{\alpha}$ of dimension $(k+1|k+1)$. 
Consider the $\kk[x]$-submodule
\begin{equation}\label{eq:strongly_balanced_dir_sum}
 N=\sum_\alpha N'_{\alpha} \subset M.   
\end{equation} 
We claim that this sum is direct. Indeed, passing from the filtered space $\mathcal{F}_f^\bullet N\subset \mathcal{F}_f^{\bullet} M$ to the associated graded, the summands $N'_{\alpha}$ in Eq. \eqref{eq:strongly_balanced_dir_sum} are sent respectively to the direct summands in the decomposition $\bigoplus_\alpha N_{\alpha}$.

Recall that $h$-grading $M=\bigoplus_i M_i$. Let $m\in \widetilde{M}$. We have: $m\in M_0$ and $xm=\sum_{i\leq -2} x_im \in \oplus_{j<0} M_j$. This implies: $xm\in \bigoplus_{\alpha} N_{\alpha}$. Now, as linear subspaces of $M$, we have: $N=\bigoplus_\alpha  N_{\alpha}$. So $xm\in \mathcal{F}_f^{< 0} N$. By the definition of $N$, we conclude that $xm\in xN$.

Choose a basis $\{m_\beta\}_{\beta\in B}$ in $\widetilde M$. By the argument above, for every $k$ there exists $n_\beta\in \mathcal{F}_f^{< 0} N$ such that $xm_\beta=xn_\beta$.
Let $M':=\mathrm{span}\{m_\beta-n_\beta~|~\beta\in B \}$. Then $M'$ is a trivial $\kk[x]$-submodule of $M$ and $$M=N\oplus M'$$ as $\kk[x]$-modules (with $M'\cong \widetilde{M}$ as vector spaces).

\end{enumerate}
\end{proof}

\begin{proposition}\label{prop:isom_DS_funcs_balanced}
    Let $x\in \mathcal{N}^{\ad}(\g_{\bar 1})$ be strongly balanced, and let $x_0$ be its attractor.
    
    We have a natural isomorphism $\widetilde{DS} ~\xrightarrow{\sim}~DS_{x_0}$ where $$ \widetilde{DS}(M):=M^x / (xM\cap M^x), \;\;\; DS_{x_0}(M)=M^{x_0} / {x_0}M.$$
\end{proposition}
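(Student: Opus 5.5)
The plan is to build the isomorphism through the filtration $\mathcal{F}_f^\bullet$ defined by $f=[x,x]$, with $h$-grading $M=\bigoplus_i M_i$. Both functors will be compared on a single canonical subspace: the weight-zero part of $\mathcal{F}_f^{0}$, i.e. $\mathcal{F}_f^0(M)/\mathcal{F}_f^{-1}(M)\cong M_0$. \cref{lem:strb}(1) shows that $DS_{x_0}(M)=DS_{x_0}(M_0)$ and that every class there is represented by the trivial $\mathfrak k$-summand $\widetilde M$. \cref{lem:strb}(2) gives a $\kk[x]$-splitting $M=N\oplus M'$, with $N$ a sum of blocks of size $(k+1|k+1)$ and $M'\cong\widetilde M$ trivial. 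Hence $\widetilde{DS}(M)\cong M'$, and $M'$ maps isomorphically onto $\widetilde M$ modulo $\mathcal{F}_f^{<0}$.

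To make this natural, I would define a map $\theta_M:\widetilde{DS}(M)\to DS_{x_0}(M)$ as follows. First I would check that every $m\in \Ker x$ satisfies $m\in\mathcal{F}_f^0(M)$ modulo $xM\cap\Ker x$. In the decomposition $M=N\oplus M'$, the kernel of $x$ on $N$ is $x^{2k+1}$ of the generators, which lies in $xM$, so $\Ker x = M'\oplus (xM\cap \Ker x)$ and $M'\subset \mathcal{F}_f^0$. Then I set $\theta_M([m]):=[\mathrm{gr}_0(m)]$, the class of the degree-$0$ component in $M_0^{x_0}/x_0M_0$. The leading term of $xm=0$ in the associated graded is $x_0\,\mathrm{gr}_0(m)=0$, so $\mathrm{gr}_0(m)\in\Ker x_0$. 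Naturality is automatic once well-definedness holds, because $\mathfrak g$-module maps preserve $\mathcal{F}_f^\bullet$ and the $h$-grading is canonical on the associated graded. Here I would use the associated graded of $\mathcal{F}_f$ rather than the chosen $h$-eigenspaces, so that the map is canonical.

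The main obstacle is well-definedness: if $m\in xM\cap\Ker x\cap\mathcal{F}_f^0$, I must show $\mathrm{gr}_0(m)\in x_0M_0$. I would argue directly with the decomposition from \cref{lem:strb}(2). We have $xM=xN$, and its elements of $\mathcal{F}_f$-degree $\le 0$ have degree-$0$ part equal to $x_0$ applied to the degree-$0$ part of the preimage, since $x=x_0+x_{-2}+\dots$ has no $-1$ component. This is the point where strong balancedness is used. Any correction from $x_{-2}$ acting on degree-$2$ components lands in $\mathrm{Im}\,f$ terms that vanish in $DS_{x_0}(M_0)$ by \cref{lem:strb}(1). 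If that bookkeeping proves delicate, my fallback is to compute $\theta_M$ blockwise on the explicit bases of $N'_\alpha$ and $M'$ from \cref{lem:strb}. On $N$ the map is zero, and on $N_\alpha\cong V_k\otimes P$ we also have $DS_{x_0}=0$.

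Finally, $\theta_M$ is an isomorphism because on $M'$ it is the composite $M'\to\widetilde M$, which is bijective. On the target, $DS_{x_0}(M)\cong\widetilde M$ by \cref{lem:strb}(1). Surjectivity and injectivity then follow by comparing dimensions, or directly from the bijection $M'\cong\widetilde M$.
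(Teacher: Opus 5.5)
Your map is the paper's map in disguise: for $m\in\Ker x$, $\mathrm{gr}_0(m)$ is just the weight-zero component $pr(m)$. Your bijectivity argument via $M'\to\widetilde M$, $m_\beta-n_\beta\mapsto m_\beta$, is exactly how the paper concludes. The gap is in the well-definedness step, which you rightly flag but do not resolve.

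\textbf{Where the primary argument fails.} Suppose $m=xn$ and $n$ had components $n_j$ of positive $h$-weight. Then the weight-zero part of $xn$ would be $x_0n_0+\sum_{j\geq 2}x_{-j}n_j$; the vanishing of $x_{-1}$ only removes the $j=1$ term. Your assertion that these corrections ``land in $\Im f$'' and vanish in $DS_{x_0}(M_0)$ is unsupported. Moreover, strong balancedness is not what this step turns on; it enters only through the splitting of \cref{lem:strb}.

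\textbf{The missing observation.} This is the heart of the paper's proof: $x^2=f$ on $M$. If $xn\in\Ker x$, then $fn=x^2n=0$. Hence $n\in\Ker f\subset\bigoplus_{i\leq 0}M_i$, since the kernel of $f$ consists of lowest weight vectors. Therefore the weight-zero part of $xn$ is exactly $x_0n_0\in x_0M_0$. The same remark gives $\Ker x\subset\Ker f\subset\mathcal{F}_f^0(M)$ outright. So the ``modulo $xM\cap\Ker x$'' detour through the splitting is unnecessary.

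\textbf{Your fallback.} It can be made to work, but not for the reason you state. The $\kk[x]$-blocks $N'_\alpha$ are not the $\mathfrak k$-summands $N_\alpha$, so ``$DS_{x_0}(N_\alpha)=0$'' says nothing by itself about $\mathrm{gr}_0$ of elements of $N$. What is true is the following. The space $xM\cap\Ker x=\Ker x|_N$ is spanned by the vectors $x^{2k_\alpha+1}v_\alpha=xf^{k_\alpha}v_\alpha\in\bigoplus_{j\leq -k_\alpha}M_j$. The weight-zero component of such a vector is $0$ when $k_\alpha>0$ and $x_0v_\alpha$ when $k_\alpha=0$. Either way it lies in $x_0M_0$.

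With either repair, your proof coincides with the paper's.
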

\begin{proof}
Let $(e,h,f=\frac{1}{2}[x,x])$ be an $\sl_2$-triple producing $x_0$ as an attractor of $x$. 
    Consider the $h$-grading $M=\bigoplus_i M_i$ and the projection
$$pr:M \twoheadrightarrow M_0=M^h.$$ 
Let $m\in M^x$. We have $fm=0$ so $m\in \bigoplus_{i\leq 0} M_i$. Let us write $m=\sum_{i\leq 0} m_i$ where $m\in M_i$. Then $pr(m)=m_0$ and $fm_i=0$ for all $i$. Now, $xm \in x_0m_0 + \bigoplus_{i< 0} M_i$, but since $xm=0$ we obtain: $$hm_0=fm_0=x_0m_0=0.$$

This shows that the $pr(M^x)\subset M^{\mathfrak{k}}\subset M^{x_0}$ (see the notation in \cref{lem:strb}). In particular, we obtain a natural transformation $(-)^x\to (-)^{x_0}$ given by the above map $pr:M^x\to M^{x_0}$. 

Now, let $m\in M$ such that $xm\in M^x$. Then $fm=0$, so again $m\in \bigoplus_{i\leq 0} M_i$. Let us write $m=\sum_{i\leq 0} m_i$, $m\in M_i$. Again, $xm \in x_0m_0 + \bigoplus_{i< 0} M_i$ and we have: $pr(xm)=x_0m_0$. Hence $ pr(xM\cap M^x)\subset x_0M_0\subset x_0M$.

So we obtain a natural transformation $\widetilde{DS} ~\longrightarrow~DS_{x_0} $ given by the morphism $$M^x / (M^x\cap xM) ~\longrightarrow ~DS_{x_0}(M)=M^{x_0} / {x_0}M.$$ 

It remains to check that this natural transformation is an isomorphism. Consider the subspaces $\widetilde{M}, M' \subset M$ constructed in the proof of \cref{lem:strb}. Clearly, we may identify (non-canonically) $$\widetilde{M}\cong M^{x_0} / {x_0}M, ~~ M' \cong M^x / (M^x\cap xM).$$ The explicit isomorphism $\widetilde{M}\to M', ~ m_k\mapsto m_k-n_k$ then gives the inverse to the map $M^x / (M^x\cap xM) ~\longrightarrow ~M^{x_0} / {x_0}M$ we previously defined.
\end{proof}

\begin{corollary}\label{cor:criterion_balanced}
Assume that $x\in \g_{\bar 1}$ is strongly balanced and let $x_0\in att(x)$.

Then $x\in \g_{bal}$ and there exists a natural isomorphism $\Phi_x \xrightarrow{\sim} DS_{x_0}=\Phi_{x_0}.$
\end{corollary}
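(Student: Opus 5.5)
The plan is to reduce to the $\ad$-nilpotent case already handled in \cref{lem:strb} and \cref{prop:isom_DS_funcs_balanced}, by passing to $s$-invariants. Let $\frac{1}{2}[x,x]=s+f$ be the Jordan decomposition. If $f=0$ then $x\in\g^{hom}$, $x$ is its own attractor, $x\in\g_{bal}$ by the example preceding \cref{rmk:balanced_func_faithful}, and $\Phi_x=DS_x$ by definition. In that case there is nothing more to prove.

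So assume $f\neq 0$ and choose an $\sl_2$-triple $(e,h,f)\subset\g^s_{\bar 0}$ with $x_{-1}=0$, as in \cref{def:strongly_bal} (using \cref{lem:balancedLevi} to place $\sl_x$ inside $\g^s$). All the components $x_i$ commute with $s$. Hence $x$, $x_0$ and $\sl_x$ all lie in the Levi sub-superalgebra $\g^s$, which is the Lie superalgebra of a quasi-reductive subgroup $G^s$. For $M\in\Rep(G)$ the space $M^s$ is a $G^s$-module, and both $\Phi_x(M)=Gr^{ev}(M^s)$ and $DS_{x_0}(M)=\Ker x_0|_{M^s}/\Im x_0|_{M^s}$ depend only on $M^s$. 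Here we use \cref{lem:square_attractor}, which gives that $s=[x_0,x_0]$ is also the relevant semisimple element for $x_0$.

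On $M^s$ the element $x$ acts nilpotently, with $x^2=f$. I would apply the argument of \cref{lem:strb} directly to the $\g^s$-module $M^s$, with $\mathfrak k=\sl_x\times\kk[x_0]$. The only inputs used there are $[\sl_x,x_0]=0$, $x_0^2=0$ on $M^s$ (since $s$ acts by zero), $f=[x_0,x_{-2}]$, and $x^2=f$ on $M^s$. All of these hold, so the proof goes through verbatim, even though $x$ need not be $\ad$-nilpotent in $\g^s$ itself (it is $\ad$-nilpotent modulo the centre $s$).

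From the second part of \cref{lem:strb} we get that every nontrivial Jordan block of $x|_{M^s}$ has even size, so $x\in\g_{bal}$. Then \cref{lem:Phi_for_x_bal} gives $\Phi_x\cong\widetilde{DS}_x$. The same proof as \cref{prop:isom_DS_funcs_balanced}, applied to $M^s$, gives a natural isomorphism $\widetilde{DS}_x\cong DS_{x_0}$: one projects $\Ker x|_{M^s}$ to the $h$-weight-zero part and checks bijectivity using the splitting $M^s=N\oplus M'$. Composing the two isomorphisms yields $\Phi_x\cong DS_{x_0}$.

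The independence from the choice of attractor follows from \cref{lem:selfcompart}, since the attractors are $G_{\bar 0}$-conjugate. The main point to verify carefully is that the earlier $\ad$-nilpotent arguments really only need the relations listed above on $M^s$, and not global $\ad$-nilpotency in $\g$. The step I expect to need the most care is naturality of the isomorphism in $M$ when restricting to $s$-invariants. It should hold because the projection $pr$ is defined functorially via the $h$-grading.
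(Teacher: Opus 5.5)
Your proposal is correct and follows essentially the same route as the paper: reduce to the $s$-invariants $M^s$, then combine \cref{lem:strb}, \cref{lem:Phi_for_x_bal} and \cref{prop:isom_DS_funcs_balanced}. The only difference is packaging. The paper passes to the quasi-reductive quotient $\g^s/\kk s$, where the image of $x$ is genuinely $\ad$-nilpotent and strongly balanced, so the earlier results apply verbatim instead of having their proofs re-checked on $M^s$; your remark that a different choice of attractor only changes $DS_{x_0}$ by a $G_{\bar 0}$-conjugation (\cref{lem:selfcompart}) is a welcome detail the paper leaves implicit.
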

\begin{proof}
Let $x\in \mathcal{N}^{\ad}(\g_{\bar 1})$ be strongly balanced. By \cref{lem:strb}, $x$ is balanced. The natural isomorphism in this case is provided by \cref{lem:Phi_for_x_bal} and \cref{prop:isom_DS_funcs_balanced}. More generally, given any strongly balanced element $x\in \g_{\bar 1}$ with Jordan decomposition $\frac{1}{2}[x,x]=s+f$, we may consider the action of the quasi-reductive Lie superalgebra $\g':=\g^s/\kk s$ on $M^s$ for any $M\in \Rep(G)$. The image $\widetilde{x}$ of $x$ in $\g'$ is $\ad$-nilpotent and strongly balanced, with attractor $\widetilde{x}_0$ (the image of $x_0$ in $\g$). The functors $\Phi_x$, $\Phi_{x_0}$ are naturally isomorphic to the compositions $\Phi_{\widetilde x}\circ (-)^s$, $\Phi_{{\widetilde x}_0}\circ (-)^s$ respectively. We now apply our previous results to prove the desired claim.
\end{proof}
We conclude this section with a proposition which allows us to identify balanced elements in the general linear Lie superalgebra, in a manner similar to the neatness criterion in \cref{thrm:JM}.
\begin{proposition}\label{prop:balanced_in_gl}
     Let $V$ be a finite-dimensional vector superspace and let $x\in \gl(V)_{\bar 1}$ be an element of the general linear Lie superalgebra. Let $\frac{1}{2}[x,x]=s+f$ be the Jordan decomposition of $\frac{1}{2}[x,x]$.
     
     Then $x$ is strongly balanced (thus, balanced) if and only if the operator $x\rvert_{V^s}$ is balanced. 
\end{proposition}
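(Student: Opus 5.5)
The forward implication is immediate from \cref{cor:criterion_balanced}: if $x$ is strongly balanced then $x\in\gl(V)_{bal}$. Applying the definition of balanced to the natural representation $M=V$ then says exactly that $x\rvert_{V^s}$ is a balanced operator. All the work is in the converse. I will build an $\sl_2$-triple by hand on the zero eigenspace of $s$, and use \cref{prop:x_1_can_be_eliminated} to remove the remaining weight $(-1)$ components on the nonzero eigenspaces.

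For the converse, assume $x\rvert_{V^s}$ is balanced. If $f=0$ then $x\in\g^{hom}$ and there is nothing to prove, so assume $f\neq 0$. Decompose $V=\bigoplus_\lambda V_\lambda$ into eigenspaces of $s$, so $V^s=V_0$. Since $x$ commutes with $s$, we have $x\in\g^s=\bigoplus_\lambda \gl(V_\lambda)$, and we can write $x=\sum_\lambda x^{(\lambda)}$ and $f=\sum_\lambda f^{(\lambda)}$ componentwise. On each $V_\lambda$ we have $(x^{(\lambda)})^2=\lambda\cdot\id+f^{(\lambda)}$. I will choose an $\sl_2$-triple $(e,h,f)=\bigoplus_\lambda(e^{(\lambda)},h^{(\lambda)},f^{(\lambda)})$ in $\g^s_{\bar 0}$ blockwise, taking the zero triple in a block where $f^{(\lambda)}=0$.

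\emph{The block $\lambda=0$.} Here $x^{(0)}$ is a nilpotent balanced operator, so its Jordan blocks have size $1$ or even size. On a block of size $2k$ with basis $a_0,\dots,a_{2k-1}$ and $xa_j=a_{j+1}$, the operator $f=x^2$ has two strings, $a_0,a_2,\dots$ and $a_1,a_3,\dots$. I let $h^{(0)}$ act on both $a_{2j}$ and $a_{2j+1}$ by the weight $k-1-2j$, and take $e^{(0)}$ to complete the triple. Then $a_{2j}\mapsto a_{2j+1}$ has weight $0$, while $a_{2j+1}\mapsto a_{2j+2}$ has weight $-2$. So $x^{(0)}=x^{(0)}_0+x^{(0)}_{-2}$ has no weight $(-1)$ part. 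On size $1$ blocks we set $h^{(0)}=0$, and $x$ acts by zero there.

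\emph{The blocks $\lambda\neq 0$.} Here I take an arbitrary $\sl_2$-triple containing $f^{(\lambda)}$. With the resulting $h$, the weight $(-1)$ part $x_{-1}$ of $x$ lives only in $\bigoplus_{\lambda\neq0}\gl(V_\lambda)$. It satisfies $[x_0,x_{-1}]=0$, because $x_{-1}$ is the weight $(-1)$ part of the identity $[x,x]=2(s+f)$ and $s+f$ has weights $0$ and $-2$ only. By \cref{lem:square_attractor} applied blockwise, $(x_0^{(\lambda)})^2=\lambda\,\id_{V_\lambda}$. Since this is central in $\gl(V_\lambda)$, the operator $\ad_{x_0}$ squares to zero on $\gl(V_\lambda)$.

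The main step is to show that $\Ker\ad_{x_0}=\Im\ad_{x_0}$ on $\gl(V_\lambda)\cong V_\lambda\otimes V_\lambda^*$, i.e.\ that $DS_{x_0}(\gl(V_\lambda))=0$. For this, note that $x_0^{(\lambda)}$ makes $V_\lambda$ a module over the Clifford algebra $\kk[\xi]/(\xi^2-\lambda)$ with $\lambda\neq 0$. Such a module is a direct sum of copies of the unique $(1|1)$-dimensional simple module. Tensoring with the dual module turns $V_\lambda\otimes V_\lambda^*$ into a free module over $\kk[\xi]/(\xi^2)$, on which $\ad_{x_0}$ has zero homology. I expect this computation to be routine. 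Once it is done, $x_{-1}\in\Im\ad_{x_0}\rvert_{\g^s}$, and \cref{prop:x_1_can_be_eliminated} shows that $x$ is strongly balanced. That proposition conjugates $h$ by $\exp(\ad_u)$ with $u\in\g^s_{\bar 0}$, so the new triple still lies in $\g^s$, and no compatibility issue arises between the blocks.
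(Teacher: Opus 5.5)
Your proof is correct. On $V^s=V_0$ it is exactly the paper's construction: on each even Jordan block both $f$-strings get the same weights, so $x^{(0)}=x^{(0)}_0+x^{(0)}_{-2}$.

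The real difference is how you treat the eigenspaces $V_\lambda$ with $\lambda\neq 0$. The paper only says one may reduce to the $\ad$-nilpotent case and then builds $\sl_x\subset\gl(V^s)$. But $f$ can have nonzero components $f^{(\lambda)}\in\gl(V_\lambda)$ for $\lambda\neq 0$ even when $x\rvert_{V^s}$ is balanced. For example, take $V=V_\lambda$, so that $V^s=0$. Then no triple inside $\gl(V^s)$ contains $f$, and the paper's reduction is left unjustified.

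You handle these blocks explicitly. You take an arbitrary triple on them and note that the whole weight $(-1)$ part of $x$ lives there. You then remove it using $\Ker\ad_{x_0}=\Im\ad_{x_0}$ on $\gl(V_\lambda)$ together with \cref{prop:x_1_can_be_eliminated}. The Clifford-module step is indeed routine: on a single $(1|1)$-block, $\ad_{x_0}$ acting on $\gl(1|1)$ has a $(1|1)$-dimensional kernel, and this kernel equals its image. So your argument is complete, at the price that the final triple is only known up to the conjugation by $\exp(\ad_u)$.

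If you want an explicit triple, these blocks can also be done by hand. Let $B\colon V_{\lambda,\bar 1}\to V_{\lambda,\bar 0}$ and $C\colon V_{\lambda,\bar 0}\to V_{\lambda,\bar 1}$ be the off-diagonal blocks of $x^{(\lambda)}$. Both are invertible because $BC$ and $CB$ are. After conjugation by $GL(V_{\lambda,\bar 0})\times GL(V_{\lambda,\bar 1})$ you may take $B=1$ and $C=\lambda+N$ with $N$ nilpotent, so $f^{(\lambda)}=\diag(N,N)$. Choose an $\sl_2$-triple $(E,H,N)$ and use $(\diag(E,E),\diag(H,H),\diag(N,N))$. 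This gives $x^{(\lambda)}=x^{(\lambda)}_0+x^{(\lambda)}_{-2}$ directly, where $x^{(\lambda)}_0$ has blocks $(1,\lambda)$ and $x^{(\lambda)}_{-2}$ has blocks $(0,N)$.

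One small wording fix: the weight $(-1)$ part of $[x,x]=2(s+f)$ is $2[x_0,x_{-1}]$, not $x_{-1}$.
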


\begin{proof}
Let us first consider the ``only if'' direction: if $x$ is strongly balanced then \cref{cor:criterion_balanced} implies that $x$ is balanced, so the operator $x\rvert_{V^s}$ is balanced. 

We now assume that the operator $x\rvert_{V^s}$ is balanced. We need to prove that $x$ is strongly balanced. 
 
 If $f=0$ then we are done. Otherwise, $f\neq 0$ and we would like to define a suitable $\sl_2$-subalgebra $\sl_x\subset \gl(V^s)\subset \gl(V)^s$ (see \cref{def:strongly_bal}). 
 
We may reduce the problem to the case when $x$ is $\ad$-nilpotent (so $f=\frac{1}{2}[x,x]$): under this assumption, we will construct an $\sl_2$-action on $V$ extending $f\rvert_{V}$.  

It is enough to do so on each subspace of $V$ forming a single Jordan block of $x$, so we may assume for simplicity that $x$ has a just one Jordan block in $V$. If this block is trivial (i.e. $\dim V=1$) then the corresponding action of $\sl_2$ on it is defined to be trivial. 

Otherwise, this block has even size, so $\dim V\in 2\Z$. Let $(v_i)_{i=0}^{2n-1}$ be the Jordan basis for $x$, so that
$$ \forall\, i\leq 2n-2,\;xv_i=v_{i+1}, ~xv_{2n-1}=0.$$
Then for any $ i\leq 2n-3$ we have: $\;fv_i=v_{i+2}$ and $fv_{2n-1}=fv_{2n-2}=0$. That means that $f$ acts by a single Jordan block on each of the subspaces $span(v_{2i})_{i=0}^{n-1}$ and $span(v_{2i+1})_{i=0}^{n-1}$ and its action obviously extends to an $\sl_2$-action on each of these subspaces. Each of them becomes a simple $\sl_2$-module of dimension $n$ (one purely even and one purely odd). 
Let $h\in \End(V)$ be the Cartan element with respect to this action, so that for any $ i$ we have: $h.v_{2i}=h.v_{2i+1}=n-2i+1$.

We now have a decomposition $x=x_0+x_{-2}$, where $x_0, x_{-2}$ are operators defined on the basis via
$$x_0(v_i):=\begin{cases}
    v_{i+1} &\text{ if } i\in 2\Z\\
    0 &\text{ if } i\in 2\Z+1\\
\end{cases}, \;\;\;\;\;\;x_{-2}(v_i):=\begin{cases}
   0 &\text{ if } i\in 2\Z\\
     v_{i+1} &\text{ if } i\in 2\Z+1\\
\end{cases}.$$
Clearly, $[h, x_0]=0$ and $[h, x_{-2}]=-2x_{-2}$, implying that $x$ is strongly balanced.
\end{proof}

\subsection{Correspondence between strongly balanced elements and their attractors}\label{ssec:attract_corresp}

In this section, we study the correspondence between the set of strongly balanced elements in $\g$ and the set of cohomological elements. This correspondence is given by the attractor construction.

\begin{proposition}\label{lem:balancedfin} 
Let $x$ and $x'$ be two strongly balanced $\ad$-nilpotent elements.
Assume that $[x,x]=[x',x']$ and $att(x)=att(x')$. Then $G_{\bar 0}.x=G_{\bar 0}.x'$.
\end{proposition}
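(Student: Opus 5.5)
Write $f:=\frac{1}{2}[x,x]=\frac{1}{2}[x',x']$. If $f=0$ then $x,x'$ are homological and $x=att(x)=att(x')=x'$, so there is nothing to prove. Assume $f\neq 0$. Since both elements are strongly balanced, we fix $\sl_2$-triples $(e,h,f)$ and $(e',h',f)$ in which $x=x_0+x_{-2}+\dots$ and $x'=x'_0+x'_{-2}+\dots$ have no $(-1)$-components. By Kostant's theorem the two triples are conjugate by an element of $C_{G_{\bar 0}}(f)$, and conjugating $x'$ by it fixes $[x',x']$. We may therefore assume $h=h'$. The attractors $x_0,x'_0$ then lie in the same $G_{\bar 0}$-orbit by hypothesis, and hence in the same orbit under the stabilizer of $f$. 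By \cref{lem:selfcompart} (both are attractors of elements with the same square), we may further conjugate by the centralizer of $(e,h,f)$ to get $x_0=x'_0$. This step needs care: what we must show is that conjugacy of attractors can be realized inside $C_{G_{\bar 0}}(e,h,f)$. I would argue this by noting that attractors with respect to the fixed $h$ are exactly the degree-$0$ parts, and applying the argument of \cref{lem:selfcompart} in reverse.

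With $x_0=x'_0$ and $h$ fixed, we compare $x=x_0+x_{-2}+\dots$ and $x'=x_0+x'_{-2}+\dots$. These satisfy $[x_0,x_{-2}]=f=[x_0,x'_{-2}]$, and their higher components are constrained by $[x,x]=[x',x']=2f$. The plan is to build $g=\exp(u)$ with $u\in\g^{<0}_{\bar 0}\cap\g^s$ and $g.x=x'$, by induction on the degree. This is the same successive-approximation scheme as in the proof of \cref{lem:criterion_neatness_h_decomp}, with $\ad_{x_0}$ in place of $\ad_{x_{-1}}$. Suppose $x$ and $x'$ agree in degrees $>-k$. Comparing the degree-$(-k-1)$ components of $[x,x]=[x',x']$ gives $[x_0,x_{-k}-x'_{-k}]=0$ modulo terms already matched. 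So $d:=x_{-k}-x'_{-k}$ is an $\ad_{x_0}$-cocycle of $h$-weight $-k$. We need $d=[x_0,u_{-k}]$ for some even $u_{-k}$ of weight $-k$; applying $\exp(\ad_{u_{-k}})$ then kills $d$ and changes only components of degree $<-k$ (and also preserves $f$ if $u_{-k}$ centralizes $f$).

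The key input is \cref{lem:strb}(1) applied to $M=\g$ (the adjoint module): every non-trivial indecomposable $\sl_x\times\kk[x_0]$-summand of $\g$ is of the form $V_k\otimes P$ with $P$ projective over $\kk[x_0]$. Hence $DS_{x_0}(\g_i)=0$ for $i\neq 0$, i.e.\ every $\ad_{x_0}$-cocycle of nonzero $h$-weight is a coboundary. Because $\ad_{x_0}$ commutes with $\sl_x$, we can moreover choose $u_{-k}$ in the lowest-weight (that is, $f$-invariant) part whenever $d$ is $f$-invariant, using the $V_k\otimes P$ structure. The main obstacle I expect is this bookkeeping: showing that the difference $d$ really is a cocycle and is $f$-invariant after the earlier corrections, so that the corrected element keeps square $f$ and $h$-grading. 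I would first try to handle it by working with the full cocycle equation degree by degree and using $[f,x]=[f,x']=0$. Finiteness of the grading then ends the induction and gives $G_{\bar 0}.x=G_{\bar 0}.x'$.
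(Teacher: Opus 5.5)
Your degree-by-degree induction is the paper's argument. The paper takes the largest $i$ with $x_i\neq x'_i$ and gets $[x_0,x_i-x'_i]=0$ from the degree-$i$ part of $[x,x]=[x',x']$; in your indexing that is degree $-k$, not $-k-1$. It then writes the difference as $[x_0,w_i]$ with $w_i$ even, of degree $i$, and $[f,w_i]=0$, and conjugates by $\exp(-w_i)$. The paper gets $\g^{x_0}\cap\g^i\subset[x_0,\g]$ for $i\neq 0$ from $f=[x_0,x_{-2}]$: this puts $\sl_x$ inside the ideal $[x_0,\g]$ of $\g^{x_0}$, so $\sl_x$ acts trivially on $\g^{x_0}/[x_0,\g]$. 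It then corrects a preimage by some $z\in\g^{x_0}\cap\g^i$ with $[f,z]=[f,u_i]$. Reading both facts off the $V_k\otimes P$ structure of \cref{lem:strb}(1) is a valid substitute.

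The gap is the reduction to one triple with $x_0=x'_0$. You read the hypothesis as $G_{\bar 0}$-conjugacy of attractors, and claim that once $h=h'$ this becomes conjugacy under the stabilizer of $f$. \cref{lem:selfcompart} cannot give this, since it compares attractors of a single element for two triples. Moreover the claim is false.

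Take $\gl(4|4)$ with even basis $a_1,a_2,b_1,b_2$ and odd basis $c_1,c_2,d_1,d_2$. Let $x_0$ be $c_i\mapsto a_i$, $b_i\mapsto d_i$. Set $x=x_0+(a_1\mapsto c_2)$ and $x''=x_0+(d_1\mapsto b_2)$. Let $k\in G_{\bar 0}$ swap $a_i\leftrightarrow b_i$ and $c_i\leftrightarrow d_i$, and put $x'=\Ad_k x''$. Then:
\begin{itemize}
\item $\frac12[x,x]=\frac12[x',x']=f$, the map $a_1\mapsto a_2$, $c_1\mapsto c_2$.
\item With $h$ of weights $1,1,-1,-1$ on $c_1,a_1,c_2,a_2$ and $0$ elsewhere, both elements have the form (weight $0$) $+$ (weight $-2$).
\item So $x,x'$ are strongly balanced for the same triple $(e,h,f)$, with attractors $x_0$ and $\Ad_k x_0$ respectively.
\end{itemize}
These attractors are $G_{\bar 0}$-conjugate, but not $C_{G_{\bar 0}}(f)$-conjugate. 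Indeed, $C_{G_{\bar 0}}(f)$ preserves $\Ker f\cap V_{\bar 0}=\langle a_2,b_1,b_2\rangle$. This subspace contains the even image of $\Ad_k x_0$, namely $\langle b_1,b_2\rangle$, but not that of $x_0$, namely $\langle a_1,a_2\rangle$. Also $x$ and $x'$ are not conjugate: the length-$4$ Jordan chain of $x$ is $c_1\to a_1\to c_2\to a_2$, while that of $x'$ is $a_1\to c_1\to a_2\to c_2$. So under your reading the proposition is false, and no argument of the kind you sketch can close this step.

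The hypothesis must be used as the paper does: $att(x)=att(x')$ is an equality of sets of attractors, so they share an element. Let $x_0$ be the attractor of $x$ for a triple $(e,h,f)$ with $x_{-1}=0$; it is also the attractor of $x'$ for some triple $(e',h',f)$. Both triples centralize $x_0$, because an attractor has $h$-weight $0$ and is killed by $f$, hence by $e$. Kostant's theorem holds in any finite-dimensional Lie algebra; applied in $\mathfrak{c}_{\g_{\bar 0}}(x_0)$, it carries $(e',h',f)$ to $(e,h,f)$ by an element fixing $x_0$ and $f$. The $(-1)$-component of $x'$ for this triple need not vanish. However, it is an $f$-invariant $\ad_{x_0}$-cocycle of weight $-1$. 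So your induction, started in degree $-1$ and using \cref{lem:strb}(1) for $x$, removes it as well.
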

    \begin{proof} Let $f:=\frac{1}{2}[x,x]$ and $x_0:=att(x)=att(x')$. We may assume that $f\neq 0$, otherwise $x_0=x=x'$ and we are done. Since all the $\sl_2$-triples containing $f$ are conjugate, we may assume without loss of generality the existence of an $\sl_2$-triple $(e,h,f)$ such that the $\ad_h$-decompositions of $x, x'$ are
    $$x=x_0+\sum_{i\leq -2} x_{i},\quad  x'=x_0+\sum_{i\leq -2} x'_{i}.$$
    We denote the corresponding $\sl_2$-subalgebra of $\g$ by $\sl_x$, as before.
    Let $\g=\bigoplus_i \g^i$ be the $\ad_h$-grading on $\g$.
    
    Let us choose the maximal $i$ such that $x_i\neq x'_i$. Set $y_i:=x_i-x'_i$. We have $[h,y_i]=iy_i$, $[f,y_i]=0$
    and $[x_0,y_i]=0$. The last equality follows from the fact that the projection of $[x,x]-[x',x']=0$ on $\g^i$ is 
    $$\sum_{i\leq j\leq 0} \left([x_j, x_{i-j}] - [x'_j, x'_{i-j}]\right) =[x_0,y_i] $$ (here we rely on the equality $x_j=x'_j$ for $j>i$). 

    \InnaA{Consider the Lie sub-superalgebra $\g^{x_0} \subset \g $ and its ideal $[x_0,\g] $.
    
    Recall that $x_0$ commutes with $h, f$, meaning that $\sl_x\subset  \g^{x_0}$.
    Since $[x_0,\g] \subset \g^{x_0}$ is an ideal and $f = [x_0, x_{-2}]\in [x_0,\g]$ , we have: $\sl_x\subset [x_0,\g]$. So $\ad_h$ induces a $\Z$-grading $\g^{x_0} = \bigoplus_j \g^j\cap \g^{x_0}$, and a respective grading on $  [x_0,\g]$. 

    Next, consider the $\sl_x$-action on $[x_0,\g] \subset \g^{x_0} $.
    The subalgebra $\sl_x$ lies in the ideal $ [x_0,\g] \subset \g^{x_0}$ so $\sl_x$ acts trivially on the quotient $\g^{x_0}/[x_0,\g] $. Hence $ \g^i\cap \g^{x_0} \subset [x_0, \g]$ for any $i\neq 0$.     In our case, $y_i \in \g^i\cap \g^{x_0}$ with $i<0$ so $y_i\in [x_0, \g]$.
} 

Let $u_i\in \g_{\bar 0}$ such that $[x_0, u_i]=y_i$ and $[h,u_i]=iu_i$.
\InnaA{Note that $$[x_0, [f, u_i]]=[f, [x_0, u_i]]=[f, y_i]=0$$
so  $[f, u_i]\in \g^{x_0} \cap \g^{i-2}$. Furthermore, $\g^{x_0}  $ is an $\sl_x$-submodule, so there exists $z\in  \g^{x_0} \cap \g^i$ such that  and $[f,z]=[f, u_i]$. Replacing $u_i$ by $w_i:=u_i-z$, we get: $[f, w_i]=0, [x_0, w_i]=y_i$.
} 
    Let $x'':=\Ad_{\exp(-w_i)}(x')$. The pair $(x, x'')$ satisfies the conditions of the proposition and 
$$x''= x_0+\sum_{j\leq -2} x'_{j}-[w_i, x_0]+ z, \;\;\;\; z\in \bigoplus_{j<i} \g^j$$
So
$x_0+\sum_{j\leq {-2}}x''_i$ with $x_j=x_j''$ for $j\geq i$. Since the set of indices $i$ for which $x_i\neq 0$ is bounded from below, repeating this procedure several times leads to the required result.
        
      \end{proof}
\begin{corollary}\label{cor:balancednil} Let $x_0\in \g^{\mathbf{sc}}$ be a self-commuting element. There are finitely many, up to conjugation, strongly balanced $\ad$-nilpotent elements $x$ with $x_0\in att(x)$.
\end{corollary}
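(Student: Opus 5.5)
By \cref{lem:balancedfin}, a strongly balanced $\ad$-nilpotent element $x$ is determined up to $G_{\bar 0}$-conjugacy by the pair $([x,x], att(x))$. So the plan is to fix the attractor $x_0$ and show that only finitely many $G_{\bar 0}$-orbits of values $f=\frac{1}{2}[x,x]$ can occur, up to a stabilizer of $x_0$ that preserves the attractor.

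\emph{Step 1 (the centralizer of $x_0$).} Let $x$ be strongly balanced and $\ad$-nilpotent with $x_0\in att(x)$. Choose an $\sl_2$-triple $(e,h,f)$ realizing this, so that $x=x_0+x_{-2}+\dots$. Since $x_0$ is self-commuting, $s=[x_0,x_0]=0$, so $x$ is indeed $\ad$-nilpotent and $f=\frac12[x,x]$. As noted before \cref{lem:x_minus_1_is_neat_in_DS}, $[x_0,e]=[x_0,h]=[x_0,f]=0$. Hence $f$ is a nilpotent element of the even part of $\g^{x_0}$, i.e.\ of $\mathfrak{c}_{\g_{\bar 0}}(x_0)$.

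\emph{Step 2 (finiteness of nilpotent orbits).} Let $K:=C_{G_{\bar 0}}(x_0)$. By \cref{lem:centralizer_Jordan_closed}, $\mathfrak{c}_{\g_{\bar 0}}(x_0)$ is closed under Jordan decomposition. I expect it to be reductive, or at least to have only finitely many $K$-orbits of nilpotent elements. To prove this, I would write $\g^{x_0}_{\bar 0}$ as an algebraic Lie algebra $\mathfrak{k}$. Any nilpotent element of $\mathfrak{k}$ lies in a Levi factor $\mathfrak{k}_{red}$ modulo the unipotent radical. Then I would use the Richardson-type fact that an algebraic group acting on the nilpotent cone of its Lie algebra has finitely many orbits; this holds for arbitrary connected linear algebraic groups in characteristic zero.

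This is the step I expect to be the main obstacle. If the general fact is not available, I would instead argue via Kostant/Mal'cev. Each such $f$ sits in an $\sl_2$-triple inside $\mathfrak{k}$, since the triple above lies in $\g^{x_0}$. By the Mal'cev conjugacy of Levi subalgebras, every $\sl_2$-subalgebra of $\mathfrak{k}$ is $K$-conjugate into a fixed Levi subalgebra $\mathfrak{k}_{red}$. Inside the reductive $\mathfrak{k}_{red}$ there are only finitely many nilpotent orbits. So there are finitely many $K$-orbits of such $f$.

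\emph{Step 3 (conclusion).} Let $x,x'$ be two such elements with $f,f'$ in the same $K$-orbit. Replace $x'$ by $\Ad_k(x')$ for $k\in K$. This preserves $x_0$ as an attractor: conjugating the triple by $k$ fixes $x_0$ as the degree-$0$ component. After this replacement we may assume $[x,x]=[x',x']$ with a common attractor $x_0$. Then \cref{lem:balancedfin} gives $G_{\bar 0}.x=G_{\bar 0}.x'$. The number of orbits is therefore bounded by the number of $K$-orbits of nilpotent elements in $\mathfrak{c}_{\g_{\bar 0}}(x_0)$, which is finite.

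One point needs care in this last step. \cref{lem:balancedfin} uses $att(x)=att(x')$ only via a common $\sl_2$-triple producing $x_0$ for both elements. Since all triples through $f$ are $C_{G_{\bar 0}}(f)$-conjugate, I would check that one can align them while keeping $x_0$, using \cref{lem:selfcompart}: conjugates of $x_0$ by $C_{G_{\bar 0}}(f)$ are again attractors.
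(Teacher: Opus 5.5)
Your fallback in Step 2 is essentially the paper's proof, but two points need repair before the argument is complete.

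\paragraph{Comparison with the paper.}
The paper takes $\mathfrak k=[x_0,\g]$, which by the proof of \cref{lem:balancedfin} contains the triple $(e,h,f)$. It then uses the Levi--Mal'cev argument of \cref{lem:sl2} to reduce conjugacy of embeddings $\sl_2\to\mathfrak k$ to the semisimple quotient $\mathfrak k/\operatorname{rad}(\mathfrak k)$, and finishes with \cref{lem:balancedfin}. Your choice of the larger algebra $\mathfrak{c}_{\g_{\bar 0}}(x_0)$ with $K=C_{G_{\bar 0}}(x_0)$ works equally well. The Mal'cev conjugating elements $\exp(\ad z)$, with $z\in[\mathfrak k,\operatorname{rad}\mathfrak k]$, act unipotently on $\g$, so they lie in $K$ and fix $x_0$.

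\paragraph{First repair: drop the Richardson-type claim.}
The route you propose first is false. A connected linear algebraic group need not have finitely many orbits on the nilpotent elements of its Lie algebra. Already $\G$ acts trivially on its one-dimensional Lie algebra, all of whose elements are nilpotent. A Borel subgroup of $GL_n$ with $n\geq 6$ has infinitely many orbits on its nilradical. Also, $\mathfrak{c}_{\g_{\bar 0}}(x_0)$ need not be reductive. Hence your closing bound in Step 3, by ``the number of $K$-orbits of nilpotent elements in $\mathfrak{c}_{\g_{\bar 0}}(x_0)$'', is unjustified. What the Mal'cev argument actually bounds is the number of $K$-conjugacy classes of $\sl_2$-triples in $\mathfrak{c}_{\g_{\bar 0}}(x_0)$, and that is what you should count.

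\paragraph{Second repair: the alignment step.}
Counting triples also settles the alignment issue you raise; the proof of \cref{lem:balancedfin} passes over the same point with a ``without loss of generality''. That proof needs a single triple $(e,h,f)$ for which both $x$ and $x'$ have degree-$0$ part exactly $x_0$ and vanishing degree-$(-1)$ part. If the triples realizing $x_0$ for $x$ and $x'$ are conjugate by some $k\in K$, then replacing $x'$ by $\Ad_k(x')$ gives exactly this.

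Your proposed justification via \cref{lem:selfcompart} does not work. It is not true that $C_{G_{\bar 0}}(f)$-conjugates of $x_0$ are attractors. All attractors of $x$ lie in $x_0+\g^{<0}$ for the $\ad_h$-grading, whereas $C_{G_{\bar 0}}(e,h,f)$ may rescale $x_0$, for example in $\gl(n|n)$. Even where such a conjugate is an attractor, that would not give the equality you need.

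If you prefer to count only orbits of $f$, the correct fix is Kostant's theorem. Two triples through $f$ satisfy $(e,h,f)=\exp(\ad u)(e'',h'',f)$ for some $u\in\mathfrak{c}_{\g_{\bar 0}}(f)\cap\g^{<0}$, graded by $\ad_h$. When $x_0$ commutes with both $h$ and $h''$, this element fixes $x_0$. Indeed, $\exp(\ad u)(x_0)$ lies in $x_0+\g^{<0}$, and it is killed by $\ad_h$ because $[h'',x_0]=0$.
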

\begin{proof} Let $x$ be  a balanced $\ad$-nilpotent element with attractor $x_0$ and $f:=\frac{1}{2}[x,x]$. As it was shown in the proof of \cref{lem:balancedfin}, $f\in[x_0,\g]$ and $f$ can be embedded into an $\sl_2$-subalgebra of  $\mathfrak k=[x_0,\g]$.
Let $K\subset G_{\bar 0}$ be the adjoint group with the Lie algebra $\mathfrak{k}_{\bar 0}$. 
By \cref{lem:sl2} below, two embeddings $\sl_2\to\mathfrak k$ are conjugate if and only if the induced
embeddings $\sl_2\to \mathfrak{k}/\operatorname{rad}(\mathfrak k)$ are conjugate. Since $ \mathfrak{k}/\operatorname{rad}(\mathfrak k)$ is a reductive Lie algebra, there are finitely many conjugacy classes of embeddings $\sl_2\to \mathfrak{k}/\operatorname{rad}(\mathfrak k) $.
Hence there are finitely many (up to conjugation) choices of $f$.
Now the statement follows from \cref{lem:balancedfin}.
\end{proof}
\begin{lemma}\label{lem:sl2} Let $\mathfrak k$ be a Lie algebra. Two embeddings $\sl_2\to\mathfrak k$ are conjugate if and only if  the induced
embeddings $\sl_2\to \mathfrak{k}/\operatorname{rad}(\mathfrak k)$ are conjugate.     
\end{lemma}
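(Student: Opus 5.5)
Let $\rr:=\operatorname{rad}(\mathfrak k)$ and let $\pi:\mathfrak k\to\mathfrak k/\rr$ be the projection. Conjugation is understood by the adjoint group $K$ of $\mathfrak k$ (generated by $\exp\ad_u$ for $u$ ranging over $\ad$-nilpotent elements of $\mathfrak k$), and on the quotient by its image. The direction ``conjugate $\Rightarrow$ images conjugate'' is immediate, since $\pi$ intertwines the adjoint actions. For the converse, given embeddings $\phi,\psi:\sl_2\to\mathfrak k$ with $\pi\circ\phi$ and $\pi\circ\psi$ conjugate, I first lift the conjugating element. The image of $K$ in $\Aut(\mathfrak k/\rr)$ contains the group generated by exponentials of nilpotent elements of the Levi factor, and this group already realizes the conjugacy of $\sl_2$-embeddings in $\mathfrak k/\rr$. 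So I may replace $\psi$ by a $K$-conjugate and assume $\pi\circ\phi=\pi\circ\psi$.

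Next I use Levi--Malcev. Both $\phi(\sl_2)$ and $\psi(\sl_2)$ are semisimple subalgebras of $\mathfrak k$, so each lies in some Levi subalgebra. All Levi subalgebras are conjugate under $\exp\ad$ of elements of the nilradical $[\mathfrak k,\rr]$, which are $\ad$-nilpotent, hence these automorphisms lie in $K$. After a further conjugation, both images therefore lie in one Levi subalgebra $\s$. Conjugating by elements of $\exp\ad[\mathfrak k,\rr]$ changes $\phi$ only modulo $\rr$ by the identity, so the condition $\pi\circ\phi=\pi\circ\psi$ is preserved.

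Finally, $\pi\rvert_\s:\s\to\mathfrak k/\rr$ is an isomorphism. Since $\phi(\sl_2),\psi(\sl_2)\subset\s$ and $\pi\circ\phi=\pi\circ\psi$, we get $\phi=\psi$. The only subtle point is whether the first conjugation, which comes from the quotient, lifts to $K$. I handle it by taking the lift to be the exponentials of the same nilpotent elements of $\s\cong\mathfrak k/\rr$, which are $\ad$-nilpotent in $\mathfrak k$ because $\s$ acts on $\mathfrak k$ through finite-dimensional $\s$-representations.

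A cleaner alternative order, which avoids the lifting issue, is to apply Levi--Malcev first to put both images in a common Levi subalgebra $\s$, and then to conjugate inside $\s$ using the group of $\s$, which embeds in $K$.
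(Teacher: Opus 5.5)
Your proof is correct, but it puts the work in a different place from the paper.

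\textbf{The paper's route.} The paper fixes a Levi decomposition $\mathfrak k=\s+\rr$ and compares an embedding $\varphi$ with its ``Levi lift'' $\bar\varphi=(\pi\rvert_\s)^{-1}\circ\pi\circ\varphi$. It then proves $\varphi\sim\bar\varphi$ by hand:
\begin{itemize}
\item When $\rr$ is abelian, $\varphi-\bar\varphi$ is a $1$-cocycle, and $H^1(\sl_2,\rr)=0$. So it is a coboundary $u\mapsto[\bar\varphi(u),v]$, and $\exp\ad_v$ carries $\varphi$ to $\bar\varphi$.
\item The general case goes by induction on $\dim\rr$ through $\mathfrak k/[\rr,\rr]$, as in the proof of Levi's theorem.
\end{itemize}

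\textbf{Your route.} You invoke the Malcev--Harish-Chandra conjugacy theorem as a black box: every semisimple subalgebra can be moved into a given Levi factor by some $\exp\ad_z$ with $z\in[\mathfrak k,\rr]$. After that, the conclusion follows at once because $\pi\rvert_\s$ is an isomorphism and these automorphisms act trivially modulo $\rr$.

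\textbf{Comparison.} The paper essentially reproves the special case of Malcev's theorem it needs, using only Whitehead's lemma for $\sl_2$. Your argument is shorter but relies on the full theorem.

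Your proof also makes explicit a step the paper leaves tacit: why conjugacy in $\mathfrak k/\rr$ lifts to $\mathfrak k$. The reason is that exponentials of nilpotent elements of $\s$ are $\ad$-nilpotent on $\mathfrak k$. The paper's reduction ``it suffices to show $\varphi$ is conjugate to $\bar\varphi$'' silently uses this same fact.

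A minor point of terminology: $[\mathfrak k,\rr]$ is the nilpotent radical in Bourbaki's sense, which is in general smaller than the nilradical (the largest nilpotent ideal). Your argument only needs its elements to be $\ad$-nilpotent, and they are.
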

\begin{proof} Let us fix a  Levi decomposition $\mathfrak{k}=\mathfrak s+\mathfrak r$ where 
$\s$ is a semisimple subalgebra of $\mathfrak k$ and $\mathfrak r$ is the radical. Let $\varphi:\sl_2\to\mathfrak{k}$ be some embedding and $\bar\varphi$ is the induced embedding $\bar \varphi:\sl_2\to\s$. It suffices to show that $\varphi$ is conjugate to $\bar\varphi$.

For any $u\in\sl_2$, denote $\psi(u):=\varphi(u)-\bar\varphi(u)$. 

Consider first the case when $\mathfrak r$ is abelian. Then $\psi$ is a $1$-cocycle in the Chevalley-Eilenberg complex computing $H^1(\sl_2,\mathfrak r)$, with the $\sl_2$-module structure on $\mathfrak r$ defined by $u\mapsto\ad_{\bar{\varphi}(u)}$. Since $H^1(\sl_2,\mathfrak r)=0$, there exists $v\in\mathfrak r$ such that $\psi(u)=[\bar{\varphi}(u),v]$ for all $u\in\sl_2$. Then $$(\exp\ad_v)(\varphi)=\varphi+[v,\varphi]=\bar{\varphi}.$$
In general, we prove the statement by induction on dimension of $\mathfrak r$ in the same way as it is done in the proof of Levi's theorem. Using the above argument for $\mathfrak k/[\mathfrak r,\mathfrak r]$ we may assume that the image of $\varphi$ lies in $\s+[\mathfrak r,\mathfrak r]$ and complete the proof using the induction assumption. 
    
\end{proof}
\begin{corollary}\label{cor:balanced} Let $x_0\in\g^{hom}$ be a homological element. There are finitely many (up to $ G_{\bar 0}$-conjugation) strongly balanced elements $x\in\g_{\bar 1}$ with $x_0\in att(x)$.
\end{corollary}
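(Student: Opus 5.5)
The plan is to reduce to \cref{cor:balancednil} by passing to the centralizer of the semisimple part. Fix $x_0\in\g^{hom}$ and set $s:=\frac{1}{2}[x_0,x_0]$, a semisimple element. Let $x$ be strongly balanced with $x_0\in att(x)$, and write $\frac{1}{2}[x,x]=s'+f$ for its Jordan decomposition. By \cref{lem:square_attractor}, $s'=[x_0,x_0]$ up to the normalisation used there; in any case the semisimple part of $\frac12[x,x]$ is determined by $x_0$, so $s'=s$. It follows that $x\in\g^s$, and all the $\ad_h$-components $x_i$ lie in $\g^s$.

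Consider the Levi sub-superalgebra $\g^s$, which is quasi-reductive and corresponds to the reductive group $C_{G_{\bar 0}}(s)$. Pass to the quotient $\g':=\g^s/\kk s$ as in the proof of \cref{cor:criterion_balanced}, or rather to $\g^s$ modulo the span of $s$ inside its center. Let $\widetilde{x}$ and $\widetilde{x}_0$ denote the images. Then $\widetilde{x}$ is $\ad$-nilpotent, with $\frac12[\widetilde x,\widetilde x]=\widetilde f$, and it is strongly balanced, since the same $\sl_2$-triple $(e,h,f)\subset\g^s$ gives a decomposition with no $(-1)$-component. Moreover $\widetilde x_0\in att(\widetilde x)$ and $[\widetilde x_0,\widetilde x_0]=0$, so $\widetilde x_0$ is self-commuting in $\g'$.

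Apply \cref{cor:balancednil} in $\g'$ with the group $C_{G_{\bar 0}}(s)$, or its image acting on $\g'$. This gives finitely many $C_{G_{\bar 0}}(s)$-orbits of such $\widetilde{x}$. The remaining work is to lift this finiteness back to $\g^s$, and this lifting is the step I expect to be the main obstacle.

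For the lift, note that $\g^s_{\bar 1}\to\g'_{\bar 1}$ is injective whenever $s\neq 0$, because $\kk s$ is even. So the odd elements $x$ and their images correspond bijectively, and $C_{G_{\bar 0}}(s)$-conjugacy is preserved. The subtlety is that the ideal $\kk s$ need not be a direct factor. Being central in $\g^s_{\bar 0}$ but not in $\g^s$, the quotient $\g'$ may fail to be quasi-reductive. In that case I would instead rerun the proofs of \cref{lem:balancedfin} and \cref{cor:balancednil} directly in $\g^s$, replacing ``$[x,x]=[x',x']$'' by ``$[x,x]=[x',x']$ with common semisimple part $s$''. Those arguments only use $\ad_h$-gradings, the ideal $[x_0,\g^s]$ containing $f=[x_0,x_{-2}]$ (modulo $s$), and \cref{lem:sl2} applied to $\mathfrak k=[x_0,\g^s]_{\bar 0}$. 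Together they yield finitely many $f$ up to conjugacy and then finitely many $x$ per $f$.

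Finally, $C_{G_{\bar 0}}(s)\subset G_{\bar 0}$, and any $G_{\bar 0}$-conjugate of $x$ having $x_0$ as an attractor is handled by the conjugacy of attractors from \cref{lem:selfcompart}. Hence finitely many $C_{G_{\bar 0}}(s)$-orbits give finitely many $G_{\bar 0}$-orbits, which proves the statement.
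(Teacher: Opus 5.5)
Your proof is correct and follows the paper's argument. You restrict to $\g^s$ with $s=\frac12[x_0,x_0]$, pass to $\g^s/\kk s$, apply \cref{cor:balancednil}, and lift back using that $\g^s_{\bar 1}\to(\g^s/\kk s)_{\bar 1}$ is an equivariant bijection.

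The obstacle you anticipate does not occur. By definition of the centralizer, $s$ commutes with all of $\g^s$, even and odd parts alike. So $\kk s$ is a central even ideal, and $\g^s/\kk s$ is quasi-reductive. Your fallback of rerunning \cref{lem:balancedfin} directly in $\g^s$ is therefore unnecessary.
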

\begin{proof} Let $x$ and $x'$ be elements with a common attractor $x_0$.
Consider the Jordan decompositions $\frac{1}{2}[x,x]=s+n$ and $\frac{1}{2}[x',x']=s'+n'$. We have
$s=s'=\frac{1}{2}[x_0,x_0]$ and $x, x'\in \g^s$. So it suffices to check that there are finitely many, up to $C_{G_{\bar 0}}(s)$-conjugation, strongly balanced elements $x\in\g^s$ with the given attractor $x_0$. Moreover, we may pass to the quasi-reductive quotient $\g^s/\kk s$ of $\g^s$. Each conjugacy class in $\g^s_{\bar 1}$ corresponds to a single conjugacy class in $\g_{\bar 1}$. The images of the elements $x, x'$ in $\g^s/\kk s$ are strongly balanced $\ad$-nilpotent elements, so we reduce the statement to \cref{cor:balancednil}.
\end{proof}

\section{Distinguished orbits}\label{sec:distinguished}

\subsection{Definition of a distinguished element}

\begin{definition}\label{def:distinguished}
An element $x\in\g$ (respectively, the orbit $G_{\bar 0}.x$) is called {\it distinguished} if $\fc_{\g}(x)^{\mathbf{ss}}=Z(\g)_{\bar 0}$. 

\end{definition}

 This definition is analogous to the definition of a distinguished nilpotent element in a reductive Lie algebra. The above definition can be rephrased as follows: $x$ is distinguished iff $x $ is not contained in any proper Levi sub-superalgebra of $\g$ (see \cref{def:parabolic_subalgebra}).

\begin{remark}
    If $\g=\g_{\bar 0}$ is a semisimple (purely even) Lie algebra, this definition coincides with the definition of a distinguished element in \cite{collingwood1993nilpotent}: an element $x$ in a semisimple Lie algebra is called distinguished if its centralizer in $\g$ contains only nilpotent elements (and it's automatically nilpotent). 
    %This is due to the fact that $\Der(\g)\cong \g$ for any semisimple Lie algebra $\g$. However, if $\g=\g_{\bar 0}$ is a reductive (purely even) Lie algebra with a non-trivial center, our definition means that $\g$ has no distinguished elements at all.
\end{remark}

We will discuss examples of distinguished orbits in \cref{sec:orbits_in_classical}.

\begin{lemma}\label{lem:dist-jor} Let $x\in\g_{\bar 1}$ be a distinguished element in $\g$. Let
$\frac{1}{2}[x,x]=s+f$ be the Jordan decomposition of $\frac{1}{2}[x,x]$. Then $s\in Z(\g)$.

Furthermore, if $Z(\g)\cap[\g,\g]_{\bar 0}=0$ then $s=0$, hence $x\in \mathcal{N}^{\ad}(\g_{\bar 1})$.    
\end{lemma}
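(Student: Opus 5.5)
Both claims follow by showing that $s$ lies in the even centralizer of $x$; distinguishedness then forces it into the center, and the extra hypothesis forces it to vanish.

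\textbf{Step 1: $s$ lies in $\fc_{\g}(x)^{\mathbf{ss}}$.} Set $t:=\frac{1}{2}[x,x]\in\g_{\bar 0}$. By the Jacobi identity, $[x,[x,x]]=0$, so $t\in\fc_{\g_{\bar 0}}(x)$. By \cref{lem:centralizer_Jordan_closed}, the even centralizer of $x$ is closed under Jordan decomposition. Hence the semisimple part $s$ of $t$ also lies in $\fc_{\g_{\bar 0}}(x)$. Since $s$ is semisimple, $s\in\fc_{\g}(x)^{\mathbf{ss}}$.

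\textbf{Step 2: $s$ is central.} By \cref{def:distinguished}, distinguishedness of $x$ gives $\fc_{\g}(x)^{\mathbf{ss}}=Z(\g)_{\bar 0}$. Therefore $s\in Z(\g)$, which is the first claim.

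\textbf{Step 3: $s=0$ under the extra hypothesis.} Assume $Z(\g)\cap[\g,\g]_{\bar 0}=0$. We need to see that $s\in[\g,\g]_{\bar 0}$.
\begin{itemize}
\item We have $t=\frac12[x,x]\in[\g,\g]_{\bar 0}$.
\item The nilpotent part $f$ of $t$ lies in $[\g_{\bar 0},\g_{\bar 0}]\subseteq[\g,\g]_{\bar 0}$. Indeed, $\g_{\bar 0}$ is reductive, so $\g_{\bar 0}=Z(\g_{\bar 0})\oplus[\g_{\bar 0},\g_{\bar 0}]$. Nilpotent elements of a reductive Lie algebra (those whose image under $\ad$ is nilpotent, equivalently nilpotent in the Jordan sense) lie in the derived subalgebra. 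If one prefers to avoid this, use the Jordan decomposition inside the semisimple part: the central component of $t$ is semisimple, so it is absorbed into $s$.
\end{itemize}
Hence $s=t-f\in[\g,\g]_{\bar 0}$. Combined with Step 2, $s\in Z(\g)\cap[\g,\g]_{\bar 0}=0$.

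\textbf{Step 4: conclusion.} With $s=0$, $[x,x]=2f$ is nilpotent and lies in $[\g_{\bar 0},\g_{\bar 0}]$. Then $\ad_x^2=\frac12\ad_{[x,x]}$ is nilpotent, so $\ad_x$ is nilpotent. Thus $x\in\mathcal{N}^{\ad}(\g_{\bar 1})$.

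The only point needing care is Step 3: we must check that the Jordan decomposition in the reductive algebra $\g_{\bar 0}$ places the central component of $t$ into $s$ and not into $f$, so that $f\in[\g_{\bar 0},\g_{\bar 0}]$. This is standard, since central elements of a reductive Lie algebra are semisimple in this convention (cf. the remark after \cref{notn:ss}).
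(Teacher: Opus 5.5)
Your proof is correct. Steps 1--2 are the paper's argument for the first assertion: the paper just notes that $x$ commutes with $\frac12[x,x]$, hence with its semisimple part, and you make the appeal to \cref{lem:centralizer_Jordan_closed} explicit.

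For the second assertion you take a different route.
\begin{itemize}
\item The paper shows $s\in[\g,\g]$ via the attractor. \Cref{lem:square_attractor} identifies $s$, up to the factor $\frac12$, with $[x_0,x_0]$, where $x_0$ is the weight-zero component of $x$ for an $\sl_2$-triple through $f$ inside $\g^s$.
\item You instead write $s=\frac12[x,x]-f$. You then use only that the nilpotent Jordan part of an element of the reductive algebra $\g_{\bar 0}$ lies in $[\g_{\bar 0},\g_{\bar 0}]$, which is built into the paper's convention for nilpotent elements.
\end{itemize}
Your argument is more elementary and self-contained: it needs no $\sl_2$-triples and no Jacobson--Morozov inside $\g^s$. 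The paper's route gives sharper information: $s$ is itself the square of an odd element commuting with it, so $s\in[\g_{\bar 1},\g_{\bar 1}]$. This fits the attractor framework the paper uses throughout.

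One small inaccuracy: your parenthetical says $\ad$-nilpotency is equivalent to Jordan-nilpotency in a reductive Lie algebra, and that is false. Central elements have $\ad=0$ but are semisimple and need not lie in the derived algebra. Your fallback remark, that the central component of $t$ is absorbed into $s$, is exactly the right convention, so the argument stands.
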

\begin{proof} The first assertion follows from the fact that $x$ commutes  with $\frac{1}{2}[x,x]$ so $\ad_{s}(x)=0$.

To prove the second assertion, it suffices to show that $s\in [\g,\g]$. Let $x_0$ be the attractor of $x$. By \cref{lem:square_attractor} we have:
$s=[x_0,x_0]$, as required.
\end{proof}

Just like in the Lie algebra case (see \cite[Theorem 8.1.1]{collingwood1993nilpotent}), any two minimal Levi subalgebras containing an $\ad$-nilpotent element $x$ are conjugate:

\begin{lemma}\label{lem:uniqlevi}
Let $x\in \g_1$ and let $\g', \g''$ be two minimal Levi subalgebras of $\g$ containing $x$. Then $\g'$ and $\g''$ are $C_{G_{\bar 0}}(x)$-conjugate,
and $x$ is distinguished in both $\g'$ and $\g''$.  
\end{lemma}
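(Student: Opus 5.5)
The approach follows the classical argument for reductive Lie algebras (Collingwood--McGovern, Theorem 8.1.1), applied inside the centralizer of $x$. A Levi subalgebra containing $x$ has the form $\g^{\s}$ for a commutative $\s\subset\g^{\mathbf{ss}}$. Since $x\in\g^{\s}$, we have $\s\subset\fc_{\g_{\bar 0}}(x)$, and conversely any toral subalgebra of $\fc_{\g_{\bar 0}}(x)$ gives a Levi subalgebra containing $x$. So Levi subalgebras containing $x$ correspond to toral subalgebras $\s$ of $\fc_{\g_{\bar 0}}(x)$, with larger $\s$ giving smaller $\g^{\s}$. Minimal Levi subalgebras containing $x$ therefore arise as $\g^{\ft}$ for maximal toral subalgebras $\ft$ of $\fc_{\g_{\bar 0}}(x)$. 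Given a minimal $\g'=\g^{\s'}$, I would replace $\s'$ by the center $Z(\g')_{\bar 0}\cap \g^{\mathbf{ss}}$; this does not change $\g'$, since $\g^{Z(\g')_{\bar 0}}\supseteq \g'$ and $\g^{Z(\g')_{\bar 0}}\subseteq\g^{\s'}=\g'$. I would then check that this $\s'$ is a maximal toral subalgebra of $\fc_{\g_{\bar 0}}(x)$. If $\ft\supsetneq \s'$ were toral in $\fc_{\g_{\bar 0}}(x)$, then $\g^{\ft}\subseteq \g'$ would contain $x$, so minimality forces $\g^{\ft}=\g'$. But then $\ft\subset Z(\g')$, and $\ft$ consists of semisimple elements, so $\ft\subseteq\s'$, a contradiction.

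The key step is conjugacy of maximal tori in $C_{G_{\bar 0}}(x)$. This group is an algebraic group, not necessarily reductive, but its Lie algebra is $\fc_{\g_{\bar 0}}(x)$, which is closed under Jordan decomposition by \cref{lem:centralizer_Jordan_closed}. I would use that $C_{G_{\bar 0}}(x)$ is a connected linear algebraic group, so all its maximal tori are conjugate (Borel). Maximal toral subalgebras of $\fc_{\g_{\bar 0}}(x)$ are exactly the Lie algebras of maximal tori: every toral subalgebra consisting of semisimple elements of the algebraic Lie algebra lies in the Lie algebra of some torus, namely its algebraic hull. Hence $\s'$ and $\s''$ are $C_{G_{\bar 0}}(x)$-conjugate, and so are $\g'=\g^{\s'}$ and $\g''=\g^{\s''}$. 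The main obstacle is this identification of toral subalgebras with Lie algebras of tori of $C_{G_{\bar 0}}(x)$; I would handle it by taking the smallest algebraic subgroup whose Lie algebra contains $\s'$, which is a torus because $\s'$ is commutative and consists of semisimple elements.

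For distinguishedness, suppose $x$ lay in a proper Levi subalgebra of $\g'$, say $(\g')^{u}$ with $u\in\fc_{\g'}(x)^{\mathbf{ss}}\setminus Z(\g')_{\bar 0}$. Since $u\in\g'_{\bar 0}$ commutes with $\s'$, the space $\s'+\kk u$ is a commutative subalgebra of semisimple elements of $\g_{\bar 0}$. Hence $\g^{\s'+\kk u}=(\g')^{u}$ is a Levi subalgebra of $\g$ containing $x$ and strictly smaller than $\g'$, contradicting minimality. Here one must check that semisimplicity of $u$ in $\g'_{\bar 0}$ agrees with semisimplicity in $\g_{\bar 0}$. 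This holds because $\g'_{\bar 0}=\fc_{\g_{\bar 0}}(\s')$ is closed under Jordan decomposition, by the same argument as in \cref{lem:centralizer_Jordan_closed}. Therefore $\fc_{\g'}(x)^{\mathbf{ss}}=Z(\g')_{\bar 0}$, i.e.\ $x$ is distinguished in $\g'$, and by symmetry also in $\g''$.
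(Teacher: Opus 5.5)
Your proof is correct and follows essentially the same route as the paper. For the first claim, both use conjugacy of maximal toral subalgebras of $\fc_{\g_{\bar 0}}(x)$ under $C_{G_{\bar 0}}(x)$ together with minimality. For distinguishedness, both use the same shrinking argument: a non-central semisimple $u\in\fc_{\g'_{\bar 0}}(x)$ produces the smaller Levi subalgebra $\g'\cap\g^{u}$ containing $x$.

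The only difference is cosmetic. The paper conjugates so that generators $s',s''$ of $\g',\g''$ lie in a common maximal toral subalgebra $\mathfrak t$, and then gets $\g^{s'}=\g^{\mathfrak t}=\g^{s''}$ directly from minimality. You instead first show that $Z(\g')_{\bar 0}\cap\g^{\ss}$ is itself a maximal toral subalgebra. You also make explicit the torus and semisimplicity compatibilities that the paper leaves implicit.
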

\begin{proof} Let $\g'=:\g^{s'}$ and $\g''=:\g^{s''}$ for  $s',s'' \in \g^{\mathbf{ss}}$. 
Since $C_{G_{\bar 0}}(x)$ (the centralizer of $x$ in $G_{\bar 0}$) is an algebraic group, all maximal toral subalgebras in its Lie algebra
are conjugate. Thus, without loss of generality we may assume that ${s'}, {s''}$ lie in the same maximal toral subalgebra $\mathfrak t\subset \mathrm{Lie} ~C_{G_{\bar 0}}(x)$. In particular, $[{s'}, {s''}]=0$. For a generic value of $\varepsilon\in \kk$
we have: $s'+\varepsilon s'' \in \mathfrak{t}\subset \g^{\mathbf{ss}}_{\bar{0}}$ and $\g^{\mathfrak{t}}\subset \g^{s'+\varepsilon s''}\subset \g^{s'}\cap \g^{s''}$. By the minimality of $\g^{s'}$, $\g^{s''}$ we conclude that $\g^{s'}=\g^{s''}=\g^\mathfrak{t}$.

To prove the second assertion, assume that $x$ is not distinguished in $\g^{s'}$. Then there exists a non-central semisimple $u\in\g^{s'}_{\bar 0}$ which lies in the centralizer of $x$. Then $x\in \g^{s'}\cap \g^u$ and that contradicts minimality of $\g^{s'}$.
 \end{proof}

The following straightforward statements will be useful for finding distinguished odd orbits in Lie superalgebras.

\begin{lemma}\label{lem:pg_distinguished_orbits}
    Given a quasi-reductive Lie superalgebra $\g$, let $\widetilde{\g}:=\g/Z(\g)_{\bar 0}$. Then distinguished orbits in $\widetilde{\g}_{\bar 1}$ are in bijection with distinguished orbits in $\g_{\bar 1}$.
\end{lemma}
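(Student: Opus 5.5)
The plan is to use the quotient map $\pi:\g\to\widetilde{\g}=\g/Z(\g)_{\bar 0}$. It is an isomorphism on odd parts, $\g_{\bar 1}\xrightarrow{\sim}\widetilde{\g}_{\bar 1}$, because $Z(\g)_{\bar 0}$ is purely even. We will show that this identification is $G_{\bar 0}$-equivariant and preserves distinguishedness. First, $Z(\g)_{\bar 0}$ is an ideal, so $\widetilde{\g}$ is a Lie superalgebra. Its even part $\g_{\bar 0}/Z(\g)_{\bar 0}$ is reductive and acts semisimply on $\widetilde{\g}_{\bar 1}=\g_{\bar 1}$, so $\widetilde{\g}$ is quasi-reductive. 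The group $G_{\bar 0}$ acts on $\widetilde{\g}$ through its image $\widetilde{G}_{\bar 0}=G_{\bar 0}/Z$, where $Z$ is the kernel of the adjoint action restricted to the relevant part. The adjoint action of $G_{\bar 0}$ on $\g_{\bar 1}$ factors through this image, so the $G_{\bar 0}$-orbits on $\g_{\bar 1}$ and the $\widetilde G_{\bar 0}$-orbits on $\widetilde{\g}_{\bar 1}$ coincide under $\pi$.

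The main step is to compare the sets $\fc_{\g}(x)^{\mathbf{ss}}$ and $\fc_{\widetilde{\g}}(\pi(x))^{\mathbf{ss}}$ for $x\in\g_{\bar 1}$. An even element $a\in\g_{\bar 0}$ satisfies $[a,x]=0$ in $\g$ if and only if $[\pi(a),\pi(x)]=0$ in $\widetilde{\g}$, because $[a,x]\in\g_{\bar 1}$ and $\pi$ is injective there. Hence $\fc_{\widetilde{\g}}(\pi(x))_{\bar 0}=\pi(\fc_{\g}(x)_{\bar 0})$, and this centralizer contains $Z(\g)_{\bar 0}$.

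Semisimplicity is preserved in both directions. Write $\g_{\bar 0}=Z(\g_{\bar 0})\oplus[\g_{\bar 0},\g_{\bar 0}]$; an element is semisimple if and only if its component in $[\g_{\bar 0},\g_{\bar 0}]$ is semisimple. The ideal $Z(\g)_{\bar 0}$ lies in $Z(\g_{\bar 0})$, so $\pi$ preserves semisimplicity. Conversely, every semisimple element of $\widetilde{\g}_{\bar 0}$ lifts to a semisimple element of $\g_{\bar 0}$, and in fact every lift is semisimple, since lifts differ by central elements.

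The remaining comparison is between the centers. The center of $\widetilde{\g}$ might a priori be larger than $\pi(Z(\g))$, and I expect this to be the main obstacle. Suppose $\pi(a)\in Z(\widetilde{\g})_{\bar 0}$ with $a\in \g_{\bar 0}$. Then $[a,\g_{\bar 1}]=0$ exactly, by injectivity of $\pi$ on odd parts. Also $[a,\g_{\bar 0}]\subset Z(\g)_{\bar 0}\cap[\g_{\bar 0},\g_{\bar 0}]$. This intersection is zero, because the center of a reductive Lie algebra meets its derived algebra trivially. Hence $a\in Z(\g)_{\bar 0}$, and so $Z(\widetilde{\g})_{\bar 0}=\pi(Z(\g)_{\bar 0})=0$.

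Combining these facts, $\fc_{\g}(x)^{\mathbf{ss}}=Z(\g)_{\bar 0}$ holds if and only if $\fc_{\widetilde{\g}}(\pi(x))^{\mathbf{ss}}=0=Z(\widetilde{\g})_{\bar 0}$. The forward direction uses that $\pi$ is surjective onto the centralizer and preserves semisimplicity. The reverse direction uses that preimages of $0$ are central, and that every semisimple element in $\fc_{\g}(x)$ maps to a semisimple element in $\fc_{\widetilde\g}(\pi(x))$, which is therefore $0$. Thus $\pi$ restricts to a bijection of distinguished orbits.
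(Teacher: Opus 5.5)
Your argument is correct and is essentially the direct verification the paper intends; the paper lists this lemma among ``straightforward statements'' and gives no proof. The quotient map identifies $\g_{\bar 1}$ with $\widetilde{\g}_{\bar 1}$ equivariantly, even centralizers of odd elements and semisimplicity correspond under $\pi$, and $Z(\widetilde{\g})_{\bar 0}=0$ because $Z(\g_{\bar 0})\cap[\g_{\bar 0},\g_{\bar 0}]=0$; that last check, that the center does not grow in the quotient, is the one step needing care, and you handled it correctly.
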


\begin{lemma}
Let $M\in \Rep(\G^{(1|1)})$ and let $M=\bigoplus_i \mathtt{M}_i\otimes \widetilde{V}_i$ be its decomposition into a direct sum of indecomposable $\G^{(1|1)}$-modules, where $\mathtt{M}_i$ is the indecomposable $\G^{(1|1)}$-module with highest vector of even parity and dimension $i+1$ and $\widetilde{V}_i$ is the multiplicity superspace.
Then the quotient of the ring $\End_{\G^{(1|1)}}(M)$ modulo its radical is $ \prod_i \End(\widetilde{V}_i)$.  
\end{lemma}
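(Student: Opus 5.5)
Since $\G^{(1|1)}$-modules are just modules over the finite-dimensional algebra $A:=\kk[x]/(x^N)$ (with $x$ odd, for $N$ large), the statement is the standard description of the radical quotient of the endomorphism ring of a module over a Krull--Schmidt category. I would prove it directly from the Krull--Schmidt structure. First I record that each $\mathtt{M}_i$ is indecomposable with local endomorphism ring: $\End_{\G^{(1|1)}}(\mathtt{M}_i)$ consists of parity-preserving maps commuting with $x$. Such a map is determined by the image of the generator $a_0$, which must be even. So the map is $a_0\mapsto \sum_{j\text{ even}} c_j a_j$, and the endomorphism ring is $\kk[x^2]/(\text{truncation})$. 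This ring is local with residue field $\kk$, the maximal ideal being spanned by the positive powers of $x^2$. The same computation applied to $\Pi\mathtt{M}_i$ is absorbed into the multiplicity superspace: the odd part of $\widetilde V_i$ accounts for it, and I would check that parity-preserving maps $\mathtt{M}_i\otimes\widetilde V_i\to \mathtt{M}_i\otimes\widetilde V_i$ modulo radical are exactly the even endomorphisms of $\widetilde V_i$. Under the paper's convention, $\End(\widetilde V_i)$ denotes parity-preserving maps.

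Next I show that $\Hom(\mathtt{M}_i, \mathtt{M}_j)\circ\Hom(\mathtt{M}_j,\mathtt{M}_i)$ lands in the maximal ideal of $\End(\mathtt{M}_i)$ for $i\neq j$, and likewise for maps through $\Pi\mathtt{M}_j$ and $\Pi\mathtt{M}_i$. This holds because a composite that is an isomorphism would make $\mathtt{M}_i$ a direct summand of $\mathtt{M}_j$, which is impossible by indecomposability and dimension. Concretely, a map between Jordan blocks of different sizes cannot be bijective, so the composite is non-invertible, hence lies in the maximal ideal. Similarly, maps $\mathtt{M}_i\to\Pi\mathtt{M}_i$ are never isomorphisms, since parity-preserving isomorphisms must send the even top vector $a_0$ to an even top vector.

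Then I apply the general fact (as in Benson's book, already cited in the proof of \cref{lem:aux_ss_functors}). Write $M=\bigoplus_\lambda N_\lambda$ as a sum of indecomposables with local endomorphism rings. Then $\operatorname{rad}\End(M)$ consists of the matrices $(\phi_{\lambda\mu})$ with every entry $\phi_{\lambda\mu}$ in the radical $\operatorname{rad}(N_\mu,N_\lambda)$ of the category. Here $\operatorname{rad}(N,N')$ is the space of all maps when $N\not\cong N'$, and the non-isomorphisms when $N\cong N'$. So $\End(M)/\operatorname{rad}$ is the product over isomorphism classes of matrix algebras $\End(\kk^{\text{mult}})$, and the isomorphism class of $\mathtt{M}_i$ together with that of $\Pi\mathtt{M}_i$ assemble into $\End(\widetilde V_i)$ with its even part.

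The only delicate step is checking that the radical of $\End(M)$ is exactly the matrices with radical entries. This requires that the entrywise-radical matrices form a two-sided ideal and that $1-\phi$ is invertible for $\phi$ in it. I would cite the Krull--Schmidt/Benson result rather than reprove it. For a hands-on alternative, I would filter by the Deligne grading: radical entries strictly lower a suitable grading, hence the ideal is nilpotent, and the quotient is visibly semisimple, so it equals the radical.
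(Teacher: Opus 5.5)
Your argument is correct, and it takes a different route from the paper's. The paper's one-line proof appeals to the vanishing $\Hom_{\G^{(1|1)}}(\mathtt{M}_i,\Pi^a\mathtt{M}_j)=0$ for $j<i$. That would give a block-triangularity of $\End_{\G^{(1|1)}}(M)$ with respect to block size, with the off-diagonal part a nilpotent ideal. Taken literally, that vanishing is false: for $j<i$ the quotient map $\mathtt{M}_i\twoheadrightarrow\mathtt{M}_j$, $a_0\mapsto a_0$, is a nonzero even $x$-equivariant map.

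What is true, and what you use, is the weaker fact that all maps between non-isomorphic indecomposables are radical. A map from a longer block to a shorter one kills the socle. A map from a shorter block to a longer one, or from $\mathtt{M}_i$ to $\Pi\mathtt{M}_i$, lands in $\Im x$. Hence every composite $\mathtt{M}_i\to\Pi^a\mathtt{M}_j\to\mathtt{M}_i$ with $(j,a)\neq(i,\bar 0)$ has zero constant term in $\End_{\G^{(1|1)}}(\mathtt{M}_i)$, which is a truncation of $\kk[x^2]$. This fact, together with locality of $\End_{\G^{(1|1)}}(\mathtt{M}_i)$ with residue field $\kk$ and the standard Krull--Schmidt description of $\operatorname{rad}\End(M)$, yields the statement. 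Your treatment of $\Pi\mathtt{M}_i$ through the odd part of $\widetilde V_i$ matches the paper's convention that $\End(\widetilde V_i)$ means parity-preserving maps.

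Your Deligne-filtration alternative is the correct version of the triangularity the paper gestures at. It holds with respect to Deligne degree rather than block size: every radical entry sends $\mathcal{F}^k$ into $\mathcal{F}^{k-1}$ (compare \cref{lem:negl_Deligne_filt}), so the entrywise-radical ideal is nilpotent with semisimple quotient.

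One small point: the sentence ``a map between Jordan blocks of different sizes cannot be bijective'' only handles composites through a shorter block. For $j>i$, the work is done by your direct-summand argument, or by the $\Im x$ observation above.
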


\begin{proof}
This follows from the fact that $\End_{\G^{(1|1)}}(\mathtt{M}_i, \Pi^a \mathtt{M}_j)=0$ for $j<i$ and $a\in \{\bar 0, \bar 1\}$.
\end{proof}

\begin{lemma}\label{cor:determining_semisimples_comm_with_x}
   Let $G$ be a quasi-reductive supergroup, $\g:=\mathrm{Lie} (G)$ and $x\in \mathcal{N}^{\ad}(\g_{\bar 1})$. Let $M$ be a faithful indecomposable representation of $\g$ and let $M\rvert_x=\bigoplus_i \mathtt{M}_i\otimes \widetilde{V}_i$ be its decomposition as above.
   
   The element $x$ is distinguished if and only if the image of $\mathfrak{c}_{\g_{\bar 0}}(x)^{\ss} \hookrightarrow \prod_i \End(\widetilde{V}_i)$ lies in $\kk \id$. 
\end{lemma}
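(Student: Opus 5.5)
The plan is to translate the definition of distinguished (\cref{def:distinguished}), which says $\fc_{\g}(x)^{\ss}=Z(\g)_{\bar 0}$, into a statement about the image of the centralizer in $\End(M)$. The key device is the previous lemma: $\End_{\G^{(1|1)}}(M)$ modulo its radical is $\prod_i \End(\widetilde V_i)$.

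First, note that any $t\in\mathfrak{c}_{\g_{\bar 0}}(x)$ acts on $M$ by an even endomorphism commuting with $x\rvert_M$. Hence $\rho(t)\in\End_{\G^{(1|1)}}(M)$, which gives a map
$$\pi:\mathfrak{c}_{\g_{\bar 0}}(x)\to \End_{\G^{(1|1)}}(M)\twoheadrightarrow \prod_i\End(\widetilde V_i).$$
For a semisimple element $t$, $\rho(t)$ is a semisimple operator, since representations of the reductive $G_{\bar 0}$ carry semisimple elements to diagonalizable operators.

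The main observation is that a semisimple element of $\End_{\G^{(1|1)}}(M)$ whose image in the semisimple quotient is a scalar $c$ must equal $c\,\id$. Indeed, $\rho(t)-c$ is then both semisimple and in the radical. The radical of a finite-dimensional algebra is nilpotent, so $\rho(t)-c$ is nilpotent and semisimple, hence zero. Therefore $\pi(t)\in\kk\id$ for semisimple $t$ if and only if $\rho(t)$ is a scalar on $M$.

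Next we show that $\rho(t)$ scalar is equivalent to $t\in Z(\g)_{\bar 0}$. If $t\in Z(\g)_{\bar 0}$ is semisimple, then $\rho(t)$ commutes with $\rho(\g)$ and is semisimple. Its eigenspaces are $\g$-submodules, so by indecomposability of $M$ it is a scalar. Conversely, if $\rho(t)=c\,\id$, then $\rho([t,y])=0$ for all $y\in\g$. Faithfulness of $M$ gives $[t,y]=0$, so $t\in Z(\g)_{\bar 0}$. Combining this with the previous step, $\pi(\mathfrak{c}_{\g_{\bar 0}}(x)^{\ss})\subset\kk\id$ if and only if $\mathfrak{c}_{\g_{\bar 0}}(x)^{\ss}\subseteq Z(\g)_{\bar 0}$.

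The reverse inclusion $Z(\g)_{\bar 0}\subseteq \mathfrak{c}_{\g_{\bar 0}}(x)^{\ss}$ always holds, since the center of the reductive $\g_{\bar 0}$ consists of semisimple elements (Notation \ref{notn:ss}) and commutes with $x$. So the condition is exactly that $x$ is distinguished.

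The only delicate step is the radical argument. One must check that semisimplicity of $\rho(t)$ as an operator on $M$ persists inside the algebra $\End_{\G^{(1|1)}}(M)$, i.e. that $\rho(t)-c$ is a semisimple operator lying in the nilpotent radical. This is immediate, since the Jordan decomposition is intrinsic to the operator.
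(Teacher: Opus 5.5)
Your argument is correct and is exactly the one the paper leaves implicit. The paper states this lemma without proof, immediately after the lemma identifying $\End_{\G^{(1|1)}}(M)$ modulo its radical with $\prod_i \End(\widetilde{V}_i)$, and your steps supply the intended details: a semisimple operator lying in the nilpotent radical vanishes, and faithfulness plus indecomposability show that acting by a scalar is equivalent to being central. One hypothesis is worth stating explicitly, and you already invoke it: $M$ must be a $G$-module, so that semisimple elements of $\g_{\bar 0}$, including central ones, act semisimply on $M$.
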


In particular, we obtain the following necessary condition for being distinguished: 
\begin{corollary}\label{cor:necessary_cond_distinguished}
Let $G$ be a quasi-reductive supergroup, $\g:=\mathrm{Lie} (G)$ and $x\in \mathcal{N}^{\ad}(\g_{\bar 1})$ be distinguished. 

Let $M$ be a faithful indecomposable representation of $\g$. \InnaA{Then for any $s\in \mathfrak{c}_{\g_{\bar 0}}(x)^{\ss} $, the operator $s\rvert_M $ is a scalar operator.}
\end{corollary}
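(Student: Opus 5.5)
The corollary follows directly from \cref{cor:determining_semisimples_comm_with_x}. The plan is to check that every semisimple $s\in\mathfrak{c}_{\g_{\bar 0}}(x)^{\ss}$ acts on $M$ through an element of $\End_{\G^{(1|1)}}(M)$ that is semisimple, and to show that distinguishedness forces this element to be a scalar.

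First, take $s\in\mathfrak{c}_{\g_{\bar 0}}(x)^{\ss}$. Since $[s,x]=0$, the operator $s\rvert_M$ commutes with $x\rvert_M$, so $s\rvert_M\in\End_{\G^{(1|1)}}(M)$. Since $\g_{\bar 0}$ is reductive, $M$ is finite-dimensional, and $G$ is quasi-reductive, the element $s$ acts diagonalizably on $M$. Decompose $M=\bigoplus_\lambda M_\lambda$ into $s$-eigenspaces. Each $M_\lambda$ is $x$-stable, so this is a decomposition of $\G^{(1|1)}$-modules.

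Second, use that $x$ is distinguished. By definition, $\fc_\g(x)^{\ss}=Z(\g)_{\bar 0}$, so $s$ is central in $\g$. Then each eigenspace $M_\lambda$ is $\g$-stable, since $[s,\g]=0$ means every element of $\g$ preserves the eigenspaces of $s$. So $M=\bigoplus_\lambda M_\lambda$ is a decomposition of $\g$-modules. Because $M$ is indecomposable, only one eigenvalue occurs, and $s\rvert_M$ is the scalar $\lambda\,\id_M$.

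The only step that needs care is the use of the definition: one must make sure that $\fc_\g(x)^{\ss}$ is read inside $\g_{\bar 0}$, which agrees with $\mathfrak{c}_{\g_{\bar 0}}(x)^{\ss}$. This avoids having to pass through the radical-quotient description of \cref{cor:determining_semisimples_comm_with_x}. Alternatively, the radical-quotient route works as well. The image of $s$ in $\prod_i\End(\widetilde V_i)$ lies in $\kk\,\id$, so $s\rvert_M-c\,\id$ is a semisimple element of the radical of $\End_{\G^{(1|1)}}(M)$. That element is nilpotent, hence zero. I would present the direct central-element argument as the main proof.
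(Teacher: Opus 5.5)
Your proof is correct, but your main route differs from the paper's. The paper gives no separate argument. It presents the corollary as an immediate consequence of \cref{cor:determining_semisimples_comm_with_x}, which is exactly your ``alternative'': the image of $s$ in $\prod_i\End(\widetilde V_i)$ is a scalar $c$, so $s\rvert_M-c\,\id$ is a semisimple element of the radical of $\End_{\G^{(1|1)}}(M)$, hence zero.

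Your main argument instead goes straight from the definition. Distinguishedness makes $s$ central in $\g$, so its eigenspaces on $M$ are $\g$-submodules, and indecomposability leaves a single eigenvalue. The preliminary step placing $s\rvert_M$ in $\End_{\G^{(1|1)}}(M)$ is not needed for this.

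\emph{What your route buys.} It is more elementary and more general: it uses neither faithfulness of $M$, nor $\ad$-nilpotency of $x$, nor the $\G^{(1|1)}$-decomposition. It is also precisely the argument behind the ``only if'' half of \cref{cor:determining_semisimples_comm_with_x}, which the paper states without proof, so your version is the more self-contained one.

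\emph{What the paper's route buys.} The corollary appears as the easy half of an if-and-only-if criterion. The other half, where faithfulness is needed to conclude that a semisimple $s$ acting by a scalar on $M$ is central, is what certifies distinguishedness in the case-by-case analysis of \cref{sec:orbits_in_classical}.

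\emph{Shared input.} In both routes, the only input beyond linear algebra is that $s\rvert_M$ is diagonalizable. This requires $M$ to come from $G$: for a bare $\g$-module, a central element can act with a nilpotent part. Indecomposability should accordingly be read over $\g$, or with $G_{\bar 0}$ connected. You correctly invoked the quasi-reductive group for the diagonalizability step.
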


\subsection{Criterion for distinguished neat orbits}

In this subsection we will prove the following proposition:
\begin{proposition}\label{prop:equiv_cond_distinguished}
    Let $\g$ be a quasi-reductive Lie superalgebra and let $x\in \g_{neat}$. Let $\osp_x\subset \g$ be a corresponding $\osp(1|2)$-type subalgebra and let $h \in \osp_x$ be the Cartan element. Let $\g=\bigoplus_{i\in \Z} \g^i$ be the corresponding decomposition into $\ad_h$-eigenspaces. Then the following conditions are equivalent:
    \begin{enumerate}
        \item\label{itm:1_neat_dist} $x$ is distinguished,
        \item\label{itm:2_neat_dist}  $\dim \g^0_{\InnaA{\bar 0}} = \dim \g^1_{\InnaA{\bar 1}}  + \dim Z(\g)_{\bar 0}$,
        \item\label{itm:3_neat_dist}  $\mathfrak{c}_{\g_{\bar 0}}(\osp_x)= Z(\g)_{\bar 0}$, where $\mathfrak{c}_{\g_{\bar 0}}(\osp_x)$ is the centralizer of $\osp_x$ in $\g_{\bar 0}$. 
    \end{enumerate}

\end{proposition}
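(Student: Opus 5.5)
The plan is to prove (1)$\Leftrightarrow$(3) through a comparison of reductive centralizers, and (2)$\Leftrightarrow$(3) through a dimension count using $\osp(1|2)$-representation theory.

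\textbf{Reduction to the centralizer of $\osp_x$.} By \cref{lem:centralizer_Jordan_closed}, $\mathfrak{c}_{\g_{\bar 0}}(x)$ is closed under Jordan decomposition, so $x$ is distinguished iff the reductive part of $\mathfrak{c}_{\g_{\bar 0}}(x)$ has no non-central semisimple elements. The key claim is that $\mathfrak{c}_{\g_{\bar 0}}(x)=\mathfrak{c}_{\g_{\bar 0}}(\osp_x)\ltimes \mathfrak{u}$, where $\mathfrak{u}=\mathfrak{c}_{\g_{\bar 0}}(x)\cap\bigoplus_{i<0}\g^i$ is a nilpotent ideal.

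To prove this claim, decompose $\g$ into simple $\osp_x$-modules $\MM(k)$ (up to parity). On $\MM(k)$, $\Ker X$ is spanned by the lowest weight vector, of $h$-weight $-k/2$. Hence $\mathfrak{c}_{\g}(x)$ lies in $\bigoplus_{i\le 0}\g^i$, and $\mathfrak{c}_\g(x)\cap\g^0$ is exactly the $\osp_x$-invariants $\mathfrak{c}_{\g}(\osp_x)$. The component $\mathfrak{u}$ in negative degrees is an ideal of $\mathfrak{c}_{\g_{\bar 0}}(x)$ consisting of $\ad$-nilpotent elements.

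By \cref{prop:centralizer_reductive} (with $\h=\sl_x$, extended to the $\osp_x$-centralizer), $\mathfrak{c}_{\g_{\bar 0}}(\osp_x)$ is reductive, so it is a Levi factor of $\mathfrak{c}_{\g_{\bar 0}}(x)$. Maximal tori of $\mathfrak{c}_{\g_{\bar 0}}(x)$ are conjugate into a Levi factor (Mostow/Levi), so every semisimple element of $\mathfrak{c}_{\g_{\bar 0}}(x)$ is $C_{G_{\bar 0}}(x)$-conjugate into $\mathfrak{c}_{\g_{\bar 0}}(\osp_x)$.

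\textbf{(1)$\Leftrightarrow$(3).} If (3) holds, every semisimple element of $\mathfrak{c}_{\g_{\bar 0}}(x)$ is conjugate into $Z(\g)_{\bar 0}$, hence is central, so $x$ is distinguished. Conversely, $\mathfrak{c}_{\g_{\bar 0}}(\osp_x)$ is reductive, so it is spanned by its semisimple elements. If $x$ is distinguished, these semisimple elements all lie in $Z(\g)_{\bar 0}$, and therefore $\mathfrak{c}_{\g_{\bar 0}}(\osp_x)\subseteq Z(\g)_{\bar 0}$. The reverse inclusion is obvious.

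\textbf{(2)$\Leftrightarrow$(3).} Write $\g\cong\bigoplus_k \MM(k)\otimes W_k$ as $\osp_x$-modules, with $k$ even since $h$ acts on $\g$ with integer eigenvalues. Here $\g^0$ corresponds to $h$-weight $0$ and $\g^1$ to $h$-weight $1$, normalized by $[h,x]=x$ with $x$ of $h$-weight $-1$. In $\MM(k)$, $k=2m$:
\begin{itemize}
\item the weight-$0$ space is one-dimensional of the parity of the highest vector when $m$ is even, and of the opposite parity when $m$ is odd;
\item the weight-$1$ space is one-dimensional and of opposite parity to the weight-$0$ space, for $m\ge1$;
\item for $m=0$ there is no weight-$1$ vector.
\end{itemize}
Thus every summand with $m\ge1$ contributes equally to $\dim\g^0_{\bar 0}$ and to $\dim\g^1_{\bar 1}$, provided the parities match as claimed. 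Parity bookkeeping is the delicate step. I expect the pairing to be exactly between the weight-$0$ even line and the weight-$1$ odd line, because $Y$ (or $X$) maps one to the other bijectively and changes parity.

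Trivial summands contribute only to $\g^0$. Their even part is $\mathfrak{c}_{\g_{\bar 0}}(\osp_x)$, while their odd part is $\osp_x$-invariant and lies in $\g^0_{\bar 1}$, which does not affect the count. Hence
\[
\dim\g^0_{\bar 0}-\dim\g^1_{\bar 1}=\dim\mathfrak{c}_{\g_{\bar 0}}(\osp_x),
\]
and (2)$\Leftrightarrow$(3) follows because $Z(\g)_{\bar 0}\subseteq\mathfrak{c}_{\g_{\bar 0}}(\osp_x)$ always.

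\textbf{Main obstacles.} I expect two points to need the most care. The first is the Levi-factor step: showing that semisimple elements of $\mathfrak{c}_{\g_{\bar 0}}(x)$ conjugate into $\mathfrak{c}_{\g_{\bar 0}}(\osp_x)$, which I would handle via the uniqueness of $\osp_x$ up to conjugation in \cref{thrm:JM}. The second is the parity check in the weight count.
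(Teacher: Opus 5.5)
Your argument is correct. For (2)$\Leftrightarrow$(3) it coincides with the paper's. Your count also produces the right quantity, $\dim\g^0_{\bar 0}-\dim\g^1_{\bar 1}=\dim\mathfrak{c}_{\g_{\bar 0}}(\osp_x)$. The printed proof writes $\mathfrak{c}_{\g_{\bar 0}}(x)$ there, which is a slip: in $\osp(1|2)$ itself $\mathfrak{c}_{\g_{\bar 0}}(x)=\kk f\neq 0$, while the left side is $0$. The parity bookkeeping you worry about can be avoided. With $x$ lowering weights by $1$, $\ad_x$ is injective on $\g^1$ and maps it, with a parity shift, onto the weight-$0$ part of the nontrivial isotypic components. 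The remaining part of $\g^0_{\bar 0}$ is exactly $\mathfrak{c}_{\g_{\bar 0}}(\osp_x)$.

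You depart from the paper in (1)$\Leftrightarrow$(3). Both proofs use the splitting $\mathfrak{c}_{\g_{\bar 0}}(x)=\mathfrak{c}_{\g_{\bar 0}}(\osp_x)\oplus\mathfrak u$, but the paper never conjugates anything.

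For (3)$\Rightarrow$(1), the paper writes a non-central semisimple $s$ as $s'+n$ with $s'\in\mathfrak{c}_{\g_{\bar 0}}(\osp_x)$ and $n\in\mathfrak u$. It takes the semisimple part $s''$ of $s'$, which stays in $\mathfrak{c}_{\g_{\bar 0}}(\osp_x)$ by \cref{lem:centralizer_Jordan_closed}. If $s''$ were central, then $\ad_s=\ad_{(s'-s'')+n}$ would be block-triangular for the $h$-grading with nilpotent diagonal blocks, hence nilpotent, a contradiction.

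For (1)$\Rightarrow$(3), the paper rules out non-central nilpotents in $\mathfrak{c}_{\g_{\bar 0}}(\osp_x)$ by a hands-on Jacobson--Morozov argument using $[\mathfrak{c}_{\g_{\bar 0}}(\osp_x),\fm]\subset\fm$. This is essentially the proof of \cref{prop:centralizer_reductive} specialized to this situation.

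Your Mostow route costs an external theorem but proves more: every semisimple element of $\mathfrak{c}_{\g_{\bar 0}}(x)$ is $C_{G_{\bar 0}}(x)$-conjugate into $\mathfrak{c}_{\g_{\bar 0}}(\osp_x)$. The paper's argument is elementary and only extracts some non-central semisimple element of $\mathfrak{c}_{\g_{\bar 0}}(\osp_x)$.

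Your alternative via \cref{thrm:JM} also works once you note that $x$ is neat in the Levi $\g^s$. Restrict a faithful $G$-module to $G^s$ and apply the first part of that theorem. Then some $\osp(1|2)$-type subalgebra inside $\g^s$ contains $x$, and it is $G_{\bar 0}$-conjugate to $\osp_x$.

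Two points should be stated carefully in your version:
\begin{itemize}
\item Apply \cref{prop:centralizer_reductive} with $\h=\osp_x$ itself, not with $\sl_x$. This is allowed because finite-dimensional $\osp(1|2)$-modules are completely reducible.
\item Read ``reductive'' as reductive in $\g_{\bar 0}$, meaning there is no nonzero ideal of nilpotent elements of $\g_{\bar 0}$. An abstractly reductive subalgebra with nilpotent centre is not spanned by semisimple elements, and Mostow's theorem needs this stronger property.
\end{itemize}
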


\InnaA{Before we prove the proposition, let us prove an auxiliary lemma.}

\begin{lemma}\label{lem:centralizer_x_vs_osp_x_ss_elem}
Let $x\in \g_{neat}$ and let $\osp_x\subset \g$ be an $\osp(1|2)$-type subalgebra containing $x$. Then $\mathfrak{c}_{\g_{\bar 0}}(x)^{\mathbf{ss}}=Z(\g)_{\bar 0}$ if and only if $\mathfrak{c}_{\g_{\bar 0}}(\osp_x)^{\mathbf{ss}}=Z(\g)_{\bar 0}$.
\end{lemma}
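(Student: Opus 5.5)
The inclusion $\mathfrak{c}_{\g_{\bar 0}}(\osp_x)\subset \mathfrak{c}_{\g_{\bar 0}}(x)$ is immediate, since $x\in\osp_x$. Also $Z(\g)_{\bar 0}$ lies in both semisimple parts, because central even elements of the reductive $\g_{\bar 0}$ are semisimple. So the implication "$\mathfrak{c}_{\g_{\bar 0}}(x)^{\mathbf{ss}}=Z(\g)_{\bar 0}\Rightarrow \mathfrak{c}_{\g_{\bar 0}}(\osp_x)^{\mathbf{ss}}=Z(\g)_{\bar 0}$" is trivial. The content is the converse: given a semisimple $t\in\mathfrak{c}_{\g_{\bar 0}}(x)$, I want to show that $t$ is $G_{\bar 0}$-conjugate (by an element centralizing $x$) to a semisimple element centralizing all of $\osp_x$. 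Then the hypothesis forces that conjugate, and hence $t$ itself, to be central.

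The plan is to use the conjugacy part of \cref{thrm:JM} applied inside a Levi subalgebra. Let $t\in \mathfrak{c}_{\g_{\bar 0}}(x)^{\mathbf{ss}}$ and put $\fl:=\g^t$. This is a Levi subalgebra, hence the Lie superalgebra of a quasi-reductive subgroup $L\subset G$, and it contains $x$.

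First I need $x$ to be neat in $\fl$. I would argue this by restriction: the action of $x$ on any $L$-module that is a summand of the restriction of a faithful $G$-module $V$ is neat, because $V\rvert_x$ is neat and $V=\bigoplus_\lambda V_\lambda$ splits into $t$-eigenspaces, each of which is $x$-stable. Since $V$ is faithful for $L$ as well, \cref{thrm:JM} (first bullet) gives $x\in\fl_{neat}$.

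By \cref{thrm:JM} there is then an $\osp(1|2)$-type subalgebra $\osp'_x\subset\fl$ containing $x$, and $t$ commutes with it because $t$ is central in $\fl$. By the uniqueness part of \cref{thrm:JM} in $\g$, there is $g\in G_{\bar 0}$ with $\Ad_g(\osp'_x)=\osp_x$ and $\Ad_g(x)=x$. I expect the conjugating element to fix $x$, since the extension $\bar i_x$ of the fixed map $i_x$ is unique up to conjugation; if the statement only gives conjugation of the image, I would adjust using the centralizer of the triple, as in the proof of \cref{lem:selfcompart}. Then $\Ad_g(t)$ is semisimple and centralizes $\osp_x$, so by hypothesis $\Ad_g(t)\in Z(\g)_{\bar 0}$. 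Since central elements are fixed by $G_{\bar 0}$ (assuming $G_{\bar 0}$ connected, or using $\Ad$-invariance of the center), $t=\Ad_g(t)\in Z(\g)_{\bar 0}$.

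The main obstacle is making the uniqueness-up-to-conjugation in \cref{thrm:JM} precise enough that the conjugating element fixes $x$. If this fails, a fallback is a Kostant-type argument. Consider the graded centralizer $\mathfrak{c}_{\g_{\bar 0}}(x)=\mathfrak{c}_{\g_{\bar 0}}(\osp_x)\oplus\mathfrak{u}$, where $\mathfrak{u}\subset\g^{<0}$ is a nilpotent ideal. This mirrors the classical decomposition of $\mathfrak{c}(e)$ as a reductive part plus the unipotent radical. Every semisimple element of $\mathfrak{c}_{\g_{\bar 0}}(x)$ is then conjugate, via $\exp\mathfrak{u}$, into a maximal torus of the reductive Levi factor $\mathfrak{c}_{\g_{\bar 0}}(\osp_x)$.
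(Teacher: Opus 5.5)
Your proof is correct, but it takes a different route from the paper's. The paper never invokes the conjugacy part of \cref{thrm:JM}. It uses the $\ad_h$-grading to write $\mathfrak{c}_{\g_{\bar 0}}(x)=\mathfrak{c}_{\g_{\bar 0}}(\osp_x)\oplus\bigl(\mathfrak{c}_{\g_{\bar 0}}(x)\cap\g^{<0}\bigr)$, where the second summand is a nilpotent ideal. A non-central semisimple $s\in\mathfrak{c}_{\g_{\bar 0}}(x)$ is split accordingly as $s=s'+n$. The semisimple part $s''$ of $s'$ lies in $\mathfrak{c}_{\g_{\bar 0}}(\osp_x)$ by \cref{lem:centralizer_Jordan_closed}. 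Finally, $s''$ cannot be central, since otherwise $\ad_s=\ad_{(s'-s'')+n}$ would be nilpotent.

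That proof is elementary and self-contained, but it only produces \emph{some} non-central semisimple element of $\mathfrak{c}_{\g_{\bar 0}}(\osp_x)$. Your proof imports heavier inputs: quasi-reductive Levi subgroups, the faithful-representation criterion for neatness, and uniqueness up to $G_{\bar 0}$-conjugacy of the $OSp(1|2)$-extension. In exchange, it yields the stronger statement that every semisimple element of $\mathfrak{c}_{\g_{\bar 0}}(x)$ is $G_{\bar 0}$-conjugate into $\mathfrak{c}_{\g_{\bar 0}}(\osp_x)$. Your Kostant-type fallback is essentially the conjugation argument the paper itself uses in the proof of \cref{lem:restr_neat_bal_to_levi}.

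Two small simplifications are available. First, the obstacle you flag is not real. $\Ad_g$ is an automorphism of $\g$, so it preserves $Z(\g)_{\bar 0}$ setwise, and $\Ad_g(t)\in Z(\g)_{\bar 0}$ already gives $t\in Z(\g)_{\bar 0}$ whether or not $g$ fixes $x$. In any case, two homomorphisms extending the same $i_x$ can only be conjugate by some $g$ with $\Ad_g(x)=x$. Second, the $t$-eigenspace decomposition in your neatness step is superfluous. $V$ remains faithful on restriction to $L$ and $x$ acts on the same space, so the first part of \cref{thrm:JM} applies directly. You only need to note that $[x,x]$, acting nilpotently on a faithful module, is a nilpotent element of the reductive algebra $\fl_{\bar 0}$.
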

\begin{proof}
Clearly $Z(\g)_{\bar 0}\subset \mathfrak{c}_{\g_{\bar 0}}(\osp_x)^{\mathbf{ss}} \subset \mathfrak{c}_{\g_{\bar 0}}(x)^{\mathbf{ss}}$ so we only need to check that the ``if'' implication. 

Now, fix any non-central semisimple element $s\in \mathfrak{c}_{\g_{\bar 0}}(x)$. We need to produce a non-central semisimple element in $\mathfrak{c}_{\g_{\bar 0}}(\osp_x)^{\mathbf{ss}}$ to complete the proof.

Let $h\in \osp_x$ be the Cartan element and let $\g=\bigoplus_{i\in \Z} \g^i$ be the $\ad_h$-decomposition of $\g$, with $x$ acting by an odd operator of degree $-1$. Recall that $ \g^{<0}:=\bigoplus_{i<0} \g^i$ is an ideal in $\g$ and all its elements are $\ad$-nilpotent. Hence $\mathfrak{c}_{\g_{\bar 0}}(x)\cap \g^{<0}$ is a nilpotent ideal in $\mathfrak{c}_{\g_{\bar 0}}(x)$.

It is a straightforward observation that $\mathfrak{c}_{\g_{\bar 0}}(\osp_x) = \mathfrak{c}_{\g_{\bar 0}}(x) \cap \g^0$, so $$ \mathfrak{c}_{\g_{\bar 0}}(x) = \mathfrak{c}_{\g_{\bar 0}}(\osp_x) \oplus \left( \mathfrak{c}_{\g_{\bar 0}}(x)\cap \g^{<0} \right).$$  

Consider the decomposition $s = s' +n$ where $s'\in \mathfrak{c}_{\g_{\bar 0}}(\osp_x)$ and $n\in \mathfrak{c}_{\g_{\bar 0}}(x)\cap \g^{<0} $. Let $s''$ be the semisimple part in the Jordan decomposition of $s'$. By \cref{lem:centralizer_Jordan_closed}, $s''\in \mathfrak{c}_{\g_{\bar 0}}(\osp_x)^{\mathbf{ss}}$. So it remains to check that $s''\notin Z(\g)_{\bar 0}$ to prove the assertion. 

Indeed, assume that $s''\in Z(\g)_{\bar 0}$. Then we would have $\ad_{s}=\ad_{(s'-s'')+n}$. But $s'-s''$ is an $\ad$-nilpotent element which lies in the Lie algebra $\mathfrak{c}_{\g_{\bar 0}}(x)$, while $n$ lies in the nilpotent ideal $ \mathfrak{c}_{\g_{\bar 0}}(x)\cap \g^{<0} $ of $\mathfrak{c}_{\g_{\bar 0}}(x)$; so $\ad_{(s'-s'')+n}$ is a nilpotent operator, contradicting the assumption that $\ad_s$ is a non-zero semisimple operator.

\end{proof}

\begin{proof}[Proof of \cref{prop:equiv_cond_distinguished}]

The last two conditions in \cref{prop:equiv_cond_distinguished} are clearly equivalent, since $\dim \g^0_{\InnaA{\bar 0}} - \dim \g^1_{\InnaA{\bar 1}}= \dim \mathfrak{c}_{\g_{\bar 0}}(x)$. 

To prove that \eqref{itm:3_neat_dist} implies \eqref{itm:1_neat_dist}, assume that the conditions in \eqref{itm:3_neat_dist} hold.  
By \cref{lem:centralizer_x_vs_osp_x_ss_elem}, $\mathfrak{c}_{\g_{\bar 0}}(x)$ contains no non-central semisimple elements, so $x$ is distinguished. 

Therefore it remains to check that \eqref{itm:1_neat_dist} implies \eqref{itm:3_neat_dist}. Assume that $x$ is distinguished. By \cref{lem:centralizer_x_vs_osp_x_ss_elem}, any semisimple element in $\mathfrak{c}_{\g_{\bar 0}}(\osp_x)$ lies in $Z(\g)_{\bar 0}$. 
%So we only need to check that $\mathfrak{c}_{\g_{\bar 0}}(\osp_x)$ doesn't contain non-central $\ad$-nilpotent elements.

Consider a decomposition of $\osp_x$-modules $\g=\mathfrak{c}_{\g_{\bar 0}}(\osp_x)\oplus \mathfrak m$. Here $ \mathfrak m$ is an $\osp_x$-submodule of $\g$ which contains no trivial $\osp_x$-submodules. We claim that
$$[\mathfrak{c}_{\g_{\bar 0}}(\osp_x),\mathfrak m]\subset \mathfrak m.$$
Indeed, the $\osp_x$-module $[\mathfrak{c}_{\g_{\bar 0}}(\osp_x),\mathfrak m]$ is isomorphic to a submodule of the tensor product $\mathfrak{c}_{\g_{\bar 0}}(\osp_x)\otimes \mathfrak m$. The latter contains no trivial $\osp_x$-submodules, so $[\mathfrak{c}_{\g_{\bar 0}}(\osp_x),\mathfrak m]\subset \mathfrak m$.

Taking the even part of the above decomposition, we obtain \begin{equation}\label{eq:distinguished_neat_aux}
    \g_{\bar 0}=\mathfrak{c}_{\g_{\bar 0}}(\osp_x)\oplus \mathfrak m_{\bar 0} \;\;\;\; \text{ and } \;\;\;\; [\mathfrak{c}_{\g_{\bar 0}}(\osp_x),\mathfrak m_{\bar 0}]\subset \mathfrak m_{\bar 0}.
\end{equation}

 \InnaA{Since $\g_{\bar 0}$ is reductive, we have $\mathfrak{c}_{\g_{\bar 0}}(\osp_x) \subset   Z(\g_{\bar 0})\oplus \left( \mathfrak{c}_{\g_{\bar 0}}(\osp_x)\cap [\g_{\bar 0}, \g_{\bar 0}]\right)$. 
 
 Let} $u\in \mathfrak{c}_{\g_{\bar 0}}(\osp_x)\cap [\g_{\bar 0}, \g_{\bar 0}]$. We will show that $u=0$, \InnaA{proving that $\mathfrak{c}_{\g_{\bar 0}}(\osp_x)\subset Z(\g_{\bar 0}) $.}
 
 By the Jacobson-Morozov theorem one can find a semisimple element $s \in \g_{\bar 0}$ such that $[s,u]=2u$. Write $s=s_1+s_2$
with $s_1\in\mathfrak{c}_{\g_{\bar 0}}(\osp_x)$ and $s_2\in\mathfrak m_{\bar 0}$. By \eqref{eq:distinguished_neat_aux}, we have: $[s_2,u]=0$ and $[s_1,u]=2u$. 
Since $s_1\in \mathfrak{c}_{\g_{\bar 0}}(\osp_x)$, it must be $\ad$-nilpotent, which implies $u=0$. 

Thus, $\mathfrak{c}_{\g_{\bar 0}}(\osp_x)\subset Z(\g_{\bar 0})$. All elements of $Z(\g_{\bar 0})$ are semisimple in a quasi-reductive superalgebra, so we conclude that $\mathfrak{c}_{\g_{\bar 0}}(\osp_x)\cap Z(\g_{\bar 0})\subset Z(\g)_{\bar 0}$ and $\mathfrak{c}_{\g_{\bar 0}}(\osp_x)\cap [\g_{\bar 0}, \g_{\bar 0}] =\{0\} $. Hence $\mathfrak{c}_{\g_{\bar 0}}(\osp_x)=Z(\g)_{\bar 0}$, as required.

\end{proof}

\subsection{Distinguished neat orbits and distinguished nilpotent even orbits}

Let $\mathcal{N}^{\ad}(\g_{\bar 0})$ be the nilpotent cone in the Lie algebra $\g_{\bar 0}$. 

The map $\kappa:\mathcal{N}^{\ad}(\g_{\bar 1}) \to \mathcal{N}^{\ad}(\g_{\bar 0})$, $x\mapsto [x,x]$ interacts nicely with the notion of distinguished elements (see \cref{def:distinguished})\footnote{\InnaA{An element $y\in \g_{\bar 0}$ is distinguished if and only if its semisimple part is central and its nilpotent part is a distinguished nilpotent element.}}, 

\begin{proposition}\label{prop:dist_square_implies_dist}
    
Let $\g$ be a quasi-reductive Lie superalgebra and let $x\in\g_{\bar1}$. 
\InnaA{Assume that at least one of the following conditions holds:
\begin{enumerate}
\item $Z(\g_{\bar 0}) = Z(\g)_{\bar 0}$,
    \item $\g_{\bar 0}\subseteq [\g, \g]$.
\end{enumerate}
 Under this assumption,} if $[x,x]$ is distinguished in $\g_{\bar 0}$ then $x$ is distinguished in $\g$.
\end{proposition}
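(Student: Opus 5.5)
The plan is to show that any semisimple element of $\mathfrak{c}_{\g_{\bar 0}}(x)$ already commutes with $[x,x]$, and then use the distinguishedness of $[x,x]$ in $\g_{\bar 0}$ together with one of the two hypotheses to conclude that it is central in all of $\g$.

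Let $t\in \mathfrak{c}_{\g_{\bar 0}}(x)^{\mathbf{ss}}$. By the Jacobi identity, $[t,[x,x]]=2[[t,x],x]=0$, so $t\in \mathfrak{c}_{\g_{\bar 0}}([x,x])^{\mathbf{ss}}$. The hypothesis that $[x,x]$ is distinguished in $\g_{\bar 0}$ means, via \cref{def:distinguished} applied to the purely even reductive Lie algebra $\g_{\bar 0}$, that the semisimple elements of $\g_{\bar 0}$ centralizing $[x,x]$ all lie in $Z(\g_{\bar 0})$. Hence $t\in Z(\g_{\bar 0})$. It remains to show $t\in Z(\g)_{\bar 0}$; this is the only real step.

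Under condition (1), $Z(\g_{\bar 0})=Z(\g)_{\bar 0}$, so we are done immediately.

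Under condition (2), I will use that $t$ kills the generating set $\g_{\bar 0}\cup\{x\}$ of a subalgebra, but that is not all of $\g$. So I argue differently: $t\in Z(\g_{\bar 0})$ is semisimple and $\g_{\bar 0}$-central, so $\ad_t$ acts on $\g_{\bar 1}$ as a $\g_{\bar 0}$-equivariant semisimple operator. Decompose $\g_{\bar 1}=\bigoplus_\lambda \g_{\bar 1}^{\lambda}$ into $\ad_t$-eigenspaces, which are $\g_{\bar 0}$-submodules. Since $t$ is even, $[\g_{\bar 1}^\lambda,\g_{\bar 1}^\mu]\subset \g_{\bar 0}^{\lambda+\mu}$, and $\g_{\bar 0}=\g_{\bar 0}^0$ because $t$ centralizes $\g_{\bar 0}$. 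Therefore $[\g_{\bar 1}^\lambda,\g_{\bar 1}^\mu]=0$ unless $\mu=-\lambda$. The hypothesis $\g_{\bar 0}\subseteq[\g,\g]$ gives $\g_{\bar 0}=[\g_{\bar 0},\g_{\bar 0}]+[\g_{\bar 1},\g_{\bar 1}]$.

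The trick I would try is the invariant form. For quasi-reductive $\g$ there need not be one, so instead I use the derivation argument: define $D$ on $\g$ by $D|_{\g_{\bar 0}}=0$ and $D|_{\g_{\bar 1}^\lambda}=\lambda$. Then $D$ is a derivation precisely by the bracket compatibility above, and $\ad_t=D$. Commuting with $x$ means $x\in\g_{\bar 1}^0$. This alone does not force $\lambda=0$, so here I expect the main obstacle. The fix I would attempt is to write $t=\sum[a_i,b_i]$ with $a_i,b_i\in\g$, using $\g_{\bar 0}\subseteq[\g,\g]$. Then I would show that $t$ lies in the span of brackets $[\g_{\bar 1}^\lambda,\g_{\bar 1}^{-\lambda}]$ and $[\g_{\bar 0},\g_{\bar 0}]$, and use supertrace-type or weight arguments in a faithful representation to derive $\ad_t=0$ on $\g_{\bar 1}$. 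Failing that, I would fall back on the structure theory of \cref{ssec:quasired}, where $Z(\g_{\bar 0})$ acting nontrivially on $\g_{\bar 1}$ arises only from pieces outside $[\g,\g]$. Once $\ad_t=0$ on $\g_{\bar 1}$, we get $t\in Z(\g)_{\bar 0}$, and $x$ is distinguished.
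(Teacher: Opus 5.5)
\textbf{Verdict: genuine gap in case (2).} Your first step is correct and matches the paper: $[t,[x,x]]=2[[t,x],x]=0$ puts every $t\in\mathfrak{c}_{\g_{\bar 0}}(x)^{\mathbf{ss}}$ into $Z(\g_{\bar 0})$. This settles case (1).

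Case (2), however, is not proved. After setting up the $\ad_t$-eigenspace decomposition you only list strategies, and the ones you name cannot work. Both try to show $\ad_t|_{\g_{\bar 1}}=0$ from $t\in Z(\g_{\bar 0})$ and $\g_{\bar 0}\subseteq[\g,\g]$ alone, without using $[x,x]$ again. That implication is false. For $\g=\sl(m|n)$ with $m\neq n$ we have $\g_{\bar 0}\subseteq[\g,\g]=\g$, yet $Z(\g_{\bar 0})=\kk\,\diag(nI_m,mI_n)$ acts on $\g_{\bar 1}$ by the nonzero scalars $\pm(n-m)$. So writing $t=\sum[a_i,b_i]$ gives no information beyond hypothesis (2). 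Your fallback claim, that a nontrivial action of $Z(\g_{\bar 0})$ on $\g_{\bar 1}$ only comes from pieces outside $[\g,\g]$, is also wrong: the simple ideals $\sl(m|n)$ ($m\neq n$) and $\osp(2|2n)$ are exactly where it occurs.

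Adding $[t,x]=0$ still does not suffice. In $\g=\sl(2|1)\times\sl(2|1)$ take $x=(x_1,0)$ and $t=(0,z)$, with $z$ spanning $Z(\sl(2|1)_{\bar 0})$. Then $t$ is semisimple, lies in $Z(\g_{\bar 0})$, commutes with $x$, and (2) holds, yet $\ad_t\neq 0$ on $\g_{\bar 1}$. So the distinguishedness of $[x,x]$ must be used a second time, and your plan never does this.

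The missing step, as in the paper, goes as follows.
\begin{enumerate}
\item Reduce to $Z(\g)_{\bar 0}=0$ and use the structure theory of \cref{ssec:quasired}. Since $[x,x]\in\mathfrak{i}(\g)$ is distinguished in $\g_{\bar 0}\cong\mathfrak{i}(\g)_{\bar 0}\times\rr_{\bar 0}$, the factor $\rr_{\bar 0}$ is abelian. Hence $[\g,\g]_{\bar 0}\subseteq\mathfrak{i}(\g)_{\bar 0}$.
\item Hypothesis (2) then gives $\g=\mathfrak{d}\oplus\bigoplus_i\mathfrak{t}_i$, where $\mathfrak{d}$ is purely odd abelian and $[\mathfrak{d},\g_{\bar 0}]=0$.
\item Write $t=\sum_i t_i$ and $x=u+\sum_i x_i$ with $u\in\mathfrak{d}$. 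Then $t_i\in Z((\mathfrak{t}_i)_{\bar 0})$ and $[t_i,x_i]=0$.
\item $Z((\mathfrak{t}_i)_{\bar 0})\neq 0$ only for $\mathfrak{t}_i\cong\sl(m|n)$ ($m\neq n$) or $\osp(2|2n)$. In these, a nonzero central element acts invertibly on $(\mathfrak{t}_i)_{\bar 1}$.
\item For such $i$, the $\mathfrak{t}_i$-component of $[x,x]$ is $2[u,x_i]+[x_i,x_i]$, which vanishes if $x_i=0$. But $0$ is not distinguished in the non-abelian reductive algebra $(\mathfrak{t}_i)_{\bar 0}$, so distinguishedness of $[x,x]$ forces $x_i\neq 0$. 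Invertibility then gives $t_i=0$.
\end{enumerate}
This use of distinguishedness to force $x_i\neq 0$ in the pieces where the central torus acts, combined with the invertibility of that action, is what your eigenvalue decomposition lacks.
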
 
\begin{proof} 

\InnaA{Let $x\in\g_{\bar1}$ such that $[x,x]$ is distinguished in $\g_{\bar 0}$. Then $\mathfrak{c}^{\mathbf{ss}}(x)\subset \mathfrak{c}^{\mathbf{ss}}([x,x]) \subset Z(\g_{\bar 0})$; to show that $x$ is distinguished, we need to show that $\mathfrak{c}^{\mathbf{ss}}(x)\subset Z(\g)_{\bar 0}$. Clearly, if $Z(\g_{\bar 0}) = Z(\g)_{\bar 0}$ then we are done. So from now on we will assume that $\g_{\bar 0}\subseteq [\g, \g]$.}

Without loss of generality we may assume that $Z(\g)_{\bar 0}=0$. 

Let $\mathfrak{i}(\g)$ be the sum of the minimal ideals in $\g$ \InnaA{and $\rr:=\g/\mathfrak{i}(\g)$, as described in \cref{ssec:quasired}. Then $\rr_{\bar 1}$ is an abelian ideal in $ \rr$ and $\rr_{\bar 0}$ is a (purely even) reductive Lie algebra . Furthermore, we have an isomorphism $\g_{\bar 0}\cong \mathfrak{i}(\g)_{\bar 0} \times \rr_{\bar 0}$ of Lie algebras.

Since $x\in \g_{\bar 1}$ and $\rr_{\bar 1}$ is an abelian ideal, we have: $[x,x]\in \mathfrak{i}(\g)$. The element  $[x,x]$ is distinguished in $\g_{\bar 0}$, so the decomposition $\g_{\bar 0}\cong \mathfrak{i}(\g)_{\bar 0} \times \rr_{\bar 0}$ implies: $ \rr_{\bar 0}$ contains no non-central semisimple elements. Hence $\rr_{\bar 0}$ is abelian, which implies: $[\rr, \rr]_{\bar 0}=0$. Thus $[\g,\g]_{\bar 0}\subset\mathfrak{i}(\g)_{\bar 0}$. The assumption $\g_{\bar 0}\subseteq [\g, \g]$ implies that $ \mathfrak{i}(\g)_{\bar 0} = \g_{\bar 0}$, so we have a decomposition of $\g_{\bar 0}$-modules $\g=\mathfrak{d}\oplus\mathfrak{i}(\g)$, where $\mathfrak{d}$ is a purely odd abelian subalgebra acting trivially on $\g_{\bar 0}$.
}
\begin{comment}
  Let $\mathfrak{t}':=\g_{\bar 1}+\mathfrak{t}$. This is the preimage of $(\g/\ft)_{\bar 1}$ under the quotient map $\g\to \g/\ft$, so it is an ideal in $\g$.

Additionally, $\g/\mathfrak{t}'$ is a (purely even) reductive Lie algebra. The ideal $\mathfrak{t}':=\g_{\bar 1}+\mathfrak{t}$ has a $\g_{\bar 0}$-decomposition
$\mathfrak{d}\oplus\mathfrak{t}$ where $\mathfrak{d}$ is an odd abelian superalgebra acting trivially on $\ft'_{\bar 0}$ (see \cref{ssec:quasired}).

Since $x\in \ft'$, we also have: $[x,x]\in\mathfrak{t}'$. This element is distinguished in $\g_{\bar 0}$, so the semisimple part of  $\g/\mathfrak{t}'$ is trivial. Therefore the Lie algebra $\g/\mathfrak{t}'$ is abelian.
Then $[\g,\g]_{\bar 0}\subset\mathfrak{t}'_{\bar 0}$. The assumption $\g_{\bar 0}\subseteq [\g, \g]$ implies
that $\mathfrak{t}'=\g=\mathfrak{d}\oplus\mathfrak{t}$ where $\mathfrak{d}$ acts trivially on $\g_{\bar 0}$.  
\end{comment}
 Recall that $\mathfrak{i}(\g)=\bigoplus_i \mathfrak{t}_i$ where each $\mathfrak{t}_i$ is one of the following:
\begin{itemize}
    \item a simple Lie superalgebra,
    \item a purely odd abelian superalgebra,
    \item a superalgebra of the form $\s\otimes\kk[\xi]$ where $\s$ is a simple Lie algebra and $\xi$ is an odd variable.
\end{itemize} 

Let $s$ be a semisimple element in the centralizer $\fc_{\g_{\bar  0}}(x)$. In order to show that $x$ is distinguished, we wish to show that $s=0$. Since $s\in \fc_{\g_{\bar  0}}(x)$ we have: $[s,[x,x]]=0$. The assumption that $[x,x]$ is distinguished implies: $s\in Z(\g_{\bar 0})$. Furthermore, since $\mathfrak{d}$ acts trivially on $\g_{\bar 0}$, we have $[s,\mathfrak{d}]=0$.

Write $s=\sum_i s_i$, $x=u+\sum x_i$ with $s_i,x_i\in \mathfrak{t}_i$ and $u\in \mathfrak{d}$. The above arguments imply that $s_i\in Z((\mathfrak{t}_i)_{\bar 0})$ and $[s, u]=0$. Recall that $Z((\mathfrak{t}_i)_{\bar 0})\neq Z(\mathfrak{t}_i)_{\bar 0} =0$ only if $\mathfrak{t}_i\cong\sl (m|n)$ with $m\neq n$ or $\mathfrak{t}_i\cong\mathfrak{osp}(2|2n)$. So we may have $s_i\neq 0$ only in these cases.

Now, we have:
$$0=[s,x]=[s,u]+\sum_{i, j} [s_i, x_j]=0+\sum_{i} [s_i, x_i]=\sum_{i} [s_i, x_i].$$ Since $[s_i, x_i]\in \ft_i$ for every $i$, we have: $[s_i, x_i]=0$.

The condition that $[x,x]$ is distinguished implies that for each $i$, the element $[u+x_i,x_i]$ is distinguished in the ideal $\mathfrak{t}_i$.

Let us consider only the indices $i$ for which $\mathfrak{t}_i\cong\sl (m|n)$ ($m\neq n$) or $\mathfrak{t}_i\cong\mathfrak{osp}(2|2n)$. In particular, $x_i\neq 0$, and every non-zero element of $Z((\mathfrak{t}_i)_{\bar 0})$ acts invertibly on $(\mathfrak{t}_i)_{\bar 1}$, implying that $s_i=0$ for these indices as well.
%In other words, $s_i$ is a central element of $(\mathfrak{t}_i)_{\bar 0}$ commuting with a non-zero element of $(\ft_i)_{\bar 1}$. Yet if $\mathfrak{t}_i\cong\sl (m|n)$ ($m\neq n$) or $\mathfrak{t}_i\cong\mathfrak{osp}(2|2n)$, 
So $s_i=0$ for every $i$, implying that $s=0$ as required.

\end{proof}

\begin{example}
In general, an element $x\in \g_{neat}$ being distinguished does not imply that $f:=[x,x]$ is distinguished. In type $A$ this implication holds, as can be seen from \cref{ex:dist_elem_type_A}. 

Let $\g:=\osp(4|4)$ and let $V=\kk^{4|4}$ be the tautological representation of $\g$. % with the basis $B=(e_1, e'_1, e_2, e'_2, e_3, e_3', e_4, e'_4)$ . 
Let $x\in \gl(4|4)_{\bar 1}$ be given by the checkered diagram $\mathbf{D}^x$ below:
    
\[\ytableausetup{centertableaux}
\mathbf{D}^x = \begin{ytableau}
*(gray) & *(white) &*(gray)  &*(white) &*(gray)\\
*(white)  & *(gray) & *(white) 
\end{ytableau}\]

Then $V\rvert_{\osp_x} = \MM(2)\oplus \Pi \MM(1)$ (see \cref{ssec:prelim_osp} for notation),
and so $x$ is distinguished (cf. \cref{ssec:dist_in_osp_pe}). Now, $f\rvert_{V_{\bar 1}}$ is given by the Young diagram 

\[\ytableausetup{centertableaux}
 \begin{ytableau}
*(white) &*(white) \\
*(white)  & *(white) 
\end{ytableau}\]

This diagram has rows of equal length, \InnaA{so it represents a non-distinguished nilpotent orbit in $\mathfrak{sp}_4$}. This implies that $f$ is not distinguished in $\osp(4|4)_{\bar 0}$.
\end{example}

\section{Examples: odd orbits in simple quasi-reductive Lie superalgebras}\label{sec:orbits_in_classical}

%\mbox{}
\subsection{Dichotomy}
In this section, we will describe classifications of odd orbits in simple quasi-reductive Lie superalgebras (see the list of such superalgebras in \cref{ssec:classical_list}). In particular, we will prove (case-by-case) the following dichotomy:

\begin{theorem}\label{thrm:dist_is_neat_or_balanced}
    Let $\g$ be a simple quasi-reductive Lie superalgebra or a Takiff Lie superalgebra, such that $\g\not \cong \mathfrak{spe}(n)$ for any $n$. Let $x\in \g_{\bar 1}$ be a distinguished odd element. Then $x$ is either strongly balanced or neat.
\end{theorem}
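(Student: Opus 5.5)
The argument is case-by-case over the list in \cref{ssec:classical_list} plus the Takiff superalgebras, with a common reduction first. By \cref{lem:dist-jor}, if $x$ is distinguished then the semisimple part $s$ of $\frac{1}{2}[x,x]$ is central, and it vanishes whenever $Z(\g)\cap[\g,\g]_{\bar 0}=0$. This holds for all the simple algebras in question, after passing to $\widetilde{\g}=\g/Z(\g)_{\bar 0}$ via \cref{lem:pg_distinguished_orbits} for $\p\sl(n|n)$ and $\mathfrak{psq}(n)$. So I may assume $x\in\mathcal{N}^{\ad}(\g_{\bar 1})$. I then fix $f=\frac{1}{2}[x,x]$ and an $\sl_2$-triple $(e,h,f)$ with decomposition $x=x_0+x_{-1}+\dots$. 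The goal is to show that either $x_0=0$, in which case $x$ is neat by \cref{lem:criterion_neatness_h_decomp}, or $x_{-1}\in\Im\ad_{x_0}$, in which case $x$ is strongly balanced by \cref{prop:x_1_can_be_eliminated}.

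For the matrix algebras $\gl(m|n)$, $\sl(m|n)$, $\p\sl(n|n)$ I work in the tautological module $V$. By \cref{prop:balanced_in_gl} and the first part of \cref{thrm:JM}, $x$ is strongly balanced (respectively neat) in $\gl(V)$ iff the Jordan blocks of $x|_V$ all have size $1$ or even size (respectively all have odd size). \cref{cor:necessary_cond_distinguished} and its refinement \cref{cor:determining_semisimples_comm_with_x} show that distinguishedness forces all multiplicity spaces $\widetilde V_i$ to admit only scalar semisimple endomorphisms commuting with $x$. In type $A$ this means $V|_x$ has a single isotypic Jordan type with $1$-dimensional multiplicity, or a multiplicity space on which $\mathfrak{c}(x)^{\ss}$ still acts by scalars. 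I expect this to leave only two possibilities: all blocks of one odd size, or all blocks of one even size. Odd size gives a neat $x$; even size gives a balanced $x$ and hence a strongly balanced one. I still need to check that the notions in $\sl$ or $\p\sl$ agree with those in $\gl$, which I would do using \cref{prop:equiv_cond_distinguished} and the explicit $\sl_2$ constructed in \cref{prop:balanced_in_gl}.

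For $\osp(m|2n)$ and the Takiff case, the same strategy applies using the form-preserving Jordan decomposition. Distinguishedness forces each block type to appear with multiplicity compatible with only finite isometry groups, that is, multiplicity at most $1$ or $2$ for the appropriate symmetry type, as in the classical Collingwood--McGovern analysis. I then show that mixtures of odd and even blocks can be separated by a non-central torus acting with opposite weights on the two parts, which contradicts distinguishedness. The exceptional algebras $D(2|1;a)$, $F(1,3)$, $G(1,2)$ and $\mathfrak{psq}(n)$ are handled by hand. In each, I use $\g_{\bar 0}$ and the known small odd modules to list the orbits with no non-central semisimple centralizer, and for each I exhibit either an $\osp(1|2)$ containing it or an $h$ with $x_{-1}=0$.

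I expect the orthosymplectic and $\mathfrak{psq}$ cases to be the main obstacle. There, balancedness in $\g$ is not obviously equivalent to balancedness in the tautological representation, because \cref{prop:balanced_in_gl} produces an $\sl_2$ inside $\gl(V)$ that must be made compatible with the form. For $\osp$, I would first try to show that the $\sl_2$ built block by block in \cref{prop:balanced_in_gl} can be chosen orthosymplectic by pairing dual Jordan blocks. Failing that, I would verify $x_{-1}\in\Im\ad_{x_0}$ directly and apply \cref{prop:x_1_can_be_eliminated}.
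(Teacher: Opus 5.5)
Your reduction to $s=0$ is fine (every algebra in scope has zero center, so \cref{lem:dist-jor} applies), and so is the target dichotomy ``$x_0=0$ or $x_{-1}\in\Im\ad_{x_0}$''. The gap is in the case analysis that is supposed to deliver it. In type $A$ a step fails for $\p\sl(n|n)$. The element is $\ad$-nilpotent in $\p\sl(n|n)$, but $V$ is only an $\sl(n|n)$-module. The unique odd lift $x\in\sl(n|n)_{\bar 1}$ can have $\frac12[x,x]=\lambda\id_V+f$ with $\lambda\neq0$, in which case $x|_V$ is invertible and there are no Jordan blocks to sort. For example, take $x=\begin{pmatrix}0&\mu 1_n\\ \nu(1_n+J)&0\end{pmatrix}$ with $\mu\nu\neq 0$ and $J$ a full nilpotent Jordan block. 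Its semisimple centralizer in $\sl(n|n)_{\bar 0}$ consists only of scalars, so its image is distinguished in $\p\sl(n|n)$, yet your argument never sees it. The paper treats this family separately in \cref{lem:sl-dist}(2): normalize the upper block to $\mu 1_n$; distinguishedness then forces the lower block to be a single Jordan block, so $f$ is principal and $x=x_0+x_{-2}$ for the principal $h$. Even when $x|_V$ is nilpotent, ``I expect all blocks to have one size'' is not an argument, and it is false. In $\sl(n|n)$, two odd blocks of different lengths and opposite superdimensions give a distinguished (neat) element. What you actually need is that even- and odd-length blocks cannot coexist. If an even-length summand $M_i$ occurs alongside another summand, then $\sdim M_i=0$ makes $\id_{M_i}\oplus 0$ a supertraceless, non-scalar semisimple element commuting with $x$, contradicting \cref{cor:determining_semisimples_comm_with_x}. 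For the quotients $\p\sl(n|n)$ and $\mathfrak{psq}(n)$ you also do not need \cref{prop:equiv_cond_distinguished}: a surjection of Lie superalgebras sends neat (resp. strongly balanced) elements to neat (resp. strongly balanced) ones.

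The remaining cases are only promised, not argued.
For $D(2|1;a)$, $F(1,3)$, $G(1,2)$, the paper does not enumerate orbits. It uses a structural argument valid for every $\ad$-nilpotent $x$ (\cref{lem-defect1}). If $f\neq0$ and $x_0\neq0$, then $x_0$ is self-commuting, hence conjugate to an isotropic root vector $x_\alpha$. Since $DS_{x_\alpha}(\g)$ is purely even, $\bar x_{-1}=0$, so by \cref{rem:x_minus_1_is_zero_in_DS} the triple $(e,h,f)$ lies in $[x_0,\g]$. The even part of $[x_0,\g]$ is solvable by \cref{lem:auxdef1}, which is absurd; hence $x$ is self-commuting or neat.
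For the Takiff superalgebra, you do not say which module and invariant form ``the same strategy'' would use. The paper's argument needs none: $x=\lambda\partial+\xi u$ is distinguished iff $u$ is a distinguished nilpotent of $\s$, and then $x_0=\lambda\partial$, $x_{-2}=\xi u$ (\cref{lem:Takiff}, \cref{Cor:Takiff}).
For $\osp(m|2n)$, the missing ingredient is \cref{lem:form_B_indec_U}: an indecomposable $\kk[x]$-summand on which an even form is non-degenerate is odd-dimensional. By \cref{lem:form-nilp}, all other summands come in isotropic pairs $W'\oplus W''$, and these are ruled out by the element $\lambda\id_{W'}\oplus(-\lambda)\id_{W''}$ of $\osp$. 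So every distinguished element of $\osp(m|2n)$ is neat, and the orthosymplectic $\sl_2$ you worry about never has to be built. Also, if your separating torus means opposite scalars on the odd-block and even-block parts, it does not lie in $\osp$ at all, since both parts are non-degenerate.
Similarly, for $\mathfrak{psq}(n)$ the point (\cref{lem:q-dist}) is that distinguishedness forces $V=W\oplus\Pi W$ with $W$ indecomposable. For even $n$, the $\sl_2$ of \cref{prop:balanced_in_gl} built on $W$ and copied onto $\Pi W$ lies in $\q(n)_{\bar 0}$.
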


We mention here a straightforward observation which we will use:
given a surjective homomorphism of Lie superalgebras $q: \g\twoheadrightarrow \g'$, we have
\begin{itemize}
    \item $x\in \g_{bal}$ (respectively, $x$ is strongly balanced) $ \; \Longrightarrow \;$ $ q(x)\in \g'_{bal}$ (respectively, $q(x)$ is strongly balanced).
    \item $x\in \g_{neat} \; \Longrightarrow \; q(x)\in \g'_{neat}$.
\end{itemize}
\subsection{Checkered Young diagrams}
To describe the nilpotent orbits in Lie sub-superalgebras of $\gl(m|n)$, we define a checkered Young diagram associated with a nilpotent odd operator $x \in \End(\kk^{m|n})_{\bar 1}$:

\begin{definition}\label{def:checkered_Young_diagram}
A {\it checkered Young diagram} is an equivalence class of Young diagrams whose cells are colored in black and white, the colors alternating in each row, under the equivalence relations: $\lambda \sim \mu$ if $\lambda$ is obtained from $\mu$ by a permutation of the rows. A checkered Young diagram is said to be of type $(m|n)$ if it contains $m$ black boxes and $n$ white boxes.
\end{definition}
\begin{comment}
    \begin{notation}
    Given a Young diagram $D$ (checkered or not), we denote by $c_i(D)$ ($i\geq 1$) the lengths of its $i$-th column and by $r_i(D)$ ($i\geq 1$) the length of its $i$-th row.
\end{notation}
\end{comment}

\begin{definition}\label{def:checkered_diagram}

Let $x\in \End(\kk^{m|n})_{\bar 1}$ be an nilpotent operator. The {\it checkered Young diagram $\mathbf{D}^{x}$} is a checkered Young diagram obtained by taking the sizes of the Jordan blocks of $x$. 

Each row of the diagram corresponds to a single Jordan block of $x$, with the leftmost box corresponding to the eigenvector annihilated by $x$. 

The coloring of the cells in the diagram is as follows: each cell of the diagram is colored either in black (if the corresponding vector in $\kk^{m|n}$ is even) or in white (if the corresponding vector in $\kk^{m|n}$ is odd). 
\end{definition}
Two adjacent boxes in the same row will always have different colors.

\begin{example}\label{ex:diagrams_for_gl_2_3}
Let $\{e_1, e_2, e'_1, e'_2, e'_3\}$ be the standard basis in the vector superspace $\kk^{2|3}$. We have:
\[x=\begin{bmatrix}
0 &0 &\vline&1 &0 &0\\
0 &0 &\vline&0 &0 &0\\
\hline
0 &0 &\vline&0 &0 &0\\
1 &0 &\vline&0 &0 &0\\
0 &0 &\vline&0 &0 &0\\
\end{bmatrix} \; \Longrightarrow \;\ytableausetup{centertableaux}
\mathbf{D}^x = \begin{ytableau}
*(white)  & *(gray) & *(white)  \\
*(gray) \\
*(white)
\end{ytableau}.\]
\begin{comment}
    $$\begin{bmatrix}
0 &0 &0 &0 &0\\
1 &0 &0 &0 &0\\
0 &1 &0 &0 &0\\
0 &0 &0 &0 &0\\
0 &0 &0 &0 &0\\
\end{bmatrix}$$
\end{comment}
\end{example}

\subsection{Nilpotent orbits in type A}\label{ssec:nilp_orbits_type_A}
\mbox{}

Let $G$ be the supergroup $GL(m|n)$ or $SL(m|n)$ and let $\g:=\Lie (G)$.

The checkered diagrams defined above parameterize nilpotent orbits in $\g_{\bar 1}$. That is, two elements $x, y\in \mathcal{N}^{\ad}(\g_{\bar 1})$ are $G_{\bar 0}$-conjugate iff they correspond to the same checkered Young diagram (as before, up to a permutation of rows).

The following lemma is  a straightforward corollary of \cref{thrm:JM,prop:balanced_in_gl}:
\begin{lemma}\label{lem:neat_and_balanced_diagrams_type_A}
Let $x\in \mathcal{N}^{\ad}(\g_{\bar 1})$. Then
\begin{itemize}
    \item $x$ is neat iff all the rows of $\mathbf{D}^{x}$ have odd length.
    \item $x$ is (strongly) balanced iff each row of $\mathbf{D}^{x}$ has either even length or length $1$.
\end{itemize}
\end{lemma}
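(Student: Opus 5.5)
The plan is to reduce both assertions to the case $\g=\gl(m|n)$ acting on $V=\kk^{m|n}$, and then apply \cref{thrm:JM} and \cref{prop:balanced_in_gl}. If $G=SL(m|n)$, we pass to $GL(m|n)$ as follows. The tautological representation $V$ is faithful for both supergroups, and the Jordan type of $x$ on $V$ does not depend on the ambient group. Moreover, $\ad$-nilpotency of $x$ means that $[x,x]$ is a nilpotent operator on $V$, so $x\rvert_V$ is a nilpotent odd operator and $\mathbf{D}^x$ is defined. Note also that, as a set, the Jordan blocks of $x$ on $V$ are exactly the rows of $\mathbf{D}^x$.

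For neatness, \cref{thrm:JM} says that $x\in\g_{neat}$ if and only if $x\rvert_V$ is neat as an operator, because $V$ is faithful. By \cref{def:neat_operator}, $x\rvert_V$ is neat if and only if every indecomposable $\G^{(1|1)}$-summand of $V$ has nonzero superdimension. Each summand is a single Jordan block $\mathtt{M}_k$ of length $k+1$ with alternating parities. Its superdimension is $\pm1$ when the length is odd and $0$ when the length is even. Hence $x$ is neat if and only if all rows of $\mathbf{D}^x$ have odd length.

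For balancedness, $x$ is $\ad$-nilpotent, so the semisimple part $s$ of $\frac{1}{2}[x,x]$ is zero and $V^s=V$. By \cref{prop:balanced_in_gl}, applied in $\gl(V)$, $x$ is strongly balanced if and only if $x\rvert_V$ is balanced. That holds exactly when every Jordan block has length $1$ or even length, i.e. every row of $\mathbf{D}^x$ has length $1$ or even length. It remains to show that for $\ad$-nilpotent $x$, being balanced is equivalent to being strongly balanced, in both $\sl(m|n)$ and $\gl(m|n)$. By \cref{cor:criterion_balanced}, strongly balanced implies balanced. Conversely, if $x$ is balanced, then $x\rvert_V$ is a balanced operator, and \cref{prop:balanced_in_gl} makes $x$ strongly balanced in $\gl(m|n)$.

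This last point is the main obstacle when $G=SL(m|n)$: the $\sl_2$-triple produced in $\gl(m|n)$ must actually lie in $\sl(m|n)$. I would handle this by observing that $f=\frac{1}{2}[x,x]$ lies in $\sl(m|n)_{\bar 0}$. The triple $(e,h,f)$ built in the proof of \cref{prop:balanced_in_gl} lies in $[\gl(m|n)_{\bar 0},\gl(m|n)_{\bar 0}]=\sl_m\times\sl_n$, since $\sl_2$ is perfect. This is contained in $\sl(m|n)$, so the same decomposition $x=x_0+x_{-2}$ witnesses strong balancedness in $\sl(m|n)$.
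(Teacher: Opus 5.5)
Your proof is correct and follows the paper's approach: the paper obtains this lemma as a direct corollary of \cref{thrm:JM}, applied to the faithful tautological representation, together with \cref{prop:balanced_in_gl}. Your extra observation, that any $\sl_2$-triple in $\gl(m|n)_{\bar 0}$ lies in $\sl_m\times\sl_n\subset\sl(m|n)$ because $\sl_2$ is perfect, correctly fills in the $SL(m|n)$ case, which the paper leaves implicit.
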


\subsubsection{Distinguished orbits in \texorpdfstring{$\gl(m|n)$}{gl} and \texorpdfstring{$\sl(m|n)$, $m\neq n$}{sl}}\label{ex:dist_elem_type_A}

Let $\g$ be the Lie superalgebra $\gl(m|n)$ or $\sl(m|n)$ ($m\neq n$).
Let $x\in \g_{\bar 1}$ be a distinguished odd element. Let $V:=\kk^{m|n}$ be the tautological representation of $\gl(m|n)$ and consider the action of $\kk[x]$ on $V$ given by $x\rvert_V$.

 In this case, $Z(\g)\cap [\g, \g]_{\bar 0}\InnaA{=\{0\}}$, so by \cref{lem:dist-jor}, all the distinguished elements are $\ad$-nilpotent; so we assume from now on that $x$ is $\ad$-nilpotent. 

 Furthermore, by \cref{cor:determining_semisimples_comm_with_x}, we have:
 \begin{lemma}
  \InnaA{Let $x\in \mathcal{N}^{\ad}(\g_{\bar 1})$. The element $x$ is distinguished iff $V\rvert_{\kk[x]}$ is indecomposable; equivalently, $x$ is distinguished iff $\mathbf{D}^x$ has exactly one row.    }
 \end{lemma}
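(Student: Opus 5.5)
The plan is to apply \cref{cor:determining_semisimples_comm_with_x} with $M=V=\kk^{m|n}$, the tautological representation. It is faithful, and for $\g=\gl(m|n)$ or $\sl(m|n)$ with $m\neq n$ it is indecomposable (indeed irreducible over $\g_{\bar 0}\oplus\g_{\bar 1}$). By that lemma, $x$ is distinguished if and only if every semisimple element of $\mathfrak{c}_{\g_{\bar 0}}(x)$ maps into $\kk\,\id$ inside $\prod_i \End(\widetilde{V}_i)$, where $V\rvert_x=\bigoplus_i \mathtt{M}_i\otimes\widetilde{V}_i$. Since a row of $\mathbf{D}^x$ is exactly a Jordan block of $x$, the claim reduces to showing that the semisimple part of the centralizer acts by scalars on $V$ exactly when there is a single Jordan block.

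\emph{If $\mathbf{D}^x$ has one row.} Then $V$ is an indecomposable $\kk[x]$-module, so $\End_{\kk[x]}(V)$ is local and its semisimple elements are scalars. Any semisimple $s\in\mathfrak{c}_{\g_{\bar 0}}(x)$ is an even endomorphism of $V$ commuting with $x$, so $s\rvert_V\in\kk\,\id$. For $\gl(m|n)$ this puts $s$ in the center. For $\sl(m|n)$ we have $str(\lambda\,\id)=\lambda(m-n)$, so $m\neq n$ forces $\lambda=0$, hence $s=0\in Z(\g)_{\bar 0}$. So $x$ is distinguished.

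\emph{If $\mathbf{D}^x$ has at least two rows.} Write $V=V_1\oplus V_2$ as a sum of nonzero $\kk[x]$-submodules, with $V_1$ a single Jordan block. Take $s=a\,\id_{V_1}+b\,\id_{V_2}$ with $a\neq b$. This is even, semisimple, commutes with $x$, and is not a scalar. For $\sl(m|n)$ we also need $a\,\sdim V_1+b\,\sdim V_2=0$. If $\sdim V_1,\sdim V_2$ are not both zero, we can solve this with $a\neq b$: if one of them is zero, set the other coefficient to zero and take the remaining one nonzero; otherwise take $a=\sdim V_2$, $b=-\sdim V_1$, and then $a=b$ would give $\sdim V=0$, contradicting $m\neq n$. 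Both superdimensions cannot vanish, again because $\sdim V=m-n\neq 0$. So we obtain a non-central semisimple element of $\mathfrak{c}_{\g_{\bar 0}}(x)$, and $x$ is not distinguished.

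The only delicate step is this trace constraint for $\sl(m|n)$, and it is resolved by the hypothesis $m\neq n$. The remaining input, that $x$ may be assumed $\ad$-nilpotent, was already established via \cref{lem:dist-jor}.
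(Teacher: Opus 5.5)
Your proposal is correct and follows the paper's route. The paper obtains this lemma directly from \cref{cor:determining_semisimples_comm_with_x}, applied to the tautological representation $V=\kk^{m|n}$. You additionally write out the details the paper leaves implicit: the explicit non-scalar semisimple element $a\,\id_{V_1}+b\,\id_{V_2}$, and the supertrace bookkeeping that makes it lie in $\sl(m|n)$ when $m\neq n$.
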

Using \cref{lem:neat_and_balanced_diagrams_type_A} we obtain:
 
 \begin{corollary}
    The Lie superalgebra $\gl(m|n)$ has a distinguished odd $\ad$-nilpotent orbit if and only if $|m -n|\leq 1$. 
    \begin{enumerate}
        \item This orbit is neat iff $|m-n|=1$; in that case, for any $x$ in this orbit, $f:=[x,x]$ is a regular nilpotent element in $\gl(m)\times \gl(n)$.

\item If $m=n$, there are precisely $2$ distinguished orbits, given by a checkered Young diagrams with one row (these diagrams are obtained one from another by a swap of colors). All the elements in these orbits are strongly balanced.
    \end{enumerate}
    The Lie superalgebra $ \sl(m|n)$ ($m\neq n$) has a distinguished odd $\ad$-nilpotent orbit if and only if $|m -n|= 1$, and this orbit is neat. 
 \end{corollary}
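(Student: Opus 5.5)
The corollary splits into an existence statement and a classification of the type of the orbit, and both reduce to the preceding lemma: an $\ad$-nilpotent $x$ is distinguished iff $\mathbf{D}^x$ has exactly one row. By \cref{lem:dist-jor} and $Z(\g)\cap[\g,\g]_{\bar 0}=\{0\}$, every distinguished element is $\ad$-nilpotent, so this lemma covers all distinguished odd elements. The plan is therefore to count one-row checkered diagrams of type $(m|n)$. Since colors alternate along a row, a row of length $m+n$ has either $\lceil (m+n)/2\rceil$ or $\lfloor (m+n)/2\rfloor$ black cells. Hence a single row of type $(m|n)$ exists iff $|m-n|\le 1$.

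For $\gl(m|n)$ with $|m-n|=1$, the row has odd length $m+n$ and its color pattern is forced (it begins and ends with the majority color). So there is exactly one such orbit, and it is neat by \cref{lem:neat_and_balanced_diagrams_type_A}. To identify $f$, I will use the Jordan basis $v_0,\dots,v_{m+n-1}$ of $x$. Then $f=\frac{1}{2}[x,x]=x^2$ shifts $v_i\mapsto v_{i+2}$, so $f$ acts by a single Jordan block on $V_{\bar 0}$ and by a single Jordan block on $V_{\bar 1}$. Thus $f$ is regular nilpotent in $\gl(m)\times\gl(n)$, whether the statement is read with $[x,x]$ or with $\frac{1}{2}[x,x]$. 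For $m=n$, the row has even length $2n$ and may start with either color. This gives two diagrams, exchanged by swapping colors, and both are strongly balanced by \cref{lem:neat_and_balanced_diagrams_type_A}.

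For $\sl(m|n)$ with $m\neq n$, the odd parts of $\sl(m|n)$ and $\gl(m|n)$ coincide, and the $G_{\bar 0}$-orbits agree with those computed for $\gl$, since the tautological action is the same and checkered diagrams classify orbits for $SL(m|n)$ too. Distinguishedness is likewise detected by the same lemma. The case $m=n$ is excluded in the statement, so the condition $|m-n|\le1$ becomes $|m-n|=1$, and the unique orbit is neat as before.

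The only delicate point is justifying that the one-row criterion applies verbatim in $\sl(m|n)$. The criterion comes from \cref{cor:determining_semisimples_comm_with_x}, which needs $V$ faithful and indecomposable; for $\sl(m|n)$ with $m\ne n$ the tautological representation is simple, hence indecomposable. Its scalar operators lie in $\sl(m|n)$ only when they are zero, consistent with $Z(\g)_{\bar 0}=0$. I expect this check to be routine.
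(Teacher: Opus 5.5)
Your proposal is correct and is essentially the paper's argument: the one-row criterion for distinguishedness combined with \cref{lem:neat_and_balanced_diagrams_type_A}, with the count of alternating one-row diagrams and the Jordan-block description of $f$ written out explicitly. One minor caveat: your aside that every distinguished element is $\ad$-nilpotent fails for $\gl(n|n)$, where $\id\in[\g,\g]_{\bar 0}$ (for instance $x=\begin{pmatrix}0&1_n\\ 1_n+J&0\end{pmatrix}$ is distinguished but not $\ad$-nilpotent), but the corollary only concerns $\ad$-nilpotent orbits, so nothing in your proof depends on it.
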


\subsubsection{Distinguished orbits in $\sl(n|n),\p\sl(n|n)$}
Let $\g:=\sl(n|n)$ and denote by $V=\kk^{n|n}$ the defining representation of $\sl(n|n)$. 
\begin{comment}
    Recall that $\mathrm{Der}^{\bullet}(\sl(n|n))=\gl(n|n)$.
\end{comment}

\begin{lemma}\label{lem:sl-dist} Let $x\in \g_{\bar 1}$ be distinguished
and let $\frac{1}{2}[x,x]=s+f$ be the Jordan decomposition of $\frac{1}{2}[x,x]$.
\begin{enumerate}
    \item $s=\lambda\id_V$ for some $\lambda \in \kk$.
    \item If $s\neq 0$, then $f$ is a principal nilpotent element of $\g_{\bar 0}$ embedded into a principal $\sl_2$-triple $(e,h,f)$ and we have a decomposition $x=x_0+x_{-2}$ such that $[h,x_i]=ix_i$ for every $i$. In particular, $x$ is strongly balanced.
    \item If $s=0$, there are two cases: 
    \begin{itemize}
        \item either $V$ is an indecomposable $\kk[x]$-module and $x$ is strongly balanced,
        \item or $V=U_1\oplus U_2$ where $U_1, U_2$ are indecomposable $\kk[x]$-modules of odd dimensions and $x$ is neat.
    \end{itemize} 
\end{enumerate}

\end{lemma}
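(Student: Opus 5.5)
The plan is to work in the defining representation $V=\kk^{n|n}$ and translate distinguishedness into a statement about the $\kk[x]$-module structure of $V$, in the spirit of \cref{cor:determining_semisimples_comm_with_x}.

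\emph{Part (1).} By \cref{lem:dist-jor}, $s\in Z(\g)$. The center of $\sl(n|n)$ is $\kk\id_V$, so $s=\lambda\id_V$ for some $\lambda\in\kk$.

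\emph{Part (2).} Suppose $s=\lambda\id_V$ with $\lambda\neq0$. Then $x^2=\lambda+f$ on $V$, which is invertible, so $x\rvert_V$ is invertible and odd. Hence $x$ identifies $V_{\bar 0}$ with $V_{\bar 1}$, and $f$ acts on $V_{\bar 0}$ and on $V_{\bar 1}$ through conjugate operators ($x$ intertwines them).

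Now suppose $V_{\bar 0}$ splits into several $f$-Jordan blocks. Choose a semisimple $t\in\gl(V_{\bar 0})$ that commutes with $x^2\rvert_{V_{\bar 0}}$ and acts by different scalars on different blocks, with trace $0$. Transport it to $V_{\bar 1}$ via $x$, i.e. set $t\rvert_{V_{\bar 1}}:=x t x^{-1}$. This gives an element of $\sl(n|n)_{\bar 0}$ commuting with $x$. It is semisimple and non-central, which contradicts distinguishedness. The point needing care is the case $n=1$, or more generally getting supertrace zero; supertrace zero is automatic, since the two diagonal blocks are conjugate.

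So $f$ is a single Jordan block on each of $V_{\bar 0}$ and $V_{\bar 1}$, i.e. $f$ is principal in $\g_{\bar 0}$. Take a principal triple $(e,h,f)$, normalised on both sides compatibly with $x$. Writing $V$ in a basis adapted to $h$, the explicit form of $x$ shows that only $h$-weights $0$ and $-2$ occur: mimic the computation of \cref{prop:balanced_in_gl}, treating $V$ as two cyclic $f$-modules interchanged by $x$. This gives $x=x_0+x_{-2}$, so $x$ is strongly balanced.

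\emph{Part (3).} Now $s=0$, so $x$ is $\ad$-nilpotent. Decompose $V\rvert_{\kk[x]}=\bigoplus U_j$ into Jordan blocks. A semisimple element acting by scalars $c_j$ on $U_j$ lies in $\sl(n|n)$ iff $\sum c_j\sdim U_j=0$, and it commutes with $x$. Distinguishedness forces every such element to be a scalar (\cref{cor:necessary_cond_distinguished} in spirit), which already requires the number of blocks to be at most $2$. Moreover, if there are two blocks with $\sdim U_1=\sdim U_2=0$, or one of even and one of odd size, a non-scalar traceless choice exists. (With two blocks, $\sum c_j\sdim U_j=0$ has a non-scalar solution unless $\sdim U_1=-\sdim U_2\neq0$.)

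Since $\sdim V=0$, this leaves two possibilities. If there is one block, it has even length, so $x$ is strongly balanced by \cref{prop:balanced_in_gl}. If there are two blocks, both have odd length with opposite superdimensions $\pm1$, so all rows of $\mathbf{D}^x$ are odd and $x$ is neat by \cref{lem:neat_and_balanced_diagrams_type_A}.

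The main obstacle is part (2). It requires showing that the other semisimple elements of the centralizer (those not of the block-scalar form) also force a single Jordan block, and checking that the weight decomposition has no $x_{-1}$ term. I would do the latter concretely in a Jordan basis.
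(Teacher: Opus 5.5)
Your argument is correct and is essentially the paper's proof. Part (2) is the paper's $\diag(A,A)$ construction, done without first normalising $x\rvert_{V_{\bar 1}}=\mu 1_n$. In part (3), your dimension count on $\{c:\sum_j c_j\sdim U_j=0\}$ is a slightly leaner version of the paper's multiplicity-space and pigeonhole bookkeeping.

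The worry in your last paragraph is unnecessary. A single non-central semisimple element in the centralizer already contradicts distinguishedness. Also, $x_{-1}=0$ is automatic, because a principal $h$ has only even weights on $\g_{\bar 1}$. With your transported triple one gets exactly $x_0=x\rvert_{V_{\bar 0}}+\lambda x^{-1}\rvert_{V_{\bar 1}}$ and $x_{-2}=f x^{-1}\rvert_{V_{\bar 1}}$.
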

 \InnaA{By \cref{prop:dist_square_implies_dist}, the above conditions on $s,f$ are also sufficient to ascertain that the element $x$ is distinguished.}
\begin{proof}
\begin{enumerate}

\item By \cref{lem:dist-jor} we have: $s\in Z(\g)_{\bar 0}$, so $ s=\lambda\id_V$.

   \item Write $x=\begin{pmatrix}0&B\\ C&0\end{pmatrix}$. \InnaA{Since $s\neq 0$, the matrices} $B,C$ are non-degenerate,
    therefore, using the action of $G_{\bar 0}$ we can reduce the situation to the case where $B=\mu 1_n$ for some $\mu\in \kk^{\times}$
    and $C$ coincides with its canonical Jordan form. 
    Clearly, if $C$ has more than one Jordan block,
    then it commutes with some semisimple non-central element $A\in \gl_n$; in that case, $x$ is not distinguished \InnaA{since it commutes with $\begin{pmatrix}
        A &0\\ 0 &A
    \end{pmatrix}$}.
    
    Hence we may write $C=\nu (1_n+J)$ where $J$ is the nilpotent Jordan block of size $n$ and $\nu \in \kk$. We then obtain:
    $\mu\nu=\lambda$. Thus we have $\frac{1}{2}[x,x]=\lambda\left(\begin{matrix}C&0\\ 0&C\end{matrix}\right)$.
    In particular, $f=\lambda J$ is a principal nilpotent element in $\g_{\bar 0}$. Taking the pricipal $\sl_2$-triple $(e,h,f)$  in $\g_{\bar 0}$, we have a decomposition $x=x_0+x_{-2}$ into $\ad_h$-eigenvectors, where $$x_0=\left(\begin{matrix}0&\mu 1_n\\ \nu 1_n&0\end{matrix}\right),\;\;\; x_{-2}=\left(\begin{matrix}0&0\\ \nu J&0\end{matrix}\right).$$
    
    \item Assume that $s=0$, so $x\in \mathcal{N}^{\ad}(\g_{\bar 1})$. Let $V\rvert_{\kk[x]} = \bigoplus_i M_i\otimes V_i $ be the decomposition of $V$ into a direct sum of indecomposable $\kk[x]$-modules, with $V_i$ denoting the multiplicity space (purely even) of the indecomposable $\kk[x]$-module $M_i$. 
    By \cref{cor:determining_semisimples_comm_with_x}, the elements of $ \fc_{\g_{\bar 0}}(x)^{\ss}$ act by a scalar on $V$. For any collection $s_i\in \End(V_i)$ of semisimple operators, we may consider the (even) semisimple endomorphism $\phi=\oplus_i s_i \id_{M_i}$ of $V$. If $str(\phi)=\sum_i tr(s_i)\sdim M_i=0$ then \cref{cor:determining_semisimples_comm_with_x} states that $\phi\in \kk\id_V$. In particular, we conclude:
    \begin{itemize}
        \item If $V$ is decomposable, then $V_i=0$ whenever $\sdim M_i=0$ (otherwise we could take $s_j:=\delta_{i, j}\id_{V_j}$ and obtain $\phi\not\in \kk\id_V$). 
   \item We have $\dim V_i\leq 1 $ for all $i$ (otherwise one can choose a traceless endomorphism $s_i \in \End(V_i)$, set $s_j:=0$ for $j\neq i$ and obtain $\phi\not\in \kk\id_V$). 
   \item If $V_i, V_j\neq 0$ and $\sdim M_i=\sdim M_j \neq 0$, then $i=j$ (otherwise we may take $s_i:=\id$, $s_j:=-\id$ and $s_k:=0$ for all $k\neq i, j$ and obtain $\phi\not\in \kk\id_V$). 
    \end{itemize}

    As a consequence, $V$ cannot have three distinct indecomposable direct summands $M_i, M_j, M_k$: indeed, these would have to be of superdimension $\pm 1$  due to the first conclusion; by the pigeonhole principle, two of them would have equal superdimensions, contradicting the last conclusion.

    We conclude that there are two possible cases:
    \begin{itemize}
        \item {\it $V$ is an indecomposable $\kk[x]$-module}. In this case, $x$ is conjugate either to the matrix 
$\left(\begin{matrix}0&\lambda 1_n\\ J&0\end{matrix}\right)$ or to $\left(\begin{matrix}0&J\\ \lambda 1_n&0\end{matrix}\right)$. Thus one can write a decomposition $x=x_0+x_{-2}$ into $\ad_h$-eigenvectors, with 
\begin{align*}
&x=\left(\begin{matrix}0&\lambda 1_n\\ J&0\end{matrix}\right) \;\; \Longrightarrow \;\;x_0=\left(\begin{matrix}0&\lam 1_n\\ 0&0\end{matrix}\right),\;\;\; x_{-2}=\left(\begin{matrix}0&0\\ J&0\end{matrix}\right), \\
&x=\left(\begin{matrix}0&J\\ \lambda 1_n&0\end{matrix}\right) \;\; \Longrightarrow \;\;x_0=\left(\begin{matrix}0&0\\ \lambda 1_n&0\end{matrix}\right),\;\;\; x_{-2}=\left(\begin{matrix}0&J\\0 &0\end{matrix}\right).
\end{align*} In particular, in this case $x$ is strongly balanced.
        \item {\it $V$ is a direct sum of two indecomposable $\kk[x]$-modules of odd dimensions (and opposite superdimensions).} In this case $x$ is neat by \cref{thrm:JM}.
    \end{itemize}

    \end{enumerate}
\end{proof}

\InnaA{By \cref{lem:pg_distinguished_orbits}, the odd distinguished elements in $\p\sl(n|n)$ are the same as in $\sl(n|n)$.}
\begin{corollary}\label{cor:psl-dist} Every odd distinguished element in $\sl(n|n) , \p\sl(n|n)$ is either strongly balanced or neat. 
    \end{corollary}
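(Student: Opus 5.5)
For $\sl(n|n)$ the statement is a direct consequence of \cref{lem:sl-dist}. Let $x\in \sl(n|n)_{\bar 1}$ be distinguished and write $\frac{1}{2}[x,x]=s+f$ for the Jordan decomposition. By part (1) of that lemma, $s=\lambda\id_V$, and we split into cases according to whether $s$ vanishes.

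If $s\neq 0$, part (2) gives a principal $\sl_2$-triple $(e,h,f)$ in $\g_{\bar 0}$ with $[s,h]=0$, which is automatic since $s$ is central. The $\ad_h$-decomposition of $x$ is $x=x_0+x_{-2}$, so $x_{-1}=0$ and $x$ is strongly balanced by \cref{def:strongly_bal}. If $s=0$, then $x$ is $\ad$-nilpotent and part (3) applies:
\begin{itemize}
\item either $V$ is indecomposable over $\kk[x]$, in which case the explicit decomposition $x=x_0+x_{-2}$ again shows $x$ is strongly balanced;
\item or $V=U_1\oplus U_2$ with both summands of odd dimension, in which case all Jordan blocks of $x\rvert_V$ have odd size.
\end{itemize}
In the second subcase, $x\rvert_V$ is neat in $\gl(V)$. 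Since $V$ is a faithful representation of $SL(n|n)$, \cref{thrm:JM} gives $x\in\g_{neat}$. This settles $\sl(n|n)$.

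For $\p\sl(n|n)$, the plan is to use \cref{lem:pg_distinguished_orbits} together with the observation stated at the start of this section. Let $q:\sl(n|n)\twoheadrightarrow\p\sl(n|n)$ be the quotient by $Z(\g)_{\bar 0}=\kk\id$. By \cref{lem:pg_distinguished_orbits}, every distinguished odd element $\bar x\in\p\sl(n|n)_{\bar 1}$ is of the form $q(x)$ for some distinguished $x\in\sl(n|n)_{\bar 1}$. Such an $x$ is neat or strongly balanced by the first part, and surjective homomorphisms preserve both properties. Hence $\bar x$ is neat or strongly balanced.

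The one point to check is that the lift of a distinguished element is again distinguished, which is exactly what \cref{lem:pg_distinguished_orbits} provides. The bijection there does not depend on $x$ being $\ad$-nilpotent, since $Z(\g)_{\bar 0}$ is central and semisimple elements of the centralizer lift compatibly. With that in place, no step should be a genuine obstacle, and the proof is just a short assembly of \cref{lem:sl-dist}, \cref{thrm:JM} and the quotient observation.
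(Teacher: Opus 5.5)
Your proposal is correct and follows the paper's argument. The $\sl(n|n)$ case is read off case by case from \cref{lem:sl-dist}, with \cref{thrm:JM} for the neat subcase, and $\p\sl(n|n)$ is handled via \cref{lem:pg_distinguished_orbits} together with the fact that surjective homomorphisms preserve neatness and strong balancedness. One small nit: when $n=1$ you have $f=0$, so $x$ is homological and is strongly balanced by the first clause of \cref{def:strongly_bal}, not via a triple with $x_{-1}=0$.
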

    
    \begin{remark}
        Let $\g=\sl(n|m)$ or $\p\sl(n|n)$. For each $x_0\in \g^{hom}$ there exists at most one distinguished orbit $G_{\bar 0}.x\subset \g_{\bar 1}$ such that $att(x)=G_{\bar 0}.x_0$.
    \end{remark}

\subsection{Nilpotent orbits in orthogonal and periplectic Lie superalgebras}
\subsubsection{Nilpotent operators in the presence of a symmetric bilinear form}\label{ssec:nilp_in_presence_bilinear_form}

In this subsection, we will consider a non-degenerate symmetric bilinear form $B$ on a finite-dimensional vector superspace $V$. The form $B$ might be even (so $B:V\times V\to \kk$) or odd (so $B:V\times V\to \Pi \kk$). 

Let $x\in \End^{\bullet}(V)$ be an odd nilpotent operator preserving (infinitesimally) the form $B$: that is, 
$$B(xu, v)=-(-1)^{\bar {u}} B(u, xv) $$ for any homogeneous $u, v \in V$.
The goal of this subsection is to explain how $V$ decomposes into a direct sum of mutually orthogonal $\kk[x]$-submodules.

\begin{lemma}\label{lem:form-nilp} 
Let $V$ be a vector superspace with a non-degenerate symmetric bilinear form $B$. Let $x\in\End^{\bullet}(V)$ be an odd nilpotent operator preserving the form $B$. 
The restriction of $V$ to $\kk[x]$ has an orthogonal decomposition
\begin{equation}\label{eq:orthog_decomp}
    V=\bigoplus_{i=1}^p U_i\oplus\bigoplus_{j=1}^q W_j,
\end{equation}
where every $U_i$ is an indecomposable $\kk[x]$-module and every $W_j=W_j'\oplus W_j''$ is a direct sum of two indecomposable $\kk[x]$-modules such that $W'_j\cong (W''_j)^*$ if $B$ is even and $W'_j\cong\Pi (W''_j)^*$ if $B$ is odd.

\end{lemma}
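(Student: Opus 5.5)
We argue by induction on $\dim V$, at each step splitting off either one non-degenerate indecomposable summand or one non-degenerate pair. The basic tool is the identity $B(x^k u, v)=\pm B(u, x^k v)$, which follows by iterating the invariance relation. In particular $\Ker x^k$ is the orthogonal complement of $\Im x^k$. Set $W:=V$ and let $n$ be the largest Jordan block size of $x$, so $x^{n}=0$ and $x^{n-1}\neq 0$. Consider the pairing
$$\beta(u,v):=B(u, x^{n-1}v)$$
on $V$. Its radical contains $\Ker x^{n-1}$ and $\Im x$, so $\beta$ descends to a non-degenerate form $\bar\beta$ on $\overline V:=V/(\Ker x^{n-1}+\Im x)$. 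This quotient is the multiplicity superspace of the blocks of maximal size $n$. By the iterated invariance, $\bar\beta$ is (super)symmetric or (super)skew, with a sign and parity depending on $n$, on $\overline{B}$ and on the parity of the generating vector.

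In the first case, assume there is a homogeneous $u$ with $\beta(u,u)\neq 0$. Then the cyclic submodule $U=\kk[x]u=\mathrm{span}\{u,xu,\dots,x^{n-1}u\}$ has dimension $n$. The restriction $B|_U$ pairs $x^iu$ with $x^{n-1-i}u$. The Gram matrix in this basis is anti-triangular with nonzero antidiagonal, since those entries equal $\pm\beta(u,u)$. Hence $B|_U$ is non-degenerate. It follows that $V=U\oplus U^\perp$, and $U^\perp$ is $x$-stable by invariance. We then apply induction to $U^\perp$, and this $U$ becomes one of the $U_i$.

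In the second case, $\bar\beta(u,u)=0$ for every homogeneous $u$. This happens when $\bar\beta$ is skew, or when the parity of $B$ forces $\beta$ to pair opposite parities. Non-degeneracy of $\bar\beta$ then gives homogeneous $u,w$ with $\beta(u,w)\neq 0$. Let $W'=\kk[x]u$ and $W''=\kk[x]w$. Modifying $u$ within $u+\kk w$ is unnecessary here. Instead, we check that on $W'+W''$ the Gram matrix in the basis $x^iu,x^jw$ is block anti-triangular with invertible antidiagonal blocks. Therefore $W'\oplus W''$ is non-degenerate, the sum is direct, and $B$ identifies $W'$ with $(W'')^*$, or with $\Pi(W'')^*$ when $B$ is odd. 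We split it off orthogonally and induct.

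The main obstacle is the sign and parity bookkeeping in the iterated invariance identity. We must determine exactly when $\beta(u,u)$ can be nonzero, so that the two cases above are exhaustive. We must also check in the second case that the Gram matrix on $W'\oplus W''$ is genuinely non-degenerate. A possible problem is that $B(x^iu,x^ju)$ with $i+j<n-1$ might not vanish. We handle this by a triangular change of basis: replace $u$ by $u-\sum_{k\ge1} c_k x^k w$ (and symmetrically for $w$), in the standard Gram–Schmidt manner used for classifying nilpotent orbits in $\mathfrak{so}$ and $\mathfrak{sp}$. The non-degeneracy claim only needs the antidiagonal entries, which this change of basis does not affect.
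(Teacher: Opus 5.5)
Your proof is correct and is essentially the paper's argument. Both proceed by induction on $\dim V$ and use the non-degenerate pairing $(u,v)\mapsto B(u,x^{r}v)$ on $V/\Ker x^{r}$ to split off orthogonally either one cyclic block (when some $B(v,x^rv)\neq 0$) or a pair of cyclic blocks. The only difference is that you check non-degeneracy with the (block) anti-triangular Gram matrix, whereas the paper observes that the radical of the restricted form is a $\kk[x]$-submodule and so would have to meet the socle.

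The sign bookkeeping and the Gram--Schmidt modification you flag at the end are not needed. Your two cases are exhaustive by construction, and the non-degeneracy check only uses the vanishing of the entries below the antidiagonal.
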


\begin{proof} The proof goes by induction on $\dim V$. Let $r$ be maximal such that $x^r\neq 0$. Since $B$ is $x$-invariant, we have $\Im x^r=(\Ker x^r)^{\perp}$. Therefore $B$ defines a non-degenerate pairing $(V/\Ker x^r)\times \Im x^r\to \kk$. 
Choose $v\notin\Ker x^r$ and let $c=B(v,x^r v)$. We will now explain how to ``split off" a $\kk[x]$-submodule of $V$ containing $v$. 

First, assume that $c\neq 0$. Then for all $0\leq k\leq r$ we have:
\begin{equation*}
    B(x^k v,x^{r-k}v)=(-1)^{k\bar v+\frac{k(k+1)}{2}}c.
\end{equation*}
If $B$ is even (resp., odd) then $r$ is even (resp., odd). Let $U$ denote the indecomposable $\kk[x]$-submodule generated by $v$. The restriction of $B$ on $U$ is non-degenerate: the radical of $B\rvert_{U\times U}$ must be a $\kk[x]$-submodule of $U$, but $B$ pairs $soc(U)=\mathrm{span}\{x^rv\}$ with $ \mathrm{span}\{v\}$.
%Indeed, let $u=\sum_{i=1}^r a_i x^iv$ and $s$ is minimal with $a_s\neq 0$. Then $B(u,x^{r-s}v)=\pm c a_s\neq 0$ since $B(x^p v,x^q v)=0$ for $p+q>r$. 
We now decompose $V=U\oplus U^\perp$ and apply the induction assumption to $U^\perp$.

Now assume that $c=B(v,x^r v)=0$. There exists $w\notin\Ker x^r$ such that $B(w,x^r v)=1$. We also can assume that $B(w,x^rw)=0$ because otherwise we can reduce to the previous case (taking $w$ instead of $v$).
We have the relations
\begin{equation*}
B(x^k w,x^{r-k}v)=(-1)^{k\bar v+\frac{k(k+1)}{2}}.
\end{equation*}
Let $W$ be the $\kk[x]$-submodule generated by $v$ and $w$. Again, the restriction
of $B$ on $W$ is non-degenerate: the radical of $B\rvert_{W\times W}$ must be a $\kk[x]$-submodule of $W$, yet $B$ pairs non-degenerately $soc(W)=\mathrm{span}\{x^rv, x^r w\}$ with $ \mathrm{span}\{v, w\}$. 

Letting $W':=\kk[x]v$, $W'':=\kk[x]w$ we obtain the non-degenerate pairing $B:W'\times W''\to\kk$. 
Therefore $W'$ is isomorphic to $(W'')^*$ if $B$ is even and to $\Pi (W'')^*$ if $B$ is odd.
%Indeed, let $u=\sum_{i=1}^r a_i x^iv+\sum_{i=1}^r b_i x^i w$, $k$ be minimal such that $a_k\neq 0$ and $l$ be minimal such that $a_l\neq 0$. Then if $k\leq l$ we have $B(u,x^{r-k}w)\neq 0$ and if $l<k$ then $B(u,x^{r-l}v)\neq 0$. 
As in the previous case, we write
$V=W\oplus W^\perp$ and apply the induction assumption to $W^{\perp}$.
\end{proof}

We now study the action of $\kk[x]$ on the two types of direct summands appearing in the decomposition \eqref{eq:orthog_decomp}.
\begin{lemma}\label{lem:form_B_indec_U}
Let $U$ be a vector superspace with a non-degenerate symmetric bilinear form $B$. Let $x\in\End^{\bullet}(U)$ be an odd nilpotent operator preserving the form $B$. Assume that $U$ is an indecomposable $\kk[x]$-module. Let $r:=\dim U-1$, so $x^r\neq 0$, $x^{r+1}=0$. 

Then we may choose a $\kk[x]$-generator $v\in U$ such that $B(x^av,x^b v)=0$ if 
$a+b\neq r$ and $B(x^av,x^{r-a} v)\neq 0$ for any 
$0\leq a\leq  r$.

As a consequence, we have:
\begin{itemize}
    \item If $B$ is even then $\dim U \in 2\Z+1$, $r\in 2\Z$, so $x$ is a neat operator. 
    
    Furthermore, $\bar{v}\equiv r/2 \mod 2$.
    \item If $B$ is odd then $\dim U \in 2\Z$, $r\in 2\Z+1$, so $x$ is a balanced operator. 
    
    Furthermore, $\bar{v}\equiv (r+1)/2 \mod 2$.
\end{itemize}
\end{lemma}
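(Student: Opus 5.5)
The plan is to start from the first paragraph of the proof of \cref{lem:form-nilp}. There we saw that when $c=B(v,x^rv)\neq 0$, the restriction of $B$ to $U=\kk[x]v$ is non-degenerate. In the present situation $U$ is indecomposable, so $U$ itself is cyclic, generated by any $v\notin \Ker x^r$, and $\soc(U)=\mathrm{span}\{x^r v\}$. Since $B$ is non-degenerate on $U$ and $\Im x^r=(\Ker x^r)^\perp$, the pairing $B(\cdot, x^r v)$ is nonzero on $U/\Ker x^r$. The decomposition $U=U'\oplus U''$ of the second case of that proof cannot occur, because it would make $U$ decomposable. Hence we can pick a generator $v$ with $c:=B(v,x^rv)\neq 0$. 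Then invariance of $B$ gives $B(x^av,x^bv)=\pm B(v,x^{a+b}v)$. This vanishes for $a+b>r$, because $x^{a+b}=0$, and equals $\pm c\neq 0$ for $a+b=r$.

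The remaining task is to modify $v$ so that $B(v,x^kv)=0$ for $0\le k<r$; the previous identity then gives $B(x^av,x^bv)=0$ for all $a+b<r$. I would do this by a Gram--Schmidt type correction, replacing $v$ by $v'=v+\sum_{j\ge1}\alpha_j x^jv$ with $\alpha_j=0$ unless $x^jv$ has the parity of $v$, so that $v'$ stays homogeneous. Write $\beta_k:=B(v,x^kv)$. I would kill these values inductively in $k$, going downward from $k=r-1$. Adding $\alpha x^{j}v$ with $j=r-k$ changes $B(v',x^kv')$ by $\alpha(\pm c\pm c)$ plus terms of higher total degree, which are zero or already handled. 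The main obstacle is the sign bookkeeping, since the two contributions might cancel. Here the symmetry of $B$ helps: together with invariance, it forces $\beta_k=0$ automatically whenever the two contributions cancel. So in each degree either the term already vanishes or it can be removed. Parity also automatically kills every $\beta_k$ with $x^kv$ of parity opposite to what $B$ pairs nontrivially, namely $k$ odd for even $B$ and $k$ even for odd $B$. If the correction turns out to be too messy, I would replace it with a cleaner alternative: choose $v$ spanning a complement to $\Im x+\soc$ inside the $B$-orthogonal complement of $\mathrm{span}\{x^jv : j<r\}$, and check that this space is nonempty by dimension counting.

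For the consequences, note that $B(v,x^rv)\neq0$ forces $\bar v+\overline{x^rv}+\bar B=0$, that is, $r\equiv \bar B \pmod 2$. So $B$ even gives $r$ even and $\dim U$ odd, hence neat; $B$ odd gives $r$ odd and $\dim U$ even, hence balanced. For the parity of $v$, I would use symmetry once more. Applying $k=r/2$ (respectively $(r\pm1)/2$) applications of invariance gives $B(x^k v, x^{r-k}v)=(-1)^{k\bar v+k(k+1)/2}c$, and at the middle index symmetry of $B$ compares this to its swap. For $B$ even, $k=r/2$ and $x^kv$ paired with itself must be nonzero; this requires $x^kv$ to be even when the form on it is symmetric. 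That gives $\bar v+r/2\equiv 0$, i.e.\ $\bar v\equiv r/2$. For $B$ odd, pairing $x^{(r-1)/2}v$ with $x^{(r+1)/2}v$ and comparing with the swapped pairing gives a sign condition; solving it yields $\bar v\equiv (r+1)/2$.
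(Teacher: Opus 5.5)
Your proposal is correct and follows essentially the same route as the paper. You first get a homogeneous generator $v$ with $B(v,x^rv)\neq 0$, and then kill $B(v,x^kv)$ for $k<r$ by downward induction on $k$, replacing $v$ by $v-\alpha x^{r-k}v$ (here $r-k$ is even by parity). Your remark that the two contributions might cancel, and that symmetry together with $B(x^{2m}u,w)=(-1)^mB(u,x^{2m}w)$ forces $B(v,x^kv)=0$ exactly when $(r-k)/2$ is odd, addresses a sign subtlety that the paper's computation passes over. Your symmetry arguments for the parity of $v$ also supply consequences that the paper leaves implicit.
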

\begin{proof}

Let $S:=\{\text{homogeneous } v\in U~:~B(v, x^rv)=1\}$; by \cref{lem:form-nilp}, we have $ S\neq \emptyset$.
Each $v\in S$ is a $\kk[x]$-generator of $U$, since $x^rv\neq 0$ and $U$ is indecomposable. Moreover, since $x$ preserves the form $B$, we have: $$\forall ~v\in S, ~0\leq a\leq r, \;\;B(x^a v, x^{r-a}v)=\pm B(v, x^rv)\neq 0.$$ 

Next, for any $v\in S$, consider the set
$K(v):=\{0\leq k<r~:~ B(v,x^k v)\neq 0\}$ and denote $k(v):=
    \sup K(v)$ (we use the convention $\sup\emptyset:=-\infty$).

We will now explain how to construct a vector $u\in S$ for which $k(u)=-\infty$. Constructing such a vector will prove the lemma, since $u$ will satisfy: 
$$\forall  ~0\leq a,b\leq r, \;\;a+b\neq r\; \implies \;B(x^a v, x^bv)=\pm B(v, x^{a+b}v)= 0.$$ 

We start with $v\in S$ and denote: $k:=k(v)$. If $k=-\infty$, then we are done. 

From now on, assume that $k\geq 0$ and denote: $\beta:=B(v, x^kv)\neq 0$. Note that $\overline{x^r v}=\overline{x^k v}$ due to the homogeneity of $B$, so $r-k\in 2\Z$.

Let $\alpha\in \kk$ and set $v':=v-\alpha x^{r-k}v$. For any $ l\geq 0$ we have:
$$B(v, x^{l}v')=B(v,x^l v)-\alpha B(v, x^{r-k+l}v).$$
Thus $B(v, x^rv')=1$, $B(v,x^l v')=0$ for $k<l<r$ and $B(v,x^k v')=\beta -\alpha $.
It follows that for any $ l\geq 0$ we have:
$$B(v', x^{l}v')=B(v,x^l v')-\alpha B(x^{r-k}v, x^l v')=B(v,x^l v')-\alpha B(v, x^{r-k+l} v')$$
(in the last equality, we use that $r-k\in 2\Z$).
So $ B(v', x^{r}v')=1$, $ B(v', x^{l}v')=0$ for $k<l<r$ and $$ B(v', x^{k}v')=\beta-\alpha-(-1)^{(r-k)\bar {v}} \alpha B(v, x^r v')=\beta-2 \alpha.$$
Taking $\alpha:=\beta/2$, we obtain that $B(v', x^kv')=0$. Hence $v'\in S$ and $k(v')<k(v)$.
We may now replace $v$ by $v'$ and repeat this process.
In the end, we will obtain a new vector $u \in S$ such that $k(v)=-\infty$ as required.
\end{proof}

%Let $\g=\mathfrak{spe}(n)$ or $\osp(m|2n)$ and $V$ denote the defining representation of $\g$. Let $B$ denote a $\g$-invariant symmetric bilinear form on $V$ (odd for $\mathfrak{spe}(n)$ and even for $\osp(m|2n)$).
\begin{lemma}\label{lem:form-nil2} Let $W$ be a vector superspace with a non-degenerate symmetric bilinear form $B$. Let $x\in \End^{\bullet}(W)$ be an odd nilpotent operator preserving the form $B$. Assume that we have a  $\kk[x]$-module decomposition $W=W'\oplus W''$ where $W', W''$ are indecomposable $\kk[x]$-modules such that $B: W'\times W''\to \kk$ is a non-degenerate pairing. 

Let $r:=\frac{1}{2}\dim W-1$ (so $r+1=\dim W'=\dim W''$). Then
\begin{enumerate}
 \item One can choose  $W'$ and $W''$ to be isotropic with respect to $B$.
 \item One can choose generators $v\in W'$, $w\in W''$ such that $B(x^av,x^b w)=0$ if 
$a+b\neq r$ and $B(x^av,x^{r-a} w)\neq 0$ for any 
$0\leq a\leq  r$.
\end{enumerate}
In particular, we have: 
\begin{itemize}
    \item if $B$ is even then $\bar{v}\equiv\bar{w}+r \mod 2$,
    \item if $B$ is odd then $\bar{v}\equiv\bar{w}+r+1 \mod 2$.
\end{itemize}
\end{lemma}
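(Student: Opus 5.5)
We first make $W'$ and $W''$ isotropic, and then normalize the generators by a Gram--Schmidt-type correction modelled on the proof of \cref{lem:form_B_indec_U}.

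Fix generators $v\in W'$ and $w\in W''$, homogeneous, with $x^r v\neq 0\neq x^r w$. Since $B$ pairs $W'$ and $W''$ non-degenerately, it pairs $\mathrm{soc}(W'')=\kk x^r w$ non-trivially with $W'/xW'$. Hence $B(v,x^r w)\neq 0$, and we rescale so that $B(v,x^rw)=1$. By $x$-invariance, $B(x^a v,x^b w)=\pm B(v,x^{a+b}w)$. So the values of $B$ on $W'\times W''$ are governed by the numbers $B(v,x^l w)$, and similarly $B(v,x^l v)$ and $B(w,x^l w)$ govern $B$ on $W'\times W'$ and on $W''\times W''$.

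For part (1), suppose $B|_{W'}$ is not identically zero. Let $k<r$ be maximal with $B(v,x^kv)\neq0$. Note $B(v,x^r v)=0$ is allowed only because otherwise $W'$ would split off as a $U$-type summand as in \cref{lem:form-nilp}; if that happens we would simply need to re-choose the decomposition, so we assume we are in the genuinely paired situation. We replace $v$ by $v' := v-\alpha x^{r-k}w$. This changes $B(v',x^k v')$ by $-2\alpha\,(\pm1)$ plus a term involving $B(w,x^{\cdot}w)$ of higher index, and does not affect $B(v',x^l v')$ for $l>k$. We choose $\alpha$ to kill it, exactly as in \cref{lem:form_B_indec_U}; the parity bookkeeping $r-k$ versus the parity of $B$ has to be checked but mirrors that lemma. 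The new $v'$ still generates a module $W'$ complementary to $W''$, since the correction lies in $xW''$ when $r-k>0$. Decreasing induction on $k$ makes $W'$ isotropic. We then do the same for $w$ using corrections $w-\alpha x^{r-k}v'$. These preserve the isotropy of $W'$ because they only modify $w$, and they kill $B(w,x^k w)$ for all $k$ because $B(v',x^{\cdot}v')=0$ makes the cross term vanish.

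For part (2), with both summands isotropic, we normalize the pairing. Let $k<r$ be maximal with $\beta=B(v,x^kw)\neq0$, and replace $w$ by $w-\beta' x^{r-k}w$ with a suitable $\beta'$. This modifies $B(v,x^kw)$ by $-\beta'$ and leaves the indices $l>k$ and the isotropy of $W''$ intact, since $W''$ is $x$-stable and isotropic. Induction then gives $B(v,x^lw)=0$ for $l<r$. By $x$-invariance, $B(x^a v,x^b w)=\pm B(v,x^{a+b}w)$ vanishes unless $a+b=r$, and is $\pm1$ when $a+b=r$.

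The parity claims follow from homogeneity: $B(v,x^rw)\neq0$ forces $\bar v+\bar w+r\equiv \bar B$. This gives $\bar v\equiv\bar w+r$ for even $B$ and $\bar v\equiv \bar w+r+1$ for odd $B$.

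The main obstacle is part (1): making sure that each correction step keeps $W'$ and $W''$ indecomposable and complementary, and handling the case $r-k$ odd, where the sign $(-1)^{(r-k)\bar v}$ could make the correction cancel instead of doubling. I would first check whether homogeneity of $B$ forces $r-k$ to have a fixed parity, as it did in \cref{lem:form_B_indec_U}. If it does not, I would fall back on choosing $\alpha$ via the linear term coming from $B(v,x^rw)$ alone, which does not involve any doubling.
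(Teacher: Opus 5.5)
Your route differs from the paper's. The paper uses the super Jacobson--Morozov theorem for $r$ even: it writes $W\cong W'\otimes U$, $B=B_{W'}\otimes B_U$, and picks an isotropic basis of $U$. For $r$ odd it uses an $\sl_2$-triple. You instead normalize generators directly. That can give a more elementary, uniform proof, and your part (2) and the parity claims are correct.

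Part (1), however, has a genuine gap. You set aside the case $B(v,x^rv)\neq 0$ by saying one would ``re-choose the decomposition'', but that re-choice is exactly what part (1) asserts is possible. The case does occur under the hypotheses. Take $r=0$, $x=0$, $W=\kk^{2|0}$ with $B(e_1,e_1)=B(e_2,e_2)=1$, $B(e_1,e_2)=0$, and $W'=\kk e_1$, $W''=\kk(e_1+e_2)$. Your corrections $v-\alpha x^{r-k}w$ need $k<r$ and never change the top coefficient, so they cannot help here.

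The missing step is to begin with good generators. Set $Q_r(a,b):=B(a,x^rb)$. It descends to a non-degenerate form on the two-dimensional space $W/xW$, because $(xW)^\perp=\Ker x=x^rW$. On each parity component $Q_r$ is symmetric or skew-symmetric, so $W/xW$ has a homogeneous $Q_r$-isotropic basis. This is automatic unless $\bar v=\bar w$ and $Q_r$ is symmetric; in that case you use that $\kk$ is algebraically closed. Lift this basis to $v,w$. Nakayama and a dimension count give $W=\kk[x]v\oplus\kk[x]w$, with $B(v,x^rw)\neq 0$, so the pairing is non-degenerate, and $B(v,x^rv)=B(w,x^rw)=0$. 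Only now can your descending induction over $k<r$ start. This is the analogue of choosing a $Q$-isotropic basis in the paper's case ($r$ odd, $B$ odd).

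The point you flag as the main obstacle is not one, but you leave it open, and your fallback does not work. The coefficient of $\alpha$ is exactly the sum of the two cross terms, so if they cancelled there would be no linear term left to use. Here is the resolution. Fix $k$ and a parity $p$, and set $Q_k(a,b):=B(a,x^kb)$. Moving $x^k$ across by invariance and then using symmetry of $B$ gives $Q_k(a,b)=\tau(p,k)\,Q_k(b,a)$ for all $a,b$ of parity $p$, where the sign depends only on $p$ and $k$. Homogeneity gives $k\equiv\bar B$ and $\bar v+\bar w+r\equiv\bar B$, which forces $x^{r-k}w$ to have the parity of $v$. Since $Q_k(v,v)\neq 0$, the sign must be $\tau=1$. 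Hence
$B(v',x^kv')=B(v,x^kv)-2\alpha B(v,x^rw)$, and the $\alpha^2$-term vanishes because $2r-k>r$. So the cross terms always double, never cancel.

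The sign $(-1)^{(r-k)\bar v}$ in \cref{lem:form_B_indec_U} came from moving $x^{r-k}$ off $v$ itself, and it does not transfer here; indeed $r-k\equiv\bar v+\bar w$ can be odd. The same argument handles your corrections of $w$. There the $\alpha^2$-term again vanishes for degree reasons, not because of a vanishing cross term. Finally, state explicitly that every correction, including those in part (2), keeps $B(v,x^rw)=1$, so the pairing stays non-degenerate. With these additions your argument proves the lemma.
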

\begin{proof} 
As a start, we note that $soc(W')\perp \Im x$, so $B$ must pair $ soc(W')$ with a subspace of $ cosoc(W'')$, whose parity is $\bar{r}+p$, where $p$ is the parity of $soc(W'')$. Thus $W'\cong \Pi^{(\bar B+\bar r)} W''$ as $\kk[x]$-modules.

{\bf Case 1: $r$ is even}. 

In this case, we have $x\in \g_{neat}$ where $\g:=\osp(W)$ if the form $B$ is even, 
and $\g:=\mathfrak{pe}(W)$ if the form $B$ is odd. We use the superanalogue of the Jacobson--Morozov theorem (see \cref{thrm:JM}): by this theorem, we may consider an $\osp(1|2)$-subalgebra $x\in\osp_x\subset\g$. The form $B$ is $\osp_x$-invariant and
 $W'$ and $W''$ become simple $\osp_x$-modules. 
 
%{\bf Case 1.1: $B$ is an even form}. 

Since $r$ is even, have an isomorphism of $\osp_x$-modules $W' \cong \Pi^{\bar B} W''$. So $W\cong W'\otimes U$ where $U$ a multiplicity space of dimension $(2|0)$ if $B$ is even and of dimension $(1|1)$ if $B$ is odd.
%is a purely even multiplicity space of dimension $2$.
 Decomposing the symmetric square of the tensor product $W^*=W'^*\otimes U^*$, we obtain the identity $$S^2(W^*)=S^2(W'^*)\otimes S^2(U^*)\oplus \Lambda^2(W'^*)\otimes \Lambda^2(U^*).$$
 This implies:
 $$S^2(W^*)^{\osp_x}=S^2(W'^*)^{\osp_x}\otimes S^2(U^*)\oplus \Lambda^2(W'^*)^{\osp_x}\otimes \Lambda^2(U^*).$$
 We have: $B\in S^2(W^*)^{\osp_x}_{\bar 0}$ if $B$ is even and $B\in S^2(W^*)^{\osp_x}_{\bar 1}$ if $B$ is odd.

The simple $\osp_x$-module $W'$ admits a unique, up to rescaling,  non-zero $\osp_x$-invariant even bilinear form $B_{W'}\in (W'^*\otimes W'^*)^{\osp_x}_{\bar 0}$, which is either symmetric or skew-symmetric. This implies that $B=B_{W'}\otimes B_U$, where $B_U$ is a nondegenerate bilinear form on $U$ (the form $B_U$ has the same parity as $B$). 

By the above decomposition of $S^2(W^*)^{\osp_x}$, the form $B_U$ is symmetric if $B_{W'}$ is symmetric and $B_U$ is skew-symmetric if $B_{W'}$ is skew-symmetric.

Whether $B_U$ is symmetric or skew-symmetric, we may choose a homogeneous basis of $U$ consisting of isotropic vectors.
Let us choose two copies of $W'$ in $W=W'\otimes U$ corresponding to this basis and denote these new copies again by $W', W''$. These new subspaces $W', W''$ will then be isotropic.

  \begin{comment}

{\bf Case 1.2: $B$ is an odd form}. In this case, $W'$ is isomorphic to $\Pi W''$ as an $\osp_x$-module and $W\cong W'\otimes U$ where $U$ is a $(1|1)$-dimensional multiplicity space. The previous argument implies that we again have a decomposition $B=B_{W'}\otimes B_U$, where $B_{W'}$ is a non-degenerate $\osp_x$-invariant form on $B_{W'}$, which is unique up to rescaling.
\end{comment}
In both cases (when $B$ is either an even or an odd form), the statement (2) follows from $h$-weight considerations, where $h$ is the Cartan element in $\osp_x$. Indeed, consider the $\osp_x$-actions on its simple modules $W',W''$. For any two $h$-eigenvectors $v \in W',w\in W''$ with respective $h$-eigenvalues $p,q$, we have: $B(v,w)\neq 0$ if and only if $p+q=0$.

{\bf Case 2: $r$ is odd}. 

In that case, $\sdim W'=\sdim W''=0$ and $W'\cong \Pi^{(\bar B+\bar 1)} W''$ as $\kk[x]$-modules.

 Let $f:=\frac{1}{2}[x, x]$. By the usual Jacobson--Morozov theorem, one can place
$f$ inside an $\sl_2$-triple $(e,h,f)$ so that the $\sl_2$-subalgebra $\sl_x$ generated by this triple preserves the form $B$. We will first prove that one can choose $W', W''$ to be isotropic.

{\bf Case 2.1: the form $B$ is even}. 

In this case, $W'$ is isomorphic to $\Pi W''$ as $\sl_x$-modules. Hence 
$\dim \operatorname{coKer}x=(1|1)$.
We decompose $W_{\bar 0}$ into a direct sum of two isomorphic irreducible $\sl_2$-submodules $W_{\bar 0}=Y_1\oplus Y_2$. Furthermore, we may assume that
$Y_1$ and $Y_2$ are isotropic and $Y_1 \not\subset\Im x$.
Let $v$ be the highest weight vector of $Y_1$. Then $v\notin \Im x$ and for any odd $k,l$ we have
$$B(x^k v, x^l v)=\pm B(x^{k-1}v,x^{l+1}v)=B(f^{(k-1)/2}v,f^{(l+1)/2}v)=0.$$
Thus, $W'=\kk[x]v$ is isotropic. Similarly, we may write $W_{\bar 1}=Z_1\oplus Z_2$ as a sum of two isotropic $\sl_2$-submodules, so that $Z_1 \not\subset\Im x$. The highest weight vector $w\in Z_1$ generates an isotropic $\kk[x]$-submodule $W''$. The vectors $v, w$ correspond to a homogeneous basis of $\operatorname{coKer}x$, so we conclude that $W'\oplus W''=W$.

{\bf Case 2.2: $B$ is an odd form}. In this case, $W'$ is isomorphic to $W''$  as $\sl_x$-modules. 
Without loss of generality we may assume that $\operatorname{coKer}x$ is a $2$-dimensional purely even subspace. 
The form $B$ defines a nondegenerate pairing between the highest $h$-weight subspace $\Ker(e \rvert_{W_{\bar 0}})$ and the lowest $h$-weight subspace $\Ker(f \rvert_{W_{\bar 1}})$.
Note that $\Ker(f\rvert_{W_{\bar 1}})=x^r \Ker(e\rvert_{W_{\bar 0}}) $, so the above pairing is determined by an even form $Q$ on $\Ker(e\rvert_{W_{\bar 0}})$, with $Q(v, w):= B(v,x^r w)$ where $v, w\in \Ker(e\rvert_{W_{\bar 0}})$. 

If $v,w\in \Ker(e\rvert_{W_{\bar 0}})$ then
$$Q(v, w)=B(v,x^r w)=(-1)^{(r+1)/2} B(w,x^r v)=(-1)^{(r+1)/2} Q(w,v).$$ Thus, the form $Q$ is either symmetric or skew-symmetric, depending on the parity of $(r+1)/2$. In both cases we can choose a basis $v_1,v_2$ of $\Ker(e\rvert_{W_{\bar 0}})$ which will be isotropic with respect to $Q$.

Let $W'=\kk[x]v_1$ and $W''=\kk[x]v_2$; since $\Ker(e\rvert_{W_{\bar 0}})\cong \operatorname{coKer}x$, we have: $W=W'\oplus W''$. We claim that $W', W''$ are isotropic with respect to $B$. These are isomorphic as $\sl_x$-modules, so they have the same highest weight $m$ (the $h$-weight of $v_1, v_2$). The $h$-weight of the lowest weight vector $x^r v_i$ is then $-m$.

To show that $W'$ is isotropic, it is enough to check that $B(v_i, x^k v_i)=0$ for any $k$. 
Recall that for any two $h$-eigenvectors $v, w \in W'$ with respective $h$-weights $p,q$, we have $B(v,w)\neq 0$ if and only if $p+q=0$. So it is enough to check that $B(v_i, w)=0$ where $w\in W'$ is any vector of $h$-weight $-m$. Recall that the space of odd lowest weight vectors  $\Ker(f\rvert_{W_{\bar 1}}) = \mathrm{span}\{x^rv_1, x^r v_2\}$ is two dimensional, so we just need to check that $B(v_i, x^r v_i)=0$. But this is true:
$$0=Q(v_i, v_i)=B(v_i,x^r v_i).$$
Hence $W'$ is isotropic and so is $W''$.

    It remains to check (2).

Let us choose a basis in $e_1,\dots,e_{r+1}$ in $W'$ such that $xe_i=e_{i+1}$ for all $i\leq r$ and $x e_{r+1}=0$. Let $f_1,\dots, f_{r+1}$ denote the dual basis in $W''$, i.e., $B(e_i,f_j)=\delta_{ij}$.
Then $$B(e_i,x f_j)=-(-1)^{\bar{e_i}}B(e_{i+1}, f_j)=\pm\delta_{i+1,j}. $$
From this we obtain $xf_j=\pm f_{j-1}$ for $j\geq 2$ and $x f_1=0$.
Take $v=e_1$, $w=f_{r+1}$. The statement follows.

\end{proof}

\subsubsection{Nilpotent orbits in \texorpdfstring{$\osp(m|2n)$}{}}
In this case, the nilpotent orbits were studied previously and classified in \cite{gruson2010cones}.
From the results of \cref{ssec:nilp_in_presence_bilinear_form}, we obtain the following description of nilpotent orbits in terms of checkered Young diagrams, recreating the classification of \cite{gruson2010cones}:
\begin{definition}

\mbox{}

\begin{itemize}
    \item A one-row diagram is called {\it basic $\osp$-admissible} if it has an odd length and the number of black boxes is odd.
    \item A two-row diagram is called {\it basic $\osp$-admissible} if the two rows have the same length $l$ and satisfy the following conditions: 
    \begin{itemize}
        \item if $l$ is odd then the two rows are identical, 
        \item if $l$ is even then one row is obtained from another by swapping the colors.
    \end{itemize} 
    \item A checkered Young diagram is called {\it $\osp$-admissible} if it is obtained by stacking one-row and two-row basic $\osp$-admissible diagrams. 
\end{itemize}
\end{definition}
In the following proposition only, we denote by $OSp(m|2n)$ the supergroup associated with the Harish-Chandra pair $(O(n)\times Sp(2m), \osp(m|2n))$ and by $SOSp(m|2n)$ its connected component of the unit.
\begin{proposition}\label{prop:osporbits} 

The {$\ad$}-nilpotent $OSp(m|2n)$-orbits
in $\osp(m|2n)$ are in bijection with the $\osp$-admissible checkered diagrams containing $m$ black boxes and $2n$ white boxes. 

The orbits under the action of the connected subgroup $SOSp(m|2n)$ are the same, unless $m\in 2\Z$ and the number of black boxes in each row is even. In the latter case, the corresponding $OSp(m|2n)$-orbit splits into two $SOSp(m|2n)$-orbits.
\end{proposition}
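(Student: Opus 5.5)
The plan is to reduce the classification of $\ad$-nilpotent $OSp(m|2n)$-orbits to the classification of orthogonal $\kk[x]$-module structures on $V=\kk^{m|2n}$ with its even form $B$. I would do this in four steps: attach a diagram to each orbit, show the diagram is admissible, show the diagram determines the orbit, realize every admissible diagram, and finally treat the passage to the connected group.

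\textbf{Attaching a diagram.} Since $OSp(m|2n)_{\bar 0}=O(m)\times Sp(2n)$ acts by form-preserving changes of basis, an element $x\in\osp(m|2n)_{\bar 1}$ which is $\ad$-nilpotent acts nilpotently on $V$. Its orbit therefore determines the checkered diagram $\mathbf{D}^x$.

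\textbf{Admissibility.} By \cref{lem:form-nilp}, $V$ splits orthogonally into summands $U_i$ and $W_j=W_j'\oplus W_j''$.
\begin{itemize}
\item By \cref{lem:form_B_indec_U}, each $U_i$ has odd dimension $r+1$. Its generator has parity $r/2$, so the colors alternate starting from $\overline{v}$. A direct count then shows that the number of black boxes is odd, so each $U_i$ gives a basic one-row diagram. I would double-check the parity bookkeeping against the convention that the leftmost box is the kernel vector $x^r v$.
\item By \cref{lem:form-nil2}, each $W_j$ consists of two rows of equal length $r+1$. The generator parities satisfy $\bar v\equiv \bar w+r$. This makes the rows identical when $r+1$ is odd and color-swapped when $r+1$ is even, giving a basic two-row diagram.
\end{itemize}
Hence $\mathbf{D}^x$ is $\osp$-admissible.

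\textbf{Injectivity.} Suppose $x$ and $x'$ have the same admissible diagram. The normal forms in \cref{lem:form_B_indec_U,lem:form-nil2} give bases $\{x^a v\}$ and $\{x^a v, x^a w\}$ in which the Gram matrix of $B$ is antidiagonal with explicitly determined signs. Rescaling the generators (we are over $\kk=\bar\kk$) normalizes the nonzero entries. The resulting basis of $V$ has a Gram matrix depending only on the diagram. The linear map sending one such adapted basis to the other is an even isometry intertwining $x$ and $x'$, so it lies in $O(m)\times Sp(2n)$.

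One point needs care. The decomposition in \cref{lem:form-nilp} is not canonical, so the multiset of pieces must be shown to be an invariant. I would handle this by noting that the diagram already determines the Jordan type. Which rows pair into basic two-row blocks is then forced by the parity and length constraints; any ambiguity only concerns pairs of identical odd rows, which can be regrouped either way. This regrouping is the main obstacle, and I would settle it using the $\osp_x$-isotypic argument of Case 1 in \cref{lem:form-nil2}: the multiplicity space carries a nondegenerate form, and any two such forms are equivalent.

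\textbf{Surjectivity.} For each basic diagram I would write the explicit model from the lemmas, with its form and operator, and take orthogonal direct sums. The resulting $x$ lies in $\osp(m|2n)$ and is nilpotent. I expect $\ad$-nilpotence to follow directly, since $[x,x]=2x^2$ is a nilpotent even operator.

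\textbf{Connected subgroup.} The $O(m)$-orbit splits into two $SO(m)$-orbits exactly when the stabilizer of $x$ in $O(m)\times Sp(2n)$ lies in $SO(m)\times Sp(2n)$. If some row has an odd number of black boxes, there is a reflection acting by $-1$ on that piece and trivially elsewhere. It commutes with $x$ after adjusting the signs of the odd part on that piece by $-1$ in $Sp$; this is allowed because $-\id$ on a piece is symplectic. Its even component has determinant $-1$, so the orbit does not split. Otherwise every piece carries an even-dimensional even part, and I would show by a determinant computation on the even part that stabilizer elements have determinant $1$, mirroring the classical very-even case. This forces the orbit to split when $m$ is even.
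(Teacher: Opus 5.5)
Your proof of the bijection takes the route the paper intends: the paper simply reads the classification off \cref{lem:form-nilp}, \cref{lem:form_B_indec_U} and \cref{lem:form-nil2}. Your treatment of the non-canonical regrouping of identical odd rows, via the form on the multiplicity space, is a correct and needed supplement.

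The gap is in the last step, and it cannot be repaired because the claim itself fails there. You assert that if some row has an odd number of black boxes, then $-\id$ on the piece containing it has determinant $-1$ on the even part. This holds only when the row can sit in a one-row piece, i.e.\ when it has odd length. A row of even length $l$ with $l/2$ odd lies in a two-row piece $W=W'\oplus W''$ together with its colour-swapped partner. Then $\dim W_{\bar 0}=l$ is even, so $-\id_W$ has determinant $+1$ on $W_{\bar 0}$. You also cannot negate $W'$ alone, since $B$ pairs $W'$ with $W''$.

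Here is a counterexample to the stated criterion in $\osp(2|2)$. Let $V_{\bar 0}=\mathrm{span}(e,f)$ with $e,f$ isotropic and $B(e,f)=1$, and $V_{\bar 1}=\mathrm{span}(u,u')$ with $B(u,u')=1$. Let $x$ act by $f\mapsto u$, $u'\mapsto -e$, $e,u\mapsto 0$. One checks that $x\in\osp(2|2)_{\bar 1}$ and $[x,x]=0$. The diagram $\mathbf{D}^x$ consists of the rows white--black and black--white, each with one black box. Any $g\in O(2)\times Sp(2)$ fixing $x$ preserves $\Ker x\cap V_{\bar 0}=\kk e$, an isotropic line in the quadratic plane $V_{\bar 0}$. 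Hence $g\rvert_{V_{\bar 0}}=\diag(t,t^{-1})\in SO(2)$, and the $OSp(2|2)$-orbit of $x$ splits into two $SOSp(2|2)$-orbits. This is the familiar fact that the nonzero self-commuting elements of $\osp(2|2n)$ form two $SO(2)\times Sp(2n)$-orbits, of $\mathfrak{so}_2$-weight $+1$ and $-1$, exchanged by $O(2)$.

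What your method actually proves is the corrected statement: the orbit splits if and only if no row of \emph{odd} length has an odd number of black boxes (then $m$ is automatically even).
\begin{itemize}
\item If such a row exists, regroup so that it forms a one-row piece $U$, and use $-\id_U\oplus\id_{U^\perp}$, which has determinant $-1$ on $V_{\bar 0}$.
\item If not, compute the determinant on the reductive quotient $\prod_R GL(M_R)$ of the group of even $\kk[x]$-automorphisms of $V$. Here $R$ runs over row types, $M_R$ is the multiplicity space of $R$, and $b_R$ is its number of black boxes. The determinant on $V_{\bar 0}$ equals $\prod_R\det(g_R)^{b_R}$.
\item For an even-length type $R$ with colour swap $\bar R$, $B$ pairs $M_R$ with $M_{\bar R}$, so $\det g_{\bar R}=\det(g_R)^{-1}$; since $b_R=b_{\bar R}$, these factors cancel.
\item Odd-length types now have $b_R$ even, so they contribute $1$.
\end{itemize}
The paper's condition, that every row has an even number of black boxes, says exactly that $x^2\rvert_{V_{\bar 0}}$ is a very even nilpotent in $\mathfrak{so}(V_{\bar 0})$. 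Your converse argument correctly shows this condition is sufficient for splitting, but it is not necessary.
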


\subsubsection{Nilpotent orbits in periplectic Lie superalgebras}

We define a $\mathfrak{pe}$-admissible checkered diagram, used to classify nilpotent odd orbits in $\mathfrak{pe}(n)$. 
\begin{definition}

\mbox{}

\begin{itemize}
    \item A one-row diagram is called {\it basic $\mathfrak{pe}$-admissible} if one of the following conditions holds: \begin{itemize}
        \item The row has length $4m+2$ for some $m\in \Z_{\geq 0}$ and the leftmost box is black.
        \item The row has length
    $4m$ for some $m\in \Z_{\geq 1}$ and the leftmost box is white.
    \end{itemize} 
    \item A two-row diagram is called {\it basic $\mathfrak{pe}$-admissible} if the two rows have the same length $l$ and we have: 
    \begin{itemize}
        \item if $l$ is even then the two rows are identical, 
        \item if $l$ is odd then one row is obtained from another by swapping the colors.
        \end{itemize}
  
    \item A checkered Young diagram is {\it $\mathfrak{pe}$-admissible} if it is obtained by stacking one-row and two-row basic $\mathfrak{pe}$-admissible diagrams. 
\end{itemize}
\end{definition} 
From the results of \cref{ssec:nilp_in_presence_bilinear_form}, we obtain the following description of nilpotent orbits in terms of checkered Young diagrams:
\begin{proposition}\label{partitionsP}
The $\ad$-nilpotent orbits in $\mathfrak{pe}(n)$ are in bijection with $\mathfrak{pe}$-admissible checkered diagrams.
\end{proposition}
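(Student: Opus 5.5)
The plan is to follow the $\osp$ case: reduce the classification of $G_{\bar 0}$-orbits of $\ad$-nilpotent elements of $\mathfrak{pe}(n)$ to the classification of pairs $(V,x)$ up to isometry. Here $V=\kk^{n|n}$ carries the odd non-degenerate symmetric form $B$ preserved by $\mathfrak{pe}(n)$, and $x\in\End^{\bullet}(V)_{\bar 1}$ is a nilpotent operator preserving $B$. Any $\ad$-nilpotent $x\in\mathfrak{pe}(n)_{\bar 1}$ acts nilpotently on $V$. Two such elements are conjugate under $G_{\bar 0}=GL_n$, the group of even isometries of $(V,B)$, if and only if there is an even isometry intertwining them. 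The first step is therefore to show that the decomposition of \cref{lem:form-nilp}, together with the normal forms of \cref{lem:form_B_indec_U} and \cref{lem:form-nil2}, determines $(V,B,x)$ up to isometry.

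\textbf{Admissibility of diagrams.} By \cref{lem:form-nilp} we have an orthogonal decomposition $V=\bigoplus U_i\oplus\bigoplus W_j$.
\begin{itemize}
\item For a single indecomposable summand $U$, \cref{lem:form_B_indec_U} (odd $B$) gives $\dim U=r+1$ even and $\bar v\equiv (r+1)/2 \bmod 2$. The leftmost box of the row corresponds to $\ker x$, i.e.\ to $x^r v$, of parity $\bar v+r=\bar v+1$. If $r+1=4m+2$, then $(r+1)/2$ is odd, so $v$ is odd and $x^rv$ is even: the leftmost box is black. If $r+1=4m$, then $v$ is even and $x^r v$ is odd: the leftmost box is white. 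These are exactly the basic one-row $\mathfrak{pe}$-admissible rows.
\item For a pair $W=W'\oplus W''$, \cref{lem:form-nil2} gives $\bar v\equiv\bar w+r+1$. If the length $l=r+1$ is even, the generators have equal parity and the two rows are identical. If $l$ is odd, the parities differ and the rows are color-swapped.
\end{itemize}
Hence $\mathbf{D}^x$ is $\mathfrak{pe}$-admissible. Conversely, each basic block can be realized with an explicit basis and form: take the normal forms of those lemmas with $B(x^av,x^bv)=\pm\delta_{a+b,r}$, and check that the signs are compatible with odd symmetry. Orthogonal direct sums then realize any admissible diagram, which gives surjectivity.

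\textbf{Injectivity.} This is the main obstacle. If $x,x'$ have the same diagram, we need an even isometry conjugating them. The difficulty is that the splitting into one-row versus two-row blocks is not canonical: two equal identical rows of even length could a priori be grouped either as two single rows or as one pair.

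The first thing I would try is to show that the isometry class of each block is rigid. Lemmas \ref{lem:form_B_indec_U} and \ref{lem:form-nil2} provide generators with $B(x^av,x^bv)$ (respectively $B(x^av,x^bw)$) determined up to a sign convention and a rescaling. Rescaling $v$ (by a square root, using that $\kk$ is algebraically closed) normalizes the constant, so any two blocks of the same basic type are isometric via a $\kk[x]$-equivariant even map.

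Next, I would show that two rows of the same admissible one-row type can be regrouped as a two-row basic diagram, and vice versa. For an orthogonal sum $U_1\oplus U_2$ of isomorphic single rows, set $v=v_1+iv_2$ and $w=v_1-iv_2$. Then $B(v,x^r v)=c-c=0$, so $\kk[x]v$ and $\kk[x]w$ are isotropic and paired, giving the pair form. Hence the isometry class is determined by the multiset of row types, i.e.\ by the diagram. Finally I would match blocks by a Witt-type cancellation argument, inducting on $\dim V$ as in the proof of \cref{lem:form-nilp} and splitting off a block of maximal length from both $(V,x)$ and $(V,x')$.

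Connectedness causes no orbit splitting here, since $G_{\bar 0}=GL_n$ is connected. This contrasts with \cref{prop:osporbits}, where the disconnected group $O(m)\times Sp(2n)$ requires a separate analysis.
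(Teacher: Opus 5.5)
Your proposal is correct and takes the same approach as the paper, which obtains the classification directly from the orthogonal decomposition and normal forms of \cref{lem:form-nilp}, \cref{lem:form_B_indec_U} and \cref{lem:form-nil2}; your parity computations for the one-row and two-row blocks match the definition of $\mathfrak{pe}$-admissibility exactly. Your regrouping of two equal single rows via $v_1\pm iv_2$ spells out a point the paper leaves implicit, and together with the rigidity of the normal forms it already gives injectivity by taking the orthogonal sum of blockwise isometries, so the final Witt-type cancellation step is unnecessary.
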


\begin{corollary}
    Neat orbits in $\mathfrak{pe}(n)$ are in bijection with (colorless) Young diagrams of size $n$ all of whose rows have odd length.  
\end{corollary}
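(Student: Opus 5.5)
The statement to prove is the last corollary: neat orbits in $\mathfrak{pe}(n)$ correspond bijectively to colorless Young diagrams of size $n$ all of whose rows have odd length. The plan is to combine \cref{partitionsP} with the neatness criterion of \cref{thrm:JM}, using the tautological representation $V=\kk^{n|n}$. Since $V$ is faithful, an $\ad$-nilpotent $x\in\mathfrak{pe}(n)_{\bar 1}$ is neat iff $x\rvert_V$ is a neat operator, i.e. iff every row of $\mathbf{D}^x$ has odd length. By \cref{partitionsP}, the neat orbits are therefore in bijection with the $\mathfrak{pe}$-admissible checkered diagrams of type $(n|n)$ all of whose rows have odd length. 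It remains to identify this set with the set of odd-row partitions of $n$.

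First, look at the basic $\mathfrak{pe}$-admissible pieces. Every one-row basic diagram has even length, namely $4m+2$ or $4m$. So a $\mathfrak{pe}$-admissible diagram with only odd rows must be a stack of two-row basic pieces of odd length $l$. In such a piece, one row is the color swap of the other. An odd row has a determined color pattern once its leftmost color is chosen, so the pair consists of one row starting with black and one starting with white. Up to permuting the two rows, this pair is uniquely determined by $l$.

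Next, define the map: send such a stacked diagram to the colorless Young diagram whose rows are the lengths $l$ of its two-row pieces, one row per pair. Each pair contributes $l$ black and $l$ white boxes. Hence type $(n|n)$ forces $\sum l=n$, so the image is a partition of $n$ into odd parts. Conversely, any partition of $n$ into odd parts $l_1,\dots,l_k$ gives a diagram by stacking the unique two-row basic piece of each length $l_i$. This diagram is $\mathfrak{pe}$-admissible, has odd rows, and is of type $(n|n)$.

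Finally, check that the two constructions are mutually inverse. A checkered diagram is defined only up to permutation of rows, so I must verify that the odd-row diagram determines its pairing data. For each odd $l$, the rows of length $l$ split equally into those starting black and those starting white, since each pair contributes one of each. Thus the multiplicity of $l$ in the partition equals the number of black-starting rows of length $l$, which is intrinsic to the diagram. Any regrouping of rows into pairs yields the same multiset of pair lengths, so the map is well defined and injective; surjectivity was shown above. This bookkeeping is the only point needing care; the rest follows directly from \cref{partitionsP} and \cref{thrm:JM}.
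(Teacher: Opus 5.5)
Your proposal is correct and follows the paper's proof. Neatness is read off from the odd row lengths of $\mathbf{D}^x$ via the faithful representation; one-row basic $\mathfrak{pe}$-admissible pieces are excluded because they have even length; and each two-row piece of odd length $l$, which has $l$ black boxes in total, is sent to a row of length $l$. Your extra check that the resulting partition does not depend on how the rows are grouped into pairs is a detail the paper leaves implicit.
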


\begin{proof}
Let $x\in \mathcal{N}^{\ad}(\mathfrak{pe}(n)_{\bar 1})$ and consider the $\mathfrak{pe}$-admissible checkered Young diagram $\mathbf{D}^x$ corresponding to $x$. The element $x$ is neat iff all the rows of $\mathbf{D}^x$ are of odd length. By the definition of a $\mathfrak{pe}$-admissible checkered Young diagram, we conclude that $x\in \mathfrak{pe}(n)_{neat}$ iff $\mathbf{D}^x$ is stacked out of basic $\mathfrak{pe}$-admissible two-row diagrams (without using one-row diagrams). The set of such diagrams is in bijection with the set of (colorless) Young diagrams of size $n$ all of whose rows have odd length: namely, each basic $\mathfrak{pe}$-admissible two-row checkered diagram $\mathbf{D}$ corresponds to a single-row Young diagram containing all the black boxes of $\mathbf{D}$; stacking of checkered diagrams corresponds to stacking of single-row Young diagrams.

\end{proof}

\subsubsection{Distinguished odd elements in $\osp(m|2n)$ and $\mathfrak{pe}(n)$, $\mathfrak{spe}(n)$}\label{ssec:dist_in_osp_pe}
\mbox{}

Let $(\g, V)$ be the pair $(\osp(m|2n), \kk^{m|2n})$ or the pair $(\mathfrak{pe}(n), \kk^{n|n})$. The space $V$ is the (faithful) matrix representation of $\g$ and it is equipped with a non-degenerate $\g$-invariant symmetric bilinear form $B$. The form $B$ is even if $\g=\osp(m|2n)$ and odd if $\g=\mathfrak{pe}(n)$.

The Lie superalgebras $\osp(m|2n)$ and $\mathfrak{pe}(n)$ have trivial centers, so all the distinguished odd elements in this setting are $\ad$-nilpotent.

\begin{lemma}\label{lem:form-dist} 
Let $x$ be an $\ad$-nilpotent element in $\g_{\bar 1}$.

The element $x$ is distinguished iff we have a decomposition $V=\bigoplus_{i=1}^p U_i$ where $U_i$ are mutually orthogonal non-isomorphic indecomposable $\kk[x]$-components. 

A distinguished element $x\in \g_{\bar 1}$ is neat 
    for  $\g=\osp(V)$ and balanced for $\g=\mathfrak{pe}(V)$.
\end{lemma}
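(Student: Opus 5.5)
The plan is to apply \cref{cor:determining_semisimples_comm_with_x} to the faithful representation $V$. First I check that $V$ is indecomposable as a $\g$-module, or at least that the argument goes through with $V$. For $\osp$ and $\mathfrak{pe}$, $V$ is irreducible, with the small-rank exceptions handled directly. I start from the orthogonal decomposition \eqref{eq:orthog_decomp} of \cref{lem:form-nilp}, $V=\bigoplus_i U_i\oplus\bigoplus_j W_j$, and identify the semisimple elements of $\fc_{\g_{\bar 0}}(x)$. Such an element is an even $x$-equivariant endomorphism of $V$ that is skew with respect to $B$.

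For the ``only if'' direction, I exhibit a non-central semisimple element of $\fc_{\g_{\bar 0}}(x)$ whenever the decomposition is not of the stated form. There are two cases.
\begin{enumerate}
\item If some $W_j=W'_j\oplus W''_j$ occurs, use \cref{lem:form-nil2} to choose $W'_j,W''_j$ isotropic. Then the operator $\phi$ acting by $1$ on $W'_j$, by $-1$ on $W''_j$ and by $0$ on the orthogonal complement is $x$-equivariant, preserves $B$ infinitesimally, and is semisimple and nonzero, since $\g$ has trivial center.
\item If $U_i\cong U_k$ for some $i\neq k$, take generators $v_i,v_k$ as in \cref{lem:form_B_indec_U}. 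Rescaling, the forms restricted to $U_i$ and $U_k$ agree under the isomorphism. Then $U_i\oplus U_k\cong U\otimes \kk^2$, with $B=B_U\otimes(\text{standard form})$ where the second factor is symmetric. I choose an isotropic basis of $\kk^2$; this splits $U_i\oplus U_k$ into two isotropic isomorphic summands, and the same $\pm1$ operator as in the first case works.
\end{enumerate}

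For the ``if'' direction, let $s$ be semisimple in $\fc_{\g_{\bar 0}}(x)$. It preserves the isotypic components of the $\kk[x]$-module $V$ modulo the radical, in the sense of the lemma preceding \cref{cor:determining_semisimples_comm_with_x}. Since the $U_i$ are pairwise non-isomorphic with one-dimensional multiplicity spaces, $s$ acts on each multiplicity space by a scalar $\lambda_i$. Replacing $s$ by a conjugate under the unipotent radical of the centralizer, or arguing via the semisimple part, I may assume $s$ preserves each $U_i$ and acts by $\lambda_i$ there. Skewness with respect to $B$ restricted to $U_i$, which is nondegenerate, gives $2\lambda_i=0$. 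Hence $s=0$.

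For the neat/balanced assertion, each $U_i$ carries a nondegenerate restriction of $B$. By \cref{lem:form_B_indec_U} it has odd dimension when $B$ is even and even dimension when $B$ is odd. So $x\rvert_V$ is a neat operator for $\osp$, hence $x$ is neat by \cref{thrm:JM}. For $\mathfrak{pe}$ every block has even size, so $x\rvert_V$ is balanced. To get that $x$ is (strongly) balanced in $\g$, I would embed $\mathfrak{pe}(V)\subset\gl(V)$ and repeat the construction in the proof of \cref{prop:balanced_in_gl}, choosing the $\sl_2$ and the splitting $x=x_0+x_{-2}$ compatibly with $B$ on each $U_i$. This is possible using the adapted generator of \cref{lem:form_B_indec_U}, since the $h$ defined there is then $B$-skew.

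The main obstacle I expect is the ``if'' direction, namely justifying that a semisimple centralizing element can be taken to preserve each $U_i$ rather than merely acting by scalars modulo the radical. I would handle it by taking the semisimple $s$ and decomposing $V$ into $s$-eigenspaces. These are $\kk[x]$-submodules, and $B$ pairs the $\lambda$-eigenspace with the $-\lambda$-eigenspace. By Krull--Schmidt, a $\lambda\neq0$ eigenspace together with its $-\lambda$ partner would force paired isomorphic summands, contradicting the hypothesis.
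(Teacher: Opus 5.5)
Your proposal is correct and follows the paper's proof in all essentials. You use the orthogonal decomposition of \cref{lem:form-nilp}, explicit non-central semisimple $B$-skew operators in the two bad cases, and \cref{lem:form_B_indec_U} for the parity of the block sizes; the operators are $\pm1$ on isotropic halves of a $W_j$ via \cref{lem:form-nil2}, and on $U_i\oplus U_k$ after rescaling so the forms agree, which is the diagonalized form of the paper's rotation matrix. Two of your steps make explicit what the paper only cites via \cref{cor:determining_semisimples_comm_with_x} and \cref{lem:form_B_indec_U}: the eigenspace/Krull--Schmidt argument for the ``if'' direction, and the explicit $B$-skew $\sl_2$-triple showing that $x$ is strongly balanced in $\mathfrak{pe}(V)$. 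For the latter, note that $e$ also lies in $\mathfrak{pe}(V)_{\bar 0}$, being the unique completion of $(h,f)$ and hence fixed by the $B$-adjoint involution.
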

\begin{proof}
    We use the results of \cref{ssec:nilp_in_presence_bilinear_form} and \cref{cor:determining_semisimples_comm_with_x}. 
    
    Let $x\in \mathcal{N}^{\ad}(\g_{\bar 1})$ and consider the orthogonal decomposition of the $\kk[x]$-module $V$ given in \cref{lem:form-nilp}:
$$
    V=\bigoplus_{i=1}^p U_i\oplus\bigoplus_{j=1}^q W_j.
$$
Here every $U_i$ is an indecomposable $\kk[x]$-module and every $W_j=W_j'\oplus W_j''$ is a direct sum of two isotropic indecomposable $\kk[x]$-modules paired by $B$.

If all the $U_i$'s are pairwise non-isomorphic and there are no summands of the form $W_j$, then by \cref{cor:determining_semisimples_comm_with_x}, the element $x$ is distinguished.

If a summand $W_j$ appears in the above decomposition, we may take an endomorphism of $V$ given by $\lambda\id_{W'_j}\oplus -\lambda\id_{W''_j}$ where $\lambda\neq 0$. This endomorphism lies in $\g\subset \End(V)$, so by \cref{cor:necessary_cond_distinguished}, this implies that $x$ is not distinguished.

Similarly, if $U_i$ appears in the above decomposition with multiplicity at least $2$, we may take the endomorphism $s$ of $V$ given by $\begin{bmatrix}
    0 &\lambda \\ -\lambda &0
\end{bmatrix} \in \End(U_i^{\oplus 2})$.
We obtain $s\in \mathfrak{c}_{\g_{\bar 0}}(x)^{\mathbf{ss}}$. By \cref{cor:determining_semisimples_comm_with_x}, $x$ is not distinguished.

  The remaining part of the lemma now follows from \cref{lem:form_B_indec_U}.
\end{proof}
We now consider distinguished elements in the Lie superalgebra $\g:=\mathfrak{spe}(n)$. Recall that  $\mathfrak{pe}(n)_{\bar 1}=\mathfrak{spe}(n)_{\bar 1}$ and let $V=\kk^{n|n}$ be the matrix representation of $\mathfrak{spe}(n)$. Again, the center of $\mathfrak{spe}(n)$ is trivial, so distinguished odd elements are $\ad$-nilpotent.

\begin{lemma}\label{lem:disting_in_spe}
    Let $\g=\mathfrak{spe}(n)$ and let $x\in \mathcal{N}^{\ad}(\g_{\bar 1})$. The element $x$ is distinguished in $\g$ if and only if the $\kk[x]$-module $V$ decomposes as $V=\bigoplus_{i=1}^p U_i \oplus W$ where 
    \begin{itemize}
        \item $U_i, W$ are mutually orthogonal, pairwise non-isomorphic $\kk[x]$-summands, 
        \item $U_i$ are indecomposable $\kk[x]$-modules with $\dim U_i\in 2\Z$, 
        \item $W=W'\oplus \Pi (W')^*$, with $\dim W'\in 2\Z+1$,  and both $W', \Pi (W')^*$ are isotropic indecomposable $\kk[x]$-modules.
    \end{itemize}
\end{lemma}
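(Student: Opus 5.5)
We follow the pattern of \cref{lem:form-dist}: take the orthogonal decomposition of \cref{lem:form-nilp},
$$V=\bigoplus_{i=1}^p U_i\oplus\bigoplus_{j=1}^q W_j,\qquad W_j=W_j'\oplus W_j'',$$
with each $W_j'$, $W_j''$ isotropic (\cref{lem:form-nil2}). Since $B$ is odd, \cref{lem:form_B_indec_U} gives $\dim U_i\in 2\Z$. The difference from $\mathfrak{pe}(n)$ is that $\mathfrak{spe}(n)_{\bar 0}\cong\sl_n$ rather than $\gl_n$. So $\mathfrak{c}_{\g_{\bar 0}}(x)^{\ss}$ consists of those semisimple elements of $\mathfrak{c}_{\mathfrak{pe}(n)_{\bar 0}}(x)$ with supertrace zero, equivalently with trace zero on $V_{\bar 0}$. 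We use \cref{cor:determining_semisimples_comm_with_x}. Here $V$ is faithful but possibly decomposable as a $\g$-module, so we apply the criterion directly: a semisimple $s\in\mathfrak{c}_{\g_{\bar 0}}(x)$ must be scalar on $V$, and since $\mathfrak{spe}(n)$ has no nonzero scalars, $s$ must be zero.

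\textbf{Step 1 (necessity).} First, a $U_i$ with multiplicity $\geq 2$ gives the rotation $\begin{bmatrix}0&\lambda\\-\lambda&0\end{bmatrix}$ on $U_i^{\oplus 2}$, as in \cref{lem:form-dist}. This element has supertrace zero (it is traceless on each parity), so it lies in $\mathfrak{spe}$, and $x$ is not distinguished. Next, each $W_j$ carries the torus element $t_j:=\id_{W_j'}\oplus(-\id_{W_j''})$, which preserves $B$ and commutes with $x$. Since $W_j''\cong\Pi (W_j')^*$, we get $str(t_j)=2\,\sdim W_j'$. If $\dim W_j'$ is even, then $\sdim W_j'=0$ and $t_j\in\mathfrak{spe}$, contradicting distinguishedness. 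If $q\geq 2$, the combination $\sdim(W_2')\,t_1-\sdim(W_1')\,t_2$ is nonzero with supertrace zero, again a contradiction. Finally, $W\not\cong U_i$ as $\kk[x]$-modules: $W$ is decomposable, and its indecomposable summands have odd dimension while each $U_i$ has even dimension. The same parity argument rules out $W'$ coinciding with some $U_i$. This yields the stated shape: $q\leq1$, and if $q=1$ then $\dim W'$ is odd.

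\textbf{Step 2 (sufficiency).} Assume $V$ has the stated form, and let $s\in\mathfrak{c}_{\g_{\bar 0}}(x)^{\ss}$. Since $s\in\mathfrak{pe}(n)_{\bar 0}$ commutes with $x$, the semisimple part of $\End_{\kk[x]}(V)$ from the lemma preceding \cref{cor:determining_semisimples_comm_with_x} shows that $s$ acts by a scalar $c_i$ on each isotypic component. Here we use that the summands are pairwise non-isomorphic. One subtlety: $W'$ and $\Pi(W')^*$ are indecomposables of the same size and could be isomorphic up to parity, so the endomorphism algebra on $W$ may be $\gl_2$-like. This must be controlled using $B$-invariance, exactly as in the multiplicity argument. $B$-invariance, together with orthogonality and the fact that $B$ pairs $U_i$ with itself, forces $c_i=-c_i$, so $c_i=0$. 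On $W$, the part of $s$ preserving $B$ inside $\End(W)^{\kk[x]}$ is either the one-dimensional torus $\kk t$, or, if $W'\cong\Pi(\Pi W')^*$ up to the relevant parity, a copy of $\mathfrak{so}_2$ or $\sl_2$ spanned by $t$ and nilpotent cross maps. In either case the semisimple elements are conjugate into $\kk t$, which has supertrace $2\lambda\,\sdim W'\neq0$ because $\dim W'$ is odd. Hence $s=0$.

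\textbf{Main obstacle.} The delicate step is this analysis of the $B$-preserving semisimple centralizer on $W$ when $W'$ and $\Pi(W')^*$ have compatible parity. I would handle it with the explicit dual bases of \cref{lem:form-nil2}(2), writing the commutant as $2\times2$ blocks and checking that the trace-zero condition kills all semisimple elements.
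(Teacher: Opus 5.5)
Your necessity argument is the paper's proof: the rotation on $U_i^{\oplus 2}$, the torus elements $t_j$ with $\operatorname{str}(t_j)=2\sdim W_j'$, and a supertrace-zero combination of two of them. You also correctly allow $W=0$. For sufficiency the paper only cites \cref{cor:determining_semisimples_comm_with_x}. You try to supply the actual argument, which is the right thing to do, but one step is false as written.

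\emph{The false step.} A semisimple $s\in\mathfrak{c}_{\g_{\bar 0}}(x)$ need not act by a scalar on each chosen summand $U_i,W',W''$, and it need not preserve $W$. Write $A:=\End_{\kk[x]}(V)_{\bar 0}$. The even $\kk[x]$-maps between distinct summands lie in $\mathrm{rad}\,A$, but they can be nonzero, and semisimple elements of the centralizer can have components there. This already happens in $\mathfrak{pe}(n)$: conjugate $t$ by $\exp(\nu)$, where $\nu\in A$ is nilpotent, $B$-skew, and has a nonzero component $W'\to U_i$. The lemma preceding \cref{cor:determining_semisimples_comm_with_x} only says that the image $\bar s=(c_M)_M$ of $s$ in $A/\mathrm{rad}\,A$ is diagonal; this quotient is a product of copies of $\kk$, one per summand. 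If you instead conjugate $s$ itself into diagonal form, you lose the orthogonality of the summands. That orthogonality is exactly what you then use to get $c_i=-c_i$ and to analyse ``$s$ on $W$''.

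\emph{The repair.} Keep your idea but work modulo the radical.
\begin{enumerate}
\item Let $a\mapsto a^\dagger$ be the $B$-adjoint, $B(au,v)=B(u,a^\dagger v)$. It preserves commuting with $x$, so it is an anti-automorphism of $A$ and preserves $\mathrm{rad}\,A$. The condition $s\in\mathfrak{pe}(n)_{\bar 0}$ says $s^\dagger=-s$.
\item The decomposition is orthogonal, with $B|_{U_i}$ nondegenerate and $W',W''$ isotropic. Hence the projections satisfy $e_{U_i}^\dagger=e_{U_i}$ and $e_{W'}^\dagger=e_{W''}$. Applying $\dagger$ to $\bar s$ gives $c_{U_i}=0$ and $c_{W''}=-c_{W'}$.
\item The difference $s-\sum_M c_M e_M$ lies in $\mathrm{rad}\,A$, so it is nilpotent and has supertrace zero. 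Therefore $\operatorname{str}(s)=2c_{W'}\sdim W'$.
\item Since $\sdim W'=\pm1$, the condition $\operatorname{str}(s)=0$ forces $c_{W'}=0$. Then $s$ is both nilpotent and semisimple, so $s=0$.
\end{enumerate}
Equivalently, the eigenspaces $V_c$ of $s$ are $\kk[x]$-submodules with $V_{-c}\cong\Pi V_c^*$. Krull--Schmidt with multiplicity one then puts each $U_i\cong\Pi U_i^*$ into $V_0$, and puts $W',W''$ into $V_c,V_{-c}$; the supertrace computation is the same.

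\emph{Your ``main obstacle''.} The case you single out never occurs. For $\dim W'$ odd one has $(W')^*\cong W'$, so $W''\cong\Pi(W')^*\cong\Pi W'$. This has the opposite superdimension and is not isomorphic to $W'$. So $A/\mathrm{rad}\,A$ is commutative, and no $\sl_2$-type piece can appear on $W$.
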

\begin{proof}

    Let $x\in \mathcal{N}^{\ad}(\g_{\bar 1})$ and consider the orthogonal decomposition of the $\kk[x]$-module $V$ given in \cref{lem:form-nilp}:
$$
    V=\bigoplus_{i=1}^p U_i\oplus\bigoplus_{j=1}^q W_j.
$$
Here every $U_i$ is an indecomposable $\kk[x]$-module of even dimension (see \cref{lem:form_B_indec_U}) and every $W_j=W_j'\oplus \Pi (W_j')^*$ is a direct sum of two isotropic indecomposable $\kk[x]$-modules paired by $B$.

If all $U_i$'s are pairwise non-isomorphic and there is at most one summand of the form $W_j$, then by \cref{cor:determining_semisimples_comm_with_x}, the element $x$ is distinguished.

Assume that some summands $U_i$ appears in the above decomposition with multiplicity at least $2$. We may then take the endomorphism $s$ of $V$ given by $\begin{bmatrix}
    0 &\lambda \\ -\lambda &0
\end{bmatrix} \in \End(U_i^{\oplus 2})$. We obtain $s\in \mathfrak{c}_{\g_{\bar 0}}(x)^{\mathbf{ss}}$. By \cref{cor:determining_semisimples_comm_with_x}, $x$ is not distinguished.

Now, for any direct summand $W_j$, let $\lambda_j \in \kk$ and define $$s_j:= \begin{bmatrix}
    \lambda &0 \\ 0&-\lambda
\end{bmatrix} \in \End(W_j)=\End(W_j'\oplus W_j'').$$ 
Let $s:=\bigoplus_j s_j$ be the corresponding endomorphism of $V$ (here $s\rvert_{U_i}=0$ for all $i$). If there exists $j$ such that $\sdim W'_j=0$, we may take $\lambda_k:=\delta_{j, k}$ and obtain: $s\in \g\subset \End(V)$; analogously, if there exist $j\neq k$ for which $\sdim W_j, \sdim W_k\neq 0$ we may take $\lambda_i:=0$ for $i\neq j, k$ and $\lambda_j, \lambda_k$ such that $\lambda_j\sdim W'_j= -\lambda_k\sdim W'_k$. In both these cases, we obtain $s\in \mathfrak{c}_{\g_{\bar 0}}(x)^{\mathbf{ss}}$. By \cref{cor:determining_semisimples_comm_with_x}, $x$ is not distinguished.

This completes the proof of the lemma.

\end{proof}
\begin{corollary}\label{cor:dist_orbits_spe}
    Let $x$ be a distinguished odd element in $\g=\mathfrak{spe}(n)$. Then $x$ is $\ad$-nilpotent and there exists $x_{neat}\in \g_{neat}$ and $x_{bal}\in \g_{bal}$ such that $x=x_{neat}+x_{bal}$ and we have:
    \begin{itemize}
        \item There exists an $\osp(1|2)$-type subalgebra $\osp_{x_{neat}}\subset \g$ commuting with $x_{bal}$.
\item There exists an $\sl_2$-type subalgebra $\sl_{x_{bal}}\subset \g$ commuting with $\osp_{x_{neat}}$.
\item \InnaA{The elements $x,x_{bal}$ have a common attractor.}
    \end{itemize}
\end{corollary}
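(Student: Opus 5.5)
By \cref{lem:disting_in_spe}, after $G_{\bar 0}$-conjugation we may take the $\kk[x]$-module $V$ to be an orthogonal direct sum $V=U\oplus W$. Here $U:=\bigoplus_i U_i$ is the sum of the pairwise non-isomorphic even-dimensional indecomposable summands, and $W=W'\oplus\Pi(W')^*$ is either zero or a single pair of isotropic odd-dimensional indecomposables. Both $U$ and $W$ are non-degenerate for $B$.

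\textbf{Step 1: the splitting.} Set $x_{bal}:=x\rvert_U\oplus 0_W$ and $x_{neat}:=0_U\oplus x\rvert_W$. Each of these preserves $B$, since $x$ preserves both summands and they are orthogonal. Both are odd, so they lie in $\mathfrak{pe}(n)_{\bar 1}=\mathfrak{spe}(n)_{\bar 1}$. They commute, because they act on complementary summands. Moreover $x_{bal}$ has Jordan blocks of even size or size $1$ on $V$, and $x_{neat}$ has Jordan blocks of odd size only. By \cref{prop:balanced_in_gl} and \cref{thrm:JM} applied in $\gl(V)$, and since the faithful representation detects neatness, $x_{neat}\in\g_{neat}$ and $x_{bal}$ is strongly balanced.

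\textbf{Step 2: the two subalgebras, built summand by summand.} Let $\osp_{x_{neat}}$ be the copy of $\osp(1|2)$ obtained from Case 1 of \cref{lem:form-nil2} on $W$, with the isotropic generators $v,w$, and acting by zero on $U$. It preserves $B$ because $B\rvert_W=B_{W'}\otimes B_U$ is $\osp$-invariant. The supertrace condition for $\mathfrak{spe}$ holds automatically for the odd generators, and the even part $\sl_2$ acts by supertraceless operators; so $\osp_{x_{neat}}\subset\mathfrak{spe}(n)$. On $U$, the construction in \cref{prop:balanced_in_gl} gives an $\sl_2$-triple on each even Jordan block $U_i$ with $x\rvert_{U_i}=x_0+x_{-2}$. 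We must choose this triple to preserve $B\rvert_{U_i}$. For this we use the basis of \cref{lem:form_B_indec_U}, in which $B(x^av,x^bv)$ is nonzero exactly when $a+b=r$. On even-index and odd-index vectors, $f=x^2$ acts by shifting $v_j\mapsto v_{j+2}$, and the pairing matches weights $p$ with $-p$. Hence the standard $h$ and $e$ are $B$-skew. The resulting $\sl_{x_{bal}}$ acts by zero on $W$, so it commutes with $\osp_{x_{neat}}$, and it is traceless on each of $U_{\bar 0}$ and $U_{\bar 1}$.

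\textbf{Step 3: common attractor.} Put $f_{bal}:=\frac12[x_{bal},x_{bal}]$ and $f_{neat}:=\frac12[x_{neat},x_{neat}]$. Then $\frac12[x,x]=f_{bal}+f_{neat}$, and this is nilpotent, so $s=0$. Take the $\sl_2$-triple of $\frac12[x,x]$ to be the sum of the triple in $\sl_{x_{bal}}$ and the triple in $\osp_{x_{neat}}$. The $h$-decomposition of $x$ is then $(x_0+x_{-2})+x_{neat}$, where $x_{neat}$ has weight $-1$. Its weight-zero part is $x_0$, which is also the weight-zero part of $x_{bal}$ with respect to the triple of $\sl_{x_{bal}}$. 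So $x_0\in att(x)\cap att(x_{bal})$.

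\textbf{Main obstacle.} The hardest step is Step 2 on the $U_i$: we must check that the $\sl_2$-triple can be made $B$-invariant for an odd form with the parity constraints of \cref{lem:form_B_indec_U}, and that everything lies in $\mathfrak{spe}(n)$ rather than only in $\mathfrak{pe}(n)$. I would first compute explicitly in the normalized basis of \cref{lem:form_B_indec_U}, and I expect the sign $(-1)^{k\bar v+k(k+1)/2}$ to be compatible with skewness of $h$ and $e$.
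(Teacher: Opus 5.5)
Your proposal is correct and follows the paper's proof. You split $x$ along the orthogonal decomposition $V=\bigoplus_i U_i\oplus W$ of \cref{lem:disting_in_spe}, build $\sl_{x_{bal}}$ on $\bigoplus_i U_i$ and $\osp_{x_{neat}}$ on $W$ so that they commute, and add the two triples to see that $x_0$ is an attractor of both $x_{bal}$ and $x$. Your Step 2 also supplies what the paper dismisses with ``clearly'': that these subalgebras lie in $\mathfrak{spe}(n)$ and not just in $\gl(V)$, which is what makes $x_{bal}$ strongly balanced in $\g$ rather than only in $\gl(V)$. The one point you leave open, $B$-skewness of $e$, closes quickly: $e$ is uniquely determined by $h$ and $f$, and $A\mapsto -A^{\dagger}$ ($B$-adjoint) is an automorphism of $\gl(U_i)_{\bar 0}$ that fixes $h$ and $f$, so it fixes $e$.
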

\begin{proof}
Let $x\in \g_{\bar 1}$ be distinguished and consider the corresponding $\kk[x]$-decomposition $V=\bigoplus_{i=1}^p U_i \oplus W$ as in \cref{lem:disting_in_spe}. Let $x_{bal}:=x\rvert_{\bigoplus_i U_i}$ and $x_{neat}:=x\rvert_{W}$. Then $x_{bal}\in \g_{bal}$ and $x_{neat}\in \g_{neat}$. Clearly, we may choose an $\sl_2$-type subalgebra $\sl_{x_{bal}}\subset \gl(\bigoplus_i U_i)$ and an $\osp(1|2)$-type subalgebra $\osp_{x_{neat}}\subset \gl(W)$ which will commute with each other.

So it remains only to check that $att(x)=att(x_{bal})$. Indeed, let $(e,h, f:=\frac{1}{2}[x_{bal}, x_{bal}])$ and $(e',h', f':=\frac{1}{2}[x_{neat}, x_{neat}])$ be the $\sl_2$-triples in $\sl_{x_{bal}},\,\osp_{x_{neat}} $ respectively. Since the $\sl_{x_{bal}}, \osp_{x_{neat}}$ commute, the triple $(e+e', h+h', f+f')$ is also an $\sl_2$-triple, with $f+f'=\frac{1}{2}[x,x]$.

Let $x_{bal}=\sum_{i\leq 0} x_i$ be the $\ad_h$-decomposition of $x_{bal}$, with $[h, x_i]=ix_i$. The $\ad_{h+h'}$-decomposition of $x$ is then $x_0+(x_{-1}+x_{neat})+\sum_{i\leq 2} x_i$, with $[h+h', x_i]=ix_i$ when $i\neq -1$, $[h+h', x_{-1}+x_{neat}]=-(x_{-1}+x_{neat})$. We conclude that $x_0$ is an attractor of both $x_{bal}, x$ as required.
\end{proof}

\subsection{Nilpotent orbits in queer Lie superalgebras}

Let $\g=\q(n)$ and denote by $V=\kk^{n|n}$ the defining representation of $\q(n)$.
The $Q(n)_{\bar 0}$-orbits in $\g_{\bar 1}$ are just the $GL_n$-orbits in $\gl_n$, so we have:
\begin{lemma}
\mbox{}
\begin{enumerate}
    \item The $Q(n)_{\bar 0}$-orbits in $\mathcal{N}^{\ad}(\g_{\bar 1})$ are parameterized by (colorless) Young diagrams on $n$ boxes: for $x\in \mathcal{N}^{\ad}(\g_{\bar 1})$, the rows in the Young diagram for $x$ correspond to a pair of Jordan blocks of $x$ in $V$ having the same size, but with opposite parities of the basis vectors.
    \item   Neat orbits in $\g_{\bar 1}$ are parametrized by Young diagrams of size $n$ whose rows all have odd length.
\end{enumerate}
    
\end{lemma}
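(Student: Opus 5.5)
The statement reduces to linear algebra over $\gl_n$. The plan rests on the standard description of $\q(n)$. Write elements of $\gl(n|n)$ in block form. Then
\[
\q(n)_{\bar 0}=\left\{\begin{pmatrix} A&0\\0&A\end{pmatrix}\right\}\cong\gl_n,\qquad
\q(n)_{\bar 1}=\left\{\begin{pmatrix} 0&B\\B&0\end{pmatrix}\right\}\cong\gl_n .
\]
The adjoint action of $Q(n)_{\bar 0}=GL_n$ on $\q(n)_{\bar 1}$ is conjugation $B\mapsto gBg^{-1}$. Thus $Q(n)_{\bar 0}$-orbits in $\g_{\bar 1}$ are exactly $GL_n$-conjugacy classes in $\gl_n$.

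\textbf{Step 1: ad-nilpotency.} For $x=x_B$ we have $\frac12[x,x]=x^2=\diag(B^2,B^2)$, which corresponds to $B^2\in\gl_n$. So $x$ is $\ad$-nilpotent iff $B^2$ is nilpotent, iff $B$ is nilpotent. The nilpotent orbits are therefore the nilpotent conjugacy classes of $\gl_n$, i.e. Young diagrams with $n$ boxes.

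\textbf{Step 2: the Jordan blocks of $x$ on $V$.} Let $v_1,\dots,v_k$ be a Jordan chain of $B$ with $Bv_i=v_{i+1}$. Put $\bar v_i$ for the even copy and $\tilde v_i$ for the odd copy in $V=\kk^n\oplus\Pi\kk^n$. Then $x$ sends $\bar v_1\mapsto\tilde v_2\mapsto\bar v_3\mapsto\cdots$ and $\tilde v_1\mapsto\bar v_2\mapsto\cdots$. Hence each Jordan block of size $k$ of $B$ gives two Jordan blocks of $x$, each of size $k$, whose starting vectors have opposite parities. This proves (1), and identifies the checkered diagram of $x$ with the diagram of $B$ with every row doubled and oppositely colored.

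\textbf{Step 3: neatness, part (2).} By the super Jacobson--Morozov theorem (\cref{thrm:JM}), applied to the faithful representation $V$, $x$ is neat iff $x|_V$ is a neat operator. A neat operator is one all of whose indecomposable summands have nonzero superdimension, i.e. all Jordan blocks have odd size. By Step 2 this holds iff every row of the Young diagram of $B$ has odd length.

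The only point needing care is that $V$ is a faithful $G$-representation for $G=Q(n)$, which holds by definition. No real obstacle is expected; the main thing to get right is the parity bookkeeping in Step 2.
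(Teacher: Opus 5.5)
Your proof is correct and follows the same route as the paper. The paper simply notes that $Q(n)_{\bar 0}$-orbits in $\g_{\bar 1}$ are $GL_n$-conjugacy classes in $\gl_n$, and it reads off neatness from the Jordan blocks on the faithful module $V$ via \cref{thrm:JM}, just as in \cref{lem:neat_and_balanced_diagrams_type_A}; your Step 2 only spells out the parity bookkeeping that the paper leaves implicit.
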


\begin{lemma}\label{lem:q-dist}
Let $x\in \g_{\bar 1}$ be a distinguished odd element
and $\frac{1}{2}[x,x]=s+f$ be the Jordan decomposition of $\frac{1}{2}[x,x]$. We have: 
\begin{enumerate}
    \item $s=\lambda \id_V$.
    \item If $s\neq 0$, then $f$ is a principal nilpotent element of $\g_{\bar 0}$. Considering a principal $\sl_2$-triple $(e,h,f)$ in $\g_{\bar 0}$, we have a decomposition $x=\sum_{i=0}^{n-1}x_{-2i}$ such that $[h,x_i]=ix_i$ for every $i$.
    \item If $s=0$, then $V$ decomposes into a direct sum of two indecomposable $\kk[x]$- modules $W\oplus \Pi W$. In that case, $x$ is neat if $n$ is odd, and $x$ is balanced if $n$ is even.
\end{enumerate}
    
\end{lemma}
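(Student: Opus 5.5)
The proof will mirror \cref{lem:sl-dist}, using the identification $\q(n)_{\bar 0}\cong\gl_n$, $\q(n)_{\bar 1}\cong\gl_n$ (adjoint action). Write an odd element as $x=\begin{pmatrix}0&X\\ X&0\end{pmatrix}$ with $X\in\gl_n$. Then $\frac12[x,x]$ corresponds to $X^2\in\gl_n$, and a semisimple $A\in\gl_n$ centralizes $x$ iff $[A,X]=0$. Since $Z(\q(n))_{\bar 0}=\kk\id_V$, $x$ is distinguished iff every semisimple element of $\gl_n$ commuting with $X$ is scalar. This happens iff $X$ has a single eigenvalue and a single Jordan block, i.e.\ $X=\mu 1_n+J$ with $J$ the principal nilpotent Jordan block. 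Indeed, if $X$ has several generalized eigenspaces, or several Jordan blocks within one eigenvalue, the projector onto one block-summand is a non-scalar semisimple centralizing element (cf.\ \cref{cor:determining_semisimples_comm_with_x}).

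For (1): by \cref{lem:dist-jor}, $s\in Z(\g)_{\bar 0}=\kk\id_V$. Concretely, $X^2=\mu^2 1_n+2\mu J+J^2$, so $s=\mu^2\id$, $\lambda=\mu^2$, and $f=2\mu J+J^2$.

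For (2): if $s\neq0$ then $\mu\neq0$, so $f=2\mu J+J^2$ is a nilpotent with a single Jordan block of size $n$, hence principal in $\gl_n=\g_{\bar 0}$. Take a principal $\sl_2$-triple $(e,h,f)$ and grade $\gl_n$ by $\ad_h$. The centralizer of a principal nilpotent $f$ in $\gl_n$ is spanned by the powers $f^k$, which are $\ad_h$-homogeneous of degree $-2k$. Since $X$ commutes with $f$ (being a polynomial in $J$, hence in $f$ because $\mu\ne0$), $X$ is a polynomial in $f$. Transporting this to $\g_{\bar 1}$, which is the adjoint $\gl_n$-module, gives $x=\sum_{i=0}^{n-1}x_{-2i}$ with $x_{-2i}$ of $h$-weight $-2i$. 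This decomposition has no odd degrees, in particular $x_{-1}=0$. The only care needed is to check that the $\ad_h$-grading on $\g_{\bar 1}$ matches that on $\gl_n$ under the identification; this holds because $\g_{\bar 1}\cong\gl_n$ as $\g_{\bar 0}$-modules.

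For (3): if $s=0$ then $\mu=0$ and $X=J$, so $V=W\oplus\Pi W$, where $W$ is a single Jordan block of $x$ of length $n$ with alternating parities. Both summands are indecomposable of dimension $n$. If $n$ is odd, all Jordan blocks of $x$ on $V$ have odd size, so by \cref{thrm:JM} (applied to the faithful module $V$, after restricting to $\gl(n|n)$) $x$ is neat. If $n$ is even, the blocks are of even size. To get balancedness, I would show strong balancedness: take the principal triple through $f=J^2$ inside $\gl_n$, extended to $\q(n)_{\bar0}$, which is possible since $J^2$ has two blocks of size $n/2$ and lies in the $\sl_2$ generated as in \cref{prop:balanced_in_gl}. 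Then check that $x$ has no $-1$ component. Alternatively, and this is the route I would actually take, argue via \cref{prop:balanced_in_gl} that $x$ is strongly balanced in $\gl(n|n)$ and that the $\sl_2$-triple produced there, built from the two Jordan blocks exchanged by the parity-switching map, commutes with that map and hence lies in $\q(n)$.

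The main obstacle is this last compatibility: ensuring that the $\sl_2$-triple and the decomposition $x=x_0+x_{-2}$ from \cref{prop:balanced_in_gl} can be chosen $\q(n)$-equivariantly. I would handle it by making the Jordan bases of the two blocks correspond under the parity-switching isomorphism, so that $h$, $x_0$ and $x_{-2}$ all visibly commute with it.
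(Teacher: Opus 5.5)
Your proposal is correct and is essentially the paper's proof: distinguishedness forces $X$ to be a single Jordan block, and (2) follows because $X$ lies in the centralizer $\kk[f]$ of the principal nilpotent $f$ (the paper writes this polynomial out as a binomial series in $C$). For $n$ even, both you and the paper build the $\sl_2$-triple of \cref{prop:balanced_in_gl} on $W$ and take it diagonally in $\q(n)_{\bar 0}\cong\gl_n$. The remaining differences are cosmetic. In (3) you read off $V=W\oplus\Pi W$ from the normal form $X=J$ rather than via $V\cong\Pi V$. The compatibility you flag as the main obstacle is automatic if you work in $\q(n)_{\bar 0}\cong\gl_n$, where the triple through $J^2$ gives $J=J_0+J_{-2}$ directly (note $J^2$ has two blocks of size $n/2$, so it is not principal as you wrote).
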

 \InnaA{By \cref{prop:dist_square_implies_dist}, the above conditions on $s,f$ are also sufficient to ascertain that the element $x$ is distinguished.}

\begin{proof} 

\begin{enumerate}
    \item
By \cref{lem:dist-jor} we have: $s\in Z(\g)_{\bar 0}$, so $ s=\lambda\id_V$.

    \item Write $x=\left(\begin{matrix}0&B\\ B&0\end{matrix}\right)$ where $B\in \gl_n$. Since $x$ is distinguished, the Jordan normal form of $B$ consists of just one Jordan block. Write $B=\mu (1_n+J)$ where $J$ is the nilpotent Jordan block of size $n$ and $\mu\in \kk$. Then $\mu^2=\lambda$ and $f=\lambda\left(\begin{matrix}C&0\\ 0&C\end{matrix}\right)$ where $C=J^2+2J$. This implies that $f$ is a principal nilpotent element in $\g_{\bar 0}\cong \gl_n$ and we may embed it into a principal $\sl_2$-triple $(e,h,f)$ in $\g_{\bar 0}$. We write $h=:\left(\begin{matrix}H&0\\ 0&H\end{matrix}\right)$ where $H\in \gl_n$.
    
    Furthermore, $[f,x]=0$ implies that $B$ is a polynomial of $C$. The conditions $B=\mu(1_n+J)$, $C=J^2+2J$ imply that this polynomial is given by
    $$B=\mu\left(1_n+\frac{1}{2}C+\frac{1}{8}C^2+\dots+\binom{\frac{1}{2}}{n-1}C^{n-1}\right).$$
   From $[h,f]=-2f$ we have: $[H, C]=-2C$, so the $\ad_H$-decomposition of $B$ is given by $B=\sum_{k\leq 0} B_{2k}$, where $B_{-2k}=\mu \binom{\frac{1}{2}}{k}C^{k}$. This implies the $\ad_h$-decomposition $x=x_0+x_{-2}+\dots+x_{2-2n}$, where
    $[h, x_{i}]=ix_i$ and
    $$ x_0 = \left(\begin{matrix}0&\mu 1_n\\ \mu 1_n&0\end{matrix}\right) \;\;\; \text{ and }\;\;\; \forall ~k\geq 0, ~x_{-2k}=\left(\begin{matrix}0&\mu\binom{\frac{1}{2}}{k}C^{k}\\ \mu\binom{\frac{1}{2}}{k}C^{k}&0\end{matrix}\right).$$
    Clearly, in this case $x$ is strongly balanced.
    \item Consider the decomposition of $V$ into a direct sum of indecomposable $\kk[x]$-modules.

    We start by noticing that if $W$ is an indecomposable $\kk[x]$-module, then $W$ is not isomorphic to $\Pi W$ as a $\kk[x]$-module. Yet we have an isomorphism of $\g$-modules $V\cong \Pi V$, which implies that we have a $\kk[x]$-decomposition $V=\bigoplus_{i=1}^r (W_i\oplus \Pi W_i)$ where $W_i$ are all indecomposable. Note that $\g_{\bar 0}=\q(n)_{\bar 0}\subset \End(V)$ contains all endomorphisms of $V$ given by $diag(A_1, \ldots, A_r)$ where $A_i\in \End(W_i\oplus \Pi W_i)$ is of the form $\begin{bmatrix} A &0\\0 &A\end{bmatrix}
    $. But $x$ is distinguished, therefore we have (see \cref{cor:determining_semisimples_comm_with_x}): $r=1$ and $V=W\oplus \Pi W$, where $W$ is an indecomposable $\kk[x]$-module. 
    
    If $n$ is odd then $W, \Pi W$ are of odd dimension and so $x$ is a distinguished neat element. If $n$ is even then $x$ is a distinguished strongly balanced element \InnaA{(the subalgebra $\sl_x$ is constructed in $\gl(W)$ just as in the proof of \cref{prop:balanced_in_gl}, and corresponds to a subalgebra of $\q(n)_{\bar 0}$)}. A corresponding attractor of $x$ is then
    $$x_0=\left(\begin{matrix}0&B\\ B&0\end{matrix}\right) \;\; \text{ where } \;\; B = \diag\left(\left(\begin{matrix}0&1\\ 0&0\end{matrix}\right)~, ~ \left(\begin{matrix}0&1\\ 0&0\end{matrix}\right)~, \ldots,~ \left(\begin{matrix}0&1\\ 0&0\end{matrix}\right)\right).$$ 
    \end{enumerate}
\end{proof}
\begin{comment}
  %\begin{lemma}\label{lem:q-dist-nilp} Let $x\in \p\q(n)_{\bar 1}$ is distinguished nilpotent. Then it is either strongly balanced or neat.
%\end{lemma}
%\begin{proof} A distinguished nilpotent $x\in \p\q(n)_{\bar 1}$ is the image of the odd element satisfying the conditions of Lemma \ref{lem:q-dist} under the projection $\q(n)\to \p\q(n)$.
%If the preimage is not nilpotent the statement follows from Lemma \ref{lem:q-dist} (2). Thus, it remains to check the statement for the case when the preimage of $x$ is nilpotent.

%If $n$ is odd   Lemma \ref{lem:q-dist} (3) implies that $x$ is neat. If $n=2k$ is even one can choose bases $e_1,\dots,e_n$ in $V_{\bar 0}$ and $f_1,\dots, f_n$ in $V_{\bar 1}$ such that $\Pi(e_i)=f_i$ and the action of $x$ in this basis is given by $$e_1\mapsto f_2\mapsto e_3\mapsto\dots\mapsto f_n,$$ $$f_1\mapsto e_2\mapsto f_3\mapsto\dots\mapsto e_n.$$
%We choose $h\in\p\q(n)_{\bar 1}$ such that  $$h(e_{2i+1})=(2i-k+1) e_{2i+1}, h(e_{2i})=(2i+1-k) e_{2i},$$ $$h(f_{2i+1})=(2i-k+1) f_{2i+1}, h(f_{2i})=(2i+1-k) f_{2i}.$$ Then $x=x_0+x_{-2}$ where  $$x_0(e_{2i+1})=f_{2i+2},\ x_0(f_{2i+1})=e_{2i+2},\ x_0(e_{2i})=x_0(f_{2i})=0.$$
%\end{proof}
  
\end{comment}
By \cref{lem:pg_distinguished_orbits}, we have:
\InnaA{
\begin{lemma}
For $\p\q(n)$, the odd distinguished elements are the same as for $\q(n)$. For $\g=\s\q(n), \p\s\q(n) $, the distinguished orbits are the distinguished orbits in $\q(n)$ where $s=0$.
\end{lemma}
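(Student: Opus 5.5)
The plan is to reduce everything to \cref{lem:q-dist} and \cref{lem:pg_distinguished_orbits}. The key observation is that all four superalgebras $\q(n)$, $\p\q(n)$, $\s\q(n)$, $\p\s\q(n)$ have essentially the same even part and the same group $Q(n)_{\bar 0}\cong GL_n$ acting. Recall that $x$ is distinguished iff $\fc_{\g}(x)^{\mathbf{ss}}=Z(\g)_{\bar 0}$ (\cref{def:distinguished}). So for subalgebras sharing the even part, the condition only involves semisimple elements of $\g_{\bar 0}\cong\gl_n$ commuting with $x$, together with the even center.

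For $\p\q(n)=\q(n)/Z(\q(n))$, I would first note that $Z(\q(n))=\kk\id_V$ is purely even: an odd central element $\begin{pmatrix}0&B\\ B&0\end{pmatrix}$ must commute with all $\begin{pmatrix}A&0\\0&A\end{pmatrix}$, forcing $B$ scalar, and then $[x,x]$-type brackets with other odd elements rule out $B\neq 0$. Then \cref{lem:pg_distinguished_orbits} gives a bijection between distinguished odd orbits of $\q(n)$ and $\p\q(n)$. The same lemma reduces $\p\s\q(n)$ to $\s\q(n)$, since $\p\s\q(n)=\s\q(n)/\kk\id_V$ and $\kk\id_V\subset\s\q(n)_{\bar 0}=\gl_n$ is central and even.

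For $\s\q(n)$, I would use that $\s\q(n)_{\bar 0}=\q(n)_{\bar 0}=\gl_n$. Hence for $x\in\s\q(n)_{\bar 1}$ the set $\fc_{\g_{\bar 0}}(x)^{\mathbf{ss}}$ is literally the same in both algebras. I would also check $Z(\s\q(n))_{\bar 0}=\kk\id_V=Z(\q(n))_{\bar 0}$; the point is that $\id_V$ still commutes with $\s\q(n)_{\bar 1}$, and nothing else in $\gl_n$ commutes with all of $\s\q(n)_{\bar 1}=\sl_n$. It follows that $x\in\s\q(n)_{\bar 1}$ is distinguished in $\s\q(n)$ iff it is distinguished in $\q(n)$. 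Moreover the $\s\q(n)_{\bar 0}$-orbits coincide with the $Q(n)_{\bar 0}$-orbits, since both are $GL_n$-conjugacy.

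It then remains to decide which distinguished elements of $\q(n)$, described in \cref{lem:q-dist}, have $B$ traceless, i.e. lie in $\s\q(n)_{\bar 1}$. In case (2), $s=\lambda\id_V\neq 0$ and $B=\mu(1_n+J)$ with $\mu^2=\lambda\neq 0$, so $\operatorname{tr}B=n\mu\neq 0$ and $x\notin\s\q(n)$. In case (3), $s=0$ forces $B^2$ nilpotent, hence $B$ nilpotent and traceless, so the whole orbit lies in $\s\q(n)_{\bar 1}$. This gives exactly the distinguished orbits of $\q(n)$ with $s=0$.

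The only step needing real care is the center computation: one must confirm that no extra even central elements appear in $\s\q(n)$ (so that the distinguishedness criterion does not change) and that the odd centers vanish (so that \cref{lem:pg_distinguished_orbits} applies as stated). I expect this to be a short direct matrix check.
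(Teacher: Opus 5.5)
Your proposal is correct and takes essentially the same route as the paper, which simply cites \cref{lem:pg_distinguished_orbits} to pass from $\q(n)$ and $\s\q(n)$ to their projective quotients. You additionally spell out the $\s\q(n)$ step that the paper leaves implicit: since $\s\q(n)_{\bar 0}=\q(n)_{\bar 0}$ and both even centers equal $\kk\id_V$, distinguishedness is the same in $\s\q(n)$ and in $\q(n)$, and by \cref{lem:q-dist} a distinguished $x$ has $\operatorname{tr}B=0$ exactly when $s=0$.
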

\begin{corollary}\label{cor:psl-dist} Every odd distinguished element in $\q(n), \p\q(n), \s\q(n), \p\s\q(n)$ is either strongly balanced or neat. 
    \end{corollary}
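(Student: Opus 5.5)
The statement (\cref{cor:psl-dist} for the queer family) will follow by combining \cref{lem:q-dist} with the surjection observation made at the start of \cref{sec:orbits_in_classical}: strong balancedness and neatness are preserved under surjective homomorphisms $q:\g\twoheadrightarrow\g'$. The plan treats each of the four algebras in turn and reduces everything to $\q(n)$.

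\textbf{The case $\q(n)$.} Let $x\in\q(n)_{\bar 1}$ be distinguished and write $\frac12[x,x]=s+f$. By part (1) of \cref{lem:q-dist}, $s=\lambda\id_V$. If $s\neq0$, part (2) gives an $\ad_h$-decomposition $x=\sum_k x_{-2k}$ with respect to a principal $\sl_2$-triple. Since there is no $x_{-1}$ term, $x$ is strongly balanced by \cref{def:strongly_bal}. If $s=0$, part (3) shows that $x$ is neat for $n$ odd and strongly balanced for $n$ even.

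\textbf{The case $\p\q(n)=\q(n)/\kk\id$.} By \cref{lem:pg_distinguished_orbits}, a distinguished $\bar x\in\p\q(n)_{\bar 1}$ is the image of a distinguished $x\in\q(n)_{\bar 1}$; since $Z(\q(n))$ is purely even, the odd parts are literally identified. The image of a strongly balanced (resp. neat) element under the quotient map is strongly balanced (resp. neat), so the $\q(n)$ case gives the claim.

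\textbf{The cases $\s\q(n)$ and $\p\s\q(n)$.} The preceding lemma says their distinguished orbits are exactly those distinguished orbits of $\q(n)$ with $s=0$. These elements are therefore neat or strongly balanced in $\q(n)$, and we must check that this persists inside the subalgebra $\s\q(n)$.

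\begin{itemize}
\item Neatness: the argument of \cref{thrm:JM} shows neatness is detected on the faithful module $V$, and $V$ restricts faithfully to $\s\q(n)$.
\item Strong balancedness: the $\sl_2$-triple built in the proof of \cref{lem:q-dist}(3) lies in $\q(n)_{\bar 0}=\gl_n=\s\q(n)_{\bar 0}$. The attractor $x_0$ exhibited there has off-diagonal block $B$ nilpotent, hence traceless, so $x_0\in\s\q(n)_{\bar 1}$. The same holds for $x_{-2}=x-x_0$. So the decomposition $x=x_0+x_{-2}$ takes place inside $\s\q(n)$.
\end{itemize}

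For $\p\s\q(n)$ we then pass to the quotient by the center, exactly as in the $\p\q(n)$ step.

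The main point requiring care is this last verification that the witnesses (the $\sl_2$-triple and the $\ad_h$-components) stay inside the subalgebra $\s\q(n)$. I expect it to reduce to the traceless-block remark above.
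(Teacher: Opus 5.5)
Your proposal is correct and follows essentially the paper's route: \cref{lem:q-dist} handles $\q(n)$; \cref{lem:pg_distinguished_orbits} and the surjection observation at the start of \cref{sec:orbits_in_classical} handle the quotients $\p\q(n)$ and $\p\s\q(n)$; and the distinguished elements of $\s\q(n)$ are identified with those of $\q(n)$ having $s=0$. Your additional check that the $\sl_2$-triple, the $\ad_h$-components and neatness survive in the subalgebra $\s\q(n)$ fills in a step the paper leaves implicit. That step is in fact automatic, since $\s\q(n)_{\bar 0}=\q(n)_{\bar 0}$ and $\s\q(n)$ is $\ad_h$-stable.
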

}
\subsection{Nilpotent orbits in exceptional Lie superalgebras}\label{sec:char_in_exceptional}

Let $\g$ be a basic simple Lie superalgebra of defect $1$. Simple Lie superalgebras of defect $1$ are $D(2,1;\alpha)$, $AG_2$ (also known as $G_3$, $G(1,2)$), $AB_3$ (also known as $F_4, F(1,3)$), $\mathfrak{sl}(1|n)$,
$\mathfrak{osp}(2|2n)$, $\mathfrak{osp}(3|2n)$, $\mathfrak{osp}(m|2)$. 

The odd self-commuting cone $\g^{\mathbf{sc}}:=\{x\in \g_{\bar 1}:[x,x]=0\}$ then has either one or two nontrivial $G_{\bar 0}$-orbits; in the latter case, the two orbits have the same dimension. These are the $G_{\bar 0}$-orbits in $\mathfrak g_{\bar 1}$ of minimal positive dimension. 

Given a self-commuting element $0\neq u\in \g^{\mathbf{sc}}$, the Lie superalgebra $DS_u(\g)$ is either a simple Lie algebra or $DS_u(\g)\simeq\mathfrak{osp}(1|2k)$.
The latter case occurs only for $\g=\mathfrak{osp}(3|2n)$.

A non-trivial $G_{\bar 0}$-orbit in $\g^{\mathbf{sc}}$ is an orbit of a root vector $x_{\alpha}\in\g_{\alpha}$ where $\alpha$ is an isotropic root.
The root vectors $x_{\alpha}$ and $x_{-\alpha}$ generate  an $\mathfrak{sl}(1|1)$-subalgebra with its center spanned by the Cartan element $h_\alpha$ corresponding to the root $\alpha$.
Note that $(h_\alpha,h_\alpha)=0$ and $DS_{x_\alpha}(\g)$ can be computed from the following relation:
\begin{equation}\label{eq:def1}
\g^{h_\alpha}=DS_{x_\alpha}(\g)\oplus \mathfrak{gl}(1|1)   
\end{equation}
Note that the above formula works for any finite-dimensional simple Kac-Moody superalgebra.

\begin{lemma}\label{lem:auxdef1} Let $\g$ be a simple finite-dimensional Kac-Moody superalgebra and $x_\alpha$ be a root vector for an isotropic root $\alpha$, then
$[x_\alpha,\g]_{\bar 0}$ is a solvable Lie algebra.
    \end{lemma}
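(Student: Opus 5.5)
We need to show that $[x_\alpha,\g]_{\bar 0}$ is solvable. The plan is to work inside a Cartan subalgebra $\h\subset\g_{\bar 0}$ and decompose by roots. Since $x_\alpha\in\g_\alpha$, we have
$$[x_\alpha,\g]_{\bar 0}=[x_\alpha,\g_{\bar 1}]=\bigoplus_{\beta\text{ odd}}[x_\alpha,\g_\beta].$$
The summand $\beta=-\alpha$ contributes $\kk h_\alpha$. For any other odd root $\beta$, the summand $[x_\alpha,\g_\beta]$ lies in the even root space $\g_{\alpha+\beta}$, or vanishes when $\alpha+\beta$ is not a root. Thus $\mathfrak a:=[x_\alpha,\g]_{\bar 0}$ is spanned by $h_\alpha$ and by root vectors of even roots of the form $\gamma=\alpha+\beta$.

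The key observation is that every such even root $\gamma$ satisfies $(\gamma,\alpha)=(\alpha,\alpha)+(\beta,\alpha)=(\beta,\alpha)$. I want to show that these roots all lie in a "half-space" defined by $h_\alpha$, or more precisely that $\mathfrak a$ sits in a Borel-type subalgebra. To get there, I would use the fact that $\mathfrak a=[x_\alpha,\g]\cap\g_{\bar 0}$ is an ideal in $\g^{x_\alpha}_{\bar 0}$, and that $h_\alpha=[x_\alpha,x_{-\alpha}]\in\mathfrak a$.

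The cleaner route is the following. Suppose $\mathfrak a$ were not solvable. Then, by Levi decomposition, $\mathfrak a$ contains an $\sl_2$-triple $(e,h,f)$ with $e,f\in[x_\alpha,\g]$. Recall from the proof of \cref{lem:balancedfin} that any $\sl_2$ inside the ideal $[x_0,\g]\subset\g^{x_0}$ acts trivially on $\g^{x_0}/[x_0,\g]=DS_{x_\alpha}(\g)$. I would combine this with \eqref{eq:def1}, $\g^{h_\alpha}=DS_{x_\alpha}(\g)\oplus\gl(1|1)$, to bound the size of $\mathfrak a$. Concretely, $\g^{x_\alpha}=\kk x_\alpha\oplus[x_\alpha,\g]\oplus(\text{lift of } DS)$, and $[x_\alpha,\g]\cong\g/\g^{x_\alpha}$ as a vector space.

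The approach I actually commit to is a grading argument. Choose $t\in\h$ with $\alpha(t)=0$ and $\beta(t)\ne0$ generic otherwise; better, use the element $h_\alpha$ together with a grading $d\in\h$ satisfying $\alpha(d)=1$. Every root vector in $\mathfrak a$ other than $h_\alpha$ has root $\gamma=\alpha+\beta$. I claim that the nonzero brackets among these vectors, together with $h_\alpha$, keep increasing the $d$-degree in a nilpotent way modulo $\kk h_\alpha$. The idea is to show that $\mathfrak a\cap\h=\kk h_\alpha$ and that $\mathfrak a/\kk h_\alpha$ is nilpotent, which forces $\mathfrak a$ to be solvable.

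To show that $\mathfrak a\cap\h=\kk h_\alpha$: if $\gamma$ and $-\gamma$ both occurred in $\mathfrak a$, their bracket would give a nonzero multiple of $h_\gamma$ in $\mathfrak a\cap\h$, which is impossible unless $h_\gamma\in\kk h_\alpha$. I would check that case separately, via $(\gamma,\alpha)$ and using $(\alpha,\alpha)=0$. Once no pair $\pm\gamma$ occurs, the set of roots of $\mathfrak a$ contains no opposite pairs and is closed under addition. It therefore lies in the positive system of some Borel subalgebra, so $\mathfrak a\subset\h+\mathfrak n$ is solvable.

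The main obstacle is excluding $\pm\gamma$ both of the form $\alpha+\beta$, that is, $\gamma=\alpha+\beta_1$ and $-\gamma=\alpha+\beta_2$ with $\beta_1,\beta_2$ odd. Then $\beta_1+\beta_2=-2\alpha$. Here I would use the Jacobi identity: $[[x_\alpha,y_1],[x_\alpha,y_2]]=\pm[x_\alpha,[y_1,[x_\alpha,y_2]]]$, using $[x_\alpha,x_\alpha]=0$. This lies in $[x_\alpha,\g_{-\alpha}]\subset\kk h_\alpha$. So the bracket of the $\gamma$ and $-\gamma$ root vectors is $h_\gamma$ up to a scalar, and it lies in $\kk h_\alpha$, forcing $\gamma\propto\alpha$. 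Since $\alpha$ is isotropic and $\gamma$ is even and non-isotropic, we get a contradiction.

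With this, $\mathfrak a$ is spanned by $h_\alpha$ together with root vectors whose roots contain no opposite pair. Moreover, $[\mathfrak a,\mathfrak a]$ lies in $\mathfrak n$ for a suitable Borel subalgebra, which gives solvability.
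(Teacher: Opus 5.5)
Your argument is correct once the exploratory paragraphs are discarded, but both of its key steps differ from the paper's.

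\textbf{Excluding opposite pairs.} The paper argues combinatorially. If $\g_{\pm\gamma}\subset[x_\alpha,\g]$, then $\gamma-\alpha$ and $-\gamma-\alpha$, hence also $\gamma+\alpha$, are roots. This is impossible, because for an isotropic $\alpha$ and a root $\gamma\neq\pm\alpha$ at most one of $\gamma\pm\alpha$ is a root. You instead use $[x_\alpha,[x_\alpha,\cdot\,]]=0$ to show $[\g_\gamma,\g_{-\gamma}]\subset[x_\alpha,\g_{-\alpha}]=\kk h_\alpha$. This forces $\gamma\in\kk\alpha$, which contradicts the non-isotropy of even roots. Your version needs only the invariant form and the subalgebra property of $[x_\alpha,\g]$, not the root-string fact about isotropic roots.

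\textbf{Concluding solvability.} The paper notes that the Killing form of $\fl=\bigoplus_{\gamma}\g_\gamma$ vanishes by $\h$-invariance, since no two roots of $\fl$ sum to zero. Cartan's criterion then makes $\fl$ solvable, and $\kk h_\alpha+\fl$ is solvable because $\fl$ is an ideal of codimension one. You instead use that the set $S$ of even roots occurring in $\mathfrak a$ is closed with $S\cap(-S)=\emptyset$, hence lies in a positive system. That implication is a genuine theorem (Bourbaki, Lie Groups and Lie Algebras, Ch.~VI, \S1.7, Prop.~22), not an immediate observation. You should either cite it or substitute the paper's Killing-form argument, which applies verbatim once opposite pairs are excluded.

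\textbf{Remove the abandoned routes.} In particular, ``$\mathfrak a/\kk h_\alpha$ is nilpotent'' is ill-posed. The line $\kk h_\alpha$ is not an ideal of $\mathfrak a$, since $[h_\alpha,\g_\gamma]\neq 0$ whenever $(\gamma,\alpha)\neq 0$. It is the root part that is an ideal, with one-dimensional quotient.
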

    \begin{proof} 

We have $$[x_\alpha,\g]=\C h_{\alpha}\oplus \bigoplus_{\beta\in\Delta}[x_\alpha,\g_\beta],$$
where $\Delta$ is the set of roots of $\g$. Then from the property of root system we have for any root $\beta\neq\pm\alpha$:
\begin{itemize}
    \item If $(\beta,\alpha)=0$ then $\beta\pm\alpha$ is not a root.
    \item If $(\beta,\alpha)\neq 0$ then exactly one of $\beta+\alpha$, $\beta-\alpha$ is a root.    
\end{itemize}
Let $\Delta'$ be the set of roots $\gamma$ such that $\g_{\gamma}\subset[x_\alpha,\g]$. We claim that if $\gamma\in\Delta'$ then $-\gamma\notin\Delta'$. Indeed assume that $-\gamma$ is in $\Delta'$. We have $\gamma-\alpha,-\gamma -\alpha\in\Delta$. Then $\gamma+\alpha\in\Delta$ and we obtain a contradiction. 

Denote by $\Delta_0'\subset \Delta'$ the subset of even roots $\Delta'$. 
Let $\fl=\bigoplus_{\gamma\in\Delta'_0}\g_\gamma$. Then $\fl$ is an ideal in
 $\h\oplus\fl$. Consider the Killing form $C(\cdot,\cdot)$ on $\fl$. It is $\h$-invariant and therefore for any $\beta,\gamma\in \Delta'_0$ we have $C(\g_\beta,\g_\gamma)=0$ because $\beta+\gamma\neq 0$. In other words the Killing form is zero on $\fl$ and hence $\fl$ is solvable. This implies that $[x_\alpha,\g]_{\bar 0}=\C h_\alpha+\fl$ is solvable. 
 \end{proof}

\begin{lemma}\label{lem-defect1} Assume that $\g$ has defect $1$ and $DS_{x_\alpha}\g$ is a Lie algebra (purely even). Then every $x\in\mathcal{N}^{\ad}(\g_{\bar 1})$ is either self-commuting (thus balanced) or neat.
    \end{lemma}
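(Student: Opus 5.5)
Let $x\in\mathcal{N}^{\ad}(\g_{\bar 1})$ with $x\neq 0$, and put $f:=\frac12[x,x]$. If $f=0$ then $x$ is self-commuting and we are done, so assume $f\neq 0$. Fix an $\sl_2$-triple $(e,h,f)$ and write $x=x_0+x_{-1}+\dots$ as in \eqref{eq:h_eigenvectors}. Since $x$ is $\ad$-nilpotent, $s=0$ and so $[x_0,x_0]=0$ by \cref{lem:square_attractor}. Thus the attractor $x_0$ lies in $\g^{\mathbf{sc}}$, and there are two cases.

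\emph{Case 1: $x_0=0$.} Then $att(x)=\{0\}$, and \cref{lem:criterion_neatness_h_decomp} shows that $x$ is neat.

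\emph{Case 2: $x_0\neq 0$.} Since the defect is $1$, $x_0$ is $G_{\bar 0}$-conjugate to a root vector $x_\alpha$ for an isotropic root $\alpha$; conjugating, we may assume $x_0=x_\alpha$. The plan is to derive a contradiction with $f\neq 0$. By hypothesis $DS_{x_0}(\g)$ is purely even, so it has no nonzero odd elements. In particular the image $\overline{x}_{-1}$ of $x_{-1}$ in $DS_{x_0}(\g)$ is zero. By \cref{rem:x_minus_1_is_zero_in_DS} this gives $f\in\Im\ad_{x_0}=[x_\alpha,\g]$. Moreover $x_{-1}\in\Ker\ad_{x_0}$ has zero image in $DS_{x_0}(\g)$, so $x_{-1}\in\Im\ad_{x_0}$. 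By \cref{prop:x_1_can_be_eliminated} we may therefore change the $\sl_2$-triple so that $x_{-1}=0$, that is, $x$ is strongly balanced. In particular $f=[x_0,x_{-2}]$.

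The key step is to show that no nonzero nilpotent $f$ can occur here. Following the proof of \cref{lem:balancedfin}, $\sl_x=\mathrm{span}\{e,h,f\}$ lies in $[x_0,\g]$. The argument: $x_0$ commutes with $e,h,f$, so $\sl_x\subset\g^{x_0}$; $[x_0,\g]$ is an ideal of $\g^{x_0}$ containing $f$; hence $h=[e,f]$ and then $e$ also lie in $[x_0,\g]$. So $\sl_x\subset[x_0,\g]_{\bar 0}$. But $[x_\alpha,\g]_{\bar 0}$ is solvable by \cref{lem:auxdef1}, and a solvable Lie algebra cannot contain a copy of $\sl_2$. Hence $f=0$, contradicting our assumption, and Case 2 cannot occur.

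The main obstacle is this last step. One must check that the whole $\sl_2$-triple, and not only $f$, lands inside $[x_0,\g]$; that is exactly the ideal argument just given. The purely even hypothesis on $DS_{x_\alpha}(\g)$ is used only to force $\overline{x}_{-1}=0$. Together the two cases show that $x$ is either self-commuting or neat.
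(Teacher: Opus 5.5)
Your argument is correct and is essentially the paper's proof: you reduce to $f\neq 0$, handle $x_0=0$ via \cref{lem:criterion_neatness_h_decomp}, and for $x_0\neq 0$ use that $DS_{x_0}(\g)$ is purely even to put the whole $\sl_2$-triple inside $[x_0,\g]$, which contradicts the solvability from \cref{lem:auxdef1}. The detour through \cref{prop:x_1_can_be_eliminated} is unnecessary: \cref{rem:x_minus_1_is_zero_in_DS} already gives $f\in[x_0,\g]$ for the original triple, which commutes with $x_0$, so your ideal argument applies directly (if you keep the detour, note that it replaces $x_0$ by the conjugate $\exp(\ad_u)(x_0)$, so the reduction to $x_0=x_\alpha$ should be made after that step).
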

\begin{proof} Assume that $f:=\frac{1}{2}[x, x]\neq 0$ (otherwise $x$ is self-commuting and we are done). 

Let $(e,h,f)$ be an $\sl_2$-triple and let $x_0\in \g^{\mathbf{sc}}$ be the attractor element of $x$ with respect to this triple (see \cref{sec:attractor}). If $x_0=0$ then $x$ is neat by \cref{lem:criterion_neatness_h_decomp} and we are done.

Now assume that $x_0\neq 0$. Then $x_{-1},e,h,f\in \g^{x_0}$. The condition that $DS_{x_0}\g$ is
purely even implies that the image of $x_{-1}$ in $DS_{x_0}\g$ is zero. Hence the image of $e,h,f$ in $DS_{x_0}\g$ is zero as well (see \cref{rem:x_minus_1_is_zero_in_DS}), so the $\sl_2$-subalgebra spanned by $e,h,f$ lies in $[x_0,\g]$.

Since $x_0$ is self-commuting, we can assume without loss of generality that $x_0=x_{\alpha}$ for an isotropic root $\alpha$. \cref{lem:auxdef1} implies that
$[x_0,\g]$ is solvable and so it cannot contain a copy of $\sl_2$. This contradicts our assumptions and completes the proof of the lemma. 
\end{proof}
\begin{remark} For $\g=\mathfrak{osp}(3|2n)$ and $n>1$, Lemma \ref{lem-defect1} is false. Indeed, one can consider a neat element $x_{-1}$
in $\mathfrak{osp}(1|2)$ and self-commuting element $x_0$ in $\mathfrak{osp}(2|2n-2)$  and $x=x_0+x_{-1}$ is neither neat nor self-commuting. 
\end{remark}

\cref{lem-defect1} reduces the classification of $\ad$-nilpotent orbits for exceptional Lie superalgebras to the classification of neat orbits and self-commuting orbits.
Moreover, to classify neat orbits, it suffices to classify nilpotent elements in $\g_{\bar 0}$ which are squares of neat elements (see \cref{lem:conjugation_of_squares}). We obtain the following dimensions of neat orbits, listed with multiplicities (a detailed classification will appear in a subsequent paper, using Kostant-Dynkin characteristic markings):
\begin{itemize}
    \item $D(2,1;a)$ has neat orbits of dimension $7,5,5,5$.
    \item $AG_2$ has neat orbits of dimensions $13,11,11,9,8$.
        \item $AB_3$ has neat orbits of dimensions $15,14,11,9$.
\end{itemize}

The complete classification of neat orbits will be published in a forthcoming paper. It follows from this classification that $D(2,1,a)$ and $AG_2$ have exactly one distinguished orbit, while $AB_3$ does not have distinguished orbits. 

Since the even parts of the exceptional Lie superalgebras are semisimple and have trivial centers, we conclude that any distinguished odd element in an exceptional Lie superalgebra is $\ad$-nilpotent; so it is either self-commuting (hence balanced) or neat.
\subsection{Distinguished orbits in Takiff superalgebras}

Let $\s$ be a simple Lie algebra and let $\g=\s\otimes \kk[\xi]\oplus \kk \partial$ be the corresponding Takiff superalgebra. Here $\xi$ is odd, $[\xi, \xi]=0$ and $\partial=\frac{\partial}{\partial_\xi}$ is an odd derivation of $\g$.
\begin{lemma}\label{lem:Takiff} Let
 $\g=\s\otimes \kk[\xi]\oplus \kk \partial$.
Let $x \in \g_{\bar 1}$. \InnaA{Then $x$ is distinguished if and only if
$x=\lambda \partial+\xi u$ where $\lambda\in \kk$ and $u$ is a distinguished nilpotent element in $\s$.}
\end{lemma}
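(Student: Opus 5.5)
The plan is to make the structure of the Takiff superalgebra explicit and then compute semisimple centralizers directly. Write $\g_{\bar 0}=\s\oplus\kk[\partial,\partial]$. Since $[\partial,\partial]=0$ by construction, in fact $\g_{\bar 0}=\s$ (here $\s$ means $\s\otimes 1$) and $\g_{\bar 1}=\xi\s\oplus\kk\partial$. The brackets we need are $[\partial,\xi u]=u$, $[\xi u,\xi v]=0$, and $[a,\xi u]=\xi[a,u]$ for $a\in\s$. Consequently $Z(\g)_{\bar 0}=0$: a central $a\in\s$ would have to commute with $\s$, which forces $a=0$. Being distinguished therefore means that $\fc_{\g_{\bar 0}}(x)$ contains no nonzero semisimple elements. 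An arbitrary odd element has the form $x=\lambda\partial+\xi u$ with $\lambda\in\kk$ and $u\in\s$. For $a\in\s$ we get $[a,x]=\lambda[a,\partial]+\xi[a,u]$. Here $[a,\partial]=-[\partial,a]=-\partial(a)=0$, because $\partial$ kills $\s\otimes 1$. Hence $[a,x]=\xi[a,u]$, and so
\[\fc_{\g_{\bar 0}}(x)=\fc_{\s}(u).\]

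With this identification the lemma reduces to a statement about $\s$. The element $x$ is distinguished if and only if $\fc_\s(u)^{\mathbf{ss}}=0$. By the classical theory (\cite{collingwood1993nilpotent}, as recalled in the remark following \cref{def:distinguished}), this holds exactly when $u$ is a distinguished element of the semisimple Lie algebra $\s$. Such an element is automatically nilpotent: if $u$ had a nonzero semisimple part $u_s$, then $u_s$ would lie in $\fc_\s(u)^{\mathbf{ss}}$ by \cref{lem:centralizer_Jordan_closed}. It is also nonzero unless $\s=0$. This gives both directions of the lemma at once, with $\lambda$ arbitrary.

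The step I would check most carefully is the sign and vanishing convention for $[\partial,\cdot]$. In particular, $[\partial,\partial]=0$ is imposed, and $\partial(\s\otimes 1)=0$, which is what makes $\lambda\partial$ invisible to the even centralizer. The conventions $[\partial,\xi u]=u$ versus $\pm u$ are irrelevant here. One should also double-check that $\g_{\bar 0}$ is exactly $\s$, with no extra even direction coming from $\partial$. If instead one took a variant in which $[\partial,\partial]\neq 0$, the computation would change, but under the definition given in \cref{ssec:classical_list} it does not. With these points confirmed, the argument is essentially this computation plus the classical characterization of distinguished nilpotents.
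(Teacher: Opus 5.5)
Your proof is correct and follows the same route as the paper. You write $x=\lambda\partial+\xi u$, observe that $\fc_{\g_{\bar 0}}(x)=\fc_{\s}(u)$ and $Z(\g)_{\bar 0}=0$, and invoke the classical fact that an element of $\s$ whose centralizer has no nonzero semisimple elements is exactly a distinguished nilpotent element; the only difference is that you spell out the bracket computations ($[\partial,\partial]=0$, $\partial(\s\otimes 1)=0$) that the paper leaves implicit.
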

\begin{proof} 
We can always write $x=\lambda \partial+\xi u$ for some
$u\in\s$. \InnaA{The condition that $x$ is distinguished is equivalent to $\s^u$ (the centralizer of $u$ in $\s$) having no semisimple element. This is equivalent to $u$ being nilpotent and distinguished as an element of $\s$.}
\end{proof}

\begin{corollary}\label{Cor:Takiff} Every odd distinguished element in $\g=\s[\xi]\oplus \kk \partial$ is balanced. 
\end{corollary}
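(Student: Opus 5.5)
By \cref{lem:Takiff}, any distinguished odd element can be written as $x=\lambda\partial+\xi u$, where $u\in\s$ is a distinguished nilpotent element and $\lambda\in\kk$. The plan is to compute $\frac{1}{2}[x,x]$, identify its Jordan decomposition, and check the criterion of \cref{def:strongly_bal} (strongly balanced), which implies balanced by \cref{cor:criterion_balanced}.

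First, compute the bracket. Since $[\partial,\partial]=0$, $[\xi u,\xi u]=0$ (the odd part is abelian), and $[\partial,\xi u]=u$, we get $[x,x]=2\lambda u$, so $\frac{1}{2}[x,x]=\lambda u$. This element is nilpotent, so $s=0$ and $f=\lambda u$.

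If $\lambda=0$, then $[x,x]=0$, so $x$ is self-commuting. Hence it is homological and strongly balanced.

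If $\lambda\neq 0$, then $f=\lambda u\neq 0$ (for $u\neq 0$; if $u=0$ then $x=\lambda\partial$ is self-commuting). Choose an $\sl_2$-triple $(e,h,f)$ in $\s=\g_{\bar 0}$. The grading by $\ad_h$ acts on $\s\otimes\kk[\xi]$ factorwise and kills $\partial$, because $[h,\partial]=-\partial(h)=0$ since $h$ is even (no $\xi$ appears). So $\partial$ has $h$-weight $0$. The element $\xi u$ has $h$-weight $-2$, because $[h,\xi u]=\xi[h,u]=-2\xi u$. Therefore the $\ad_h$-decomposition is $x=x_0+x_{-2}$ with $x_0=\lambda\partial$ and $x_{-2}=\xi u$, and $x_{-1}=0$. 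This gives strong balancedness, with attractor $\lambda\partial$ satisfying $[x_0,x_{-2}]=\lambda u=f$, as the remark after \cref{def:strongly_bal} predicts.

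The only delicate point is a sign check in $[\partial,\xi u]=u$ and in $[h,\partial]=0$, which uses the semidirect product definition. No genuine obstacle is expected; the result then follows from \cref{cor:criterion_balanced}.
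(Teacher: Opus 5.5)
Your proof is correct and follows the paper's argument essentially verbatim. Both write $x=\lambda\partial+\xi u$ via \cref{lem:Takiff}, treat $\lambda=0$ as the self-commuting case, and for $\lambda\neq 0$ place $f=\lambda u$ in an $\sl_2$-triple to get $x=x_0+x_{-2}$ with $x_0=\lambda\partial$, $x_{-2}=\xi u$, then conclude via \cref{cor:criterion_balanced}; your weight $[h,\xi u]=-2\xi u$ is the right sign (the paper's ``$2\xi u$'' is a slip), and your extra case $u=0$ is harmless but vacuous, since $\lambda\partial$ is centralized by all of $\s$ and so is not distinguished.
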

\begin{proof} Consider the decomposition $x=\lambda \partial+\xi u$, where $u\in \s$ is a distinguished nilpotent element.
If $\lambda =0$ then $[x,x]=0$, so $x$ is balanced. If $\lambda \neq 0$ then $f=\frac{1}{2}[x,x]=\lambda\partial(\xi u)=\lambda u$ is nilpotent in $\s$ and may be placed inside an $\sl_2$ triple $(e,h,f)$. We have: $[h, \partial]=0$ and $[h, \xi u]=\xi [h,u]=2\xi u$ (since $[h,f]=-2f$). So letting $x_0:=\lambda \partial$, $x_{-2}:=\xi u$ we obtain: $x=x_0+x_{-2}$ is strongly balanced.
\end{proof}

\section{Decomposition of an odd \texorpdfstring{$\ad$}{ad}-nilpotent element}\label{sec:2step}
Let $G$ be a quasi-reductive supergroup.
\subsection{Definition: a 2-step decomposition}
\begin{definition}\label{def:2step}
Let $x\in \g_{\bar 1}$ and let $\frac{1}{2}[x,x]=s+f$ be the Jordan decomposition of $\frac{1}{2}[x,x]$.
  A {\it 2-step decomposition} of $x$ is a decomposition $x=x_{neat}+x_{bal}$, so that 
  \begin{itemize}
      \item $x_{neat}\in \g_{neat}$, $x_{bal}\in \g_{bal}$, 
  %    \item $[x_{bal},x_{neat}]=0$,
      \item There exists an $\osp(1|2)$-type subalgebra $\osp_{x_{neat}}\subset \g^s$ \InnaA{containing $x_{neat}$} such that $[x_{bal}, \osp_{x_{neat}}]=0$. In particular, $[x_{bal},x_{neat}]=0$.
   
  \end{itemize}
\end{definition}

\begin{example}
Let $V$ be a finite-dimensional vector superspace and let $x\in \gl(V)_{\bar 1}$.  We will construct a $2$-step decomposition of $x$ in $\gl(V)$.

Let $\frac{1}{2}[x,x]=s+f$ be the Jordan decomposition of $\frac{1}{2}[x,x]$.

It is enough to construct a $2$-step decomposition of $x$ in $\gl(V^s)\subset \gl(V)$, so we may assume from now on that $s=0$ and $x$ is $\ad$-nilpotent. 
 Let
$$V=\bigoplus_i V_i$$ where each $V_i$ corresponds to a single Jordan block of $x$. Let $x_{neat}\in \End(V)_{\bar 1}$ be the operator given as follows: for each $i$, let $x_{neat}\rvert_{V_i}:=0$ if $\dim V_i\in 2\Z$ and $x_{neat}\rvert_{V_i}:=x\rvert_{V_i}$ if $\dim V_i\in 2\Z+1$. We set $x_{bal}:=x-x_{neat}$, so that $x_{bal}\rvert_{V_i}=0$ if $\dim V_i\in 2\Z+1$, and $x_{bal}\rvert_{V_i}=x\rvert_{V_i}$ if $\dim V_i\in 2\Z$. It is straightforward to see that $x=x_{neat}+x_{bal}$ is indeed a $2$-step decomposition of $x$. 
\end{example}

\subsection{Existence of the 2-step decomposition}

In this section, we will show the existence of a $2$-step decomposition for every $x \in \g_{\bar 1}$. We will prove a slightly stronger and more technical statement (which will be useful later on) and derive the existence of the $2$-step decomposition at the end of this section.

\begin{theorem}\label{thm:2-step_exist} Let $x\in \g_{\bar 1}$ and let $\frac{1}{2}[x,x]=s+f$ the Jordan decomposition of $\frac{1}{2}[x,x]$. Assume that $f\neq 0$. There exists exists an $\sl_2$-triple $(e,h,f)$ in $\g_{\bar 0}^s$ so that the $\ad_h$-eigenvector decomposition $x=\sum_{i\leq 0} x_i$ of $x$ satisfies:
\begin{itemize}
    \item We have $x_{neat}:=x_{-1}\in \g_{neat}$, $x_{bal}:=x-x_{-1}\in \g_{bal}$.
    \item There exists an $\osp(1|2)$-type subalgebra $\osp_{x_{neat}} \subset \g^s$ containing $x_{neat}$ such that $[x_0, \osp_{x_{neat}}] =[x_{bal}, \osp_{x_{neat}}]=0$.
    \item \InnaA{$x_0 \in att(x) \cap att(x_{bal})$}.
\end{itemize}
\InnaA{In particular, $x=x_{neat}+x_{bal}$ is a $2$-step decomposition of $x$.}
  
\end{theorem}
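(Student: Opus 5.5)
We reduce first to the $\ad$-nilpotent situation. Replacing $\g$ by the Levi sub-superalgebra $\g^s$ and then by the quasi-reductive quotient $\g^s/\kk s$, the image of $x$ becomes $\ad$-nilpotent with $\frac12[x,x]=f$. Neatness, balancedness, attractors and $\osp(1|2)$-type subalgebras all pass between $\g^s$ and this quotient: the kernel $\kk s$ is central in $\g^s$, and $s=[x_0,x_0]$ by \cref{lem:square_attractor}. So we may assume $s=0$. Next we pass to a minimal Levi sub-superalgebra $\fl=\g^{\mathfrak t}$ containing $x$. By \cref{lem:uniqlevi}, $x$ is distinguished in $\fl$. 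Every $\sl_2$-triple in $\fl_{\bar 0}$ is one in $\g_{\bar 0}$. Neatness in $\fl$ gives an $\osp(1|2)$-subalgebra of $\g$, hence neatness in $\g$. Strong balancedness in $\fl$ gives strong balancedness in $\g$ with the same triple, and therefore balancedness by \cref{cor:criterion_balanced}. Hence it suffices to prove the theorem for distinguished $x$ in a quasi-reductive $\fl$.

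We then use the structure theory of \cref{ssec:quasired}. Modulo the center and the ideal structure, $\fl'=\fl/Z(\fl)$ is a semidirect product of $\mathfrak i(\fl)$, which is a product of simple quasi-reductive superalgebras, Takiff-type pieces and odd abelian pieces, with $\mathfrak r$, whose odd part $\mathfrak r_{\bar 1}$ acts by odd derivations. Because $x$ is distinguished, $\mathfrak r_{\bar 0}$ must be essentially abelian, acting semisimply and commuting with $x$. So $x$ decomposes as a sum $\sum_j x^{(j)}$ of commuting components lying in the simple ideals, or in Takiff superalgebras that absorb the $\mathfrak r_{\bar 1}$-derivation part, together with an odd part $u$ that is central for the even part. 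Each component is distinguished in its factor. By \cref{thrm:dist_is_neat_or_balanced}, with $\mathfrak{spe}(n)$ handled separately by \cref{cor:dist_orbits_spe}, each component is strongly balanced, neat, or, for $\mathfrak{spe}$, already split into a neat and a balanced piece with commuting $\osp$ and $\sl_2$.

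In each factor we now take a triple $(e_j,h_j,f_j)$. For a neat component it is the even part of $\osp_{x^{(j)}}$, and the $\ad_{h_j}$-decomposition is $x^{(j)}=x^{(j)}_{-1}$. For a strongly balanced component it is a triple with $x^{(j)}_{-1}=0$. For $\mathfrak{spe}$ it is the sum of the two commuting triples, as in the proof of \cref{cor:dist_orbits_spe}. Since the factors commute, $h=\sum_j h_j$ gives a triple $(e,h,f)$ with $f=\frac12[x,x]$. In the resulting $\ad_h$-decomposition, $x_{-1}$ is the sum of the neat pieces; it is neat, and its $\osp$-subalgebra is the sum of the factor $\osp$'s. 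Then $x_{bal}=x-x_{-1}$ is the sum of the strongly balanced pieces. It has $\ad_h$-components only in degrees $0$ and $\le -2$, where $h$ is replaced by the sum of the balanced $h_j$'s, so it is strongly balanced and hence balanced by \cref{cor:criterion_balanced}. Since the factors commute, $[x_{bal},\osp_{x_{neat}}]=0$, and in particular $[x_0,\osp_{x_{neat}}]=0$. Finally, $x_0$ is the degree-$0$ part of both $x$ and $x_{bal}$ with respect to compatible triples, so $x_0\in att(x)\cap att(x_{bal})$. This is exactly the argument at the end of \cref{cor:dist_orbits_spe}.

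The main obstacle is the structural step. We must show that a distinguished element of a general quasi-reductive $\fl$ genuinely splits into commuting components in the simple and Takiff factors. The delicate points are the odd derivations coming from $\mathfrak r_{\bar 1}$ and the purely odd abelian ideals. We would first try to show that any $\mathfrak r_{\bar 1}$-component commutes with $\mathfrak i(\fl)_{\bar 0}$, so it can be absorbed into $x_0$ without changing the triple. Takiff factors, where the derivation $\partial$ appears, are already covered by \cref{Cor:Takiff}. A secondary check is that lifting through $\g^s/\kk s$ preserves the $\osp$-subalgebra; for this we use that $\g^s$ is a central extension and that $\osp(1|2)$ lifts uniquely because $[\osp,\osp]=\osp$.
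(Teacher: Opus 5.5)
\textbf{Verdict: genuine gap.} Your first reductions are fine and match the paper: pass to $\g^s$ modulo $\kk s$, pass to a minimal Levi where $x$ is distinguished, then use the structure theory of $\fl/Z(\fl)$. The gap is in the structural step, and the decomposition you propose there is false in general.

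\textbf{Why the splitting fails.} Write $x=x'+\sum_\alpha y_\alpha$ with $x'\in\rr_{\bar 1}$ and $y_\alpha\in(\ft_\alpha)_{\bar 1}$.
\begin{enumerate}
\item The component $x'$ does \emph{not} commute with the $y_\alpha$ on whose ideals it acts. In general $[x',y_\alpha]\neq 0$, and this bracket is part of $f=\frac12[x,x]$. So a triple assembled from triples for the individual $y_\alpha$ does not contain $f$, and $x'$ cannot be ``absorbed into $x_0$ without changing the triple''.
\item A concrete counterexample: $\fl=\mathfrak{pq}(n)$ with $n$ odd, $x'$ a nonzero multiple of the odd identity, and $y\in\mathfrak{psq}(n)_{\bar 1}$ given by a single nilpotent Jordan block. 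Then $x=x'+y$ is distinguished, and $y$ is neat in the simple factor $\mathfrak{psq}(n)$. Your recipe gives $x_{neat}=y$ and $x_{bal}=x'$. But $[x',y]\neq 0$, so $x'$ does not commute with $\osp_y$, and the even part of $\osp_y$ does not contain $\frac12[x,x]$. In fact $x$ is strongly balanced with attractor $x'$ by \cref{lem:q-dist}(2), so the correct decomposition has $x_{neat}=0$.
\item A single $x'$ may act on several ideals at once, e.g.\ one $\partial$ acting diagonally on two Takiff ideals. So $x'$ cannot be distributed among separate ``Takiff superalgebras''.
\item The $\mathfrak{pq}(n)$ situation is not covered by \cref{Cor:Takiff} at all.
\end{enumerate}

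\textbf{The missing idea.} Treat all ideals touched by $x'$ together as one balanced block.
\begin{enumerate}
\item Let $I=\{\alpha:[x',\ft_\alpha]\neq 0\}$. Since $\ad_{x'}$ kills $\fl_{\bar 0}$, each $\ft_\alpha$ with $\alpha\in I$ is $\mathfrak{psq}(n)$ or a Takiff ideal $\s\oplus\xi\s$. Hence $\kk x'\ltimes\ft_\alpha$ is $\mathfrak{pq}(n)$ or a Takiff superalgebra.
\item Show $x'+y_\alpha$ is distinguished there. Then \cref{lem:q-dist} and \cref{lem:Takiff} make it strongly balanced, with attractor exactly $x'$, via a triple inside $(\ft_\alpha)_{\bar 0}$.
\item Since $[x',\mathfrak{i}(\fl)_{\bar 0}]=0$, summing these triples over $\alpha\in I$ keeps $x'$ in degree $0$. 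So $z=x'+\sum_{\alpha\in I}y_\alpha$ is strongly balanced with $z_{-1}=0$, and all of $z$ goes into $x_{bal}$.
\item Only the $y_\alpha$ with $\alpha\notin I$ genuinely commute with $z$ and with each other. For these, your factorwise argument (the dichotomy plus \cref{cor:dist_orbits_spe}) and the diagonal triple do work.
\end{enumerate}
You also need, and do not prove, that each $y_\alpha$ is distinguished in $\ft_\alpha$. This holds because a semisimple element of $(\ft_\alpha)_{\bar 0}$ commuting with $y_\alpha$ also commutes with $x'$ and with the other ideals, hence with $x$.

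\textbf{A false side claim.} The assertion that distinguishedness forces $\rr_{\bar 0}$ to be essentially abelian is unjustified and false in general. For example, $\p\sl(2|2)\rtimes\sl_2$, with $\sl_2$ acting by outer derivations, has trivial center and $\rr_{\bar 0}\cong\sl_2$. Its odd elements with unipotent $\g_{\bar 0}$-centralizer (as in $D(2,1;a)$) are distinguished. Nothing in the correct argument needs this claim.
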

 Let us briefly sketch the strategy of the proof. As usual, the first easy step consists of passing to the case when $x$ is $\ad$-nilpotent, by restricting ourselves to the quasi-reductive Levi subalgebra $\g^s$.
 
The main idea of the proof of  \cref{thm:2-step_exist} is to reduce the problem to the case when $x$ is distinguished by considering a small enough Levi subalgebra $\fl$ containing $x$. We then consider the structure of $\fl$ as a quasi-reductive Lie superalgebra, with minimal ideals which are simple Lie superalgebras. This allows us to (almost) reduce the problem to the case where $x$ is a distinguished element in a simple Lie superalgebra, in which case we may use the classification of such elements appearing in \cref{sec:orbits_in_classical}. The ``almost'' caveat refers to the fact that the original element $x$ does not have to lie in the direct sum of the minimal ideals of $\fl$; yet one may decompose $x$ as $x=x'+x''$, where $x''$ lies in the sum of the minimal ideals in $\fl$, while $\ad_{x'}$ acts as an odd derivation on these ideals. The $2$-step decomposition of $x$ then involves incorporating $x'$ in the balanced part of $x$.

We begin with a straightforward lemma.

\begin{lemma}\label{lem:commuting_balanced}
    
    \mbox{}
    \begin{enumerate}
        \item Let $x, x'$ be strongly balanced elements in $\g_{\bar 1}$. Let $\sl_x, \sl_{x'}\subset \g_{\bar 0}$ be as in \cref{def:strongly_bal}.
        
        Assume that $[x,x']=0,  [\sl_x,\sl_{x'}]=0$.
        Then $x+x'$ is strongly balanced and an attractor of $x+x'$ is given by the sum of attractors of $x,x'$ corresponding to the Cartan elements of $\sl_x, \sl_{x'}$ respectively.

    \item Let $x,x'\in \g_{neat}$ and let $\osp_x, \osp_{x'}\subset \g$  be as in \cref{def:neat_element_aux_notions}.
    Assume that $[\osp_x, \osp_{x'}]=0$.
        Then $x+x'\in \g_{neat}$.
    \end{enumerate}
\end{lemma}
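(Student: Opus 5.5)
For part (1), I would build the $\sl_2$-triple for $x+x'$ by adding the triples of $x$ and $x'$. Write $\frac12[x,x]=s+f$ and $\frac12[x',x']=s'+f'$. Since $[x,x']=0$, we have $\frac12[x+x',x+x']=(s+s')+(f+f')$. The elements $s,f$ are polynomials in $\ad$ of $[x,x]$, so they lie in the centralizer of anything commuting with $[x,x]$. The claim $[s,x']=0$ needs care: I would use that $\ad_{[x,x]}$ kills $x'$ (by the Jacobi identity and $[x,x']=0$), and that $s$ is a polynomial in $\ad_{[x,x]}$, so $[s,x']=[f,x']=0$. By the same argument, everything from $x$'s data commutes with everything from $x'$'s data. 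Then $s+s'$ is semisimple and $f+f'$ is nilpotent, and they commute, so this is the Jordan decomposition.

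If both $f,f'\neq 0$, take $(e+e',h+h',f+f')$ with $\sl_x=\langle e,h,f\rangle$ and $\sl_{x'}=\langle e',h',f'\rangle$. Since $[\sl_x,\sl_{x'}]=0$, this is an $\sl_2$-triple, and it commutes with $s+s'$ because $\sl_x\subset\g^s$ (by the earlier remark) and $[s',\sl_x]=0$. For the latter, $s'$ is a polynomial in $\ad_{[x',x']}$, and $[x',x']$ commutes with $\sl_x$. This requires $[x',\sl_x]=0$, which I would check: it holds after choosing $\sl_x$ to commute with the even elements $s',f'$. I expect this compatibility to be the main obstacle, and I would handle it by interpreting the hypothesis as giving $[x',\sl_x]=[x,\sl_{x'}]=0$, as in the intended application (\cref{cor:dist_orbits_spe}).

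Decompose $x=x_0+x_{-2}+\dots$ under $\ad_h$ and $x'=x'_0+x'_{-2}+\dots$ under $\ad_{h'}$. Because $h'$ acts trivially on $x$ and $h$ acts trivially on $x'$, the $\ad_{h+h'}$-decomposition of $x+x'$ has components $x_i+x'_i$. Its $(-1)$-component is therefore $0$ and its $0$-component is $x_0+x'_0$. This gives strong balancedness and identifies the attractor. The cases $f=0$ or $f'=0$ reduce to using just one triple; there $x_0=x$ (or $x'_0=x'$), and the same bookkeeping applies.

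For part (2), set $h_{tot}=h+h'$, $e_{tot}=e+e'$, $f_{tot}=f+f'$ and $X=x+x'$. Since the two copies commute, $\frac12[X,X]=f+f'$, $[h_{tot},X]=-X$, and the odd elements $Y+Y'$ satisfy the $\osp(1|2)$ relations. Cross terms vanish because $[\osp_x,\osp_{x'}]=0$. So $\osp_x\oplus\osp_{x'}$ contains a diagonal $\osp(1|2)$ through $x+x'$, given by a homomorphism $\osp(1|2)\to\g$ that integrates to $OSp(1|2)\to G$ extending $i_{x+x'}$. By \cref{thrm:JM}, $x+x'$ is then neat. Equivalently, every $G$-module restricts to a semisimple $\osp(1|2)$-module on which $x+x'$ acts neatly, since $\MM(2k)\downarrow_X\cong\mathtt M_{2k}$.
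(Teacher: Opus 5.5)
Your part (2) is fine. Because $[\osp_x,\osp_{x'}]=0$, the diagonal map $u\mapsto \bar i_x(u)+\bar i_{x'}(u)$ is a homomorphism $\osp(1|2)\to\g$ sending $X$ to $x+x'$. It integrates because $SL_2$ is simply connected, so $x+x'$ acts neatly on every $G$-module. The paper gives no proof of either part; it calls the lemma straightforward.

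The gap is in part (1), at the step where you need $[x',\sl_x]=0$ and $[x,\sl_{x'}]=0$.

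\begin{itemize}
\item Your claim that this holds once $\sl_x$ is chosen to commute with $s',f'$ is false. Commuting with $s'$ and $f'$ gives no control over $\ad_h(x')$.
\item More seriously, no argument can supply it, because part (1) is false under the stated hypotheses $[x,x']=0$, $[\sl_x,\sl_{x'}]=0$ alone.
\end{itemize}

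Here is a counterexample. Take $W=\kk^{2|1}$ with $X\in\End(W)_{\bar 1}$ a single Jordan block of size $3$. Take $U=\kk^{1|1}$ with $0\neq\xi\in\End(U)_{\bar 1}$ and $\xi^2=0$. In $\gl(W\otimes U)\cong\gl(3|3)$ set $x:=X\otimes 1+1\otimes\xi$ and $x':=-1\otimes\xi$.

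\begin{itemize}
\item Since $X\otimes 1$ and $1\otimes\xi$ supercommute, $[x,x']=0$ and $x^2=X^2\otimes 1$.
\item $x'$ is self-commuting, so $\sl_{x'}=0$ and $[\sl_x,\sl_{x'}]=0$ holds trivially.
\item The operator $x$ realizes $\mathtt M_2\otimes\mathtt M_1$. All its Jordan blocks are therefore even, since $S(\mathtt M_1)=0$. Together with $x^2\neq 0=x^4$, this forces block sizes $4$ and $2$, so $x$ is strongly balanced by \cref{prop:balanced_in_gl}.
\item However, $x+x'=X\otimes 1$ has two Jordan blocks of size $3$. It is therefore a nonzero neat element, hence not balanced.
\end{itemize}

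This is consistent with your argument: no $\sl_2$-triple for $x$ with $x_{-1}=0$ can have $[h,x']=0$, or your argument would apply.

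So the extra hypothesis you adopt, that $\sl_x$ commutes with $x'$ and $\sl_{x'}$ commutes with $x$, is a necessary correction of the statement, not an interpretation of it. It does hold where the lemma is actually used, which is the proof of \cref{thm:2-step_exist}, not \cref{cor:dist_orbits_spe}. There, $z,\sl_z$ and $y^{(\alpha)}_{bal},\sl_{y_\alpha}$ for $\alpha\notin I$ lie in mutually commuting ideals.

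Under that hypothesis your diagonal-triple argument is complete. It gives the Jordan decomposition $(s+s')+(f+f')$, the triple $(e+e',h+h',f+f')$ in $\g^{s+s'}$, and $\ad_{h+h'}$-components $x_i+x'_i$. From these, $(x+x')_{-1}=0$ and $x_0+x'_0$ is an attractor.
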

Next, by the results in \cref{sec:orbits_in_classical}, we have:
\begin{lemma}\label{lem:2step_decomp_in_simples}
    If $\g$ is a simple Lie superalgebra and $x\in \g_{\bar 1}$ is distinguished, then there exists a decomposition of $x$ as in \cref{thm:2-step_exist}.
\end{lemma}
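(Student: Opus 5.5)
Since the statement is asserted to follow from the classification in \cref{sec:orbits_in_classical}, the plan is a case-by-case check over the list of simple quasi-reductive Lie superalgebras. In each case we feed in the description of distinguished odd elements obtained there. First, \cref{lem:dist-jor} together with \cref{lem:pg_distinguished_orbits} lets us assume the relevant data: the semisimple part $s$ of $\frac12[x,x]$ is central. We may therefore work in $\g^s=\g$ (or pass to $\g/Z(\g)_{\bar 0}$). If $f=0$, then $x$ is homological, and we take $x_{neat}=0$, $x_{bal}=x$, $\osp_{x_{neat}}=0$. All three bullet points of \cref{thm:2-step_exist} hold trivially, with $x_0=x$. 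So from now on we assume $f\neq0$.

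The second step applies \cref{thrm:dist_is_neat_or_balanced}, which covers every simple case except $\mathfrak{spe}(n)$: $x$ is either neat or strongly balanced. In the neat case, we use \cref{lem:criterion_neatness_h_decomp} in reverse. Take $h$ to be the Cartan element of an $\osp_x$, so that the $\ad_h$-decomposition of $x$ is $x=x_{-1}$. Then $x_0=0$, $x_{neat}=x$, $x_{bal}=0$, and the commutation conditions are vacuous. We need $0\in att(x)\cap att(0)$; this holds by definition, since $0$ is its own attractor.

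In the strongly balanced case, we choose the triple $(e,h,f)$ given by \cref{def:strongly_bal}, so that $x_{-1}=0$. Then $x_{neat}=0$ and $x_{bal}=x$, and $x_0$ is an attractor of $x$ by construction. Here one must check that the triple can be taken in $\g_{\bar 0}^s$. This is automatic because $s$ is central (alternatively, use the remark after \cref{def:strongly_bal}).

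The only genuinely new case is $\g=\mathfrak{spe}(n)$, where \cref{cor:dist_orbits_spe} provides exactly the needed data. It gives the decomposition $x=x_{neat}+x_{bal}$, an $\osp_{x_{neat}}$ commuting with $x_{bal}$, and an $\sl_{x_{bal}}$ commuting with $\osp_{x_{neat}}$. In its proof the combined triple $(e+e',h+h',f+f')$ yields the $\ad_{h+h'}$-decomposition of $x$ with $(-1)$-component $x_{-1}+x_{neat}$. The main obstacle is to ensure that this component equals $x_{neat}$ exactly, i.e. that the $(-1)$-part of $x_{bal}$ vanishes. To arrange this, I would choose $\sl_{x_{bal}}$ inside $\gl(\bigoplus_i U_i)$ as in \cref{prop:balanced_in_gl}, so that $x_{bal}=x_0+x_{-2}$ with no $x_{-1}$ term, since every $U_i$ has even dimension. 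Commutation of $x_0$ with $\osp_{x_{neat}}$ holds because the two live on the orthogonal summands $\bigoplus U_i$ and $W$. Finally, $x_0\in att(x)\cap att(x_{bal})$, as shown at the end of the proof of \cref{cor:dist_orbits_spe}. One must also check that these $\gl$-level constructions actually lie in $\mathfrak{spe}(n)$, preserving the odd form and being supertraceless; orthogonality and the isotropic splitting of $W$ make this plausible, but it is the point I would verify most carefully.
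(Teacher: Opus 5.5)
Your proposal is correct and follows the paper's own argument. The paper handles every simple case other than $\mathfrak{spe}(n)$ via \cref{thrm:dist_is_neat_or_balanced}, taking $x_{bal}=0$ or $x_{neat}=0$ according as $x$ is neat or strongly balanced, and handles $\mathfrak{spe}(n)$ via \cref{cor:dist_orbits_spe}. Your extra step in the $\mathfrak{spe}(n)$ case, choosing $\sl_{x_{bal}}$ as in \cref{prop:balanced_in_gl} (built from the generators of \cref{lem:form_B_indec_U}, so that $h$ preserves the odd form and the triple lies in $\mathfrak{spe}(n)$) so that $x_{bal}$ has no $(-1)$-component and hence $x_{-1}=x_{neat}$, is exactly what the paper's notation $\sl_{x_{bal}}$ from \cref{def:strongly_bal} tacitly assumes, and it does go through.
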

\begin{proof}
   In all the cases except $\mathfrak{spe}(n)$, distinguished orbits are either balanced or neat, as seen in \cref{thrm:dist_is_neat_or_balanced}. In the case of $\mathfrak{spe}(n)$, the appropriate decomposition was shown in \cref{cor:dist_orbits_spe}.
\end{proof}

\begin{proof}[Proof of \cref{thm:2-step_exist}] 

Let $\s$ be a maximal toral subalgebra in $\mathfrak{c}_{\g_{\bar 0}}(x)$ and consider the minimal Levi subalgebra $\g^\s$ containing $x$; by \cref{lem:uniqlevi}, $x$ is distinguished in $\g^\s$.
We may consider the image $x+Z(\g^\s)$ of $x$ in the quotient $\fl:=\g^\s/Z(\g^\s)$. By abuse of notation, we will denote this image by $x$ as well; it is distinguished in $\fl$. Repeating this step as needed, we reduce our problem to the case when $Z(\fl)=0$. 

In what follows, we will construct a decomposition of $x$ into a sum of commuting neat and balanced elements in in $\fl$. Recall from \cref{lem:aux_odd_central_commutator} that the sub-superalgebra $Z(\g^\s)_{\bar 1}$ splits off $\g^\s$ as a direct factor; so such a decomposition of $x$ in $\fl$ lifts naturally to an analogous decomposition of $x$ in $\g^\s\subset \g$.

We now use the description of the structure of $\fl$ as a quasi-reductive Lie superalgebra, as given in \cref{ssec:quasired}.
Let $\mathfrak{i}(\fl)=\bigoplus_{\alpha}  \mathfrak{t}_{\alpha}$ be the sum of the minimal ideals of $\fl$, so that for $\alpha\neq \beta$, we have $[\mathfrak{t}_{\alpha}, \mathfrak{t}_\beta]=0$, and each $\ft_{\alpha}$ satisfies: \begin{itemize}
        \item $\ft_{\alpha}$ is a simple Lie superalgebra, 
        \item $\ft_{\bar 1}$ is an abelian ideal while $\ft_{\bar 0}$ is either a simple Lie algebra (in which case $\ft_{\bar 1}$ is its adjoint representation) or $\ft_{\bar 0}=0$.
    \end{itemize} 
We have a decomposition of $\fl_{\bar 0}$-modules $\fl=\mathfrak{i}(\fl)\oplus \rr$ such that $[\rr_{\bar 1}, \rr_{\bar 1}]=0$, $[\mathfrak{i}(\fl)_{\bar 0}, ~\mathfrak{r}_{\bar 1}]=0$.

Let us write
$$x=:x'+\sum_{\alpha} y_{\alpha},\quad x'\in\rr_{\bar 1},\ y_{\alpha}\in (\mathfrak{t}_{\alpha})_{\bar 1}.$$
Note that $[x', x']=0$ since $[\rr_{\bar 1}, \rr_{\bar 1}]=0$.

First, we claim that each $y_{\alpha}$ is distinguished in $\mathfrak{t}_{\alpha}$. Indeed, assume otherwise: then there exists $\alpha$ and a semisimple element $s \in (\mathfrak{t}_{\alpha})_{\bar 0}\setminus Z(\mathfrak{t}_{\alpha})$ such that $[s,y_{\alpha}]=0$. Since $[\mathfrak{t}_{\alpha}, \mathfrak{t}_\beta]=0$ for $\alpha\neq \beta$, we have: $[s,\ft_{\beta}]=0$ for $\beta\neq \alpha$. Furthermore, $[s,x']=0$ since $$[(\mathfrak{t}_{\alpha})_{\bar 0}, x']\subset [\mathfrak{i}(\fl)_{\bar 0}, ~\mathfrak{r}_{\bar 1}]=0.$$ Hence $[s,x]=0$ and we obtain a contradiction with the fact that $x$ is distinguished. So for each $\alpha$, the element $y_{\alpha}$ is distinguished in $\mathfrak{t}_{\alpha}$.

Let $I:=\{\alpha~\mid~ [x',\mathfrak{t}_{\alpha}]\neq 0\}$ and $z:=x'+\sum_{\alpha \in I}y_{\alpha}$. We will show in \cref{lem:aux_2_step_decomp} below that
$z$ is strongly balanced. 

Next, we use 
\cref{lem:2step_decomp_in_simples}: for each $\alpha \notin I$, we have a decomposition $y_{\alpha} = y^{(\alpha)}_{neat}+y^{(\alpha)}_{bal}$ in $\ft_{\alpha}$ satisfying the requirements of the theorem. Let $\osp_{y_{\alpha}}$, $\sl_{y_{\alpha}}$ be the corresponding Lie sub-(super)algebras. We denote by $\sl_z$ the \InnaA{$\sl_2$-subalgebra} corresponding to $z$.

Now,
$[z, \mathfrak{t}_{\alpha}]=0$ for any $\alpha \notin I$, so \cref{lem:commuting_balanced} implies $x_{bal}:=z+\sum_{\alpha \notin I} y^{(\alpha)}_{bal}$ is strongly balanced.
By the same \cref{lem:commuting_balanced}, the element $x_{neat}:=\sum_{\alpha\notin I} y^{(\alpha)}_{neat}$ is neat.

Next, we choose the $\sl_2$-triple $(e,h,f)$ for $x$ by considering a diagonal embedding of $\sl_2$ into $\sl_z \oplus \bigoplus_{\alpha\notin I} \sl_{y_{\alpha}}\oplus \bigoplus_{\alpha \notin I} \osp_{y_{\alpha}}$. The $\ad_h$-decomposition of $x$ is then given by $x=\sum_{i\leq 0} x_i$, where $[h, x_i]=ix_i$. Clearly, $x_{neat}=x_{-1}$, $x_{bal}=\sum_{i\neq -1} x_i$ and $x_0$ is the attractor of both $x$ and $x_{bal}$ \InnaA{(note that $x_0$ is just the sum of the corresponding attractors of $z, y^{(\alpha)}_{bal}$ for $\alpha \notin I$)}. 

We conclude that $x=x_{neat}+x_{bal}$, $[x_{neat}, x_{bal}]=0$ and $x_0\in att(x)\cap att(x_{bal})$.

As part of our construction, we obtain an $\osp(1|2)$-type subalgebra $\osp_{x_{neat}}\subset \oplus_{\alpha \notin I} \mathfrak{t}_{\alpha}$ generated by $x_{neat}$ and the diagonal embedding of $\sl_2$ into $ \bigoplus_{\alpha \notin I} \osp_{y_{\alpha}}$. This sub-superalgebra satisfies: $[x_{bal}, \osp_{x_{neat}}]=[x_{0}, \osp_{x_{neat}}]=0.$

We have thus obtained a $2$-step decomposition of $x$ in $\g^\s$ (and thus in $\g$) satisfying the conditions of \cref{thm:2-step_exist}, as required. 
\end{proof}
To complete the proof of \cref{thm:2-step_exist}, we need two auxiliary lemmas:
\begin{lemma}
Let $\alpha\in I$, $y_{\alpha}$ and $x'$ as defined in the proof of \cref{thm:2-step_exist}. Let $\ft'_{\alpha}:=\kk x'\ltimes \mathfrak{t}_{\alpha} \subset \g$. Then $\widetilde{y}_\alpha:= x'+y_\alpha$ is strongly balanced in $\ft'_{\alpha}$, with $ x' \in att(\widetilde{y}_\alpha)$.
\end{lemma}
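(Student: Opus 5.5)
The plan is a case analysis on the type of the minimal ideal $\ft_\alpha$, using the structure theory of \cref{ssec:quasired}. Recall that $x'\in\rr_{\bar 1}$ acts on $\mathfrak{i}(\fl)$ by an odd derivation which vanishes on $\mathfrak{i}(\fl)_{\bar 0}$, and that $[x',x']=0$. Since $\alpha\in I$, this derivation is nonzero on $\ft_\alpha$. I would first eliminate the types of $\ft_\alpha$ on which this cannot happen, and then treat the remaining types by appealing to computations already made in \cref{sec:orbits_in_classical}.

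The first step is to discard the purely odd abelian case. If $(\ft_\alpha)_{\bar 0}=0$, then $[x',\ft_\alpha]\subset(\ft_\alpha)_{\bar 0}=0$, which contradicts $\alpha\in I$.

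The second step handles the case where $\ft_\alpha=\s\oplus\Pi\s$ with $\s$ simple and odd part the adjoint module. An odd derivation that vanishes on $\s$ is $\s$-equivariant as a map $\Pi\s\to\s$, hence by Schur a scalar multiple of $\partial$. So $x'$ acts as $\lambda\partial$ with $\lambda\neq 0$, and $\ft'_\alpha$ is the Takiff superalgebra $\s\otimes\kk[\xi]\rtimes\kk\partial$ up to identifying $x'$ with $\lambda\partial$ (the kernel of the action is harmless, since $x'$ commutes with everything in that kernel and strong balancedness passes to quotients and lifts through odd central directions). Now $y_\alpha$ is distinguished in $\ft_\alpha$, so $y_\alpha=\xi u$ with $u$ a distinguished nilpotent element of $\s$, exactly as in \cref{lem:Takiff}. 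The proof of \cref{Cor:Takiff} then gives the decomposition $\widetilde y_\alpha=x_0+x_{-2}$ with $x_0=\lambda\partial=x'$ and $x_{-2}=\xi u$, relative to an $\sl_2$-triple through $f=\lambda u$. So $\widetilde y_\alpha$ is strongly balanced with $x'\in att(\widetilde y_\alpha)$.

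The third step is the case where $\ft_\alpha$ is simple. Here one must know which simple quasi-reductive superalgebras admit a nonzero odd derivation vanishing on the even part. I expect that, from the known derivation algebras, the only case is $\ft_\alpha\cong\mathfrak{psq}(n)$, with the derivation given by the odd identity $I\in\mathfrak{pq}(n)_{\bar 1}$. This identification is the main obstacle: it requires an invariant-theoretic check that $\Hom_{\ft_{\bar 0}}(\ft_{\bar 1},\ft_{\bar 0})$ contains a derivation only in this case. I would attempt it by noting that such a derivation must map $\ft_{\bar 1}$ to $\ft_{\bar 0}$ equivariantly, and $\ft_{\bar 1}$ and $\ft_{\bar 0}$ share a common simple component essentially only for the $\q$-type.

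Granting this, write $x'=\lambda I$ with $\lambda\neq0$, and lift $y_\alpha$ to an element $Y$ of $\sq(n)_{\bar 1}\cong\sl_n$. By the lemma on distinguished orbits in $\s\q(n)$ and $\p\s\q(n)$, $Y$ is nilpotent and consists of a single Jordan block. Then $\lambda I+Y$ is the odd matrix with $B=\lambda 1_n+Y$. This element is distinguished in $\q(n)$ with $s\ne0$, so \cref{lem:q-dist}(2) gives $x=\sum_k x_{-2k}$ with $x_0=\mu\cdot\mathrm{odd}(1_n)$, which is proportional to $I$. Matching the degree-$0$ component forces $x_0=\lambda I=x'$. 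Finally, I would project to $\kk x'\ltimes\mathfrak{psq}(n)$ using the observation that strong balancedness passes to quotients; this yields $\widetilde y_\alpha$ strongly balanced with attractor $x'$.
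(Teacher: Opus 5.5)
Your proposal is correct and follows essentially the same route as the paper. Both proofs reduce to $\ft_\alpha$ being either the Takiff ideal $\s\oplus\xi\s$ or $\mathfrak{psq}(n)$. Both then identify $\ft'_\alpha$ with a Takiff superalgebra or with $\mathfrak{pq}(n)$, and read off a decomposition of $\widetilde y_\alpha$ whose degree-zero part is $x'$ from \cref{Cor:Takiff} and \cref{lem:q-dist}(2).

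The paper simply asserts the reduction to these two cases. Your argument is a reasonable way to justify it: an odd derivation vanishing on $(\ft_\alpha)_{\bar 0}$ is an equivariant map $(\ft_\alpha)_{\bar 1}\to(\ft_\alpha)_{\bar 0}$, so by Schur it requires a common constituent, which occurs only in these cases.

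The only cosmetic difference is that the paper first observes that $\widetilde y_\alpha$ is distinguished in $\ft'_\alpha$ and then invokes the classification. You instead compute $\widetilde y_\alpha$ directly from the explicit forms of $x'$ and of the distinguished element $y_\alpha\in\ft_\alpha$.
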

\begin{proof}
    First of all, $\ad_{x'}$ acts by an odd derivation on \InnaA{$\g$, with $\ad_{x'}(\g_{\bar 0})=0$. Recall that $\ft_{\alpha}$ satisfies: $[x', \ft_{\alpha}]\neq 0$. So the odd operator $\ad_{x'} \in \End^{\bullet}(\g) $ is injective on $(\mathfrak{t}_{\alpha})_{\bar 1}$ and acts by zero on $ (\mathfrak{t}_{\alpha})_{\bar{0}}$. This implies that $\ft_{\alpha}$ is isomorphic to either $\p\s\q(n)$ or to the maximal ideal $\s\oplus \xi\s$ in a Takiff superalgebra, with $\s$ a simple Lie algebra and $\xi^2=0$. }
    Consider the quasi-reductive Lie sub-superalgebra
$\ft'_{\alpha}:=\kk x'\ltimes \mathfrak{t}_{\alpha}\subset \g$. \InnaA{The above conditions on the ideal $\ft_{\alpha}$ imply that} $\ft'_{\alpha}$ is isomorphic either to a Takiff superalgebra or to $\mathfrak{pq}(n)$.

The element $\widetilde{y}_\alpha:=x'+y_\alpha$ is an odd distinguished element in $\ft'_{\alpha}$, for the same reason that $y_\alpha$ is distinguished in $\ft_{\alpha}$ (see the proof \cref{thm:2-step_exist}). 

Distinguished odd elements in the Takiff superalgebras and in $\mathfrak{pq}(n)$ are described in \cref{lem:Takiff} and \cref{lem:q-dist} respectively. In Takiff superalgebras, these elements are all strongly balanced, so we may assume from now on that $\ft'_{\alpha}\cong \mathfrak{pq}(n), ~\ft_{\alpha}\cong \mathfrak{psq}(n)$. In this case, we have: $$x'=\begin{pmatrix}
    0 &\mu I_n\\
    \mu I_n & 0
\end{pmatrix}$$ for some $\mu\neq 0$. The fact that ${y}_\alpha \in \ft_{\alpha}\cong \mathfrak{psq}(n)$ is distinguished implies that $y_{\alpha}=\begin{pmatrix}
    0 &A\\
    A & 0
\end{pmatrix}$ for a nilpotent $A\in \gl_n$. Hence $$ \widetilde{y}_\alpha=\begin{pmatrix}
    0 &\mu I_n +A\\
    \mu I_n+A & 0
\end{pmatrix}$$ and $[ \widetilde{y}_\alpha,  \widetilde{y}_\alpha]$ is not nilpotent in $\q(n)$. By \cref{lem:q-dist}, we conclude that $ \widetilde{y}_\alpha$ is strongly balanced. In both cases, \cref{lem:Takiff} and \cref{lem:q-dist} imply: $ x'\in att(\widetilde{y}_\alpha)$.
\end{proof}
\begin{lemma}\label{lem:aux_2_step_decomp}
    The element $z=x'+\sum_{\alpha \in I}y_{\alpha}$ defined in the proof of \cref{thm:2-step_exist} is strongly balanced.
\end{lemma}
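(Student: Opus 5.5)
We want to show $z=x'+\sum_{\alpha\in I}y_\alpha$ is strongly balanced, and we will do it by assembling the per-ideal information from the preceding lemma. For each $\alpha\in I$ that lemma gives that $\widetilde{y}_\alpha=x'+y_\alpha$ is strongly balanced in $\ft'_\alpha=\kk x'\ltimes\ft_\alpha$, with attractor $x'$. Moreover $\ft'_\alpha$ is a Takiff superalgebra or $\mathfrak{pq}(n)$. In either case we have explicit $\ad_h$-decompositions by \cref{Cor:Takiff} and \cref{lem:q-dist}. These have the form $\widetilde y_\alpha=x'+\sum_{k\geq 1}(y_\alpha)_{-2k}$ for an $\sl_2$-triple $(e_\alpha,h_\alpha,f_\alpha)\subset(\ft_\alpha)_{\bar 0}$. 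In them $y_\alpha$ itself has only components of degree $\leq -2$, while $x'$ sits in degree $0$, since $[(\ft_\alpha)_{\bar 0},x']=0$. Fix such a triple $\sl_\alpha\subset(\ft_\alpha)_{\bar 0}$ for every $\alpha\in I$.

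Next, put $e:=\sum_{\alpha\in I}e_\alpha$, $h:=\sum h_\alpha$ and $f:=\sum f_\alpha$. Since $[\ft_\alpha,\ft_\beta]=0$ for $\alpha\neq\beta$, this is an $\sl_2$-triple, the diagonal copy of $\sl_2$ in $\bigoplus_{\alpha\in I}\sl_\alpha$. We then have to check that $f$ is the nilpotent part of $\frac12[z,z]$ and that $(e,h,f)$ lies in the centralizer of the semisimple part $s$. Compute
\[
\tfrac12[z,z]=\tfrac12[x',x']+\sum_{\alpha\in I}\big([x',y_\alpha]+\tfrac12[y_\alpha,y_\alpha]\big)=\sum_{\alpha\in I}\tfrac12[\widetilde y_\alpha,\widetilde y_\alpha]-\big(|I|-1\big)\tfrac12[x',x'].
\]
Here $[x',x']=0$ because $[\rr_{\bar 1},\rr_{\bar 1}]=0$, and the cross terms $[y_\alpha,y_\beta]$ vanish. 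So $\frac12[z,z]=\sum_\alpha\frac12[\widetilde y_\alpha,\widetilde y_\alpha]$ is a sum of pairwise commuting elements of the commuting ideals $(\ft'_\alpha)_{\bar 0}=(\ft_\alpha)_{\bar 0}$. Its Jordan decomposition is therefore the sum of the componentwise Jordan decompositions, giving $s=\sum s_\alpha$ and $f=\sum f_\alpha$. Each $\sl_\alpha$ commutes with $s_\alpha$ and with the other ideals, so $\sl_z:=\operatorname{span}(e,h,f)\subset\g^s_{\bar 0}$. Now take the $\ad_h$-decomposition of $z$. On each $\ft_\alpha$, $\ad_h$ acts as $\ad_{h_\alpha}$, so $y_\alpha$ contributes only degrees $\leq -2$. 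Also $[h,x']=0$, because $x'$ commutes with $\mathfrak{i}(\fl)_{\bar 0}$. Hence $z_0=x'$ and $z_{-1}=0$, so $z$ is strongly balanced by \cref{def:strongly_bal}, with attractor $x'$.

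The step I expect to be the main obstacle is justifying that $[\ft'_\alpha,\ft'_\beta]$ interacts correctly, since $x'$ is shared by all the $\ft'_\alpha$. In the bracket above, $[x',y_\alpha]$ lies in $(\ft_\alpha)_{\bar 0}$ and no term mixes two ideals, so this should be fine. A second point needs care: the $\sl_2$-triple from \cref{lem:q-dist} for $\mathfrak{pq}(n)$ must be chosen in $\mathfrak{psq}(n)_{\bar 0}=(\ft_\alpha)_{\bar 0}$ rather than in $\q(n)_{\bar 0}$. A principal triple can be taken traceless, so this is achievable. If $|I|=0$, then $z=x'$ is self-commuting and hence homological, and there is nothing to prove.
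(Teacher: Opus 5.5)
Your proof is correct and follows the paper's own argument. It takes the per-ideal triples $(e_\alpha,h_\alpha,f_\alpha)\subset(\ft_\alpha)_{\bar 0}$ from the preceding lemma, sums them diagonally, uses $[\mathfrak{i}(\fl)_{\bar 0},x']=0$ to get $[h,x']=0$, and reads off $z_{-1}=0$ using $\frac12[z,z]=\sum_{\alpha}\frac12[\widetilde y_\alpha,\widetilde y_\alpha]$. Your extra care about a semisimple part and about tracelessness in the $\mathfrak{pq}(n)$ case is harmless but unnecessary: in $\fl$ each $\frac12[\widetilde y_\alpha,\widetilde y_\alpha]$ is already nilpotent, and $(\ft'_\alpha)_{\bar 0}=(\ft_\alpha)_{\bar 0}$ automatically.
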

\begin{proof}
Let $\alpha\in I$ and $x'$ as defined in the proof of \cref{thm:2-step_exist}. By the above lemma, 
$\widetilde{y}_\alpha:=x'+y_\alpha$ is a strongly balanced odd element in the quasi-reductive Lie superalgebra $\ft'_{\alpha}:=\ft_{\alpha}\rtimes \kk x'$, with $x'\in att(\widetilde{y}_\alpha)$.

Let
$$ f_{\alpha}:=\frac{1}{2}[\widetilde{y}_\alpha, \widetilde{y}_\alpha] = \frac{1}{2}[x', y_{\alpha}]+\frac{1}{2}[y_\alpha, y_\alpha].$$
Now, for every $\alpha\in I$, let us choose an $\sl_2$-triple $(e_{\alpha}, h_{\alpha}, f_{\alpha})$ in $(\ft_{\alpha})_{\bar 0}=(\ft'_{\alpha})_{\bar 0}$ such that
$$\widetilde{y}_\alpha=\sum_i (\widetilde{y}_\alpha)_{i} \;\;\; \text{ where }\;\;\;[h_{\alpha}, (\widetilde{y}_\alpha)_{i}]=i(\widetilde{y}_\alpha)_{i}\;\;\; \text{ and } \;\;\;(\widetilde{y}_\alpha)_{-1}=0.$$

Denote $$f:=\sum_{\alpha\in I} f_{\alpha}, \;\;\;h:=\sum_{\alpha\in I} h_{\alpha}, \;\;\;e:=\sum_{\alpha\in I} e_{\alpha}.$$ Since $[\mathfrak{t}_\alpha, \mathfrak{t}_\beta]=0$, the triple $(e,h,f)$ is an $\sl_2$-triple in $\bigoplus_{\alpha\in I} \ft_{\alpha}$.
For the same reason, we obtain: $$\frac{1}{2}[z,z] = \frac{1}{2}\left[x'+\sum_{\alpha \in I}y_{\alpha}, x'+\sum_{\alpha \in I}y_{\alpha}\right]=\sum_{\alpha \in I}\frac{1}{2}[x', y_{\alpha}]+\sum_{\alpha \in I}\frac{1}{2}[y_\alpha, y_\alpha]=\sum_{\alpha \in I} f_{\alpha}=f.$$

For any $i< 0$, let $z_{i}:=\sum_{\alpha\in I} (\widetilde{y}_\alpha)_{i}$, $z_{0}:= x'$. Recall that $\ad_{x'}:  (\mathfrak{t}_{\alpha})_{\bar{0}}\to  (\mathfrak{t}_{\alpha})_{\bar{1}}$ is zero. So for any $\alpha\in I$, we have: $[x', h_{\alpha}]=0$. From the calculations above, we have an $\ad_h$-decomposition $z=\sum_{i\leq 0} z_i$ with $z_{-1}=0$.
We conclude that $ z$ is strongly balanced.

\end{proof}
As a direct consequence of \cref{thm:2-step_exist}, we have:
\begin{corollary}\label{cor:2_step_exist}
    Let $x\in \g_{\bar 1}$. There exists a $2$-step decomposition $x=x_{neat}+x_{bal}$.
\end{corollary}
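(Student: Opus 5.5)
The statement is \cref{cor:2_step_exist}, and the plan is to read it off from \cref{thm:2-step_exist}, treating separately the degenerate case $f=0$, which that theorem excludes.

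Let $\frac{1}{2}[x,x]=s+f$ be the Jordan decomposition.

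\emph{Case $f=0$.} Here $x\in\g^{hom}$, so $x$ is balanced by the example following the definition of balanced elements. Take $x_{bal}:=x$ and $x_{neat}:=0$. The element $0$ is neat, and its $\osp(1|2)$-type subalgebra $\osp_0$ is the zero subalgebra by \cref{def:neat_element_aux_notions}. This subalgebra lies in $\g^s$ and trivially commutes with $x_{bal}$, so every condition of \cref{def:2step} holds.

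\emph{Case $f\neq0$.} Apply \cref{thm:2-step_exist}. It gives an $\sl_2$-triple $(e,h,f)$ in $\g^s_{\bar 0}$ and the decomposition $x=x_{-1}+(x-x_{-1})$, with $x_{neat}:=x_{-1}\in\g_{neat}$ and $x_{bal}:=x-x_{-1}\in\g_{bal}$. It also gives an $\osp(1|2)$-type subalgebra $\osp_{x_{neat}}\subset\g^s$ containing $x_{neat}$ and satisfying $[x_{bal},\osp_{x_{neat}}]=0$. These are exactly the requirements of \cref{def:2step}.

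The only point to check is that these are the same notions, since the definition uses the $s$ coming from $x$ itself. The theorem already places $\osp_{x_{neat}}$ inside $\g^s$ for this $s$, so nothing further is needed.

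There is no real obstacle. All the substantive work, namely the reduction to distinguished elements and the case analysis, sits in \cref{thm:2-step_exist} and its auxiliary lemmas.
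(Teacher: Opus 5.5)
Your proposal is correct and matches the paper, which simply states this corollary as a direct consequence of \cref{thm:2-step_exist}. Your separate treatment of the case $f=0$ is also correct: there you take $x_{bal}=x\in\g^{hom}\subset\g_{bal}$ and $x_{neat}=0$ with $\osp_0=0$, which covers the one case the theorem formally excludes and which the paper leaves implicit.
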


\subsection{Finiteness of attractor map}
\begin{theorem}\label{thm:finattractor} Consider the attractor map $att: \g_1/G_{\bar 0}\to \g^{hom}/G_{\bar 0}$, $G_{\bar 0}.x \to G_{\bar 0}.att(x)$. Then for any $y\in\g^{hom}$ the preimage $att^{-1}(G_{\bar 0}.y)$ is finite.
    \end{theorem}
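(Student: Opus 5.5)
The plan is to use the $2$-step decomposition of \cref{thm:2-step_exist} to split an orbit with attractor $y$ into a balanced part with the same attractor and a neat part living in a subalgebra determined by $y$. We then count each part finitely, using \cref{cor:balanced} for the balanced part and \cref{cor:fin_many_neat_orb} for the neat part.

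\textbf{Reduction.} Fix $y\in\g^{hom}$ and put $s:=\frac12[y,y]$. Let $x$ satisfy $y\in att(x)$, up to conjugation. If $x$ is homological then $x=y$. Otherwise \cref{lem:square_attractor} shows that the semisimple part of $\frac12[x,x]$ equals $s$. Hence $x\in\g^s$, and conjugacy of such $x$ by $G_{\bar 0}$ can be replaced by conjugacy by $C_{G_{\bar 0}}(s)$ inside $\g^s$. So it suffices to count $C_{G_{\bar 0}}(s)$-orbits of $x\in\g^s_{\bar 1}$ with attractor $y$.

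\textbf{Decomposition.} Apply \cref{thm:2-step_exist} to $x$. It gives $x=x_{bal}+x_{neat}$ with $x_0\in att(x)\cap att(x_{bal})$, and an $\osp_{x_{neat}}\subset\g^s$ commuting with $x_0$ and with $x_{bal}$. By \cref{lem:selfcompart}, $x_0$ is conjugate to $y$, so after conjugating we may take $x_0=y$. The construction makes $x_{bal}$ strongly balanced: it is built from \cref{lem:commuting_balanced}. \cref{cor:balanced} then says that, up to $G_{\bar 0}$-conjugation, $x_{bal}$ ranges over a finite set of orbits. Fix one representative $b$ from each orbit, and conjugate $x$ so that $x_{bal}=b$.

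\textbf{Counting the neat part.} It remains to show that, once $b$ is fixed, only finitely many orbits of the stabilizer arise for the neat part $x_{neat}$. This element is neat and lies in $\mathfrak{k}:=\fc_{\g}(b)\cap\g^s$, because $\osp_{x_{neat}}$ commutes with $b$. I would pass to a quasi-reductive subalgebra containing $x_{neat}$. One option is the centralizer in $\g^s$ of the toral part of a chosen $\sl_2$ associated with $b$. Another is to work inside the centralizer of $\sl_b$ together with $y$, since $\osp_{x_{neat}}$ commutes with $x_0=y$ and can be arranged to commute with $\sl_b$, as in the proof of \cref{thm:2-step_exist}. That centralizer is quasi-reductive by \cref{prop:centralizer_reductive}, since $\sl_b$ acts semisimply. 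In it, $x_{neat}$ is neat, so \cref{cor:fin_many_neat_orb} gives finitely many possibilities up to its adjoint group, which lies in the stabilizer of $b$. Then $x=b+x_{neat}$ falls into finitely many $G_{\bar 0}$-orbits.

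\textbf{Main obstacle.} The hardest step is making this last reduction rigorous. The $\sl_2$-subalgebra attached to $b$ is only unique up to conjugacy, and $y$ need not commute with $\sl_b$ in a way that makes the combined centralizer reductive. Also, $\fc(b)$ itself is not quasi-reductive, since $b$ is odd and non-semisimple. My first attempt would be to fix the data $(y,\sl_b)$ once per balanced orbit, using Kostant-type conjugacy as in \cref{lem:selfcompart}. I would then verify that the $\sl_2$-triple from \cref{thm:2-step_exist} can be chosen compatibly, so that $\osp_{x_{neat}}$ lies in the centralizer of $\sl_b\cup\{b\}$. That centralizer is invariant under the semisimple $\ad_h$-action, and reductivity of its even part can be checked as in \cref{prop:centralizer_reductive}.
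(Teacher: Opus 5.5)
The reduction, the use of \cref{thm:2-step_exist} and \cref{lem:selfcompart}, and the count of the balanced part via \cref{cor:balanced} are fine. The gap is in the step ``Counting the neat part'', which you correctly flagged, and the fix you sketch there does not work.

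Once $b$ is fixed, you need finitely many orbits of the possible $x_{neat}$ under a group that \emph{fixes} $b$. Your quasi-reductive candidates, $\fc_{\g}(\sl_b)$ or the centralizer of $h_b$, do not provide this. Their groups only preserve the $\ad_{h_b}$-grading and can move the components $b_0=y, b_{-2},\dots$ of $b$. So the claim that their adjoint group ``lies in the stabilizer of $b$'' is false, and finiteness of neat orbits there says nothing about the orbits of $b+x_{neat}$.

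The centralizer of $\sl_b\cup\{b\}$ is preserved by the right group, but it is not quasi-reductive in general. \cref{prop:centralizer_reductive} needs a subalgebra acting semisimply, and the subalgebra generated by $b$ does not. This fails already for self-commuting $b$, where $\sl_b=0$. Take $b=E_{13}\in\gl(2|2)_{\bar 1}$. Then $\fc_{\g_{\bar 0}}(b)$ consists of pairs $(A,D)$ with $A$ upper triangular, $D$ lower triangular and $A_{11}=D_{33}$, which is a solvable non-abelian algebra. Without reductivity there is no analogue of \cref{cor:fin_many_neat_orb}, whose proof rests on finiteness of nilpotent orbits in the reductive algebra $\g_{\bar 0}$.

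The remedy, which is the paper's route, is to reverse the order of the two counts.
\begin{enumerate}
\item Fix the neat part first. By \cref{cor:fin_many_neat_orb} applied in $\g$ itself there are finitely many choices of $x_{neat}$ up to $G_{\bar 0}$, and you may fix $\osp_{x_{neat}}$ along with it.
\item Then $x_{bal}$ and $x_0$ lie in $\mathfrak{k}:=\fc_{\g}(\osp_{x_{neat}})$. This is quasi-reductive by \cref{prop:centralizer_reductive}, since here the centralized subalgebra does act semisimply, $\Rep(OSp(1|2))$ being semisimple. Its group also fixes $x_{neat}$.
\item The $\sl_2$-subalgebra built for $x_{bal}$ in the proof of \cref{thm:2-step_exist} commutes with $\osp_{x_{neat}}$. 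Hence $x_{bal}$ is strongly balanced in $\mathfrak{k}$ with attractor $x_0$, and \cref{cor:balanced} applied in $\mathfrak{k}$ finishes the count.
\end{enumerate}
The asymmetry you ran into is essential. The neat part comes with a semisimply acting companion $\osp_{x_{neat}}$ whose centralizer is quasi-reductive and fixes $x_{neat}$; the balanced part has no such companion.

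One further point to check when writing this up: $x_0$ is only $G_{\bar 0}$-conjugate to $y$. So you also need $G_{\bar 0}.y\cap\mathfrak{k}_{\bar 1}$ to be a finite union of orbits of the centralizer group. This follows from a Richardson-type tangent space argument. For $x_0\in\mathfrak{k}_{\bar 1}$, the map $\ad_{x_0}$ commutes with the $\osp_{x_{neat}}$-equivariant projection $\g\to\mathfrak{k}$, which gives $[\g_{\bar 0},x_0]\cap\mathfrak{k}=[\mathfrak{k}_{\bar 0},x_0]$.
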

\begin{proof} It suffices to prove that for any $y$, there are finitely many (up to conjugation) $2$-step decompositions $x=x_{bal}+x_{neat}$ such that $y\in att(x_{bal})$. Indeed, there are finitely many neat orbits \InnaA{(see \cref{cor:fin_many_neat_orb})}. We may only choose neat orbits that contain an element $x_{neat}$ centralizing $y$.
Let $\mathfrak{k}$ denote the centralizer of $\osp_{x_{neat}}$. Since $\mathfrak k$ is quasi-reductive (see \cref{prop:centralizer_reductive}), by \cref{cor:balanced} there are finitely many (up to conjugation) strongly balanced elements in $\mathfrak k$ with attractor $y$. 
This proves the statement.
\end{proof}

\begin{corollary}\label{cor:nilpnumber}
Let $\g^{\mathbf{sc}}:=\{x\in \g_{\bar 1}: [x,x]=0\}$ denote the cone of self-commuting odd elements in a quasi-reductive Lie superalgebra $\g$. Then $\mathcal{N}^{\ad}(\g_{\bar 1})$ has finitely many $G_{\bar 0}$-orbits if and only if $\g^{\mathbf{sc}}$ has finitely many $G_{\bar 0}$-orbits.
\end{corollary}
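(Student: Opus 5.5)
The plan is to derive this from \cref{thm:finattractor}, together with the observation that an element is self-commuting exactly when it is $\ad$-nilpotent and equal to its own attractor. The first step is to compare the two sets. Clearly $\g^{\mathbf{sc}}\subset\mathcal{N}^{\ad}(\g_{\bar 1})$, since $[x,x]=0$ is nilpotent. Conversely, let $x\in\mathcal{N}^{\ad}(\g_{\bar 1})$. Then the semisimple part $s$ of $\frac12[x,x]$ vanishes. By \cref{lem:square_attractor}, every attractor $x_0\in att(x)$ satisfies $[x_0,x_0]=s=0$, so $x_0\in\g^{\mathbf{sc}}$. Thus the attractor map of \cref{thm:finattractor} restricts to a map
$$att:\mathcal{N}^{\ad}(\g_{\bar 1})/G_{\bar 0}\longrightarrow \g^{\mathbf{sc}}/G_{\bar 0}.$$
This map is well defined by \cref{lem:selfcompart}, which says that all attractors of a given $x$ are $G_{\bar 0}$-conjugate.

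Next I would check that this restricted map is surjective and has finite fibers. Surjectivity is immediate: a self-commuting $y$ has $[y,y]=0$ semisimple, so by \cref{notn:attractor} it is its own attractor and $G_{\bar 0}.y$ maps to itself. For the fibers, the fiber over $G_{\bar 0}.y$ inside $\mathcal{N}^{\ad}(\g_{\bar 1})/G_{\bar 0}$ is contained in the full fiber $att^{-1}(G_{\bar 0}.y)\subset \g_{\bar 1}/G_{\bar 0}$. That full fiber is finite by \cref{thm:finattractor}, applied with $y\in\g^{\mathbf{sc}}\subset\g^{hom}$.

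The conclusion then follows in both directions. If $\g^{\mathbf{sc}}$ has finitely many orbits, then $\mathcal{N}^{\ad}(\g_{\bar 1})/G_{\bar 0}$ is a finite union of finite fibers, hence finite. Conversely, if $\mathcal{N}^{\ad}(\g_{\bar 1})$ has finitely many orbits, then so does $\g^{\mathbf{sc}}$, because the $G_{\bar 0}$-stable subset $\g^{\mathbf{sc}}\subset\mathcal{N}^{\ad}(\g_{\bar 1})$ is a union of some of these orbits. (Surjectivity of $att$ gives the same conclusion.)

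I expect no real obstacle: all the substance sits in \cref{thm:finattractor}. The only point to verify carefully is that the attractor of an $\ad$-nilpotent element is self-commuting and not merely homological. This is exactly the statement $[x_0,x_0]=s=0$ from \cref{lem:square_attractor}.
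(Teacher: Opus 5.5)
Your proof is correct and matches the paper's intended derivation. The paper gives no separate argument and states this as an immediate consequence of \cref{thm:finattractor}, via the same reasoning: attractors of $\ad$-nilpotent elements are self-commuting by \cref{lem:square_attractor}, so the attractor map restricts to a surjection $\mathcal{N}^{\ad}(\g_{\bar 1})/G_{\bar 0}\to\g^{\mathbf{sc}}/G_{\bar 0}$ with finite fibers.
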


\subsection{2-Step decomposition and symmetric monoidal functors}
Let $x\in \g_{\bar 1}$, let $\frac{1}{2}[x,x]=s+f$ be the Jordan decomposition of $\frac{1}{2}[x,x]$ and fix a $2$-step decomposition $x=x_{neat}+x_{bal}$ of $x$ with properties as described in \cref{thm:2-step_exist}.

Let $\Phi_x, \Phi_{x_{neat}}: \Rep(G)\to \Rep(OSp(1|2))$, $\Phi_{x_{bal}}: \Rep(G)\to \sVect$ be the corresponding symmetric monoidal functors (see \cref{ssec:DS_functors_prelim}).

Recall that $[x_{bal}, \osp_{x_{neat}}]=0$ for some $\osp(1|2)$-type subalgebra $\osp_{x_{neat}}\subset \g^s$ containing $x_{neat}$. Fix such $\osp_{x_{neat}}$ and let $\varphi:OSp(1|2)\to G$ be the corresponding group homomorphism. We denote $(-)\downarrow_\varphi: \Rep(G)\to \Rep(OSp(1|2)\times G')$ where $G'$ is the quasi-reductive subgroup of $G$ centralizing $\Im(\varphi)$.

\begin{proposition}\label{prop:tensor_functors_2_step}
We have an isomorphism of functors
$$ (\id\boxtimes\Phi_{x_{bal}})\circ (-)\downarrow_{\varphi}\cong\Phi_x:  \Rep(G)\to \Rep(OSp(1|2)).$$
\end{proposition}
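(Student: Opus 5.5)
We reduce the statement to \cref{lem:aux_ss_functors} by restricting everything to the subgroup $OSp(1|2)\times G'$ and then to a copy of $OSp(1|2)\times\G^{(1|1)}$.

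First we reduce to the $\ad$-nilpotent case. Since $\osp_{x_{neat}}\subset\g^s$ and $x_{bal}$ commutes with $\osp_{x_{neat}}$, the element $s$ lies in $\g'$, and $s$ is also the semisimple part of $\frac12[x_{bal},x_{bal}]$. Indeed, $\frac12[x,x]=\frac12[x_{bal},x_{bal}]+\frac12[x_{neat},x_{neat}]$, the two summands commute, and the second is nilpotent. Hence for $M\in\Rep(G)$ both $\Phi_x(M)$ and $\Phi_{x_{bal}}(M\downarrow_{G'})$ are computed on $M^s$, and $M^s$ is an $OSp(1|2)\times (G')^s$-module. Replacing $G$ by $G^s$ (and $\g'$ by $(\g')^s$), we may therefore assume $s$ acts by zero. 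Then $x$, $x_{bal}$ and $x_{neat}$ act nilpotently on $M$, and $\Phi_x=Gr^{ev}\circ(-)\downarrow_{\kk[x]}$.

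Next we factor the restriction. The element $x_{bal}\in\g'_{\bar 1}$ is $\ad$-nilpotent and defines a homomorphism $\G^{(1|1)}\to G'$. Together with $\varphi$, this gives a homomorphism $\psi: OSp(1|2)\times\G^{(1|1)}\to OSp(1|2)\times G'\to G$. Because $[x_{neat},x_{bal}]=0$ and $x=\iota_X(X)+x_{bal}$ with $x_{neat}=\varphi(X)$, the homomorphism $i_x:\G^{(1|1)}\to G$ factors as $\psi\circ((\iota\times\id)\circ\Delta)$. So $(-)\downarrow_{i_x}\cong T\circ(-)\downarrow_\psi$, where $T$ is the functor of \cref{lem:aux_ss_functors}. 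This needs a short check on Lie superalgebras: the odd generator maps to $X+x_{bal}$, and its square maps to $f_{neat}+f_{bal}$, consistent since the cross terms vanish.

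Applying $S$ and \cref{lem:aux_ss_functors} gives
$$\Phi_x\cong S\circ T\circ(-)\downarrow_\psi\cong \overline{T}\circ(\id\boxtimes S)\circ(-)\downarrow_\psi .$$
Here $(\id\boxtimes S)\circ(-)\downarrow_\psi$ equals $(\id\boxtimes\Phi_{x_{bal}})\circ(-)\downarrow_\varphi$, viewed as landing in $\Rep(OSp(1|2))\boxtimes\Rep(OSp(1|2))$. Since $x_{bal}$ is balanced, the second factor $OSp(1|2)$ acts trivially on $\Phi_{x_{bal}}$ (\cref{lem:Phi_for_x_bal}, \cref{cor:criterion_balanced}). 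Hence $\overline{T}$, which is the tensor product (restriction along the diagonal), identifies with $\id\boxtimes\Phi_{x_{bal}}$ regarded as landing in $\Rep(OSp(1|2))$ via $\Rep(OSp(1|2))\boxtimes\sVect\to\Rep(OSp(1|2))$.

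The main obstacle is justifying $(\id\boxtimes S)\circ(-)\downarrow_\psi\cong(\id\boxtimes\Phi_{x_{bal}})\circ(-)\downarrow_\varphi$ as functors, including compatibility with the semisimplification of the Deligne product. Our first approach is to decompose $M\downarrow_{OSp(1|2)\times G'}=\bigoplus_k \MM(2k)\boxtimes N_k\oplus\Pi(\ldots)$ using semisimplicity of $\Rep(OSp(1|2))$. Both functors then act summandwise as $N_k\mapsto\Phi_{x_{bal}}(N_k)=S(N_k\downarrow_{x_{bal}})$, naturally in morphisms, because morphisms decompose as $\Hom_{OSp(1|2)}\otimes\Hom_{G'}$.
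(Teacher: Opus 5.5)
Your proposal is correct and follows the paper's own proof. You factor $i_x$ through $OSp(1|2)\times\G^{(1|1)}$ along the diagonal so that $(-)\downarrow_{i_x}\cong T\circ(-)\downarrow_\psi$, apply \cref{lem:aux_ss_functors}, and use that $\overline{T}$ restricts to the identity on $\Rep(OSp(1|2))\boxtimes\sVect$ because $\Phi_{x_{bal}}$ carries a trivial $OSp(1|2)$-action. Your explicit reduction to $M^s$ (using that $s$ is also the semisimple part of $\frac12[x_{bal},x_{bal}]$) and your Deligne-product bookkeeping are details the paper leaves implicit, and you handle them correctly.
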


\begin{proof}
 We denote by $\psi: \G^{(1|1)}\to \InnaA{G'}$ the homomorphism corresponding to $x_{bal}\in \InnaA{\g^{x_{neat}}}$.

Recall the functors $T, \overline{T}$ from \cref{lem:aux_ss_functors}:
\begin{align*}
    T: \Rep(OSp(1|2))\boxtimes \Rep(\G^{(1|1)}) ~\longrightarrow~\Rep(\G^{(1|1)}), \;\;\; M\boxtimes M'\longmapsto M\downarrow_X~\otimes~ M',\\
    \overline{T}: \Rep(OSp(1|2))\boxtimes \Rep(OSp(1|2)) ~\longrightarrow~\Rep(OSp(1|2)), \;\;\; M\boxtimes M'\longmapsto M\otimes M',
\end{align*}
where $\downarrow_X: \Rep(OSp(1|2))\to \Rep(\G^{(1|1)})$ denotes the usual restriction functor. 

We claim that there exists a natural isomorphism making the following diagram of functors commutative:
        $$\xymatrix{  &\Rep(OSp(1|2)) \boxtimes \Rep(G') \ar^{\id\boxtimes \Phi_{x_{bal}}}[rr]  &{}&\Rep(OSp(1|2)) \boxtimes \sVect \ar_-{\id}[d]\\
        &\Rep(G) \ar_{\Phi_x}[rr] \ar^-{(-)\downarrow_{\varphi}}[u] &{}& \Rep(OSp(1|2)) .
 }$$

By the definition of $T$, there exists a natural isomorphism making the following diagram of functors commutative:
 $$\xymatrix{
        &\Rep(OSp(1|2)\times G') \ar^{(-)\downarrow_{\psi}}[rr] &{} &\Rep(OSp(1|2)\times \G^{(1|1)}) \ar^T[d] \\  &\Rep(G) \ar[rr]^{(-)\downarrow_x}\ar^-{(-)\downarrow_{\varphi}}[u]  &{} &\Rep(\G^{(1|1)})  }$$
        \InnaA{Here $(-)\downarrow_{\psi}$ denotes the restriction functor with respect to $\psi$ and the functor $(-)\downarrow_{x}: \Rep(G)\to \Rep(\G^{(1|1)})$ is the restriction functor corresponding to the element $x\in \g$.
        }
    By \cref{lem:aux_ss_functors}, we obtain a commutative diagram of functors
    $$\xymatrix{
        &\Rep(OSp(1|2)\times G') \ar^{(-)\downarrow_{\psi}}[r] \ar@/^2pc/^{\id\boxtimes \Phi_{x_{bal}}}[rr] &\Rep(OSp(1|2)\times \G^{(1|1)}) \ar^T[d] \ar^-{\id \boxtimes S}[r]  &\Rep(OSp(1|2))\boxtimes \Rep(OSp(1|2))\ar_{\overline{T}}[d]\\  &\Rep(G) \ar[r]^{(-)\downarrow_x} \ar@/_2pc/_{\Phi_{x}}[rr]  \ar^-{(-)\downarrow_{\varphi}}[u] 
 &\Rep(\G^{(1|1)})  \ar^-{S}[r]  & \Rep(OSp(1|2)). }$$

 Note that the functor $\overline{T}: \Rep(OSp(1|2)) \boxtimes \Rep(OSp(1|2)) \to \Rep(OSp(1|2))$ gives the identity functor when restricted to the full tensor subcategory $\Rep(OSp(1|2)) \boxtimes \sVect$, which gives us the required isomorphism.
\end{proof}

\begin{comment}

    Let $M\in \Rep(G)$ and consider the following three actions on $M^s$: 
    \begin{itemize}
        \item an action of $\G^{(1|1)}$ associated with $x$ (we will denote the group acting in this case by $\G^{(1|1), x}$), 
        \item an action of $OSp(1|2)$ associated with $x_{neat}$ (we will denote the group acting in this case by $OSp_{x_{neat}}$),
        \item an action of $\G^{(1|1)}$ associated with $x_{bal}$ (we will denote the group acting in this case by $\G^{(1|1), x_{bal}}$, to avoid confusion).
    \end{itemize}
    
   By \cref{thm:2-step_exist}, the groups  $OSp_{x_{neat}}$ and $\G^{(1|1), x_{bal}}$ commute, so $M^s$ decomposes into a direct sum of  indecomposable $OSp_{x_{neat}}\times \G^{(1|1), x_{bal}}$-submodules of the form $L\boxtimes L'$, where $L$ is a simple $OSp_{x_{neat}}$-module and $L'$ is an indecomposable $\G^{(1|1), x_{bal}}$-module. These indecomposable $OSp_{x_{neat}}\times \G^{(1|1), x_{bal}}$-submodules are clearly preserved by $x$; the element $x$ acts by $x_{neat}\boxtimes 1+1\boxtimes x_{bal}$. 

   Recall that $\Phi_x$ is defined as the composition of the $s$-invariants functor $$(-)^s:\Rep(G)\to \Rep(\G^{(1|1), x})$$ with the semisimplification functor $ S: \Rep(\G^{(1|1), x})\to\Rep(OSp(1|2))$.
   
Applying the functor $S$ to $L\boxtimes L'$, we obtain $0$ if $x_{bal}\rvert_{L'}$ is a Jordan block of 
even size, otherwise $\Phi_{x}(L\boxtimes L')=\Phi_{x_{neat}}(L) $.

This proves the required statement.
\end{comment}

\begin{corollary}\label{cor:decomp_for_neat_or_bal_elems}
    Let $x\in \g_{\bar 1}$ and let $x=x_{neat}+x_{bal}$ be its $2$-step decomposition as constructed in \cref{thm:2-step_exist}. We have:
    \begin{enumerate}
        \item $x\in \g_{neat}$ if and only if $x=x_{neat}$,
        \item The following conditions are equivalent:
        \begin{enumerate}
            \item\label{it:bal} $x\in \g_{bal}$,
            \item\label{it:strbal}  $x$ is strongly balanced,
            \item\label{it:strbal_suff} $x_{neat} \subset\Im \ad_{x_0}$, where $x_0$ is an attractor of $x_{bal}$. 
        \end{enumerate}
    \end{enumerate}
\end{corollary}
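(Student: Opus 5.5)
The plan is to get everything from \cref{prop:tensor_functors_2_step}, which identifies $\Phi_x$ with $(\id\boxtimes\Phi_{x_{bal}})\circ(-)\downarrow_\varphi$, combined with the faithfulness criteria: \cref{thrm:JM} for neat elements and \cref{rmk:balanced_func_faithful} for balanced ones.

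For part (1), one direction is immediate: if $x=x_{neat}$ then $x$ is neat. For the converse, I would first reduce to the $\ad$-nilpotent case by passing to $\g^s/\kk s$ and $M^s$. Now suppose $x$ is neat. Then $\Phi_x$ is faithful by \cref{thrm:JM}. Take a faithful representation $V$ of $G$. Since $\Phi_x\cong(\id\boxtimes\Phi_{x_{bal}})\circ(-)\downarrow_\varphi$ must not kill nonzero morphisms, I would apply it to the endomorphism $x_{bal}\rvert_{V}$. This is an endomorphism of $V\downarrow_\varphi$, because $x_{bal}$ commutes with $\osp_{x_{neat}}$. Its image under $\id\boxtimes\Phi_{x_{bal}}$ is zero, since $x_{bal}$ acts by zero on $\widetilde{DS}$. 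Faithfulness then forces $x_{bal}\rvert_V=0$, hence $x_{bal}=0$.

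A possible subtlety is that $x_{bal}$ is odd, so it is not an even morphism. If this is a problem, I would apply the argument instead to the even commutator $[x_{bal},x_{bal}]$, or to multiplication by $x_{bal}$ on $V\otimes\Pi\triv$. Alternatively, I would argue directly: every $OSp(1|2)\times\G^{(1|1)}$-summand $L\boxtimes L'$ of $V$ must have $L'$ of dimension $1$ for $x$ to act neatly. Indeed, $x=x_{neat}\otimes1+1\otimes x_{bal}$ acting on $L\boxtimes L'$ with $L'$ an even Jordan block has zero superdimension summands. So $x_{bal}$ acts by zero on $V$.

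For part (2), the implication (b)$\Rightarrow$(a) is \cref{cor:criterion_balanced}, and (c)$\Rightarrow$(b) is \cref{prop:x_1_can_be_eliminated}. The latter applies because by \cref{thm:2-step_exist} we have $x_{neat}=x_{-1}$ for the chosen triple, with $x_0$ a common attractor. It remains to show (a)$\Rightarrow$(c). Assume $x$ is balanced. Then the $OSp(1|2)$-action on $\Phi_x(M)$ is trivial for all $M$. By the proposition, $\Phi_x(V)\cong\bigoplus L\otimes\Phi_{x_{bal}}(\cdots)$. By \cref{cor:criterion_balanced} applied to $x_{bal}$, we have $\Phi_{x_{bal}}\cong DS_{x_0}$ on $G'$-modules, so the $\osp_{x_{neat}}$-module $DS_{x_0}(V)$ must be trivial. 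I would apply this to $M=\g$, which is not faithful in general but is still a valid test. This shows that $\osp_{x_{neat}}$ acts trivially on $DS_{x_0}(\g)$, i.e.\ its image $\overline{x}_{neat}$ in $DS_{x_0}(\g)$ acts by zero on $DS_{x_0}(\g)$.

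The main obstacle is then to pass from ``$\overline{x}_{neat}$ acts trivially on $DS_{x_0}(\g)$'' to $\overline{x}_{neat}=0$. I would argue as follows. The image of $\osp_{x_{neat}}$ in $DS_{x_0}(\g)$ is a quotient of $\osp(1|2)$, so it is either $0$ or $\osp(1|2)$. In the latter case it would act nontrivially on itself via the adjoint action, which is a contradiction. Therefore $\overline{x}_{neat}=0$, and since $x_{neat}\in\g^{x_0}$ this means $x_{neat}\in\Im\ad_{x_0}$, which is (c). For this step the $s$-invariants in $\g$ must be handled: $x_{neat}\in\g^s$, and $DS_{x_0}(\g)=DS_{x_0}(\g^s)$.
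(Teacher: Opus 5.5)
\textbf{Part (2).} Your argument is the paper's. The implication (b)$\Rightarrow$(a) is \cref{cor:criterion_balanced}. The implication (c)$\Rightarrow$(b) is \cref{prop:x_1_can_be_eliminated}, using $x_{neat}=x_{-1}$. For (a)$\Rightarrow$(c), you apply \cref{prop:tensor_functors_2_step} to the adjoint representation, as the paper does. The paper then picks an indecomposable $\osp_{x_{neat}}\times\kk[x_0]$-summand $U\cong\osp_{x_{neat}}\boxtimes M$ of $\g^s$ containing $\osp_{x_{neat}}$ and shows $DS_{x_0}(U)=0$. You instead observe that the image of $\osp_{x_{neat}}$ in $DS_{x_0}(\g)$ is either $0$ or $\osp(1|2)$, and $\osp(1|2)$ would act nontrivially on itself. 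This is the same idea, and slightly cleaner because it avoids choosing that summand. Both versions need $\Phi_{x_{bal}}\cong DS_{x_0}$ on $G'$-modules. That holds because the $\sl_2$ for $x_{bal}$ built in the proof of \cref{thm:2-step_exist} commutes with $\osp_{x_{neat}}$; it is not part of that theorem's statement.

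\textbf{Part (1), first argument.} This does not work, and the obstruction is not parity. The map $x_{bal}\rvert_V$ is not a morphism in $\Rep(G)$, since it is not $G$-equivariant. It is also not a morphism in $\Rep(OSp(1|2)\times G')$ unless $x_{bal}$ is central in $\Lie(G')$. So neither faithfulness of $\Phi_x$ nor the functor $\id\boxtimes\Phi_{x_{bal}}$ can be applied to it. The same objection applies to $[x_{bal},x_{bal}]\rvert_V$ and to multiplication by $x_{bal}$ on $V\otimes\Pi\triv$.

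\textbf{Part (1), direct argument.} This one is a correct proof and differs from the paper's. Decompose a faithful $V$ under $OSp(1|2)\times\G^{(1|1)}$ into summands $L\boxtimes L'$. Each $L\downarrow_X$ is a single odd-length Jordan block, and each $L'$ has length $1$ or even length since $x_{bal}$ is balanced. If $L'$ has even length, then $S(L\downarrow_X\otimes L')\cong S(L\downarrow_X)\otimes S(L')=0$. Hence every indecomposable $\kk[x]$-summand of $L\boxtimes L'$ has superdimension $0$, contradicting neatness. So $x_{bal}\rvert_V=0$, and faithfulness of $V$ gives $x_{bal}=0$.

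The paper argues functorially instead. It goes: $x$ is neat iff $\Phi_x$ is faithful (\cref{thrm:JM}), iff $\Phi_{x_{bal}}$ is faithful, iff $x_{bal}=0$ (\cref{rmk:balanced_func_faithful}). Your direct argument is more elementary and self-contained. It uses only that $\osp_{x_{neat}}$ commutes with $x_{bal}$ and that $x_{bal}$ is balanced. It does not rely on the faithfulness criterion in \cref{rmk:balanced_func_faithful}, whose justification in the paper is only sketched. Drop the first argument and keep this one.
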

\begin{remark}
We remind the reader that both $\osp_{x_{neat}}$ and $x_0$ are defined up to conjugation.
\end{remark}
\begin{proof}
    \begin{enumerate}
        \item
    By the definition of a neat element, $x\in \g_{neat}$ if and only if the functor $\Phi_x$ is faithful. This happens if and only if $\Phi_{x_{bal}}$ is faithful, which in turn is equivalent to $x_{bal}=0$ (see \cref{rmk:balanced_func_faithful}).

       \item \InnaA{Recall that by by \cref{cor:criterion_balanced}, \eqref{it:strbal} implies \eqref{it:bal}. 
       
       Let us first show that \eqref{it:strbal_suff} implies \eqref{it:strbal}. Let $\frac{1}{2}[x,x]=s+f$ be the Jordan decomposition of $\frac{1}{2}[x,x]$.
       We use the explicit construction of $x_{neat}$ given in \cref{thm:2-step_exist}: there we constructed an $\sl_2$-triple $(e,h,f)$ in $\g_{\bar 0}$ so that $x_{neat}=x_{-1}$ in the $\ad_h$-decomposition $x=\sum_{i\leq 0} x_i$. We now use \cref{prop:x_1_can_be_eliminated}, which states: if $x_{-1}\in \Im \ad_{x_0}$ then $x$ is strongly balanced.

       Finally, we prove that \eqref{it:bal} implies \eqref{it:strbal_suff}. 
       Assume that $x\in \g_{bal}$. Let us show that $x_{neat} \subset\Im \ad_{x_0}$.
       By the definition of a balanced element,
    $x\in \g_{bal}$ implies that the action of $OSp(1|2)$ on $\Phi_x(V)$ is trivial for any $V\in \Rep(G)$. 
    
    Consider the adjoint representation $\g$ of $\g$ and the decomposition of $\g^s$ into indecomposable $\osp_{x_{neat}}\times \kk[x_0]$-summands. We may choose such a decomposition where an indecomposable summand $U$ contains the subspace $\osp_{x_{neat}}$ of $\g$. This summand will be isomorphic, as a $\osp_{x_{neat}}\times \kk[x_0]$-module, to $\osp_{x_{neat}}\boxtimes M$ for some indecomposable $\kk[x_0]$-module $M$. 
    
    Since $\osp_{x_{neat}}$ commutes with $x_0$, we have: $\osp_{x_{neat}}\subset \Ker \ad_{x_0}\rvert_{U}$. By \cref{prop:tensor_functors_2_step}, if the $OSp(1|2)$-action on
    $\Phi_x(\g)$ is trivial then $DS_{x_0}(M)=0$, implying that $DS_{x_0}(U)=0$. This means that 
    $ \Ker \ad_{x_0}\rvert_{U}=\Im \ad_{x_0}\rvert_{U}$, hence $\osp_{x_{neat}} \subset\Im \ad_{x_0}$.
}
       \end{enumerate}
\end{proof}

\begin{remark}
    From the proof of \cref{thm:2-step_exist}, we see that for any balanced (equivalently, strongly balanced) element $x\in \g_{\bar 1}$, there exists $\sl_x=\mathrm{span}\{e,h,f\}$ such that the $\ad_h$-decomposition of $x$ contains only eigenvectors with even eigenvalues.
\end{remark}
\begin{corollary}\label{cor:Phi_x_is_DS_att}
    Let $Forg:\Rep(OSp(1|2))\to \sVect$ be the forgetful functor and consider the functor
$\overline{\Phi}_x:=Forg\circ\Phi_x:\Rep(G)\to \sVect$.
    Then $$\overline{\Phi}_x\cong \overline{\Phi}_{x_{bal}}\cong\overline{\Phi}_{x_0}=DS_{x_0}.$$
\end{corollary}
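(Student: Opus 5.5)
The plan is to compose \cref{prop:tensor_functors_2_step} with the forgetful functor $Forg$ and then identify the balanced factor with a Duflo--Serganova functor. Fix the $2$-step decomposition $x=x_{neat}+x_{bal}$ of \cref{thm:2-step_exist}, together with $\osp_{x_{neat}}$ and $\varphi$, and choose the attractor $x_0\in att(x)\cap att(x_{bal})$ supplied by that theorem. By \cref{prop:tensor_functors_2_step} we have $\Phi_x\cong(\id\boxtimes\Phi_{x_{bal}})\circ(-)\downarrow_\varphi$. Composing with $Forg$ gives, for $M\in\Rep(G)$, an isomorphism of superspaces
\[
\overline{\Phi}_x(M)\cong \bigoplus_L L\otimes \Phi_{x_{bal}}(\mathrm{Hom}_{OSp(1|2)}(L,M)),
\]
natural in $M$. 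Here $L$ runs over simple $OSp(1|2)$-modules, and the multiplicity spaces are $G'$-modules.

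The next step is to compare this with $\overline{\Phi}_{x_{bal}}(M)$ itself. Since $x_{bal}$ commutes with $\osp_{x_{neat}}$, its semisimple part $s'$ lies in $\g'=\mathrm{Lie}(G')$. Restricting $M$ to $OSp(1|2)\times G'$ decomposes $M^{s'}$ as $\bigoplus_L L\boxtimes M_L$, and $x_{bal}$ acts as $1\otimes x_{bal}$. Hence $\Phi_{x_{bal}}(M)\cong\bigoplus_L L\otimes\Phi_{x_{bal}}(M_L)$ as superspaces: the trivial-action Jordan decomposition simply tensors with $L$, because $\Phi_{x_{bal}}$ lands in $\sVect$ via \cref{lem:Phi_for_x_bal}.

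One point needs care here. The element $s$ used for $x$ and the element $s'$ used for $x_{bal}$ must give the same invariants, or at least the $s$-invariant bookkeeping must match. Since $s=[x_0,x_0]$ for a common attractor by \cref{lem:square_attractor}, we get $s=s'$, and this resolves the issue. This yields $\overline{\Phi}_x\cong\overline{\Phi}_{x_{bal}}$.

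Finally, $x_{bal}$ is strongly balanced by \cref{cor:decomp_for_neat_or_bal_elems}, or directly from the construction in \cref{thm:2-step_exist}. So \cref{cor:criterion_balanced} gives $\Phi_{x_{bal}}\cong DS_{x_0}=\Phi_{x_0}$ for the common attractor $x_0$, and applying $Forg$ gives $\overline{\Phi}_{x_{bal}}\cong\overline{\Phi}_{x_0}$.

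I expect the main obstacle to be the middle step: making the isomorphism natural in $M$ rather than only objectwise. Morphisms of $G$-modules mix isotypic components only through $OSp(1|2)$-equivariant maps, so I will argue by Schur's lemma that both sides are the same functor $\bigoplus_L Forg(L)\otimes\Phi_{x_{bal}}(\mathrm{Hom}_{OSp(1|2)}(L,-))$. This requires only that $Forg\circ\overline{T}$ restricted to $\Rep(OSp(1|2))\boxtimes\sVect$ is the tensor-of-underlying-spaces functor.
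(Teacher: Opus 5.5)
Your proposal is correct and takes essentially the paper's route. The paper also composes the isomorphism of \cref{prop:tensor_functors_2_step} with $Forg$, rewrites $(Forg\boxtimes\id)\circ(\id\boxtimes\Phi_{x_{bal}})\circ(-)\downarrow_{\varphi}$ as $\Phi_{x_{bal}}\circ(-)\downarrow_{\bar\varphi}$ (with $\bar\varphi\colon G'\to G$ the inclusion), and invokes \cref{cor:criterion_balanced} for the attractor $x_0$. Your isotypic-decomposition argument for naturality and your check that the semisimple parts of $\frac{1}{2}[x,x]$ and $\frac{1}{2}[x_{bal},x_{bal}]$ coincide just spell out steps the paper does in one line.
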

\begin{proof} 
Let $i_{x_{neat}}: OSp(1|2) \to G$ be as in \cref{ssec:neat_notn}.
Consider the decomposition $$\InnaA{\Phi_x\cong (\id\boxtimes\Phi_{x_{bal}})\circ (-)\downarrow_{\varphi}}$$ as in \cref{prop:tensor_functors_2_step}. Composing $\Phi_x$ with the forgetful functor we obtain $$\InnaA{\overline{\Phi}_x\cong (Forg \boxtimes \id) \circ (\id\boxtimes\Phi_{x_{bal}})\circ (-)\downarrow_{\varphi} \cong \Phi_{x_{bal}}\circ (-)\downarrow_{\bar \varphi} \cong DS_{x_0}}$$ \InnaA{where $\bar \varphi: G'\to G$ is embedding of the centralizer $G'$ of $\Im(i_{x_{neat}})$} and the rightmost isomorphism follows from \cref{cor:criterion_balanced}.
\end{proof}

\begin{corollary}
    The functor $\Phi_x$ satisfies the Hinich property (``exact in the middle''): that is, given a short exact sequence $$0\to M'\xrightarrow{f'} M \xrightarrow{f''} M''\to 0$$ in $\Rep(G)$, the sequence $${\Phi}_x(M')\xrightarrow{{\Phi}_x(f')} {\Phi}_x(M) \xrightarrow{{\Phi}_x(f'')} {\Phi}_x(M'')$$ is exact and satisfies: $\Ker {\Phi}_x(f') \cong \Pi ~\mathrm{coker}\,{\Phi}_x(f'')$. 
\end{corollary}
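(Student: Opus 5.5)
We reduce both claims to the case where the middle term is a module on which $x$ acts through the decomposition of \cref{prop:tensor_functors_2_step}. Write $\Phi_x\cong(\id\boxtimes\Phi_{x_{bal}})\circ(-)\downarrow_{\varphi}$. The restriction $(-)\downarrow_\varphi$ is exact. $\Rep(OSp(1|2))$ is semisimple, so every module in $\Rep(OSp(1|2)\times G')$ splits canonically into isotypic components $\bigoplus_k \MM(2k)\boxtimes N_k\oplus \Pi\text{-shifted and odd-}k$ pieces, and this splitting is exact. Hence it suffices to prove the Hinich property for $\Phi_{x_{bal}}$ on $\Rep(G')$. By \cref{cor:criterion_balanced}, or \cref{cor:Phi_x_is_DS_att}, this functor is isomorphic to the Duflo--Serganova functor $DS_{x_0}$ for a homological attractor $x_0$ of $x_{bal}$.

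Next we treat $DS_{x_0}$ with $x_0$ homological, $s=\tfrac12[x_0,x_0]$ semisimple. Taking $s$-invariants is exact, since $s$ acts semisimply on every object of $\Rep(G')$. Thus $0\to M'^s\to M^s\to M''^s\to 0$ is a short exact sequence of complexes with differential $x_0$, where $x_0^2=s=0$ on invariants. These are $\Z/2$-graded complexes, with the grading given by parity, and $x_0$ is odd. The long exact sequence in homology is therefore a hexagon: the connecting map $\delta:H(M'')\to \Pi H(M')$ is odd. The standard snake argument gives exactness of $DS_{x_0}(M')\to DS_{x_0}(M)\to DS_{x_0}(M'')$ at the middle term. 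It also gives $\Ker DS_{x_0}(f')=\Im\delta\cong \Pi\,\mathrm{coker}\,DS_{x_0}(f'')$, because the hexagon is exact at $H(M')$ and at $H(M'')$.

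Finally we transport back. Apply the above to each isotypic multiplicity component $N_k$, which is a $G'$-module, and tensor with $\MM(2k)$. The resulting isomorphisms are $OSp(1|2)$-equivariant, since $\delta$ commutes with the $OSp(1|2)$-action: it is built from maps in $\Rep(OSp(1|2)\times G')$. Summing over $k$ yields middle exactness of $\Phi_x$ and the isomorphism $\Ker\Phi_x(f')\cong\Pi\,\mathrm{coker}\,\Phi_x(f'')$ in $\Rep(OSp(1|2))$.

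The main care point is compatibility of the natural isomorphisms with the given morphisms. The isomorphism of \cref{prop:tensor_functors_2_step} is natural, so $\Phi_x(f')$ and $\Phi_x(f'')$ correspond to $(\id\boxtimes DS_{x_0})(f'\downarrow_\varphi)$ and $(\id\boxtimes DS_{x_0})(f''\downarrow_\varphi)$. The other point is to track the parity shift so that the connecting morphism is genuinely odd, which produces the $\Pi$.
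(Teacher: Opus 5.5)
Your proof is correct up to one step that needs justification, and it takes a different route from the paper. The paper drops the $OSp(1|2)$-structure at once. By \cref{cor:Phi_x_is_DS_att}, $Forg\circ\Phi_x\cong DS_{x_0}$ on $\Rep(G)$. The Hinich lemma for $DS_{x_0}$ is then quoted from \cite[Lemma 2.7]{gorelik2022duflo}; your $\Z/2$-hexagon argument is exactly its proof. Exactness in $\Rep(OSp(1|2))$ is read off on underlying superspaces, so the paper obtains $\Ker\Phi_x(f')\cong\Pi\,\mathrm{coker}\,\Phi_x(f'')$ only as superspaces. You keep the $OSp(1|2)$-action: via \cref{prop:tensor_functors_2_step} you split $M\downarrow_\varphi$ into isotypic pieces $\MM(k)\otimes N_k(M)$ with $N_k$ exact, and apply the hexagon to the multiplicity spaces. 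This is more self-contained, and it gives the stronger, $OSp(1|2)$-equivariant form of the isomorphism.

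The step you do not justify is this. The $N_k(M)$ are only $G'$-modules, so you need $\Phi_{x_{bal}}\cong DS_{x_0}$ naturally on $G'$-modules. That means \cref{cor:criterion_balanced} must be applied inside $\g'=\fc_\g(\osp_{x_{neat}})$. For this you need an $\sl_2$-triple witnessing that $x_{bal}$ is strongly balanced, with attractor $x_0$, which commutes with $\osp_{x_{neat}}$. The reason is that the natural map of \cref{prop:isom_DS_funcs_balanced} is projection onto $h$-weight zero, which respects the isotypic splitting only if $h\in\g'$.

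Neither \cref{thm:2-step_exist} (as stated) nor \cref{cor:Phi_x_is_DS_att} (a statement on $\Rep(G)$) supplies this. It does hold for the triple built in the proof of \cref{thm:2-step_exist}: that triple is assembled from $\sl_z$ and the $\sl_{y_\alpha}$, which lie in ideals commuting with $\osp_{x_{neat}}$, and \cref{cor:dist_orbits_spe} covers $\mathfrak{spe}(n)$. Alternatively, rerun the argument of \cref{lem:balancedLevi} using the splitting of $\g$ into $\g'$ and the sum of the nontrivial $\osp_{x_{neat}}$-isotypic components. With this added your proof is complete. The paper's forgetful route avoids the issue, because there \cref{cor:criterion_balanced} is only used in $\g$.
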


\begin{proof}
    Let $x=x_{neat}+x_{bal}$ be a $2$-step decomposition of $x$ and let $x_0$ be the attractor of $x$ obtained from $x_{bal}$. The Duflo-Serganova functor $DS_{x_0}$ satisfies the Hinich property (see \cite[Lemma 2.7]{gorelik2022duflo}). \cref{cor:Phi_x_is_DS_att} then shows that the functor $\overline{\Phi}_x$ satisfies the same property. The functor $Forg$ is exact, implying that the sequence $${\Phi}_x(M')\xrightarrow{{\Phi}_x(f')} {\Phi}_x(M) \xrightarrow{{\Phi}_x(f'')} {\Phi}_x(M'')$$ is exact. Finally, the map $(f'')^{-1}\circ x_0\circ (f')^{-1}: \Pi \Phi_x(M'')\to \Phi_x(M')$ induces an isomorphism of vector superspaces $\Ker {\Phi}_x(f') \cong \Pi ~\mathrm{coker}\,{\Phi}_x(f'')$ (this is a direct computation, see \cite[Lemma 2.7]{gorelik2022duflo}). 
    %All the factors in this composition commute with $x_{neat}$ (see conditions of \cref{thm:2-step_exist}), so the above isomorphism is actually an isomorphism of $\osp_{x_{neat}}$-modules, completing the proof.
\end{proof}

\begin{onehalfspace}
\bibliographystyle{alpha}
\bibliography{biblio_nilp}
\end{onehalfspace}
\newpage
\appendix
\section{Neat and balanced elements in Levi sub-superalgebras}\label{app:detecting_neat_and_bal}
 
\begin{lemma}\label{lem:restr_neat_bal_to_levi}
Let $x\in \mathcal{N}^{\ad}(\g_{\bar 1})$ and let $\fl\subset \g$ be a Levi subalgebra such that $x\in \fl$. If $x\in \g_{neat}$ then $x\in \fl_{neat}$.
\end{lemma}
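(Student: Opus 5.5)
The plan is to use that the Levi subalgebra $\fl=\g^{\s}$ (with $\s\subset\g^{\mathbf{ss}}$ a commutative toral subalgebra) is the Lie superalgebra of a quasi-reductive sub-supergroup $L\subset G$, by the lemma in \cref{ssec:levi}. Then we verify neatness of $x$ in $\fl$ through the faithful-representation criterion of \cref{thrm:JM}. The definition of $\fl_{neat}$ asks that $x$ act neatly on every finite-dimensional representation of $\fl$ (or $L$). By the first bullet of \cref{thrm:JM}, it suffices to exhibit one faithful representation of $L$ on which $x$ acts neatly.

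First, note that $x$ is $\ad$-nilpotent in $\fl$. Indeed, $[x,x]$ is nilpotent in $\g_{\bar 0}$ and lies in $\fl_{\bar 0}=\mathfrak{c}_{\g_{\bar 0}}(\s)$. A nilpotent element of $\g_{\bar 0}$ lying in the reductive subalgebra $\fl_{\bar 0}$ is nilpotent there, since its adjoint action on $\fl$ is the restriction of a nilpotent operator. It also lies in $[\fl_{\bar 0},\fl_{\bar 0}]$, because its image in the center of $\fl_{\bar 0}$ is semisimple and nilpotent, hence zero.

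Next, take a faithful representation $V$ of $G$; for instance, any faithful $G$-module exists since $G$ is affine. By \cref{thrm:JM}, $\rho(x)\in\gl(V)_{neat}$. By \cref{def:neat_operator}, this condition depends only on the operator $\rho(x)$ on the superspace $V$, namely that all its Jordan blocks have odd size. The restriction $V\downarrow_L$ is a faithful $L$-module on which $x$ acts by the same operator $\rho(x)$, so $x$ acts neatly on it. Applying the first bullet of \cref{thrm:JM} to the quasi-reductive supergroup $L$ and the $\ad$-nilpotent element $x\in\fl_{\bar 1}$ then gives $x\in\fl_{neat}$.

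The only point needing care is the applicability of \cref{thrm:JM} to $L$. We need $L$ to be quasi-reductive, which is exactly the lemma on Levi subalgebras, and we need $x$ to be $\ad$-nilpotent in $\fl$, which is the observation in the second paragraph; no other obstacle is expected. An alternative route avoids faithful representations. Take $\osp_x\subset\g$ and its Cartan element $h$, and conjugate $\osp_x$ into $\fl$ using that $\s$ centralizes $x$ and hence $[x,x]$. Then choose $\s$ to centralize $\osp_x$ by Kostant-type uniqueness of $\sl_2$-triples in the centralizer. This route is more involved, so the representation-theoretic argument above is preferable.
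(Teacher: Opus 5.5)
Your proof is correct, but it takes a different route from the paper.

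\textbf{Your route.} You reduce the lemma to the first bullet of \cref{thrm:JM}, applied twice to the same faithful module $V$. Applied to $G$, it gives $\rho(x)\in\gl(V)_{neat}$. Applied to the quasi-reductive $L$, it gives $x\in\fl_{neat}$, since $V\downarrow_L$ is still faithful and $x$ is still $\ad$-nilpotent. Two small remarks:
\begin{itemize}
\item You do not need to rephrase $\rho(x)\in\gl(V)_{neat}$ in terms of Jordan blocks. The identical condition appears in both applications, and that rephrasing is not literally \cref{def:neat_operator}.
\item The claim that the central component of $[x,x]$ in $\fl_{\bar 0}$ is nilpotent deserves a line. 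For example, the semisimple part of $[x,x]$ computed in $\fl_{\bar 0}$ agrees with the one computed in $\g_{\bar 0}$, which is $0$.
\end{itemize}

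\textbf{The paper's route.} The paper argues inside $\g$. It fixes $\osp_x$ with Cartan element $h$ and observes that $\mathfrak{c}_{\g_{\bar 0}}(x)$ sits in non-positive $\ad_h$-degrees, with its negative part as nilradical. It therefore conjugates the torus $\s$ by $C_{G_{\bar 0}}(x)$ into $\mathfrak{c}_{\g_{\bar 0}}(x)\cap\g^0=\mathfrak{c}_{\g_{\bar 0}}(\osp_x)$. Equivalently, a $C_{G_{\bar 0}}(x)$-conjugate of $\osp_x$, still containing $x$, lies in $\fl$. This is close to the alternative you discarded, except that it moves $\s$ via the nilradical of $\mathfrak{c}_{\g_{\bar 0}}(x)$ rather than via uniqueness of $\sl_2$-triples.

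\textbf{What each buys.} Your route is shorter and more general: it shows that neatness passes to any quasi-reductive sub-supergroup whose Lie superalgebra contains $x$ as an $\ad$-nilpotent element, Levi or not. The paper's route is constructive. It produces the $\osp(1|2)$-type subalgebra of $\fl$ explicitly, as a $C_{G_{\bar 0}}(x)$-conjugate of a given one. It is also the same kind of grading argument the paper uses for the balanced analogue \cref{lem:balancedLevi}. There no faithful-module criterion is available. For instance, take $x=X+y\in\osp(1|2)\times\gl(1|1)$ with $0\neq y$ self-commuting. It acts on the faithful module $\MM(1)\boxtimes\kk^{1|1}$ with Jordan blocks of sizes $4$ and $2$. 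Yet it is not balanced, since it acts on $\MM(1)\boxtimes\kk$ by a single block of size $3$.
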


\begin{proof}
    
    Assume $x\in \g_{neat}$, $x\neq 0$ (for $x=0$ the claim obviously holds). 
    Let $\s\subset \mathfrak{c}_{\g_{\bar 0}}(x)$ be the toral subalgebra defining the Levi sub-superalgebra $\fl$.

    Let $x\in \osp_x\subset \g$ be an $\osp(1|2)$-type subalgebra and let $h\in \osp_x$ be the corresponding Cartan element, so that $[h,x]=-x$. Consider the $\ad_h$-grading $\g=\oplus_{i\in \Z} \g^i$. The operator $\ad_h$ preserves $\mathfrak{c}_{\g_{\bar 0}}(x)$, hence $\mathfrak{c}_{\g_{\bar 0}}(x)$ inherits the $\Z$-grading; since $\ad_x$ annihilates any homogeneous element in $\mathfrak{c}_{\g_{\bar 0}}(x)$, these all lie in grades less or equal $0$. Now, $\mathfrak{c}_{\g_{\bar 0}}(x)\cap (\bigoplus_{i<0} \g^i)$ is the nilradical of $\mathfrak{c}_{\g_{\bar 0}}(x)$, so $\s$ is $C_{G_{\bar 0}}(x)$-conjugate to a toral subalgebra of $ \mathfrak{c}_{\g_{\bar 0}}(x)\cap \g^0 = \Ker \ad_x\cap \Ker \ad_h$. \InnaA{Since the conjugate subalgebra lies in $\Ker \ad_x\cap \Ker \ad_h$, it commutes with $\osp_x$. In other words, $\osp_x$ is $C_{G_{\bar 0}}(x)$-conjugate to an $\osp(1|2)$-type subalgebra $\osp_x'$ which contains $x$ and commutes with $\s$. This implies that $\osp_x'\subset\fl$, so $x\in \fl_{neat}$.}
    \end{proof}

\begin{lemma}\label{lem:balancedLevi} Let $x\in \mathcal{N}^{\ad}(\g_{\bar 1})$ and let $\fl\subset \g$ be a Levi subalgebra such that $x\in \fl$. If $x$ is strongly balanced in $\g$ then $x$ is strongly balanced in $\fl$.
    \end{lemma}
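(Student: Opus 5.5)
Let $\fl=\g^{\s}$ with $\s\subset\g^{\mathbf{ss}}$ a toral subalgebra; since $x\in\fl$, we have $\s\subset\mathfrak{c}_{\g_{\bar 0}}(x)$. Write $f=\frac12[x,x]$. If $f=0$ there is nothing to prove, since $x$ is homological in $\fl$ as well. So assume $f\neq 0$, and fix an $\sl_2$-triple $(e,h,f)$ in $\g_{\bar 0}$ for which the $\ad_h$-decomposition $x=x_0+x_{-2}+x_{-3}+\dots$ has $x_{-1}=0$. The plan is the same as in \cref{lem:restr_neat_bal_to_levi}: conjugate this triple by an element of $C_{G_{\bar 0}}(x)$ so that it commutes with $\s$. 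Such a conjugation preserves $x$, so the new triple $(e',h',f)$ yields the $\ad_{h'}$-decomposition $\Ad_g(x_i)$, whose $(-1)$-component is still zero. Moreover, if $[\s,h']=[\s,e']=0$, then $h',e'\in\fl$, and since $[\s,x]=0$ the components $x'_i$ (which are polynomial in $\ad_{h'}$ applied to $x$) also lie in $\fl$. Hence $x$ is strongly balanced in $\fl$.

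The key step is therefore to find $g\in C_{G_{\bar 0}}(x)$ with $\Ad_g(\s)$ centralizing $e,h,f$ (equivalently, to conjugate the triple rather than $\s$). I would proceed as follows. Consider the Lie algebra $\fc:=\mathfrak{c}_{\g_{\bar 0}}(x)$. It is $\ad_h$-stable: $h$ does not commute with $x$, but $\ad_h$ acts on $\fc$ through the graded pieces, because an element $u=\sum u_i$ (with $[h,u_i]=iu_i$) commutes with $x$ only if the lowest-degree obstruction vanishes. This needs care, and is precisely the content to check. The cleaner route is the one used in \cref{lem:selfcompart}. Since $[f,x]=0$, we have $\fc\subset\mathfrak{c}_{\g_{\bar 0}}(f)$, and $\mathfrak{c}_{\g_{\bar 0}}(f)=\mathfrak{c}^0\ltimes\mathfrak{c}^{<0}$, where $\mathfrak{c}^0=\mathfrak{c}_{\g_{\bar 0}}(e,h,f)$ is reductive and $\mathfrak{c}^{<0}$ is the nilradical, graded by $\ad_h$. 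The toral $\s$ lies in $\mathfrak{c}_{\g_{\bar 0}}(f)$, so by conjugacy of maximal tori (Levi–Mostow) there is $u\in\mathfrak{c}^{<0}$ with $\exp(\ad_u)\s\subset\mathfrak{c}^0$. Equivalently, $\s$ commutes with the triple $\Ad_{\exp(-u)}(e,h,f)=(e'',h'',f)$.

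The remaining difficulty is that $\exp(u)$ need not fix $x$, so the decomposition of $x$ relative to $h''$ must be controlled. By the computation in the proof of \cref{lem:selfcompart}, for $u=u_{-1}+u_{-2}+\dots$ the $\ad_{h''}$-decomposition of $x$ has $x''_0=\Ad^{-1}(x_0)$ and $x''_{-1}=\Ad^{-1}(x_{-1}+[x_0,u_{-1}])=\Ad^{-1}([x_0,u_{-1}])$. So I would aim to choose $u$ with $u_{-1}$ satisfying $[x_0,u_{-1}]\in\Im\ad_{x_0}$ restricted appropriately. More precisely, $x''_{-1}$ lies in $\Im\ad_{x''_0}$ inside $\fl$, because $u_{-1}$ can be taken $\s$-invariant after averaging: the Levi–Mostow conjugator can be chosen in the $\s$-weight-zero part of the degree-$(-1)$ piece. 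Then \cref{prop:x_1_can_be_eliminated}, applied inside the quasi-reductive $\fl$ (with $x''_0\in\fl$, $x''_{-1}\in\Im\ad_{x''_0}|_{\fl}$), shows that $x$ is strongly balanced in $\fl$. I expect this last point, producing $u_{-1}$ compatible with $\s$ so that the $(-1)$-term is an $\ad_{x''_0}$-image inside $\fl$, to be the main obstacle. My first attempt would be to decompose $\g$ into $\s$-weight spaces, note that everything in sight ($x$, $f$, the gradings) is $\s$-weight zero after the conjugation, and project $[x_0,u_{-1}]$ and $u_{-1}$ onto the $\s$-weight-zero component.
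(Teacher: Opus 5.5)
Your second (``cleaner'') route is essentially the paper's proof. The paper picks an $\sl_2$-triple $(e',h',f)$ directly in $\fl_{\bar 0}$ and uses Kostant's theorem to write the conjugating element as $\exp(u)$ with $u$ in the negative part of $\mathfrak{c}_{\g_{\bar 0}}(f)$; this is equivalent to your Levi--Mostow step. It then reads off $x'_{-1}\in\Im\ad_{x'_0}$ from the degree $-1$ component, as in \cref{lem:selfcompart}, projects the preimage onto $\fl_{\bar 0}$, and concludes with \cref{prop:x_1_can_be_eliminated}.

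However, the justification you actually give for the crucial step is false and should be dropped: $u_{-1}$ (or its conjugate) cannot in general be chosen $\s$-invariant. For $s\in\s$ and $s':=\exp(\ad_u)(s)\in\mathfrak{c}^0$, comparing $\ad_h$-degrees in $s=\exp(-\ad_u)(s')$ gives $s_0=s'$ and $s_{-1}=[s',u_{-1}]$. So either $[\s,u_{-1}]=0$ (whose degree $-1$ part is $[s',u_{-1}]$) or $[\exp(\ad_u)\s,u_{-1}]=0$ would force every $s\in\s$ to have vanishing $h$-degree $-1$ component. That fails in general; after all, the conjugator has to move $\s$.

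No such choice is needed, and the step you flag as the main obstacle is immediate, provided you project \emph{after} conjugating. Since $[\s,h'']=[\s,x]=0$, every $\ad_{h''}$-component $x''_i$ of $x$ lies in $\fl$. Hence $\ad_{x''_0}$ preserves the decomposition $\g=\fl\oplus\fm$, where $\fm$ is the sum of the nonzero $\s$-weight spaces. With $g=\exp(u)$, put $w:=\Ad_g^{-1}(u_{-1})=w'+w''$ with $w'\in\fl_{\bar 0}$ and $w''\in\fm_{\bar 0}$. The identity $x''_{-1}=\pm[w,x''_0]$ then splits into $x''_{-1}=\pm[w',x''_0]$ and $[w'',x''_0]=0$. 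So $x''_{-1}\in\Im\ad_{x''_0}|_{\fl}$, and \cref{prop:x_1_can_be_eliminated}, applied in $\fl$ with the triple $(e'',h'',f)\subset\fl_{\bar 0}$, finishes the proof.

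Projecting $u_{-1}$ and $[x_0,u_{-1}]$ themselves, as you literally wrote, would not work. The element $x_0$ need not lie in $\fl$, so $\ad_{x_0}$ need not respect the $\s$-weight decomposition.

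Your first plan, conjugating by $C_{G_{\bar 0}}(x)$, can simply be discarded. $\mathfrak{c}_{\g_{\bar 0}}(x)$ is not $\ad_h$-stable in general, because $x$ is not $\ad_h$-homogeneous, and the argument never needs it.
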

    \begin{proof} Let $f:=\frac{1}{2}[x,x] \in \fl$. If $f=0$ then we are done, so we will assume that $f\neq 0$. Fix an $\sl_2$-triple $(e,h,f)$ such that $x_{-1}=0$ in the $\ad_h$-decomposition $x=\sum_{i\leq 0} x_i$. Any  $\sl_2$-triple containing $f$ is $C_{G_{\bar 0}}(f)$-conjugate to $(e,h,f)$. \InnaA{Since $\fl$ is quasi-reductive, we may choose an $\sl_2$-triple $(e',h',f)$ in $\fl_{\bar 0}$} and consider the $\ad_{h'}$-decomposition
    $x=\sum x'_i$. By \cref{prop:x_1_can_be_eliminated}, it is enough to prove that $x'_{-1} \in \Im \ad_{x'_0}\rvert_{\fl}$.
    
    Let $g\in C_{G_{\bar 0}}(f) $ be such that $e'=\operatorname{Ad}_g(e)$ and
     $h'=\operatorname{Ad}_g(h)$. Let $y:=\operatorname{Ad}_g(x)$. Then the $\ad_{h'}$-decomposition $y=\sum_i y_i$ satisfies: $y_{-1}=0$.
     
     Consider the $\ad_{h'}$-grading $\g=\bigoplus_{i\in \Z} \g^i$. The operator $\ad_{h'}$ preserves $\mathfrak{c}_{\g_{\bar 0}}(f)$ so we have an induced $\Z$-grading on $\mathfrak{c}_{\g_{\bar 0}}(f)$. Any element in $\g=\bigoplus_{i\in \Z} \g^i$ annihilated by $\ad_f$ must have a non-positive $h$-weight so $$\mathfrak{c}_{\g_{\bar 0}}(f)=\bigoplus_{i\leq 0}\mathfrak{c}_{\g_{\bar 0}}(f) \cap \g^i.$$
     Thus $\operatorname{Ad}_g=\exp{\ad u}$ 
     for some $u\in\bigoplus_{i\leq0}\mathfrak{c}_{\g_{\bar 0}}(f) \cap \g^i$. \InnaA{In fact, we may assume that $ u\in\bigoplus_{i<0}\mathfrak{c}_{\g_{\bar 0}}(f) \cap \g^i$, since $\g^f_0 = \fc_{\g_{\bar 0}}(h',f')$ commutes with the entire $\sl_2$-triple $(e',h',f)$.}
     
     Let $u=\sum_{i<0}u_i$ be the $\ad_{h'}$-decomposition of $u$. Then
     $y_{-1}=\ad_{u_{-1}} x'_0+x'_{-1}=0$. Thus, $x'_{-1}=[x'_0,u_{-1}]$ for some $u_{-1}\in\g$. 
     
     \InnaA{Consider the adjoint action of the reductive Lie algebra $\fl_{\bar 0}$ on $\g_{\bar 0}$. There exists an $\fl_{\bar 0}$-invariant decomposition 
     $\g_{\bar 0}=\fl_{\bar 0}\oplus\fm$. Let $u_{-1}=u'_{-1}+u''_{-1}$ with $u'_{-1}\in\fl_{\bar 0}$} and $u''_{-1}\in\fm$. Then $x'_{-1}=[x'_0,u'_{-1}]$, meaning that $x'_{-1} \in \Im \ad_{x'_0}\rvert_{\fl}$. By \cref{prop:x_1_can_be_eliminated} we get that $x$ is strongly balanced in $\fl$.
        
      \end{proof}

\end{document}